\newtheorem{theorem}{Theorem}
\newtheorem{lemma}{Lemma}
\numberwithin{equation}{section}
\newcommand{\eps}{\varepsilon}
\newcommand{\Z}{{\mathbb{Z}}}
\newcommand{\Q}{{\mathbb{Q}}}
\newcommand{\PP}{{\mathbb{P}}}
\newcommand{\R}{{\mathbb{R}}}
\newcommand{\T}{{\mathbb{T}}}
\newcommand{\N}{{\mathbb{N}}}
\newcommand{\FF}{{\mathcal F}}
\newcommand{\II}{{\mathcal{I}}}
\newcommand{\JJ}{{\mathcal{J}}}
\renewcommand{\SS}{{\mathfrak{S}}}
\newcommand{\G}{\mathbb{G}}
\renewcommand{\AA}{\mathcal A}
\newcommand{\BB}{\mathcal B}
\newcommand{\CC}{\mathcal C}
\newcommand{\EE}{\mathcal E}
\newcommand{\hex}{\mathrm{hex}}
\newcommand{\sq}{\square}
\title[The linear flow in a honeycomb]{Distribution of the linear flow length in a honeycomb \\ in the
small-scatterer limit}
\author[Florin P. Boca]{Florin P. Boca }
\address{Department of Mathematics, University of Illinois, 1409 W. Green Street,
Urbana, IL 61801, USA}
\address{Institute of Mathematics of the Romanian Academy, P.O. Box 1-764,
RO-014700 Bucharest, Romania}
\email{fboca@illinois.edu}
\date{May 31, 2010}
\begin{document}

\begin{abstract}
We study the statistics of the linear flow in a punctured
honeycomb lattice, or equivalently the free motion of a particle
on a regular hexagonal billiard table with holes of equal size at
the corners and obeying the customary reflection rules. In the small-scatterer limit we prove the
existence of the limiting distribution of the free path length with randomly chosen origin of
the trajectory and explicitly compute it.
\end{abstract}

\maketitle

\section{Introduction}\label{Sect1}
From the regular hexagon of unit size remove circular
holes of small radius $\eps >0$ centered at the vertices,
obtaining the billiard table $H_\eps$ of area $\vert H_\eps\vert=\frac{3\sqrt{3}}{2}-2\pi
\eps^2$. For each pair $({\mathbf x},\omega)\in H_\eps \times
[0,2\pi]$ consider a point particle moving at unit speed on a
linear trajectory, with specular reflections when reaching the
boundary. The time $\tau_\eps^\hex ({\mathbf x},\omega)$
it takes the particle to reach one of the holes is
called the \emph{free path length} (or \emph{first exit time}).
Equivalently, one can consider the unit honeycomb tessellation of the
euclidean plane, with ``fat points" (obstacles or
scatterers) of radius $\eps$ centered at the vertices $m\textbf{e}_1+n\textbf{j}$, $m\nequiv n\pmod{3}$,
of the lattice $\Lambda_6 =\Z \textbf{e}_1+\Z \textbf{j}=\Z^2 \left( \begin{smallmatrix}
1 & 0 \\ 1/2 & \sqrt{3}/ 2 \end{smallmatrix} \right)$,
$\textbf{j} =\big(\frac{1}{2},\frac{\sqrt{3}}{2}\big)$,
and a particle moving at unit speed and velocity
$\omega$ on a linear trajectory until it hits one of the
obstacles (see Figure \ref{Figure1}). If the initial position ${\mathbf x}$ is always chosen
in a fundamental domain, the first hitting time coincides with
$\tau_{\eps}^\hex ({\mathbf x}, \omega)$. In this paper we are
interested in estimating the probability
\begin{equation}\label{1.1}
\PP_\eps^\hex (\xi)=\frac{1}{2\pi \vert H_\eps\vert}  \left|
\big\{ ({\mathbf x},\omega)\in H_\eps \times [0,2\pi]: \eps
\tau_\eps^\hex ({\mathbf x},\omega)>\xi \big\}\right|,\quad
\xi\in [0,\infty),
\end{equation}
that $\eps \tau_\eps^\hex ({\mathbf x},\omega) > \xi$
as $\eps\rightarrow 0^+$. We will prove that $\Phi^\hex ( \xi)
= \lim_{\eps\rightarrow 0^+} \PP_\eps^\hex (\xi)$ exists for
all $\xi\geqslant 0$ and show how to explicitly compute
$\Phi^\hex (\xi)$.

The version of this problem where the initial point is chosen to
be the center of the hexagon has been solved in \cite{BG}. The
square lattice analog of estimating \eqref{1.1} has a longer
history originating in the work of H. A. Lorentz \cite{Lor} and G.
P\' olya \cite{Po}. A complete solution was given in \cite{BZ2}. A
detailed history and presentation of various ideas and tools
involved in this and related problems, as well as a description of
recent developments in the study of the periodic Lorentz gas,
including \cite{CG2,Dah,Go1,MS1,MS2,MS3}, is provided in \cite{Go2,Mar}.

\begin{figure}[ht]
\centering
\unitlength 0.3mm
\begin{picture}(420,230)(-10,-43)

{\color{red} \path(65,6.1499)(377.53,134.29)}

\path(0,0)(25,-43.3)(75,-43.3)(100,0)(150,0)(175,-43.3)(225,-43.3)(250,0)(300,0)(325,-43.3)(375,-43.3)(400,0)
\path(0,0)(25,43.3)(75,43.3)(100,0)
\path(150,0)(175,43.3)(225,43.3)(250,0)
\path(300,0)(325,43.3)(375,43.3)(400,0)

\path(0,86.6)(25,43.3)
\path(75,43.3)(100,86.6)(150,86.6)(175,43.3)
\path(225,43.3)(250,86.6)(300,86.6)(325,43.3)
\path(375,43.3)(400,86.6)

\path(0,86.6)(25,129.9)(75,129.9)(100,86.6)
\path(150,86.6)(175,129.9)(225,129.9)(250,86.6)
\path(300,86.6)(325,129.9)(375,129.9)(400,86.6)

\path(0,173.2)(25,129.9)
\path(75,129.9)(100,173.2)(150,173.2)(175,129.9)
\path(225,129.9)(250,173.2)(300,173.2)(325,129.9)
\path(375,129.9)(400,173.2)

\put(65,6.1499){\makebox(0,0){\tiny $\bullet$}}
\put(90.43,16.576){\makebox(0,0){\tiny $\bullet$}}

\put(95,28){\makebox(0,0){\small $C_1$}}
\put(171.29,49.73){\makebox(0,0){{\tiny $\bullet$}}}
\put(175,60){\makebox(0,0){\small $C_2$}}
\put(21.29,-36.87){\makebox(0,0){{\tiny $\bullet$}}}
\put(12,-39){\makebox(0,0){\small $C_2^\prime$}}

\put(246.52,80.573){\makebox(0,0){{\tiny $\bullet$}}}
\put(252,73){\makebox(0,0){\small $C_3$}}
\put(96.52,-5.1){\makebox(0,0){{\tiny $\bullet$}}}
\put(103,-13){\makebox(0,0){\small $C_3^\prime$}}
\put(261.22,86.6){\makebox(0,0){{\tiny $\bullet$}}}
\put(262,95){\makebox(0,0){\small $C_4$}}
\put(94.39,9.717){\makebox(0,0){{\tiny $\bullet$}}}
\put(103,13){\makebox(0,0){\small $C_4^\prime$}}

\put(312.03,107.43){\makebox(0,0){{\tiny $\bullet$}}}
\put(318,101){\makebox(0,0){\small $C_5$}}
\put(50.94,43.3){\makebox(0,0){{\tiny $\bullet$}}}
\put(51,53){\makebox(0,0){\small $C_5^\prime$}}

\put(366.83,129.9){\makebox(0,0){{\tiny $\bullet$}}}
\put(368,122){\makebox(0,0){\small $C_6$}}
\put(4.1,7.074){\makebox(0,0){{\tiny $\bullet$}}}
\put(-3,16){\makebox(0,0){\small $C_6^\prime$}}

\put(377.53,134.29){\makebox(0,0){{\tiny $\bullet$}}}
\put(373,142){\makebox(0,0){\small $C_7$}}
\put(2.53,-4.39){\makebox(0,0){{\tiny $\bullet$}}}
\put(-2,-15){\makebox(0,0){\small $C_7^\prime$}}

\put(25,-43.3){\makebox(0,0){{\footnotesize $\circ$}}}
\put(75,-43.3){\makebox(0,0){{\footnotesize $\circ$}}}
\put(175,-43.3){\makebox(0,0){{\footnotesize $\circ$}}}
\put(225,-43.3){\makebox(0,0){{\footnotesize $\circ$}}}
\put(325,-43.3){\makebox(0,0){{\footnotesize $\circ$}}}
\put(375,-43.3){\makebox(0,0){{\footnotesize $\circ$}}}

\put(0,0){\makebox(0,0){{\footnotesize $\circ$}}}
\put(100,0){\makebox(0,0){{\footnotesize $\circ$}}}
\put(150,0){\makebox(0,0){{\footnotesize $\circ$}}}
\put(250,0){\makebox(0,0){{\footnotesize $\circ$}}}
\put(300,0){\makebox(0,0){{\footnotesize $\circ$}}}
\put(400,0){\makebox(0,0){{\footnotesize $\circ$}}}

\put(25,43.3){\makebox(0,0){{\footnotesize $\circ$}}}
\put(75,43.3){\makebox(0,0){{\footnotesize $\circ$}}}
\put(175,43.3){\makebox(0,0){{\footnotesize $\circ$}}}
\put(225,43.3){\makebox(0,0){{\footnotesize $\circ$}}}
\put(325,43.3){\makebox(0,0){{\footnotesize $\circ$}}}
\put(375,43.3){\makebox(0,0){{\footnotesize $\circ$}}}

\put(0,86.6){\makebox(0,0){{\footnotesize $\circ$}}}
\put(100,86.6){\makebox(0,0){{\footnotesize $\circ$}}}
\put(150,86.6){\makebox(0,0){{\footnotesize $\circ$}}}
\put(250,86.6){\makebox(0,0){{\footnotesize $\circ$}}}
\put(300,86.6){\makebox(0,0){{\footnotesize $\circ$}}}
\put(400,86.6){\makebox(0,0){{\footnotesize $\circ$}}}

\put(25,129.9){\makebox(0,0){{\footnotesize $\circ$}}}
\put(75,129.9){\makebox(0,0){{\footnotesize $\circ$}}}
\put(175,129.9){\makebox(0,0){{\footnotesize $\circ$}}}
\put(225,129.9){\makebox(0,0){{\footnotesize $\circ$}}}
\put(325,129.9){\makebox(0,0){{\footnotesize $\circ$}}}
\put(375,129.9){\makebox(0,0){{\footnotesize $\circ$}}}

\put(0,173.2){\makebox(0,0){{\footnotesize $\circ$}}}
\put(100,173.2){\makebox(0,0){{\footnotesize $\circ$}}}
\put(150,173.2){\makebox(0,0){{\footnotesize $\circ$}}}
\put(250,173.2){\makebox(0,0){{\footnotesize $\circ$}}}
\put(300,173.2){\makebox(0,0){{\footnotesize $\circ$}}}
\put(400,173.2){\makebox(0,0){{\footnotesize $\circ$}}}

{\color{blue} \put(124,17){\makebox(0,0){\large $\omega$}}}
\put(65,6.15){\arc{95}{-0.39}{0}}
\put(65,6.15){\arc{98}{-0.393}{0}}
\put(55,10){\makebox(0,0){${\mathbf x}$}}

\thinlines {\color{blue}
\path(90.43,16.576)(21.29,-36.87)(96.52,-5.1)(94.39,9.717)(50.94,43.3)(4.1,7.074)(2.53,-4.39)}

\path(65,6.14199)(150,6.14199)

\thicklines
\path(0,0)(25,43.3)(75,43.3)(100,0)(75,-43.3)(25,-43,3)(0,0)

\end{picture}

\caption{The free path in a hexagonal billiard and respectively in a
hexagonal lattice}\label{Figure1}
\end{figure}

One additional difficulty encountered here is the absence of a
theory of continued fractions in the case of the hexagonal
tessellation. To bypass this obstacle we shall deform this tessellation, as in
\cite{BG}, into $\Z^2_{(3)}=\{ (m,n)\in\Z^2, m\nequiv n
\pmod{3}\}$. The three-strip partition of the unit square
employed \cite{CG1,BZ2} in the situation of the square lattice, or
equivalently the corresponding tiling of $\R^2$ shown in Figure
\ref{Figure6}, will be useful here. However, the presence of
certain $\pmod{3}$ constraints translates here in the existence of
a positive proportion of angles with very long trajectories. This
leads to a large number of (non-redundant) cases that have to be
analyzed individually. The main result is

\begin{theorem}\label{THM}
There exists a decreasing continuous function $\Phi^\hex :[0,\infty)\rightarrow (0,\infty)$,
$\Phi^\hex (0)=1$, $\Phi^\hex (\infty)=0$, such that
for any $\delta > 0$, as $\eps\rightarrow 0^+$,
\begin{equation*}
{\mathbb P}_\eps^\hex (\xi)=\Phi^\hex (\xi)
+O_{\delta} \big( \eps^{\frac{1}{8}-\delta} \big)
,\quad \forall \xi \geqslant 0,
\end{equation*}
uniformly for $\xi$ in compact subsets of $[0,\infty)$. Moreover, there exist constants
$C_1,C_2 >0$ such that
\begin{equation}\label{1.2}
\frac{C_1}{\xi} \leqslant \Phi^\hex (\xi) \leqslant \frac{C_2}{\xi},\quad \forall \xi \in [1,\infty).
\end{equation}
\end{theorem}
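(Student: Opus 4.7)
The plan is to adapt the strategy that succeeded for the square-lattice
Lorentz gas \cite{BZ2} and for the hexagon-centered variant \cite{BG},
while coping with the mod~$3$ congruence that singles out $\Z^2_{(3)}$.

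First, I would unfold the hexagonal billiard to a planar flow and apply the
affine deformation of \cite{BG} that carries the honeycomb tessellation
onto $\Z^2_{(3)}\subset\Z^2$. After this reduction the problem becomes:
for $(\mathbf{x},\omega)$ distributed uniformly on a fundamental domain of
$\Z^2_{(3)}$ times the circle of directions, estimate the measure of the event
that the straight segment of length $\xi/\eps$ issued from $\mathbf{x}$ in direction
$\omega$ avoids every open disk of radius $\eps$ centred at a point of
$\Z^2_{(3)}$. The gain is that trajectories are honest lines in $\R^2$
and the scatterer set is a simple sublattice.

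Next, I would partition the direction variable using Stern--Brocot / Farey
fractions $p/q$ of denominator up to roughly $\eps^{-1}$. For each fixed slope
$p/q$ with $\gcd(p,q)=1$ and each small angular perturbation
$\omega-\arctan(p/q)$, one applies the three-strip partition of the fundamental
parallelogram introduced in \cite{CG1} and exploited in \cite{BZ2}: the
trajectory falls into one of three narrow strips, and the free path length
inside that strip is an explicit piecewise-linear function of the transverse
position and the angular deviation. The complication absent in the
square-lattice case is that whether a given lattice point actually belongs to
$\Z^2_{(3)}$ is governed by $(p,q)\pmod{3}$ and by which strip the particle
occupies, so every configuration must be subdivided according to how many
translates of the fundamental parallelogram the particle must cross before
meeting a genuine scatterer. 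Carrying out this case analysis in full, and
recombining the pieces without redundancy, is the main technical obstacle.
The output is a representation of $\PP_\eps^\hex(\xi)$ as a sum, over Farey
fractions restricted by residue conditions, of elementary areas
$A^{(r)}_{p,q}(\xi,\eps)$.

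To pass to the limit $\eps\to 0^+$, I would invoke the quantitative
equidistribution of Farey fractions satisfying the relevant mod~$3$
congruences, which follows ultimately from Weil-type bounds on Kloosterman
sums. A careful accounting of errors, with the denominator cutoff balanced
against the density of long trajectories produced by the forbidden residue
classes, is what yields the exponent $\tfrac{1}{8}-\delta$. The limit
$\Phi^\hex(\xi)$ is obtained as a finite combination of one-dimensional
integrals against the natural Farey measure; continuity in $\xi$ and the
boundary values $\Phi^\hex(0)=1$, $\Phi^\hex(\infty)=0$ are read off, and
monotonicity is inherited from the monotonicity of each summand in the
pre-limit expression. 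For the tail bound \eqref{1.2}, the upper estimate
$\Phi^\hex(\xi)\leqslant C_2/\xi$ follows from monotonicity and finiteness of
the mean free path (a Santal\'o-type identity):
$\xi\Phi^\hex(\xi)\leqslant\int_0^\xi\Phi^\hex(t)\,dt\leqslant\int_0^\infty
\Phi^\hex(t)\,dt<\infty$. The matching lower bound
$\Phi^\hex(\xi)\geqslant C_1/\xi$ is extracted from the explicit formula by
isolating the contribution of directions nearly parallel to a short lattice
vector along which the excluded residue class opens a scatterer-free channel
of length $\gtrsim 1/\eps$, giving a term of order $1/\xi$.
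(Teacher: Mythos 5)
Your high-level roadmap tracks the paper faithfully: the affine unfolding onto $\Z^2_{(3)}$ as in \cite{BG}, the Farey/Stern--Brocot angular decomposition together with the three-strip partition of \cite{CG1,BZ2}, quantitative equidistribution modulo~$3$ via Weil--Kloosterman estimates, and the exhaustive case analysis of how removed slits subdivide the channels are all the actual ingredients of Sections 2--9. The general shape of the argument for continuity and the boundary values of $\Phi^\hex$, as well as the mechanism behind the exponent $\tfrac18-\delta$ (balancing the interval-partition parameter against the Kloosterman error in the analogue of Theorem~\ref{T21}), are also essentially what the paper does, even if you do not spell out the optimization.

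There is, however, a genuine gap in your treatment of the upper bound in \eqref{1.2}. Your chain
\begin{equation*}
\xi\,\Phi^\hex(\xi)\leqslant\int_0^\xi\Phi^\hex(t)\,dt
\leqslant\int_0^\infty\Phi^\hex(t)\,dt<\infty
\end{equation*}
hinges on $\int_0^\infty\Phi^\hex<\infty$, but this integral is in fact \emph{divergent}: the very lower bound $\Phi^\hex(\xi)\geqslant C_1/\xi$ that you establish in the same paragraph already forces $\int_1^\infty\Phi^\hex(t)\,dt=\infty$. Thus the argument refutes its own premise. The deeper point is that the Santal\'o formula controls the mean free path from the \emph{boundary} of the scatterers (with the cosine-weighted measure), which is indeed $O(1/\eps)$, but the quantity $\int_0^\infty\PP^\hex_\eps(\xi)\,d\xi=\eps\,\overline{\tau}_\eps$ involves the free path started from a uniformly random \emph{interior} point and direction; this is the length-biased (inspection-paradox) version and it diverges as $\eps\to 0^+$ precisely because of the $1/\xi$ tail. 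So ``finiteness of the mean free path'' cannot be invoked here in the form you want. The paper avoids this trap: Remark~2 of Section~\ref{Sect5} proves the upper half of \eqref{1.2} not by integrating, but by a pointwise comparison of the weights $W^\hex_{\gamma,k}$ with the square-lattice weights $W^\square_{\gamma,k}$ (the situation with no slit removed), which after rescaling lets one piggyback on the already-known two-sided bound for $\Phi^\square$ from \cite{BGW,BZ2,Dah}. Your lower-bound idea --- isolating scatterer-free channels opened by the forbidden residue class --- is in the right spirit and corresponds to the paper's computation of $G^{(0)}(\xi)\geqslant\tfrac{3}{16\xi}$, though you would need to carry the explicit integral through to make it quantitative.
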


Estimate \eqref{1.2} is discussed in Remark 2 of Section 5.
The repartition function $\Phi^\hex$ can be explicitly computed as
\begin{equation}\label{1.3}
\Phi^\hex (\xi)=\frac{4}{\pi^2}\ G\left( \frac{2\xi}{\sqrt{3}}\right),
\end{equation}
with $G(\xi)$ obtained by adding all terms
$\frac{\zeta(2)}{c_I} G_{I,Q}^{(*)}$ from formulas \eqref{5.22},
\eqref{6.6}, \eqref{6.7}, \eqref{6.8}, \eqref{6.9}, \eqref{6.10}, \eqref{6.11},
\eqref{7.1}, \eqref{7.2}, \eqref{7.5}, \eqref{7.6}, \eqref{7.8}, \eqref{7.11}, \eqref{7.12}, \eqref{7.13},
\eqref{8.1}, \eqref{8.2}, \eqref{8.5}, \eqref{8.6}, \eqref{8.8}, \eqref{8.9},
\eqref{8.10}, \eqref{8.11}, \eqref{8.12}, \eqref{8.13}, \eqref{8.14},
\eqref{9.1}, \eqref{9.2}, \eqref{9.4},
\eqref{9.5}, \eqref{9.6}, \eqref{9.7}, \eqref{9.9},
\eqref{9.10}, \eqref{9.11}, \eqref{9.12}, \eqref{9.13},
\eqref{9.14}, \eqref{9.15} and \eqref{9.16}.

In the case of the square lattice only the term from \eqref{5.22}
arises, with a different constant and no $\frac{2}{\sqrt{3}}$ scaling for $\xi$.
The limiting distribution $\Phi^\square$ also satisfies \eqref{1.2}.

\begin{figure}[ht]
\centering{\includegraphics*[scale=1.1, bb=0 0 250 160]{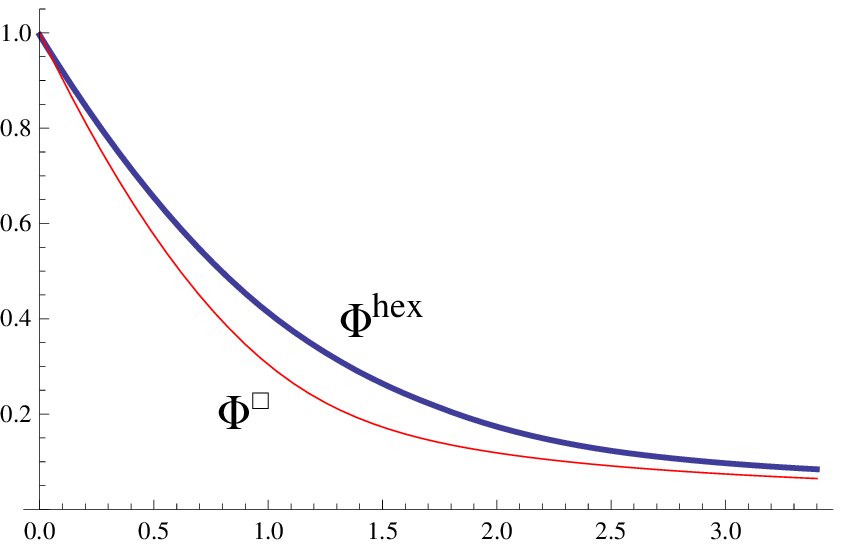}}
\caption{The limiting repartition functions $\Phi^\hex$ and $\Phi^\square$ }
\label{Figure2}
\end{figure}

In the case of a lattice it was actually proved in \cite{MS1} that for
every ${\mathbf x} \in \R^2 \setminus \Q^2$,
$\lim_{\eps\rightarrow 0^+} \frac{1}{2\pi} \big| \big\{ \omega \in [0,2\pi):
\eps \tau_\eps^\square ({\mathbf x} ,\omega) >\xi \big\} \big| =\Phi^\square (\xi)$.
It would be interesting to know whether a similar result holds true for a generic choice of ${\mathbf x}$
in the case of the honeycomb.

The analog problem about the free path length in a regular polygon with
$n$ sides ($n\neq 3,4,6$) seems to be out of reach at this time, due to lack of
a tractable coding for the linear flow. In the case of the regular octogon the recent
results in \cite{SU} may prove helpful.

\section{Translating the problem to the square lattice with $\hspace{-6pt}\mod{3}$
constraints}\label{Sect2}

For manifest symmetry reasons it suffices to consider ${\mathbf x}\in H_\eps$ and
$\omega \in \big[ 0,\frac{\pi}{6}\big]$, or
equivalently $t=\tan\omega \in \big[ 0,\frac{1}{\sqrt{3}}\big]$.
We will simply write $\tau_\eps^\hex ({\mathbf x } ,
\omega)=\tau^\hex_\eps ({\mathbf x},t)$. As in \cite{BG}
consider the lattice $\Z^2 M_0$, $M_0=\left( \begin{smallmatrix} 1 & 0 \\ 1/2 & \sqrt{3}/2 \end{smallmatrix} \right)$,
and the linear transformation $T{\mathbf x} ={\mathbf x} M_0^{-1}$ on $\R^2$:
\begin{equation*}
T(x,y)=\left( x-\frac{y}{\sqrt{3}},\frac{2y}{\sqrt{3}} \right)
=(x^\prime,y^\prime) .
\end{equation*}
This maps the vertices $\big(
q+\frac{a}{2},\frac{a\sqrt{3}}{2}\big)$ of the grid of equilateral
triangles of unit side onto the vertices $(q,a)$ of the square
lattice $\Z^2$. The vertices of the honeycomb are mapped exactly into
$\Z^2_{(3)}$, the subset of elements of $\Z^2$
with $q \nequiv a\pmod{3}$ (see Figure \ref{Figure3}). The
points of the $x$-axis are fixed by $T$. The circular scatterers
$S_{q,a,\eps}=(x_0,y_0)+\eps (\cos\theta,\sin\theta)$ with
$(x_0,y_0)=\big( q+\frac{a}{2},\frac{a\sqrt{3}}{2}\big)$ are
mapped onto ellipsoidal scatterers $(x_0^\prime,y_0^\prime)+\eps
\big( \cos \theta - \frac{\sin\theta}{\sqrt{3}} ,
\frac{2\sin\theta}{\sqrt{3}} \big)$ centered at
$(x_0^\prime,y_0^\prime)=(q,a)=T(x_0,y_0)$. The channel of width
$w=2\eps$, bounded by the two lines of slope $t=\tan\omega$ and
tangent to the circle $S_{q,a,\eps}$, is mapped (see Figure \ref{Figure4}) onto the channel of
width $w^\prime=2\eps^\prime \cos\omega^\prime$, bounded by the two
lines tangent to the ellipse $T(S_{q,a,\eps})$, of slope
$t^\prime=\tan\omega^\prime=\Psi (t)$, where
\begin{equation*}
\Psi :\left[ 0,\frac{1}{\sqrt{3}}\right] \rightarrow [0,1],\quad
t^\prime =\Psi (t)=\frac{2t}{\sqrt{3}-t},\quad
t=\Psi^{-1}(t^\prime)=\frac{t^\prime\sqrt{3}}{t^\prime+2} .
\end{equation*}
The intersection of these two channels and the $x$-axis is the horizontal segment
centered at the origin, of length
\begin{equation*}
\frac{2\eps}{\sin\omega} = \frac{w}{\sin\omega} =
\frac{w^\prime}{\sin\omega^\prime} =
\frac{2\eps^\prime}{\tan\omega^\prime} .
\end{equation*}
In particular
\begin{equation}\label{2.1}
\eps^\prime =\eps^\prime (\omega,\eps)=\frac{\eps
\tan\omega^\prime}{\sin \omega}=\frac{\eps}{\cos (\pi/6+\omega)} .
\end{equation}

\begin{center}
\begin{figure}[ht]
\centering{\includegraphics*[scale=1.1, bb=95 -10 337 102]{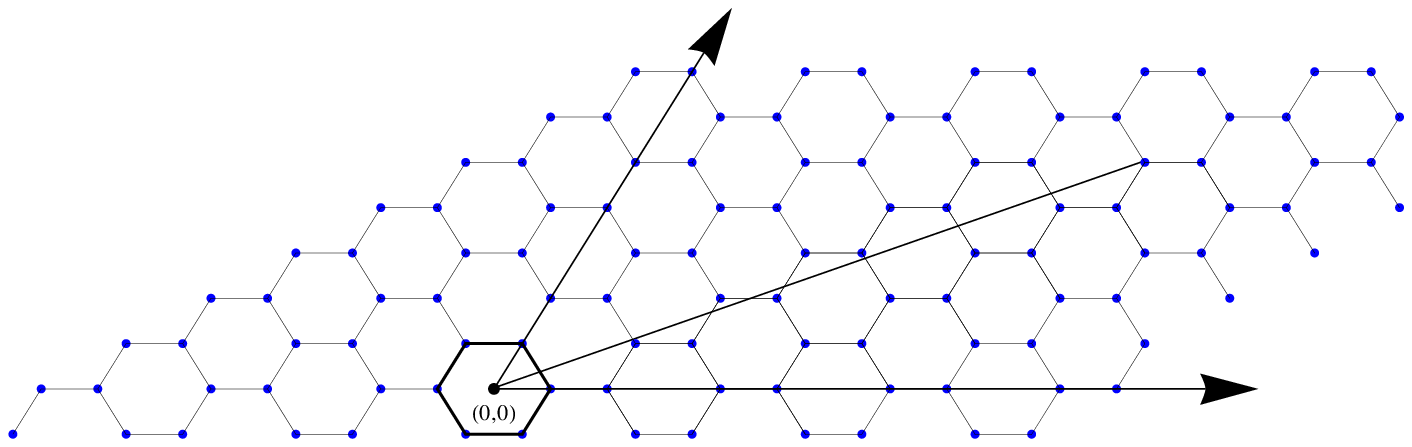}}
\centering{\includegraphics*[scale=0.5, bb=-50 0 520 270]{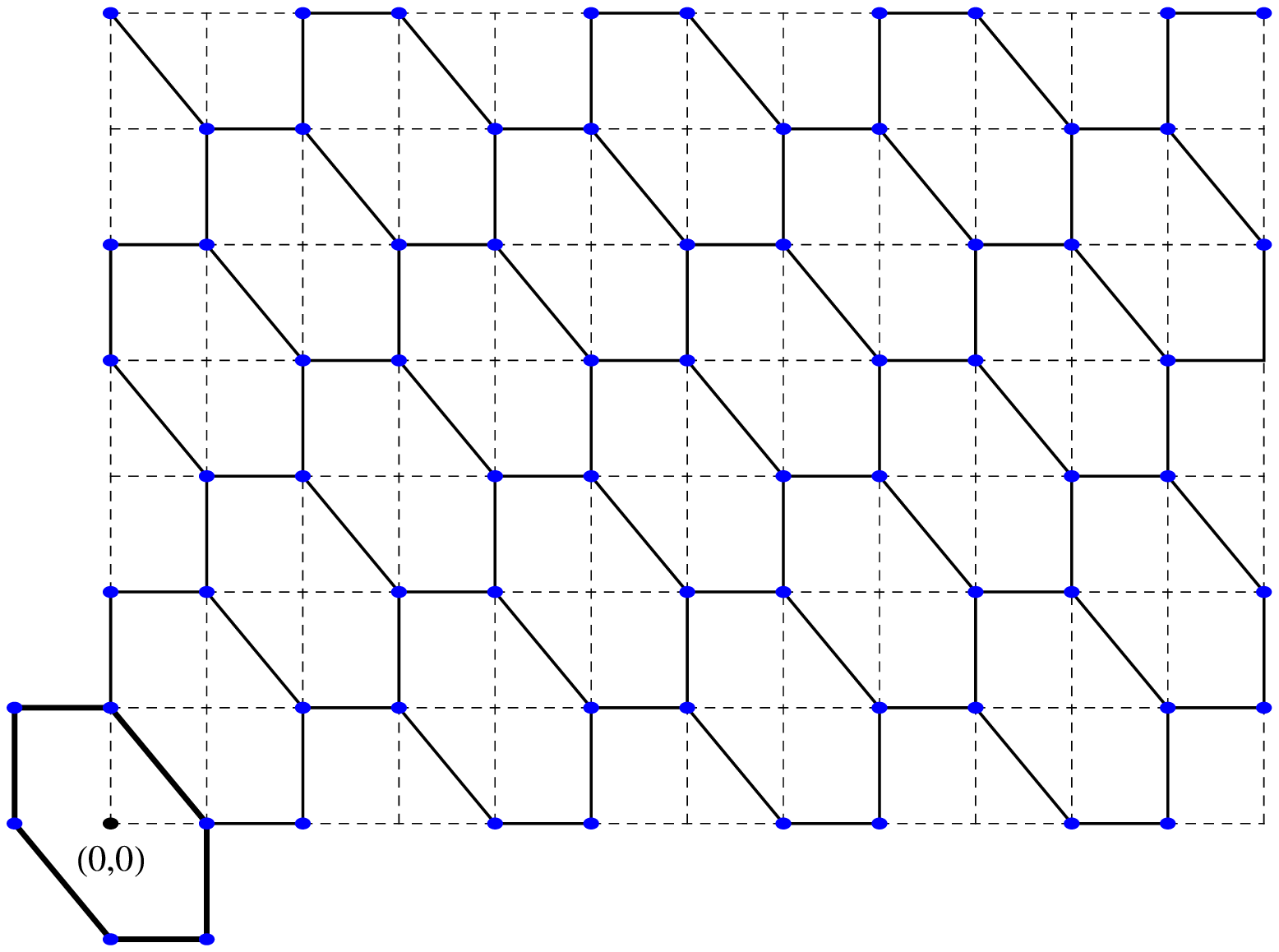}}
\caption{The free path length in the honeycomb and in
the deformed honeycomb } \label{Figure3}
\end{figure}
\end{center}

\begin{figure}[ht]
\centering{\includegraphics*[scale=1, bb=-40 0 250 200]{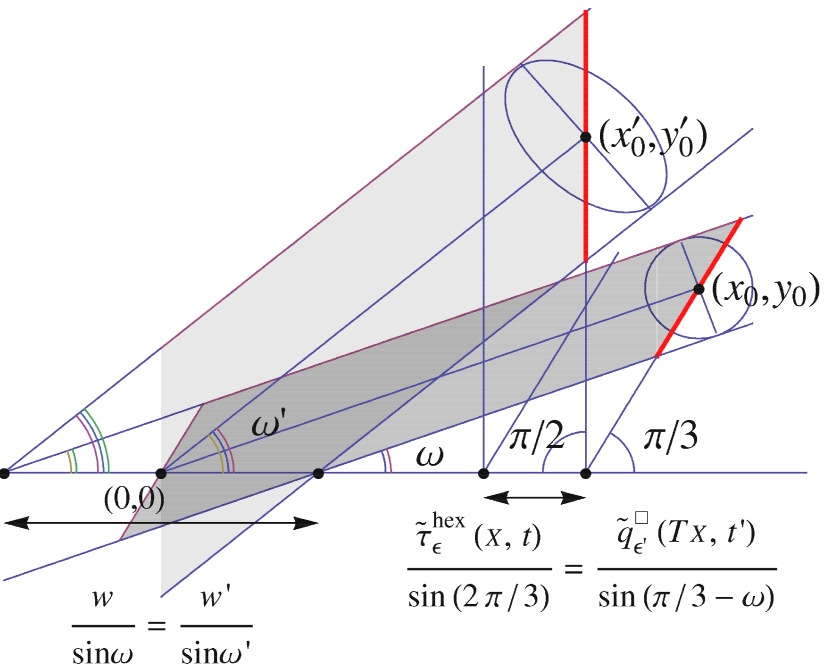}}
\caption{Change of scatterers under the linear transformation $T$ } \label{Figure4}
\end{figure}

We can first replace each circular scatterer $S_{q,a,\eps}$ by the
segment $\tilde{S}_{q,a,\eps}$ centered at $(x_0,y_0)$, of slope
$\frac{\pi}{3}$ and length
$\frac{2\eps}{\cos(\pi/6+\omega)}=2\eps^\prime$ (see Figures
\ref{Figure3} and \ref{Figure4}). Indeed, this change will result
in altering, for each $\omega$, the free path length
$\tau_\eps^\hex ({\mathbf x},t)$ to the free path length
$\tilde{\tau}_\eps^\hex ({\mathbf x},t)$ corresponding to the
later model by a quantity lesser than $4\sqrt{3} \eps^2$, which is
insignificant for the final result.

Next we apply $T$ to transport the problem from the honeycomb to
the square lattice with congruence $\hspace{-6pt}\pmod{3}$
constraints (or in the opposite direction through $T^{-1}$). The
unit regular hexagon $H$ centered at the origin is mapped to the hexagon
$T(H)$ which contains $(0,0)$ in Figure \ref{Figure4}. Actually it will be
more convenient to replace $T(H)$ by the fundamental domain $F$
consisting of the union of the square $[0,1)^2$ and of its
translates $[-1,0) \times [0,1)$ and $[0,1) \times [-1,0)$. Let
$\eps^\prime =\eps^\prime (\omega,\eps)$ be as in \eqref{2.1},
$t^\prime =\tan\omega^\prime$ as above, and consider the vertical segment
$V_{\eps^\prime}=\{ 0\} \times [-\eps^\prime,\eps^\prime]$. Consider
\begin{equation*}
\tilde{q}_{\eps^\prime}^\square ({\mathbf x}^\prime,t^\prime)=
\inf\{ n\in \N_0: {\mathbf x}^\prime +(n,nt^\prime)\in \Z^2_{(3)}+V_{\eps^\prime}\} ,
\end{equation*}
the \emph{horizontal free path length} in the square lattice with vertical
scatterers of (nonconstant) length $2\eps^\prime$ centered at
points $(x_0^\prime,y_0^\prime)=(q,a)\in \Z^2_{(3)}$. Consider also
$q_{\eps_0}^\square ({\mathbf x}^\prime,t^\prime)$, the
horizontal free path in the square lattice with vertical
scatterers of \emph{constant} length $2\eps_0$ centered at points
$(x_0^\prime,y_0^\prime)=(q,a)\in \Z^2_{(3)}$. Clearly
$q^\sq_{\eps_+}({\mathbf x}^\prime,t^\prime) \leqslant
\tilde{q}^\sq_{\eps^\prime} ({\mathbf x}^\prime,t^\prime)
\leqslant q^\sq_{\eps_-} ({\mathbf x}^\prime,t^\prime)$ when
$t^\prime$ belongs to an interval $I^\prime$ and $\eps_- \leqslant
\eps^\prime=\eps^\prime ( t^\prime) \leqslant \eps^+$, $\forall
t^\prime \in I^\prime$.

For each angle $\omega^\prime$ the transformation $T$ maps $H$ onto $F$ and
$T^{-1}$ preserves the structure of channels in the corresponding three-strip
partition from \cite{BK,BZ2,CG1} (see also the expository paper \cite{Go2}). Removal of vertical scatterers
$V_{q,a,\eps^\prime}=T(\tilde{S}_{q,a,\eps})$ with $q\equiv a
\pmod{3}$ in the $\Z^2_{(3)}$ picture results in dividing the
corresponding channel of the three-strip partition from the square
lattice model into several sub-channels, and in the occurrence of longer trajectories
associated with them. This is transported by $T^{-1}$ back to the
honeycomb model. The key observation here is that, by the Rule of
Sines,
\begin{equation*}
\frac{\tilde{\tau}^\hex_{\eps} ({\mathbf
x},t)}{\sin(2\pi/3)}=\frac{\tilde{q}^\square_{\eps^\prime}
(T{\mathbf x},t^\prime)}{\sin(\pi/3-\omega)}
=\frac{\tilde{q}^\square_{\eps^\prime} (T{\mathbf
x},t^\prime)}{\cos(\pi/6+\omega)}.
\end{equation*}
This shows that
\begin{equation}\label{2.2}
\tilde{\tau}^\hex_{\eps} ({\mathbf x},t) >\frac{\xi}{\eps}
\quad \Longleftrightarrow \quad \tilde{q}^\square_{\eps^\prime}
(T{\mathbf x},t^\prime) > \frac{2\xi\cos(\pi/6+\omega)}{\eps\sqrt{3}}
=\frac{\xi^\prime}{\eps^\prime} ,\quad
\xi^\prime:=\frac{2\xi}{\sqrt{3}},
\end{equation}
leading to
\begin{equation*}
\chi_{\big(\frac{\xi}{\eps},\infty\big)} \Big(
\tilde{\tau}_\eps^\hex ({\mathbf x},t)\Big) =\chi_{\big(
\frac{\xi^\prime}{\eps^\prime},\infty\big)} \Big(
\tilde{q}^\square_{\eps^\prime} (T{\mathbf x} , t^\prime) \Big) ,
\quad \forall {\mathbf x}\in H,\forall t\in \left[
0,\frac{1}{\sqrt{3}}\right].
\end{equation*}

For each interval $I=[\tan\omega_0,\tan\omega_1]\subseteq \big[
0,\frac{1}{\sqrt{3}}\big]$ one has
\begin{equation*}
\begin{split}
\eps^-_I:= \frac{\eps}{\cos(\pi/6+\omega_0)} \leqslant \eps^\prime &
=\frac{\eps}{\cos(\pi/6+\omega)} \leqslant
\eps^+_I:=\frac{\eps}{\cos(\pi/6+\omega_1)} ,\\ \eps^\pm_I
& = \big( 1+O\vert I\vert)\big) \eps .
\end{split}
\end{equation*}
Employing now \eqref{2.2} and the fact that $\eps \mapsto
\tilde{q}_\eps^\sq (T{\mathbf x},t^\prime)$ is non-decreasing, and
taking
\begin{equation*}
\xi^-_I:=\frac{\xi^\prime\eps^-_I}{\eps^+_I} \leqslant
\xi^\prime \leqslant \xi^+_I:=\frac{\xi^\prime
\eps^+_I}{\eps^-_I} ,\quad \xi^\pm_I = \big(
1+O(\vert I\vert)\big) \xi^\prime ,
\end{equation*}
we infer
\begin{equation*}
\begin{split}
\chi_{\big(\frac{\xi^+_I}{\eps^+_I},\infty\big)} \Big(
q_{\eps^+_I}^\sq (T{\mathbf x},t^\prime)\Big) & = \chi_{\big(
\frac{\xi^\prime}{\eps^-_I},\infty \big)} \Big(
q_{\eps^+_I}^\sq (T{\mathbf x},t^\prime)\Big) \leqslant
\chi_{\big( \frac{\xi^\prime}{\eps^\prime},\infty \big)} \Big(
q_{\eps^+_I}^\sq (T{\mathbf x},t^\prime)\Big)
\\ & \leqslant \chi_{\big(
\frac{\xi^\prime}{\eps^\prime},\infty\big)} \Big(
\tilde{q}^\sq_{\eps^\prime} (T{\mathbf x},t^\prime)\Big)
=\chi_{\big( \frac{\xi}{\eps},\infty\big)} \Big(
\tilde{\tau}^\hex_\eps ({\mathbf x},t)\Big)
\leqslant \chi_{\big( \frac{\xi^\prime}{\eps^\prime},\infty
\big)} \Big( q_{\eps^-_I}^\sq (T{\mathbf x} ,t^\prime ) \Big) \\ &
\leqslant \chi_{\big( \frac{\xi^\prime}{\eps^+_I},\infty
\big)} \Big( q_{\eps^-_I}^\sq (T{\mathbf x},t^\prime)\Big)
=\chi_{\big(\frac{\xi^-_I}{\eps^-_I},\infty\big)} \Big(
q_{\eps^-_I}^\sq (T{\mathbf x},t^\prime)\Big) .
\end{split}
\end{equation*}

Consider
\begin{equation}\label{2.3}
\G_{I,\eps}(\xi):=\int_{\Psi(I)} \frac{dt^\prime}{t^{\prime
2}+t^\prime +1} \int_F d{\mathbf x}^\prime \,  \chi_{\big(
\frac{\xi}{\eps},\infty\big)} \Big( q^\sq_{\eps} ({\mathbf
x}^\prime , t^\prime ) \Big).
\end{equation}
Applying the change of variable $({\mathbf x }^\prime , t^\prime)
= \big(T{\mathbf x},\Psi (t)\big)$ and employing \eqref{2.3},
$d{\mathbf x}=\frac{\sqrt{3}}{2} d{\mathbf x}^\prime$ and
$\frac{dt}{t^2+1}=\frac{\sqrt{3}}{2} \cdot
\frac{dt^\prime}{t^{\prime 2}+t^\prime+1}$ we infer
\begin{equation*}
\begin{split}
\frac{3}{4}\, \G_{I,\eps^+_I} (\xi^+_I) & \leqslant \frac{3}{4}
\int_{\Psi(I)} \frac{dt^\prime}{t^{\prime 2}+t^\prime +1} \int_F
d{\mathbf x}^\prime \,  \chi_{\big(
\frac{\xi^+_I}{\eps^+_I},\infty\big)} \Big( q^\sq_{\eps^+_I}
({\mathbf x}^\prime,t^\prime) \Big) \\ &  \leqslant
\widetilde{\mathbb P}_{I,\eps} (\xi):= \iint_{H\times I}
\chi_{\big( \frac{\xi}{\eps},\infty\big)} \Big(
\tilde{\tau}_\eps^\hex ({\mathbf x},\omega)\Big)\, d{\mathbf x}
d\omega \\ & =\int_I \frac{dt}{t^2+1} \int_H d{\mathbf x}\, \chi_{\big(
\frac{\xi}{\eps},\infty\big)}
\Big( \tilde{\tau}^\hex_{\eps} ({\mathbf x},t)\Big) \\
&  \leqslant \frac{3}{4} \int_{\Psi(I)}
\frac{dt^\prime}{t^{\prime 2}+t^\prime +1} \int_F d{\mathbf
x}^\prime \,  \chi_{\big( \frac{\xi^-_I}{\eps^-_I},\infty\big)}
\Big( q^\sq_{\eps^-_I} ({\mathbf x}^\prime,t^\prime)\Big) =
\frac{3}{4}\, \G_{I,\eps^-_I} (\xi^-_I).
\end{split}
\end{equation*}

To simplify notation we simply denote $\G_{I,1/(2Q)}$ by $\G_{I,Q}$ throughout.
We shall employ the following result, whose proof occupies the remaining part of the paper.

\begin{theorem}\label{T21}
Let $c,c^\prime >0$ such that $c+c^\prime <1$. For every interval
$I\subseteq [0,1]$ of length $\vert I\vert \asymp Q^c$, every
$\xi\geqslant 0$ and $\delta >0$, uniformly for $\xi$ in
compact subsets $K$ of $[0,\infty)$,
\begin{equation}\label{2.4}
\G_{I,Q}(\xi)=\frac{2 c_I}{\zeta(2)}\,
G(\xi)+O_{\delta,K}\big( E_{c,c^\prime,\delta}(Q)\big),
\end{equation}
where $G(\xi)$ is the 40 term sum described below \eqref{1.3} and
\begin{equation*}
c_I=\int_I \frac{dt}{t^2+t+1},\quad
c_{[0,1]}=\frac{\pi}{3\sqrt{3}},
\end{equation*}
\begin{equation}\label{2.5}
E_{c,c^\prime,\delta}(Q)=Q^{\max\big\{ 2 c^\prime - \frac{1}{2} ,
-c-c^\prime \big\}+\delta}.
\end{equation}
\end{theorem}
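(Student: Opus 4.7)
The plan is to reduce $\G_{I,Q}(\xi)$ to an average over Farey fractions, following the strategy developed in \cite{BZ2} for the square-lattice case but augmented to handle the $\pmod{3}$ constraints that single out $\Z^2_{(3)}$. Writing $I^\prime := \Psi(I)$, I would first decompose the $t^\prime$-integration in \eqref{2.3} into the Farey arcs attached to reduced fractions $p/q \in \FQ$ with $q \leqslant Q$. On each such arc, the condition $q^\sq_{1/(2Q)}(\mathbf{x}^\prime, t^\prime) > 2Q\xi$ becomes an explicit geometric condition on $\mathbf{x}^\prime$, obtained by unfolding the orbit $\mathbf{x}^\prime + k(1, t^\prime)$ modulo $\Z^2$ in the frame adapted to the vector $(q, p)$ and tracking when it first enters a vertical scatterer of half-width $1/(2Q)$ centered at a point of $\Z^2_{(3)}$.

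The combinatorial bookkeeping in this step is where the $\pmod{3}$ structure enters. Because scatterers are \emph{absent} at lattice points $(q,a)$ with $q \equiv a \pmod{3}$, trajectories may pierce a ``ghost'' lattice point and continue, so the first-hit geometry on each Farey arc splits according to the residues of $p$ and $q$ modulo $3$ and, within each residue class, according to which sub-channel of the three-strip decomposition of \cite{BK,BZ2,CG1} contains the trajectory and which of several candidate scatterers is encountered first. A systematic enumeration of these sub-cases produces the 40 explicit area integrals $G_{I,Q}^{(*)}$ referenced below \eqref{1.3}; each of them I would compute directly by integrating the surviving-trajectory area against the measure $dt^\prime/(t^{\prime 2} + t^\prime + 1)$ on the associated Farey arc, expressing the result as a sum indexed by $p/q \in \FQ \cap I^\prime$ of elementary terms in $p,q,\xi,Q$.

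The next step is to average these contributions over $p/q$. The main term $\frac{2 c_I}{\zeta(2)}\, G(\xi)$ emerges from two sources: the factor $1/\zeta(2)$ from the density of primitive lattice pairs, and the constant $c_I$ from the Jacobian $dt^\prime/(t^{\prime 2} + t^\prime + 1) = \frac{2}{\sqrt{3}}\, dt/(t^2+1)$ together with the $\frac{2}{\sqrt{3}}$ rescaling of $\xi$ in \eqref{1.3}. The passage from sum to integral is effected by the equidistribution of Farey fractions on short intervals, which, via Weil's bound for Kloosterman sums as exploited in \cite{BZ2}, carries an error of size $Q^{-1/2+\delta}$ for each test function smooth at scale $Q^{-c^\prime}$; summing over the $\sim Q^{c^\prime}$ such test functions needed to cover $I^\prime$ yields the $Q^{2c^\prime - 1/2 + \delta}$ contribution to \eqref{2.5}. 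A complementary error of order $Q^{-c - c^\prime + \delta}$ arises from smoothing the sharp cut-offs at the endpoints of $I^\prime$ and of the individual Farey arcs at the scale $Q^{-c - c^\prime}$, producing the maximum in \eqref{2.5}.

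The principal obstacle is the explicit case analysis: each of the 40 configurations requires its own tailored geometric derivation of the integrand $G_{I,Q}^{(*)}$ and a verification that the Kloosterman-based equidistribution bound applies uniformly with the same exponents $2c^\prime - 1/2$ and $-c - c^\prime$. A secondary subtlety is keeping the rescaling errors between $\tilde q^\sq_{\eps^\prime}$ and the constant-radius $q^\sq_{\eps^\pm_I}$ — introduced in passing from $\widetilde{\mathbb P}_{I,\eps}$ to $\G_{I,\eps^\pm_I}$ in the discussion preceding the theorem — consistent with the $Q^{-c - c^\prime}$ arm of the bound, so that the hypothesis $c + c^\prime < 1$ suffices to render \eqref{2.5} ultimately smaller than the main term $c_I \asymp |I|$.
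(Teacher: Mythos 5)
Your overall roadmap coincides with the paper's: reduce $\G_{I,Q}$ to a sum over Farey arcs, apply the three-strip partition of \cite{BK,BZ2,CG1}, split the arcs according to the residues $(r,r_k,r_{k+1})\bmod 3$ and the sub-channel geometry produced by the missing scatterers, compute the resulting forty area integrals, and then pass from the Farey sum to an integral by Kloosterman-sum equidistribution. Two of your explanatory claims are, however, off in ways that would matter if you tried to execute the plan.

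First, your mechanism for the error exponent $2c'-\tfrac12$ is wrong. You propose that each of $\sim Q^{c'}$ test functions carries error $Q^{-1/2+\delta}$, but $Q^{c'}\cdot Q^{-1/2+\delta}=Q^{c'-1/2+\delta}$, not $Q^{2c'-1/2+\delta}$; your arithmetic would actually promise a better error than the theorem states, and that better bound is not achievable. In the paper the exponent $2c'$ appears because the Kloosterman-type lemma (Lemma \ref{L3.4}, Proposition A4 of \cite{BZ1}, and its congruence-constrained extension Lemma \ref{L3.8}) carries a free dyadic parameter $T$ with an error of the form
\[
T^2\|f\|_\infty\, q^{1/2+\delta}(h,q)^{1/2}
+T\|\nabla f\|_\infty\, q^{3/2+\delta}(h,q)^{1/2}
+\frac{\|\nabla f\|_\infty\,|\II_1||\II_2|}{T},
\]
and after summing over $q,k$ and choosing $T=\lfloor Q^{c'}\rfloor$ one finds $T^2 Q^{-1/2+\delta}\asymp Q^{2c'-1/2+\delta}$ from the first term and $|I|/T\asymp Q^{-c-c'}$ from the third (see \eqref{5.17}, \eqref{5.18}). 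There is a single smoothing parameter $T$ built into the lattice-point lemma, not a cover of $I'$ by $Q^{c'}$ mollified indicators.

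Second, you treat the Kloosterman step as a direct import from \cite{BZ2}, but the congruence constraint $q\nequiv a\pmod 3$ breaks the setup of Proposition A4 of \cite{BZ1}. The paper has to develop Lemmas \ref{L3.3}, \ref{L3.5}, \ref{L3.6}, \ref{L3.7} and \ref{L3.8}, which introduce the twisted Kloosterman sums $\widetilde K_r(m,n;\ell q)$ and $K_{\II,r}(m,n;\ell q)$, establish their square-root cancellation via Weil together with a character decomposition $\pmod\ell$, and deliver a main-term density $\frac{A(q,\ell)}{\ell^2}\cdot\frac{\varphi(q)}{q^2}$ rather than $\frac{\varphi(q)}{q^2}$. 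Similarly the sum over $q$ in a fixed residue class requires Lemma \ref{L3.3} with the constant $C(\ell)/\varphi(\ell)$, not a bare $1/\zeta(2)$. These modifications are exactly what produce the correct density factors $\frac{1}{9}$ or $\frac{1}{6}$ (after the inner sum) and the uniform constant $\frac{1}{8\zeta(2)}$ that threads through \eqref{5.22} and \eqref{6.6}--\eqref{9.16}. Your plan is not wrong, but without this extension of the equidistribution toolkit to congruence classes the forty case integrals would not come out with the claimed coefficients, and the error bound in Theorem \ref{T21} would not follow.
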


\begin{proof}{Proof of Theorem \ref{THM}.}
Let $Q^-=Q^-_I :=\Big\lfloor \frac{1}{2\eps_I^-}\Big\rfloor +1$
and $\frac{\eps_I^-}{1+2\eps_I^-} \leqslant \eps^-
:=\frac{1}{2Q^-}\leqslant \eps_I^-$. Then $q^\sq_{\eps_I^-}
({\mathbf x}^\prime,t^\prime) \leqslant q^\sq_{\eps^-} ({\mathbf
x}^\prime,t^\prime)$, so $\chi_{( \xi_I^-/\eps_I^-,\infty)}
\big(q^\sq_{\eps_I^-} ({\mathbf x}^\prime,t^\prime)\big) \leqslant \chi_{(
\xi_I^-/\eps_I^- ,\infty)} \big(q^\sq_{\eps_-} ({\mathbf x
}^\prime ,t^\prime)\big) $. Similarly, taking $Q^+=Q^+_I
:=\Big\lfloor \frac{1}{2\eps_I^+}\Big\rfloor$, $\eps_I^+ \leqslant
\eps^+:=\frac{1}{2Q^+} \leqslant \frac{\eps_I^+}{1-2\eps_I^+}$, we
have $\chi_{( \xi_I^+/\eps_I^+,\infty)} \big(
q^\sq_{\eps_I^+} ({\mathbf x}^\prime,t^\prime) \big) \geqslant
\chi_{( \xi_I^+/\eps_I^+,\infty)} \big( q^\sq_{\eps^+}
({\mathbf x}^\prime,t^\prime ) \big)$. On the other hand
$\frac{\eps^\pm}{\eps^\pm_I}=1+O(\eps)$, hence
\begin{equation*}
\frac{\xi_I^\pm}{\eps_I^\pm}
=\frac{\xi_I^\pm}{\eps^\pm}\cdot\frac{\eps^\pm}{\eps^\pm_I} =
\big( 1+O(\vert I\vert)\big)\,\frac{\xi^\prime}{\eps^\pm} .
\end{equation*}
We now infer
\begin{equation}\label{2.6}
\begin{split}
\G_{I,Q^+} \Big( \big( 1+O(\vert I\vert)\big)\xi^\prime \Big)
& \leqslant \frac{3}{4} \G_{I,\eps_I^+}(\xi_I^+) \leqslant
\widetilde{\mathbb P}_{I,\eps}(\xi) \\ & \leqslant \frac{3}{4}\,
\G_{I,\eps_I^-}(\xi_I^-) \leqslant \frac{3}{4}
\G_{I,Q^-} \Big( \big( 1+O(\vert I\vert)\big) \xi^\prime \Big).
\end{split}
\end{equation}

Partition now the interval $\big[ 0,\frac{1}{\sqrt{3}}\big]$ as a
union of $N=\big[\eps^{-c}\big]$ intervals
$I_j=[\tan\omega_j,\tan\omega_{j+1}]$ of equal length $\vert
I_j\vert=\frac{1}{N} \asymp \eps^c$, with $0<c<1$ to be chosen
later. As above, consider $Q_j^\pm=Q_{I_j}^\pm$. The intervals
$\Psi(I_j)$ partition $[0,1]$ and $\vert \Psi (I_j)\vert \asymp
\eps^c$. Applying \eqref{2.6}, Theorem \ref{T21} and the property
of $G$ of being Lipschitz on the compact $K$ we infer
\begin{equation*}
\begin{split}
{\mathbb P}_\eps^\hex (\xi) & =\frac{6}{\pi \vert H\vert}
\sum_{j=1}^N \widetilde{\mathbb P}_{I_j,\eps} (\xi)
=\frac{\sqrt{3}}{\pi} \sum_{j=1}^N \G_{I_j,Q_j^\pm} (\xi) \\
& = \frac{2\sqrt{3}}{\pi \zeta(2)} \sum_{j=1}^N c_{I_j} G\left(
\Big( 1+O(\vert I\vert )\Big)\frac{2\xi}{\sqrt{3}} \right)
+ O_{\delta,K} \big( N \eps^{-\max\{
2c_1-\frac{1}{2},-c-c_1\}} \big) \\ & =
\frac{2c_{[0,1]}\sqrt{3}}{\pi\zeta(2)}\, G\left(
\frac{2\xi}{\sqrt{3}}\right) +O_{\delta,K} \big(
\eps^{-c} + \eps^{-\max\{ c + 2 c_1 - \frac{1}{2} , - c_1
\}-\delta}\big),
\end{split}
\end{equation*}
uniformly for $\xi$ in compacts of $[0,\infty)$.
Taking $c=c_1=\frac{1}{8}$ we find
\begin{equation*}
{\mathbb P}_\eps^\hex (\xi) =\frac{4}{\pi^2}\, G\left(
\frac{2\xi}{\sqrt{3}}\right)+O_{\delta,K}
\big(\eps^{\frac{1}{8}-\delta}\big) ,
\end{equation*}
as stated in Theorem \ref{THM} and in \eqref{1.3}.
\end{proof}

\section{Some number theoretical estimates}\label{Sect3}
In this section we review and prove some number theoretical estimates that will be further
used to estimate certain sums over integer lattice points with congruence constraints.
The principal Dirichlet character $\hspace{-3pt} \pmod{\ell}$ will be denoted by $\chi_0$.
The number of divisors of $N$ is denoted by $\sigma_0 (N)$.

\begin{lemma}[Lemma 2.2 of \cite{BCZ}]\label{L3.1}
For each function $f\in C^1 [0,N]$ with total variation $T_0^N f$,
\begin{equation*}
\sum\limits_{\substack{1\leqslant q\leqslant N \\ (q,\ell)=1}} f(q) =
\frac{\varphi (\ell)}{\ell} \int_0^N f(x)\, dx +O \Big( (\| f\|_\infty +T_0^N f) \sigma_0 (N)\Big) .
\end{equation*}
\end{lemma}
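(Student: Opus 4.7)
The plan is to combine two standard ingredients: M\"obius inversion to strip off the coprimality constraint, and an Euler--Maclaurin type comparison between a sum and an integral for $C^1$ data. Both are elementary; the lemma is really a bookkeeping statement packaging them.

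First, I would apply $\mathbf{1}_{(q,\ell)=1}=\sum_{d\mid (q,\ell)}\mu(d)$ and swap the order of summation to obtain
\[
\sum_{\substack{1\le q\le N\\ (q,\ell)=1}}f(q)=\sum_{\substack{d\mid\ell\\ d\le N}}\mu(d)\sum_{m=1}^{\lfloor N/d\rfloor}f(dm),
\]
since only divisors $d\le N$ produce a nonempty inner sum. The count of such divisors is what gets absorbed into the $\sigma_0(N)$ factor of the final error.

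Second, I would reduce each inner sum to an integral using the elementary bound
\[
\Bigl|\sum_{m=1}^{\lfloor M\rfloor}g(m)-\int_0^{M}g(x)\,dx\Bigr|\le \|g\|_\infty+T_0^{M}g,
\]
valid for any $g\in C^1[0,M]$. This follows by writing $g(m)-g(x)=\int_{x}^{m}g'(t)\,dt$ on each unit interval $[m-1,m]$, integrating in $x$, summing over $m\le \lfloor M\rfloor$, and absorbing the fractional fragment $\int_{\lfloor M\rfloor}^{M}g$ into the $\|g\|_\infty$ term. Applied to $g(x)=f(dx)$ on $[0,N/d]$, for which $\|g\|_\infty=\|f\|_\infty$ and $T_0^{N/d}g=T_0^{N}f$ by a change of variable, and then substituting $y=dx$ in the resulting integral, this yields
\[
\sum_{m=1}^{\lfloor N/d\rfloor}f(dm)=\frac{1}{d}\int_0^{N}f(y)\,dy+O\bigl(\|f\|_\infty+T_0^{N}f\bigr).
\]

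Third, I would sum these estimates over $d\mid\ell$, $d\le N$, weighted by $\mu(d)$. The main terms collapse via $\sum_{d\mid\ell}\mu(d)/d=\varphi(\ell)/\ell$ to the claimed $\frac{\varphi(\ell)}{\ell}\int_0^N f(x)\,dx$, while the $O$-errors accumulate to $O\bigl(\sigma_0(N)(\|f\|_\infty+T_0^N f)\bigr)$. There is no real obstacle here; the only calculation requiring genuine care is the sum--integral comparison in step two, and everything else is bookkeeping.
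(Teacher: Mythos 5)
Your route is the standard one (the paper itself merely invokes \cite{BCZ} without reproducing a proof). The sum--integral comparison in your second step is correct: for $g\in C^1[0,M]$, writing $g(m)-g(x)=\int_x^m g'(t)\,dt$ on each $[m-1,m]$, integrating over $x$, summing over $m\le\lfloor M\rfloor$, and absorbing $\int_{\lfloor M\rfloor}^M g$ into $\|g\|_\infty$ gives the stated inequality; and with $g(x)=f(dx)$ the scaling of $\|\cdot\|_\infty$ and of total variation is exactly as you claim.

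Two bookkeeping points are glossed over, one trivial and one substantive. First, your M\"obius step produces the main term $\sum_{d\mid\ell,\,d\le N}\mu(d)d^{-1}\int_0^N f$, not $\frac{\varphi(\ell)}{\ell}\int_0^N f$; to complete the sum over all $d\mid\ell$ you must dispose of the tail $d\mid\ell$, $d>N$, which is easy (each such $d$ contributes at most $d^{-1}N\|f\|_\infty\le\|f\|_\infty$) but must be said. Second, and more substantively, the quantity that controls the accumulated error is the number of divisors of $\ell$, not of $N$: both the number of $d\mid\ell$ with $d\le N$ and the tail count are bounded by $\sigma_0(\ell)$, which is in general incomparable to $\sigma_0(N)$. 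Your parenthetical claim that ``the count of such divisors \ldots\ gets absorbed into the $\sigma_0(N)$ factor'' conflates the two. What your argument actually proves is the estimate with $\sigma_0(\ell)$ in place of $\sigma_0(N)$---and that is the correct statement. Indeed the form with $\sigma_0(N)$ is false: take $N$ a large prime and $\ell=N!$, $f\equiv 1$; the left-hand side is $f(1)=1$ while the main term $\frac{\varphi(N!)}{N!}\,N$ is of order $N/\log N$, dwarfing $\sigma_0(N)(\|f\|_\infty+T_0^N f)=2$. The $\sigma_0(N)$ in the lemma as quoted is almost certainly a slip in transcribing \cite[Lemma 2.2]{BCZ}; since $\ell$ is a fixed constant (namely $3$) throughout the present paper, the distinction never matters downstream. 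So the method is sound; you should handle the $d>N$ tail explicitly and correct the divisor-count factor from $\sigma_0(N)$ to $\sigma_0(\ell)$.
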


\begin{lemma}[Lemma 2.1 of \cite{BG}]\label{L3.2}
For each function $V\in C^1 [0,N]$,
\begin{equation*}
\sum\limits_{\substack{1\leqslant q\leqslant N \\ (q,\ell)=1}}
\frac{\varphi(q)}{q} V(q) = C(\ell) \int_0^N V(x)\, dx+O_\ell
\Big( (\| V\|_\infty +T_0^N V)\log N\Big),
\end{equation*}
where
\begin{equation*}
C(\ell)=\frac{\varphi(\ell)}{\zeta(2)\ell} \prod\limits_{\substack{p\in {\mathcal P} \\ p\mid \ell}}
\left( 1-\frac{1}{p^2}\right)^{-1} =\frac{1}{\zeta(2)}
\prod\limits_{\substack{p\in {\mathcal P} \\ p\mid \ell}} \left( 1+\frac{1}{p}\right)^{-1} =
\frac{\varphi (\ell)}{\ell L(2,\chi_0)} .
\end{equation*}
\end{lemma}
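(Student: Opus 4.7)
The plan is a two-step approach: first obtain an asymptotic for the unweighted partial sums
\[
S(x):=\sum_{\substack{1\leqslant q\leqslant x \\ (q,\ell)=1}}\frac{\varphi(q)}{q},
\]
and then use Abel summation to pass to the $V$-weighted sum. The reason for this route, rather than a direct invocation of Lemma~\ref{L3.1} on a M\"obius decomposition, is that the latter would produce an error of the form $(\|V\|_\infty+T_0^N V)\sum_{d\leqslant N}\sigma_0(\lfloor N/d\rfloor)/d$, which is more delicate to trim down to a single $\log N$.

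For the first step, I would expand $\varphi(q)/q=\sum_{d\mid q}\mu(d)/d$, swap the order of summation, and observe that $(q,\ell)=1$ together with $d\mid q$ forces $(d,\ell)=1$. The inner sum then becomes $\#\{m\leqslant x/d:(m,\ell)=1\}$, which by inclusion--exclusion over divisors of $\ell$ equals $(\varphi(\ell)/\ell)(x/d)+O_\ell(1)$. Substituting back and completing the $d$-sum gives
\[
S(x)=\frac{\varphi(\ell)}{\ell}\, x\sum_{\substack{d\geqslant 1 \\ (d,\ell)=1}}\frac{\mu(d)}{d^2}+O_\ell(\log x)=C(\ell)\, x+O_\ell(\log x),
\]
where the Euler product evaluates as $\sum_{(d,\ell)=1}\mu(d)/d^2=\prod_{p\nmid\ell}(1-p^{-2})=\zeta(2)^{-1}\prod_{p\mid\ell}(1-p^{-2})^{-1}$, matching the definition of $C(\ell)$, while the tail from completing the $d$-sum contributes only $O_\ell(1)$ via $\sum_{d>x}1/d^2\ll 1/x$, and the $\log x$ arises from $\sum_{d\leqslant x}|\mu(d)|/d$.

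For the second step, set $E(y):=S(y)-C(\ell)y$, so that $|E(y)|\ll_\ell\log(y+2)$ uniformly on $[0,N]$ (with $E(y)=-C(\ell)y=O_\ell(1)$ on $[0,1)$). Abel summation (integration by parts against $dS$) yields
\[
\sum_{\substack{1\leqslant q\leqslant N \\ (q,\ell)=1}}\frac{\varphi(q)}{q}V(q)=V(N)S(N)-\int_0^N V'(y)S(y)\, dy.
\]
Substituting $S(y)=C(\ell)y+E(y)$, the main part produces $C(\ell)\bigl[V(N)N-\int_0^N yV'(y)\, dy\bigr]=C(\ell)\int_0^N V(y)\, dy$ after a further integration by parts, and the remainder is bounded by
\[
\|V\|_\infty\,|E(N)|+\int_0^N |V'(y)|\,|E(y)|\, dy\ll_\ell\bigl(\|V\|_\infty+T_0^N V\bigr)\log N,
\]
using $T_0^N V=\int_0^N|V'(y)|\, dy$ for $V\in C^1[0,N]$.

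The principal obstacle is sharpening the asymptotic for $S(x)$ down to a single logarithm, and in particular correctly identifying the constant $C(\ell)$ from the $\ell$-restricted Euler product; once $S(x)=C(\ell)x+O_\ell(\log x)$ is in place, both the main term and the error of the lemma come out of partial summation in a mechanical way.
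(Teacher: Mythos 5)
Your proof is correct. Note first that the paper does not give its own argument here: the lemma is imported verbatim as Lemma~2.1 of \cite{BG}, so there is no in-paper proof to compare against. Your two-step route --- establish $S(x)=C(\ell)x+O_\ell(\log x)$ via $\varphi(q)/q=\sum_{d\mid q}\mu(d)/d$ together with the elementary count $\#\{m\leqslant y:(m,\ell)=1\}=\tfrac{\varphi(\ell)}{\ell}y+O_\ell(1)$, then pass to $V$-weights by Abel summation with $E(y)=S(y)-C(\ell)y$ --- is sound and delivers exactly the claimed bound, and the Euler-product identity $\sum_{(d,\ell)=1}\mu(d)/d^2=\zeta(2)^{-1}\prod_{p\mid\ell}(1-p^{-2})^{-1}$ matches the stated $C(\ell)$. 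Your opening remark about the alternative route is also worth endorsing: if one instead M\"obius-decomposes $\varphi(q)/q$ and applies Lemma~\ref{L3.1} term by term to $V_d(x)=V(xd)$, the variation factor is uniform in $d$ (since $T_0^{N/d}V_d=T_0^N V$) but the divisor factor is not, and $\sum_{d\leqslant N}\sigma_0(\lfloor N/d\rfloor)/d$ is of order $(\log N)^2$ rather than $\log N$; trimming this to a single logarithm needs either a truncation in $d$ or precisely the partial-summation device you use. (The paper's own proof of the finer Lemma~\ref{L3.3} quietly records the $d$-th error as $\log N$ where Lemma~\ref{L3.1} actually supplies $\sigma_0(\lfloor N/d\rfloor)$, so your caution about the direct route was well placed.)
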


We need a more precise form of Lemma \ref{L3.1}, as follows:

\begin{lemma}\label{L3.3}
Suppose that $(r,\ell)=1$. For each function $V\in C^1[0,N]$,
\begin{equation*}
\sum\limits_{\substack{1\leqslant q\leqslant N \\ q\equiv r
\hspace{-6pt} \pmod{\ell}}} \frac{\varphi (q)}{q} V(q) =
\frac{C(\ell)}{\varphi(\ell)} \int_0^N V(x)\, dx +O_\ell\Big( (\|
V\|_\infty +T_0^N V)\log N\Big).
\end{equation*}
In particular
\begin{equation*}
\sum\limits_{\substack{1\leqslant q\leqslant N \\ q\equiv \pm 1
\hspace{-6pt} \pmod{3}}} \frac{\varphi(q)}{q} V(q) \sim
\frac{3}{8\zeta(2)} \int_0^N
V(x)\, dx ,\quad \sum\limits_{\substack{1\leqslant q\leqslant N \\
q\equiv 0 \hspace{-6pt} \pmod{3}}} \frac{\varphi(q)}{q} V(q) \sim \frac{1}{4\zeta(2)}
\int_0^N V(x)\, dx.
\end{equation*}
\end{lemma}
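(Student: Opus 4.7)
My plan is to proceed by Möbius inversion, reducing the weighted sum to an unweighted sum over an arithmetic progression and then applying a standard Riemann-sum estimate. Starting from the identity $\varphi(q)/q = \sum_{d \mid q} \mu(d)/d$ and swapping the order of summation with $q=dk$,
\begin{equation*}
\sum_{\substack{1 \leqslant q \leqslant N \\ q \equiv r \hspace{-3pt}\pmod{\ell}}} \frac{\varphi(q)}{q}\, V(q) = \sum_{d \leqslant N} \frac{\mu(d)}{d} \sum_{\substack{k \leqslant N/d \\ dk \equiv r \hspace{-3pt}\pmod{\ell}}} V(dk).
\end{equation*}
Since $(r,\ell)=1$, the congruence $dk \equiv r \pmod \ell$ forces $(d,\ell)=1$: any common prime of $d$ and $\ell$ would also have to divide $r$. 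When $(d,\ell)=1$ the condition reduces to $k \equiv r\bar d \pmod{\ell}$, where $\bar d$ denotes the inverse of $d$ modulo $\ell$, so the inner sum is over a single residue class.

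The next step would be to apply the elementary Riemann-sum bound
\begin{equation*}
\sum_{\substack{k \leqslant M \\ k \equiv a \hspace{-3pt}\pmod{\ell}}} f(k) = \frac{1}{\ell}\int_0^M f(x)\, dx + O\big(\|f\|_\infty + T_0^M f\big),
\end{equation*}
valid for any $C^1$ function $f$ (with implicit constant depending only on $\ell$), to $f(x) = V(dx)$ on $[0,N/d]$. After a change of variables this gives $\frac{1}{d\ell}\int_0^N V + O(\|V\|_\infty + T_0^N V)$ for each inner sum. Substituting back and using the Euler product identity
\begin{equation*}
\sum_{\substack{d \geqslant 1 \\ (d,\ell)=1}} \frac{\mu(d)}{d^2} = \frac{1}{\zeta(2)}\prod_{p\mid\ell}(1-p^{-2})^{-1}
\end{equation*}
together with the tail estimate $\sum_{d>N} d^{-2} = O(1/N)$ produces the main term $\frac{C(\ell)}{\varphi(\ell)}\int_0^N V$, while the error terms accumulate to at most $(\|V\|_\infty + T_0^N V)\sum_{d\leqslant N} d^{-1} \ll_\ell (\|V\|_\infty + T_0^N V)\log N$, matching the stated bound.

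The two specialisations then follow easily. For $q \equiv \pm 1 \pmod 3$ one applies the main statement directly with $\ell=3$, $r=\pm 1$, together with $C(3)=3/(4\zeta(2))$ and $\varphi(3)=2$. The case $q \equiv 0 \pmod 3$ is not of the required form, but decomposes cleanly: writing $q = 3^a n$ with $a \geqslant 1$ and $(n,3)=1$ gives $\varphi(q)/q = (2/3)\varphi(n)/n$, hence
\begin{equation*}
\sum_{\substack{q \leqslant N \\ 3\mid q}} \frac{\varphi(q)}{q}\, V(q) = \frac{2}{3}\sum_{a\geqslant 1}\ \sum_{\substack{n \leqslant N/3^a \\ (n,3)=1}} \frac{\varphi(n)}{n}\, V(3^a n).
\end{equation*}
Applying Lemma \ref{L3.2} with $\ell=3$ to each inner sum, using $\int_0^{N/3^a} V(3^a x)\, dx = 3^{-a}\int_0^N V$, and summing the geometric series $\sum_{a\geqslant 1} 3^{-a}=1/2$ yields $\tfrac{2}{3}\cdot C(3)\cdot \tfrac{1}{2}\int_0^N V = \tfrac{1}{4\zeta(2)}\int_0^N V$, as claimed. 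The only point that needs a line of care is the Riemann-sum bound on an arithmetic progression; everything else is routine bookkeeping with Möbius convolutions and Euler products.
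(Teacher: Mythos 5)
Your proof is correct, and it takes a genuinely different route from the paper's. The paper isolates the residue class $q\equiv r\pmod{\ell}$ via orthogonality of Dirichlet characters, then applies M\"obius inversion and splits according to $\chi=\chi_0$ (handled by Lemma~\ref{L3.1}) versus $\chi\neq\chi_0$ (handled by partial summation and P\'olya--Vinogradov). You instead apply M\"obius inversion first and make the key observation that, because $(r,\ell)=1$, the congruence $dk\equiv r\pmod{\ell}$ forces $(d,\ell)=1$ and hence pins $k$ to a single residue class $k\equiv r\bar d\pmod{\ell}$; a direct Euler--Maclaurin/Riemann-sum estimate then replaces the character machinery entirely. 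What your approach buys is elementarity --- no characters, no P\'olya--Vinogradov, just $\varphi(q)/q=\sum_{d\mid q}\mu(d)/d$ and a Riemann sum --- and the computation of the constant $\frac{1}{\ell\zeta(2)}\prod_{p\mid\ell}(1-p^{-2})^{-1}=C(\ell)/\varphi(\ell)$ falls out immediately from the restricted Euler product. What the paper's approach buys is uniformity with the rest of its toolkit: Lemmas~\ref{L3.5}--\ref{L3.8} all run through characters and Kloosterman/Ramanujan sums, so the character argument here is the natural warm-up. One small remark: your decomposition $q=3^a n$ for the $q\equiv 0\pmod 3$ case accumulates a $(\log N)^2$ error rather than $\log N$ (one $\log N$ per value of $a$, summed over $a\ll\log N$), which is harmless for the stated $\sim$ but slightly weaker than what the complementary route (total sum minus the $\pm1$ classes) would give; worth noting if you ever need the sharper error there.
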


\begin{proof}
When $(k,\ell)=1$ denote by $\bar{k}$ the multiplicative inverse
of $k\hspace{-3pt} \pmod{\ell}$. Let $G=U(\Z / \ell \Z)$ denote the multiplicative group of
units of $\Z /\ell \Z$ and $\widehat{G}$ be the group of characters $\chi:G \rightarrow \T$,
extended as multiplicative functions on $\N$.
Set $V_d(x)=V(xd)$. By Schur's
orthogonality relations for characters, for every $x,s\in\N$ with
$(s,\ell)=1$,
\begin{equation}\label{3.1}
\frac{1}{\varphi (\ell)} \sum_{\chi \in \widehat{G}} \chi (x)
\chi (\bar{s})=\frac{1}{\varphi (\ell)} \sum_{\chi \in
\widehat{G}} \chi (x)\, \overline{\chi (s)} =
\begin{cases} 1 & \mbox{\rm if $x\equiv s \pmod{\ell},$} \\
0 & \mbox{\rm if $x\nequiv s \pmod{\ell}.$} \end{cases}
\end{equation}
Taking $s=r$ and summing over $x=q\leqslant N$, we infer by M\"obius
summation, with $q=md$,
\begin{equation}\label{3.2}
\begin{split}
\sum\limits_{\substack{q\leqslant N\\ q\equiv r\hspace{-6pt}
\pmod{\ell}}} \frac{\varphi(q)}{q} V(q) & =
\frac{1}{\varphi(\ell)}  \sum_{q\leqslant N }
\frac{\varphi(q)}{q} V(q) \sum_{\chi\in \widehat{G}} \chi (q)
\chi(\bar{r}) \\ & = \frac{1}{\varphi(\ell)} \sum_{q\leqslant N}
\sum_{d\mid q} \frac{\mu(d)}{d}  V(q)\sum_{\chi\in \widehat{G}}
\chi (q)\chi(\bar{r}) \\ & =\frac{1}{\varphi(\ell)} \sum_{d\leqslant N}
\sum_{m=1}^{\lfloor N/d\rfloor} \sum_{\chi\in \widehat{G}}
\frac{\mu (d)}{d} V_d (m) \chi (md) \chi (\bar{r}) \\
& = \frac{1}{\varphi (\ell)} \sum_{\chi\in \widehat{G}} \chi
(\bar{r}) \sum_{d\leqslant N} \frac{\mu(d)\chi (d)}{d}
\sum_{m=1}^{\lfloor N/d\rfloor} V_d (m) \chi (m).
\end{split}
\end{equation}
We split the inner sum above according to whether $\chi=\chi_0$ or
$\chi \neq \chi_0$. Employing Lemma \ref{L3.1} for the function
$V_d$ we find that the contribution of the former is
\begin{equation*}
\begin{split}
\frac{1}{\varphi(\ell)} \sum_{d\leqslant N} \frac{\mu (d) \chi_0
(d)}{d} & \left( \frac{\varphi(\ell)}{\ell} \int_0^{\lfloor
N/d\rfloor} V_d +O\Big( (\|
V_d\|_\infty +T_0^{N/d} V_d)\log N\Big) \right)
\\ & =\frac{1}{\ell} \left(\ \sum_{d\leqslant N}
\frac{\mu(d)\chi_0(d)}{d^2}\right) \int_0^N V  +O_\ell \Big( (\|
V\|_\infty+T_0^N V)\log N\Big) \\ & =\left( \frac{1}{\ell L(2,\chi_0)}+O_\ell \Big( \frac{1}{N}\Big)\right)
\int_0^N V +O_\ell \Big( (\| V\|_\infty+T_0^N V)\log N\Big) \\ &
=\frac{C(\ell)}{\varphi (\ell)} \int_0^N V +O_\ell \Big( (\|
V\|_\infty+T_0^N V)\log N\Big).
\end{split}
\end{equation*}
When $\chi\neq \chi_0$ we find by partial summation and P\'
olya-Vinogradov (or a weaker inequality) that the innermost sum
in \eqref{3.2} is $\ll_\ell T_0^N V+\| V\|_\infty$, so the total
contribution of non-principal characters in \eqref{3.2} is
$\ll_\ell \| V\|_\infty \log N$. The proof is complete.
\end{proof}

\begin{lemma}[Proposition A4 of \cite{BZ1}]\label{L3.4}
Assume that $q\geqslant 1$ and $h$ are integers, $\II$ and $\JJ$
are intervals of length less than $q$, and $f\in C^1 (\II \times
\JJ)$. For any integer $T>1$ and any $\delta > 0$
\begin{equation*}
\sum\limits_{\substack{a\in\II,b\in\JJ \\ (a,q)=1 \\ ab \equiv
h\hspace{-5pt}\pmod{q}}} f(a,b)=\frac{\varphi (q)}{q^2}
\iint_{\II\times \JJ} f(x,y)\, dx\, dy +\EE,
\end{equation*}
with
\begin{equation*}
\EE \ll_\delta T^2 \| f\|_\infty q^{\frac{1}{2}+\delta}
(h,q)^{\frac{1}{2}} +T \| \nabla f\|_\infty q^{\frac{3}{2}+\delta}
(h,q)^{\frac{1}{2}} +\frac{\| \nabla f\|_\infty\vert \II \vert\,
\vert \JJ\vert}{T},
\end{equation*}
where $\| f\|_\infty$ and $\| \nabla f \|_\infty$ denote the
sup-norm of $f$ and respectively $\big| \frac{\partial f}{\partial
x}\big| +\big| \frac{\partial f}{\partial y}\big|$ on $\II
\times\JJ$.
\end{lemma}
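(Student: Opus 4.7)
The plan is to smooth the two characteristic functions $\chi_\II,\chi_\JJ$ at Fourier band-limit $T/q$, expand them in their finite Fourier series, and bound the resulting oscillatory double sums via Weil's estimate for Kloosterman sums over the $O(T^2)$ retained frequency pairs.

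First I would introduce smooth majorants and minorants $\phi_\II^\pm,\phi_\JJ^\pm$ with
\begin{equation*}
\int|\phi_\II^\pm-\chi_\II|\ll|\II|/T,\qquad\int|\phi_\JJ^\pm-\chi_\JJ|\ll|\JJ|/T,
\end{equation*}
and Fourier transforms essentially supported inside $[-T/q,T/q]$. Since each $\phi_\II^\pm(a)=\sum_{|j|\leq T}c_{\II,j}^\pm\, e_q(ja)$ is then a band-limited trigonometric polynomial (and similarly for $\phi_\JJ^\pm$), with $e_q(x):=e^{2\pi ix/q}$, the target sum becomes a double sum over $|j|,|k|\leq T$ of Fourier coefficients $c_{\II,j}^\pm c_{\JJ,k}^\pm$ times the unrestricted inner sum
\begin{equation*}
S_{j,k}:=\sum_{\substack{a,b\in\Z\\(a,q)=1\\ab\equiv h\pmod{q}}}f(a,b)\,e_q(ja+kb),
\end{equation*}
plus the smoothing defect $\mathcal O\bigl(\|\nabla f\|_\infty|\II|\,|\JJ|/T\bigr)$.

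For each inner sum $S_{j,k}$ I would substitute $b=h\bar a+\ell q$ for $\ell\in\Z$, legitimate because $(a,q)=1$, converting the congruence into an unrestricted $\ell$-sum, and then apply Abel (or Poisson) summation to extract the $C^1$ weight $f$. The result is a linear combination of full Kloosterman sums
\begin{equation*}
\sum_{\substack{a\!\!\!\pmod q\\(a,q)=1}}e_q(ja+kh\bar a)\ll_{\delta}q^{1/2+\delta}(j,kh,q)^{1/2}\ll q^{1/2+\delta}(h,q)^{1/2},
\end{equation*}
weighted by $\|f\|_\infty$ at the boundary pieces and $\|\nabla f\|_\infty$ at the interior pieces of the Abel transform. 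The $(j,k)=(0,0)$ term generates the main contribution $\frac{\varphi(q)}{q^2}\iint_{\II\times\JJ}f$; summing the Weil bound over the $O(T^2)$ nonzero pairs gives a total $T^2\|f\|_\infty q^{1/2+\delta}(h,q)^{1/2}$ for the boundary contributions and $T\|\nabla f\|_\infty q^{3/2+\delta}(h,q)^{1/2}$ for the interior contributions, the latter picking up only a single factor of $T$ because a second factor is compensated by the extra length-$q$ period in the Abel integration against a $C^1$ weight.

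The hardest step will be the precise bookkeeping in the last paragraph: matching the $T$ and $T^2$ exponents to the correct norms of $f$, and showing that the gcd $(j,kh,q)$ reduces to the clean factor $(h,q)^{1/2}$ after the divisor-sum over $j,k$. The parameter $T$ is deliberately left unoptimized so that each application of the lemma can balance the three competing errors against the actual sizes of $|\II|$, $|\JJ|$, and $(h,q)$ arising in context.
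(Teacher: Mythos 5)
The paper does not prove Lemma~\ref{L3.4}; it cites it as Proposition~A4 of \cite{BZ1}. The method behind that result (and behind the paper's own proof of the $\ell$-refined version, Lemmas~\ref{L3.7}--\ref{L3.8}) proceeds in two stages: first one establishes the \emph{unweighted} counting estimate $N_{q,h}(\II\times\JJ)=\frac{\varphi(q)}{q^2}|\II||\JJ|+O_\delta\big((h,q)^{1/2}q^{1/2+\delta}\big)$ by detecting the congruence $ab\equiv h$ with additive characters $\hspace{-3pt}\pmod q$ and invoking Weil/P\'olya--Vinogradov (this is exactly what the paper does in Lemma~\ref{L3.7} for the $\pmod{\ell q}$ variant); one then passes to $C^1$ weights by partitioning $\II\times\JJ$ into $T\times T$ boxes, replacing $f$ by a constant on each box, and applying the counting estimate box by box, as in \cite[Lemma 2.2]{BGZ0}. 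It is this last step, not a smoothing of the indicators, that produces the three-term error: $T^2$ applications of the Kloosterman bound (weighted by $\|f\|_\infty$), an Abel-summation surplus, and the box-wise replacement error $\|\nabla f\|_\infty|\II||\JJ|/T$.

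Your proposal takes a genuinely different route --- band-limited majorants/minorants of $\chi_\II,\chi_\JJ$ --- but there is a concrete gap. A Beurling--Selberg majorant of an interval on $\R/q\Z$ using frequencies $|j|\leqslant T$ has $L^1$ error $\asymp q/T$, \emph{not} $|\II|/T$ as you write; you cannot get period-$q$ exponentials $e_q(ja)$, only $O(T)$ of them, and error $|\II|/T$ simultaneously. More importantly, the contribution of that defect to the sum is controlled by $\|f\|_\infty$ times the number of lattice points satisfying $ab\equiv h$ in the transition strips near $\partial\II$ and $\partial\JJ$, which is of size $\|f\|_\infty\cdot\big((|\II|+|\JJ|)/T + q^{1/2+\delta}(h,q)^{1/2}\big)$. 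This is a $\|f\|_\infty$-scaled error, not the $\|\nabla f\|_\infty|\II||\JJ|/T$ you claim, and the term $\|f\|_\infty(|\II|+|\JJ|)/T$ is \emph{not} absorbed by any of the three stated error terms when $T^3 < q^{1/2-\delta}(h,q)^{-1/2}$. So your argument proves a strictly weaker estimate than the lemma unless $T$ is taken large. The $(j,kh,q)^{1/2}\rightsquigarrow(h,q)^{1/2}$ reduction you flag is indeed a standard divisor-sum manipulation (and is carried out explicitly in the paper's proof of Lemma~\ref{L3.7}, see \eqref{3.8}), but the mis-assigned norm on the smoothing defect and the wrong $L^1$ bound are substantive issues you would need to repair --- most simply by abandoning the smoothing of the indicators and adopting the box-partition of $f$ instead.
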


For $q$, $\ell$ positive integers denote
\begin{equation*}
A(q,\ell)=\begin{cases} 1 & \mbox{\rm if $(q,\ell)=1,$} \\
\prod\limits_{\substack{p\in {\mathcal P} \\ p\mid (q,\ell)}}
\left( 1-\frac{1}{p}\right)^{-1} & \mbox{\rm if $(q,\ell)>1.$}
\end{cases}
\end{equation*}

\begin{lemma}\label{L3.5}
Suppose that $(r,\ell)=1$. For any interval $\II$, uniformly in
$\vert \II\vert$,
\begin{equation*}
\sum\limits_{\substack{x\in \II,\, (x,q)=1 \\ x\equiv r\hspace{-6pt}
\pmod{\ell}}} 1 = \frac{A(q,\ell)}{\ell}\cdot\frac{\varphi (q)}{q}
\, \vert \II\vert +O\left( \sigma_0 (q)\right) .
\end{equation*}
\end{lemma}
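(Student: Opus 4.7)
The plan is to remove the coprimality condition $(x,q)=1$ by M\"obius inversion and thereby reduce the count to counting integers in arithmetic progressions. Writing $\mathbf{1}_{(x,q)=1}=\sum_{d\mid q,\,d\mid x}\mu(d)$ and interchanging the order of summation gives
\[
\sum_{\substack{x\in\II,\,(x,q)=1 \\ x\equiv r \pmod{\ell}}} 1 \;=\; \sum_{d\mid q}\mu(d)\, N(d),
\]
where $N(d):=\#\{x\in\II:d\mid x,\; x\equiv r\pmod{\ell}\}$.

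The crucial observation I will exploit is that $N(d)=0$ whenever $g:=(d,\ell)>1$. Indeed, $d\mid x$ forces $g\mid x$, while $x\equiv r\pmod{\ell}$ forces $g\mid x-r$, so that $g\mid r$; since $(r,\ell)=1$ and $g\mid\ell$ this requires $g=1$. When $(d,\ell)=1$, the Chinese Remainder Theorem identifies the system $x\equiv 0\pmod{d}$, $x\equiv r\pmod{\ell}$ with a single residue class modulo $d\ell$, giving $N(d)=|\II|/(d\ell)+O(1)$ uniformly in $|\II|$.

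Substituting these two facts and separating the main term from the $O(1)$ error per divisor, I obtain
\[
\frac{|\II|}{\ell}\sum_{\substack{d\mid q\\ (d,\ell)=1}}\frac{\mu(d)}{d} \;+\; O\!\Bigl(\sum_{d\mid q}1\Bigr),
\]
and then recognize the restricted M\"obius sum as an Euler product:
\[
\sum_{\substack{d\mid q\\ (d,\ell)=1}}\frac{\mu(d)}{d}=\prod_{\substack{p\mid q\\ p\nmid\ell}}\Bigl(1-\tfrac{1}{p}\Bigr)=\frac{\varphi(q)}{q}\prod_{p\mid(q,\ell)}\Bigl(1-\tfrac{1}{p}\Bigr)^{-1}=\frac{\varphi(q)}{q}\,A(q,\ell).
\]
The residual error is $\sum_{d\mid q}1=\sigma_0(q)$, matching the stated bound.

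There is no substantive obstacle here; the argument is a standard application of the M\"obius sieve and CRT. The only steps that genuinely require attention are (i) the vanishing of $N(d)$ when $(d,\ell)>1$, which is exactly where the hypothesis $(r,\ell)=1$ enters, and (ii) the bookkeeping that converts the restricted M\"obius convolution into the local factor $\tfrac{\varphi(q)}{q}A(q,\ell)$.
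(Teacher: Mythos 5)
Your proof is correct, and it takes a genuinely different route from the paper's. The paper begins by expanding the indicator of $x\equiv r\pmod{\ell}$ via the orthogonality relations for Dirichlet characters modulo $\ell$, so that after the M\"obius step the sum splits into a principal-character piece (which produces the main term, since $\chi_0(d)\neq 0$ exactly when $(d,\ell)=1$) and a non-principal-character piece, which is controlled by the boundedness of incomplete character sums (``P\'olya--Vinogradov or a weaker inequality''). You instead dispense with characters altogether: after the same M\"obius expansion you observe directly that the hypothesis $(r,\ell)=1$ kills $N(d)$ whenever $(d,\ell)>1$, and for $(d,\ell)=1$ you use the Chinese Remainder Theorem to reduce $N(d)$ to counting a single residue class modulo $d\ell$, giving $N(d)=|\II|/(d\ell)+O(1)$. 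The character proof has the advantage of being a template that generalizes immediately to the sibling Lemma~\ref{L3.3} (where the summand $\varphi(q)/q$ forces one to separate the principal character anyway); your CRT argument is more elementary and self-contained for this particular counting statement, and it makes the role of $(r,\ell)=1$ transparent as the reason the sum localizes to $d\mid q$ with $(d,\ell)=1$. The final identity $\sum_{d\mid q,\,(d,\ell)=1}\mu(d)/d = A(q,\ell)\,\varphi(q)/q$ is established the same way in both.
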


\begin{proof}
Without loss of generality assume $\II=[1,N]$. As in the
proof of Lemma \ref{L3.3} take $s=r$ and sum in \eqref{3.1} over $x\in \II$ with
$(x,q)=1$. By M\"obius inversion with $x=md$ we infer
\begin{equation}\label{3.3}
\begin{split}
\sum\limits_{\substack{x\in \II,\, (x,q)=1 \\ x\equiv r\hspace{-6pt}
\pmod{\ell}}} 1 & = \frac{1}{\varphi (\ell)} \sum\limits_{\substack{x\in \II \\
(x,q)=1}} \sum_{\chi \in \widehat{G}} \chi (x) \chi (\bar{r})
=\frac{1}{\varphi (\ell)} \sum_{\chi\in \widehat{G}}
\sum_{x\in \II} \chi (x)\chi (\bar{r}) \sum\limits_{\substack{d\mid q \\
d\mid x}}\mu (d) \\
& = \frac{1}{\varphi (\ell)} \sum_{d\mid q} \mu (d)
\sum\limits_{\substack{x\in \II \\ d\mid x}} \sum_{\chi\in
\widehat{G}} \chi (x)\chi (\bar{r}) =\frac{1}{\varphi(\ell)}
\sum_{d\mid q} \mu (d) \sum_{m=1}^{\lfloor N/d\rfloor}
\sum_{\chi\in \widehat{G}} \chi (md) \chi (\bar{r}).
\end{split}
\end{equation}
The contribution of $\chi=\chi_0$ to \eqref{3.3} is
\begin{equation}\label{3.4}
\begin{split}
\frac{1}{\varphi(\ell)} \sum_{d\mid q} \mu (d) \chi_0 (d)
\sum\limits_{\substack{1\leqslant m\leqslant \lfloor N/d\rfloor \\
(m,\ell)=1}} 1 & =\frac{1}{\varphi (\ell)} \sum_{d\mid q} \mu (d)
\chi_0 (d) \left( \frac{\varphi(\ell)}{\ell} \bigg(
\frac{N}{d}+O(1)\bigg) \right) \\ &  =\frac{\vert \II\vert}{\ell}
\sum_{d\mid q} \frac{\mu (d) \chi_0 (d)}{d}+O\left( \sigma_0
(q)\right) .
\end{split}
\end{equation}
In the contribution of non-principal characters to \eqref{3.3},
\begin{equation}\label{3.5}
\frac{1}{\varphi(\ell)} \sum_{d\mid q} \mu (d)
\sum\limits_{\substack{\chi\in \widehat{G} \\ \chi \neq \chi_0}}
\chi (d) \chi (\bar{r}) \sum_{m=1}^{\lfloor N/d\rfloor} \chi (m),
\end{equation}
the innermost sum is $\ll_\ell 1$ (by P\' olya-Vinogradov or a weaker
inequality), showing that the quantity in \eqref{3.5} is
$\ll_\ell \sigma_0 (q)$. The statement follows now because the sum
in \eqref{3.4} is equal to $\sum_{d\mid q}
\frac{\mu(d)}{d}=\frac{\varphi(q)}{q}$ when $(q,\ell)=1$, while
when $(q,\ell)>1$, writing $q=p_1^{\alpha_1} \cdots p_r^{\alpha_r} \tilde{q}$ with
$p_1,\ldots,p_r$ prime divisors of $\ell$ and $(\tilde{q},\ell)=1$, this sum is equal to
\begin{equation*}
\sum_{d\mid \tilde{q}} \frac{\mu (d)}{d} =
\frac{\varphi(\tilde{q})}{\tilde{q}} =\frac{\varphi (q)}{q}
\prod\limits_{i=1}^r
\left( 1-\frac{1}{p_i}\right)^{-1} = A(q,\ell)\,
\frac{\varphi(q)}{q} ,
\end{equation*}
as desired.
\end{proof}

We also need a slight extension of Lemma \ref{L3.4}.
Suppose that $(r,\ell)=1$ and denote by $\bar{x}$ the multiplicative
inverse of $x\pmod{\ell q}$ when $(x,\ell q)=1$. The Kloosterman type sums
\begin{equation*}
\begin{split}
K(m,n;\ell q) & :=\sum\limits_{\substack{x\hspace{-6pt}\pmod{\ell q} \\
(x,\ell q)=1}} e\left( \frac{mx+n\bar{x}}{\ell q} \right) ,\quad
\widetilde{K}_r (m,n;\ell q):= \sum\limits_{\substack{x\hspace{-6pt}\pmod{\ell q} \\
(x,q)=1,\, x\equiv r\hspace{-6pt}\pmod{\ell}}} e\left( \frac{mx+n\bar{x}}{\ell q}\right), \\
K_{\II,r}(m,n;\ell q) & :=\sum\limits_{\substack{x\in \II,\, (x,q)=1 \\
x\equiv r \hspace{-6pt} \pmod{\ell q}}} e\left(
\frac{mx+n\bar{x}}{\ell q}\right),
\end{split}
\end{equation*}
will be used to estimate
\begin{equation*}
\widetilde{N}_{q,\ell,r,h}(\II_1,\II_2)  :=\# \big\{ (x,y)\in \II_1
\times \II_2 : (x,q)=1, x\equiv r\hspace{-6pt}\pmod{\ell}, xy\equiv
h \hspace{-6pt} \pmod{\ell q}\big\}
\end{equation*}

\begin{lemma}\label{L3.6}
When $(r,\ell)=1$, for any interval $\II$ of length less than $q$,
\begin{equation*}
\vert K_{\II,r} (0,n;\ell q)\vert
\ll_{\ell,\delta} (n,q)^{\frac{1}{2}} q^{\frac{1}{2}+\delta}.
\end{equation*}
\end{lemma}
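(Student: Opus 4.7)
The plan is the classical completion-of-sums technique followed by Weil's bound for Kloosterman sums. Since $|\II|<q\le \ell q$, I would first expand $\mathbf{1}_{\II}$ as a finite Fourier series modulo $\ell q$,
$$\mathbf{1}_{\II}(x)=\frac{1}{\ell q}\sum_{h\,\pmod{\ell q}}\hat a(h)\, e\!\left(\frac{hx}{\ell q}\right),\qquad \hat a(h):=\sum_{y\in \II}e\!\left(-\frac{hy}{\ell q}\right),$$
so that $|\hat a(0)|=|\II|$, $|\hat a(h)|\le \min(|\II|,\ell q/|h|)$ for $h\ne 0$, and consequently $\sum_h|\hat a(h)|\ll \ell q\log(\ell q)$. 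Substituting this expansion into the definition of $K_{\II,r}(0,n;\ell q)$ and interchanging summation yields
$$K_{\II,r}(0,n;\ell q)=\frac{1}{\ell q}\sum_{h\,\pmod{\ell q}}\hat a(h)\,\widetilde{K}_r(h,n;\ell q),$$
where $\widetilde{K}_r$ is the complete twisted sum defined in the paper.

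The core step is to bound $\widetilde{K}_r(h,n;\ell q)$ by reducing it, via the Chinese Remainder Theorem, to a classical Kloosterman sum of modulus $q$. In the generic case $\gcd(\ell,q)=1$, decomposing $x\pmod{\ell q}$ as the CRT pair $(r,x_2)$ with $(x_2,q)=1$ and using $\frac{1}{\ell q}\equiv\frac{\bar q}{\ell}+\frac{\bar\ell}{q}\pmod 1$, the constraint $x\equiv r\pmod\ell$ contributes only a pure phase and the sum factors as
$$\widetilde{K}_r(h,n;\ell q)=e\!\left(\frac{(hr+n\bar r)\bar q}{\ell}\right)K(h\bar\ell,n\bar\ell\,;\,q).$$
Weil's bound (extended to composite moduli by Estermann) gives $|K(a,b;q)|\ll_\delta (a,b,q)^{1/2}q^{1/2+\delta}$; when $h=0$ the inner sum degenerates to a Ramanujan sum bounded by $(n,q)\le (n,q)^{1/2}q^{1/2}$, which obeys the same estimate. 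When $\gcd(\ell,q)>1$, an analogous prime-power by prime-power factorization handles the overlap with an $\ell$-dependent constant. In every case
$$|\widetilde{K}_r(h,n;\ell q)|\ll_{\ell,\delta}(h,n,q)^{1/2}q^{1/2+\delta}\le (n,q)^{1/2}q^{1/2+\delta}.$$

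Combining these ingredients and absorbing the $\log(\ell q)$ factor into $q^{\delta}$ then gives
$$|K_{\II,r}(0,n;\ell q)|\ll_{\ell,\delta}\frac{1}{\ell q}\cdot(n,q)^{1/2}q^{1/2+\delta}\cdot \ell q\log(\ell q)\ll_{\ell,\delta}(n,q)^{1/2}q^{1/2+\delta},$$
as claimed. The main technical obstacle is the Weil-type bound for $\widetilde{K}_r$ when $\gcd(\ell,q)>1$: one must split $\ell q$ into its prime-power divisors, isolate the prime powers shared by $\ell$ and $q$, and verify that the twisted constraint $x\equiv r\pmod\ell$ does not spoil the square-root cancellation on the $q$-part. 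The hypothesis $(r,\ell)=1$ guarantees that $x$ remains invertible modulo $\ell q$ throughout, so the reduction does go through with only an $O_\ell(1)$ loss, consistent with the stated $\ll_{\ell,\delta}$ estimate.
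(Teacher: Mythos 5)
Your completion-of-sums step is identical to the paper's, and your final assembly (treating $h=0$ separately, bounding $\sum_{h\neq 0}|\hat a(h)|\ll \ell q\log(\ell q)$, absorbing the logarithm into $q^\delta$) matches the paper's estimate \eqref{3.6}--\eqref{3.8} style argument. Where you differ is in the middle step: how to bound the incomplete-modulus-$\ell q$ sum $\widetilde K_r(h,n;\ell q)$. You factor it via the Chinese Remainder Theorem, writing $1/(\ell q)\equiv\bar q/\ell+\bar\ell/q\pmod 1$ so that when $(\ell,q)=1$ the constraint $x\equiv r\pmod\ell$ contributes only a constant phase and the $q$-part collapses to a genuine Kloosterman sum $K(h\bar\ell,n\bar\ell;q)$, which you then bound by Weil at modulus $q$. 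The paper instead detects the congruence $x\equiv r\pmod\ell$ by an additive character sum $\frac{1}{\ell}\sum_{j\pmod\ell}e\!\left(\frac{j(x-r)}{\ell}\right)$, which absorbs $e(jx/\ell)=e(jqx/(\ell q))$ into a shifted frequency and gives
\begin{equation*}
|\widetilde K_r(m,n;\ell q)|\leqslant \max_{j\pmod\ell}|K(m+jq,n;\ell q)|,
\end{equation*}
a \emph{complete} Kloosterman sum at modulus $\ell q$ which Weil bounds uniformly by $\sigma_0(\ell q)(m+jq,n,\ell q)^{1/2}(\ell q)^{1/2}\ll_{\ell,\delta}(n,q)^{1/2}q^{(1+\delta)/2}$. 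The trade-off: your CRT route is conceptually cleaner in that it passes directly to modulus $q$, but it requires a genuine case distinction at $\gcd(\ell,q)>1$, where the residues modulo $\ell$ and modulo $q$ are no longer independent and one must split $\ell q$ into prime powers and handle the overlapping prime separately (you sketch this but do not carry it out). The paper's additive-character sieve avoids that case split entirely; the $\ell$-dependent loss is simply the factor $\ell$ in $(n,\ell q)\leqslant\ell(n,q)$ and the max over $j\pmod\ell$, and the argument is uniform in $\gcd(\ell,q)$. Both are valid; the paper's is marginally tighter as a self-contained proof, while yours is equally rigorous once the prime-power overlap is spelled out (which, as you correctly note, again costs only an $O_\ell(1)$ factor).
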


\begin{proof}
We write
\begin{equation*}
\begin{split}
K_{\II,r}(0,n;\ell q) & =\sum\limits_{\substack{x\in \II,\, (x,q)=1 \\
x\equiv r \hspace{-6pt} \pmod{\ell}}} e\left(
\frac{n\bar{x}}{\ell q} \right)
= \sum\limits_{\substack{x\hspace{-6pt}\pmod{\ell q}\\ (x,q)=1,\,
x \equiv r \hspace{-6pt} \pmod{\ell}}} e\left(
\frac{n\bar{x}}{\ell q}\right)\sum_{y\in \II} \frac{1}{\ell q}
\sum_{k\hspace{-6pt}\pmod{\ell q}} e\left( \frac{k(y-x)}{\ell
q}\right) \\ & =\frac{1}{\ell q} \sum_{k\hspace{-6pt} \pmod{\ell
q}} \widetilde{K}_r (-k,n;\ell q) \sum_{y\in \II} e\left( \frac{ky}{\ell
q}\right),
\end{split}
\end{equation*}
and
\begin{equation*}
\begin{split}
\vert \widetilde{K}_r (m,n;\ell q) \vert &
= \Bigg| \sum\limits_{\substack{x\hspace{-6pt}\pmod{\ell q} \\
(x,\ell q)=1}} e \left( \frac{mx+n\bar{x}}{\ell q}\right)
\frac{1}{\ell} \sum_{j\hspace{-6pt}\pmod{\ell}} e\left(
\frac{j(x-r)}{\ell}\right) \Bigg|  \\ & =\frac{1}{\ell} \Bigg|
\sum_{j\hspace{-6pt} \pmod{\ell}} e\left( -\frac{jr}{\ell}\right)
K(m+jq,n;\ell q) \Bigg| \leqslant \max_j \left| K(m+jq,n;\ell q)\right| .
\end{split}
\end{equation*}
Employing\footnote{Here $\| x\|=\operatorname{dist} (x,\Z)$, $x\in\R$, and $c_q(n)=K(n,0;q)=K(0,n;q)$ is the Ramanujan sum.}
\begin{equation}\label{3.6}
\Bigg| \sum_{y\in \II} e\left(\frac{ky}{\ell q}\right)\Bigg|
\leqslant  \min \Bigg\{ \vert \II\vert +1 , \frac{1}{2\big\| \frac{k}{\ell q} \big\|} \Bigg\},
\end{equation}
\begin{equation*}
\vert K(0,n;\ell q)\vert =\vert c_{\ell q} (n)\vert \leqslant
(n,\ell q)\leqslant (n,q) \ell \ll_\ell (n,q)^{\frac{1}{2}} q^{\frac{1}{2}},
\end{equation*}
and the Weil estimate
\begin{equation*}
\vert K(m+jq,n;\ell q)\vert \leqslant \sigma_0 (\ell q)
(m+jq,n,\ell q)^{\frac{1}{2}} (\ell q)^{\frac{1}{2}}
\ll_{\ell,\delta} (n,q)^{\frac{1}{2}} q^{\frac{1+\delta}{2}},
\end{equation*}
we infer
\begin{equation*}
\begin{split}
\vert K_{\II,r} (0,n;\ell q)\vert & \leqslant \frac{\vert \II\vert +1}{\ell q}
\, \vert \widetilde{K}_r (0,n;\ell q)\vert + \frac{1}{\ell q}
\sum\limits_{\substack{k\hspace{-6pt}\pmod{\ell q} \\ k\neq 0}}
 \frac{\vert \widetilde{K}_r (-k,n;\ell q)\vert}{\big\|
\frac{k}{\ell q}\big\|} \\ & \ll_{\ell,\delta}
(n,q)^{\frac{1}{2}} q^{\frac{1+\delta}{2}} +\frac{1}{2\ell q}\cdot (n,q)^{\frac{1}{2}}
q^{\frac{1+\delta}{2}} \ell q \log (\ell q) \ll_\ell
(n,q)^{\frac{1}{2}} q^{\frac{1}{2}+\delta},
\end{split}
\end{equation*}
as desired.
\end{proof}

\begin{lemma}\label{L3.7}
Suppose that $(r,\ell)=1$. For any intervals $\II_1$ and $\II_2$ of
length less than $q$, any integer $h$ and any $\delta >0$,
\begin{equation*}
\widetilde{N}_{q,\ell,r,h} (\II_1,\II_2) = \frac{A(q,\ell)}{\ell^2}
\cdot \frac{\varphi(q)}{q^2} \, \vert \II_1\vert\, \vert \II_2\vert
+O_{\ell,\delta} \Big( (h,q)^{\frac{1}{2}}
q^{\frac{1}{2}+\delta}\Big) .
\end{equation*}
\end{lemma}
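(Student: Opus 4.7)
The plan is to evaluate $\widetilde{N}_{q,\ell,r,h}(\II_1,\II_2)$ by Fourier expansion of the congruence $xy\equiv h\pmod{\ell q}$, extracting the main term from the zero frequency and controlling the non-zero frequencies through the Kloosterman-type bound of Lemma~\ref{L3.6}. Since $(r,\ell)=1$, every $x$ satisfying $x\equiv r\pmod{\ell}$ and $(x,q)=1$ is automatically coprime to $\ell q$, so $x$ is invertible modulo $\ell q$ and $xy\equiv h\pmod{\ell q}$ is equivalent to $y\equiv h\bar{x}\pmod{\ell q}$.

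First I would insert the standard detector $\chi_{y\equiv h\bar x\hspace{-6pt}\pmod{\ell q}}=\frac{1}{\ell q}\sum_{k\hspace{-6pt}\pmod{\ell q}}e\bigl(\frac{k(y-h\bar x)}{\ell q}\bigr)$ and interchange summations to write
\[
\widetilde N_{q,\ell,r,h}(\II_1,\II_2)=\frac{1}{\ell q}\sum_{k\hspace{-6pt}\pmod{\ell q}}\biggl(\sum_{y\in\II_2}e\Bigl(\frac{ky}{\ell q}\Bigr)\biggr)\,K_{\II_1,r}(0,-kh;\ell q).
\]
The $k=0$ term equals $\frac{|\II_2|+O(1)}{\ell q}\,\#\{x\in\II_1:(x,q)=1,\,x\equiv r\hspace{-6pt}\pmod{\ell}\}$, and Lemma~\ref{L3.5} evaluates the cardinality, producing the announced main term $\frac{A(q,\ell)\varphi(q)}{\ell^2 q^2}\,|\II_1|\,|\II_2|$ together with a negligible error $O_\ell(\sigma_0(q))$. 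For $k\neq 0$, Lemma~\ref{L3.6} yields $|K_{\II_1,r}(0,-kh;\ell q)|\ll_{\ell,\delta}(kh,q)^{1/2}q^{1/2+\delta}$, while \eqref{3.6} gives $|\sum_{y\in\II_2}e(ky/(\ell q))|\leqslant\min(|\II_2|+1,\frac{1}{2\|k/(\ell q)\|})$; invoking the prime-by-prime inequality $(kh,q)\leqslant(k,q)(h,q)$ then bounds the total non-zero contribution by
\[
\frac{(h,q)^{1/2}q^{1/2+\delta}}{\ell q}\sum_{k=1}^{\ell q-1}(k,q)^{1/2}\min\!\Bigl(|\II_2|+1,\tfrac{1}{2\|k/(\ell q)\|}\Bigr).
\]

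The hard part will be controlling this weighted sum so that the parasitic $(k,q)^{1/2}$ factor coming from the Kloosterman bound does not destroy the estimate. I would use $(k,q)^{1/2}\leqslant(k,q)=\sum_{d\mid(k,q)}\varphi(d)$ and reorganize the sum by the divisor $d\mid q$ of $k$: writing $k=dm$ with $1\leqslant m\leqslant\ell q/d-1$, the identity $\|k/(\ell q)\|=\|m/(\ell q/d)\|$ reduces the sum to $\sum_{d\mid q}\varphi(d)\sum_{m=1}^{\ell q/d-1}\min(q+1,1/\|m/(\ell q/d)\|)$. The inner sum is $\ll(\ell q/d)\log(\ell q)$ by the classical estimate $\sum_{m=1}^{N-1}1/\|m/N\|\ll N\log N$, while $\sum_{d\mid q}\varphi(d)/d\leqslant\sigma_0(q)\ll_\delta q^\delta$. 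Substituting back yields the claimed bound $O_{\ell,\delta}\bigl((h,q)^{1/2}q^{1/2+\delta}\bigr)$ after a harmless reabsorption of $\delta$.
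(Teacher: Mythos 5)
Your proposal is correct and takes essentially the same route as the paper: the same Fourier detector for $y\equiv h\bar x\pmod{\ell q}$, the same split into the $k=0$ main term evaluated via Lemma~\ref{L3.5} and the $k\neq 0$ error controlled by Lemma~\ref{L3.6} together with \eqref{3.6} and the multiplicativity inequality $(kh,q)\leqslant(k,q)(h,q)$. The only difference is cosmetic: where the paper invokes the computation from \cite[Proposition A.3]{BZ1} to dispatch the remaining sum $\sum_{k\neq 0}(k,q)^{1/2}/\|k/(\ell q)\|$, you spell out the self-contained divisor reorganization ($k=dm$, $(k,q)=\sum_{d\mid(k,q)}\varphi(d)$, and the classical bound $\sum_{m<N}1/\|m/N\|\ll N\log N$), which is a valid expansion of the same step.
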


\begin{proof}
We write
\begin{equation*}
\widetilde{N}_{q,\ell,r,h} (\II_1,\II_2) =\sum\limits_{\substack{x\in
\II_1,\, y\in \II_2 \\ (x,q)=1,\, x\equiv r \hspace{-6pt}\pmod{\ell}}}
\frac{1}{\ell q} \sum_{k\hspace{-6pt}\pmod{\ell q}} e\left(
\frac{k(y-h\bar{x})}{\ell q}\right) =M+E,
\end{equation*}
where $\bar{x}$ denotes the multiplicative inverse of
$x\pmod{\ell q}$ and
\begin{equation}\label{3.7}
\begin{split}
E = & \frac{1}{\ell q} \sum\limits_{\substack{k\hspace{-6pt}\pmod{\ell q} \\
k\neq 0}} \left( \,\sum_{y\in \II_2} e\left( \frac{ky}{\ell q} \right)
\right) K_{\II_1,r} (0,-hk;\ell q),\\ M = & \frac{1}{\ell q}
\sum\limits_{\substack{x\in \II_1,\,y\in \II_2 \\ (x,q)=1,\, x\equiv
r\hspace{-6pt}\pmod{\ell}}} 1 = \frac{1}{\ell q} \left(
\frac{A(q,\ell)}{\ell} \cdot \frac{\varphi(q)}{q}\, \vert \II_1
\vert +O\left( \sigma_0 (q)\right) \right) \Big( \vert \II_2\vert
+O(1)\Big) \\
= & \frac{A(q,\ell)}{\ell^2} \cdot \frac{\varphi(q)}{q^2} \, \vert
\II_1\vert\,\vert \II_2\vert +O_\ell \left( \sigma_0 (q)\right) .
\end{split}
\end{equation}
From \eqref{3.6}, Lemma \ref{L3.6} and $(hk,\ell q) \leqslant ( h
, q) (k,q)\ell $, we infer as in the proof of \cite[Proposition
A.3]{BZ1}
\begin{equation}\label{3.8}
\begin{split}
\vert E\vert & \ll_{\ell,\delta} q^{-\frac{1}{2}+\frac{\delta}{2}}
\sum\limits_{\substack{k\hspace{-6pt} \pmod{\ell q} \\ k\neq 0}}
\frac{(hk,\ell q)^{1/2}}{\big\| \frac{k}{\ell q}\big\|}
\ll_{\ell,\delta} q^{-\frac{1}{2}+\frac{\delta}{2}}
(h,q)^{\frac{1}{2}}
\sum\limits_{\substack{k\hspace{-6pt}\pmod{\ell q} \\ k\neq 0}}
\frac{(k,q)^{1/2}}{\big\| \frac{k}{\ell q}\big\|} \\
& \ll q^{\frac{1+\delta}{2}} (h,q)^{\frac{1}{2}} \sum_{1\leqslant
k\leqslant (q-1)/2} \frac{(k,q)^{1/2}}{k} \ll_\delta
(h,q)^{\frac{1}{2}} q^{\frac{1}{2}+\delta}.
\end{split}
\end{equation}
The statement follows now from \eqref{3.7} and \eqref{3.8}.
\end{proof}

\begin{lemma}\label{L3.8}
Suppose that $(r,\ell)=1$. Let $\II_1$, $\II_1$ be intervals of length
less than $q$, $f\in C^1(\II_1 \times \II_2)$, and $h\in\Z$. For
any integer $T>1$ and any $\delta > 0$,
\begin{equation*}
\sum\limits_{\substack{x\in \II_1, y\in \II_2 \\ (x,q)=1,\, x\equiv
r \hspace{-6pt}\pmod{\ell} \\
xy\equiv h \hspace{-5pt}\pmod{\ell q}}} f(x,y) =
\frac{A(q,\ell)}{\ell^2} \cdot \frac{\varphi (q)}{q^2} \iint_{\II_1
\times \II_2} f(u,v)\, du\, dv +\EE,
\end{equation*}
with
\begin{equation*}
\EE \ll_{\ell,\delta} T^2 \| f\|_\infty q^{\frac{1}{2}+\delta}
(h,q)^{\frac{1}{2}} +T \| \nabla f\|_\infty q^{\frac{3}{2}+\delta}
(h,q)^{\frac{1}{2}} +\frac{\| \nabla f\|_\infty\vert \II_1 \vert
\vert \II_2\vert}{T}.
\end{equation*}
\end{lemma}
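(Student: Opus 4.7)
My plan is to adapt the proof of Lemma \ref{L3.4} (Proposition A4 of \cite{BZ1}) by replacing its underlying counting estimate with Lemma \ref{L3.7} above, which already yields the correct main term $\frac{A(q,\ell)}{\ell^2}\cdot\frac{\varphi(q)}{q^2}|\II_1||\II_2|$ and error $O_{\ell,\delta}((h,q)^{1/2}q^{1/2+\delta})$ in the case $f\equiv 1$. The congruence $x\equiv r\pmod{\ell}$ has therefore already been absorbed at the character-sum level (Schur orthogonality on $U(\Z/\ell\Z)$ combined with the Kloosterman bound of Lemma \ref{L3.6}), so what remains is the routine promotion from a counting result to a $C^1$-weighted sum.

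Concretely, I would partition $\II_1\times\II_2$ into $T^2$ sub-rectangles $R_{ij}=\II_1^{(i)}\times\II_2^{(j)}$ of sides $|\II_1|/T$ and $|\II_2|/T$, fix a base point $p_{ij}\in R_{ij}$, and decompose $f(x,y)=f(p_{ij})+(f(x,y)-f(p_{ij}))$ on $R_{ij}$, the remainder bounded pointwise by $\|\nabla f\|_\infty(|\II_1|+|\II_2|)/T$. Applying Lemma \ref{L3.7} to each $R_{ij}$ and weighting by $f(p_{ij})$, the main parts assemble into a Riemann sum for $\frac{A(q,\ell)\varphi(q)}{\ell^2 q^2}\iint_{\II_1\times\II_2}f\,du\,dv$, accurate up to $\|\nabla f\|_\infty|\II_1||\II_2|/T$ after using $|\II_i|\leqslant q$ to absorb the $(|\II_1|+|\II_2|)/T$ factor against the prefactor $\frac{1}{\ell^2 q^2}$. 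The $T^2$ instances of the Lemma \ref{L3.7} remainder, each multiplied by $\|f\|_\infty$, aggregate to the first error term $T^2\|f\|_\infty q^{1/2+\delta}(h,q)^{1/2}$.

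The $f-f(p_{ij})$ contributions are bounded by $\|\nabla f\|_\infty(|\II_1|+|\II_2|)/T$ times a local constrained-lattice count. Splitting each such count via Lemma \ref{L3.7} into its main and error parts and summing over the $T^2$ sub-rectangles, the main part contributes $\ll \|\nabla f\|_\infty|\II_1||\II_2|/T$ (absorbed into the third term), while the error part, after using $|\II_1|+|\II_2|\leqslant 2q$, produces the middle term $T\|\nabla f\|_\infty q^{3/2+\delta}(h,q)^{1/2}$.

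The main obstacle is essentially bookkeeping: tracking how the three error sources compound and verifying that they aggregate with exactly the advertised dependence on $T$ and on $(h,q)$. The only substantive departure from \cite{BZ1} is that the Weil bound used there is replaced here by Lemma \ref{L3.6}; this is where all the $\ell$-dependence in the implied constants originates, and it is the one step at which the joint condition $(x,q)=1$ and $x\equiv r\pmod{\ell}$ introduces any genuine complication beyond the $(x,q)=1$ case of \cite{BZ1}.
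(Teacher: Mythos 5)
Your proposal is correct and matches the paper's approach: the paper's proof reads ``This plainly follows from Lemma \ref{L3.7} as in the proof of [BGZ0, Lemma 2.2],'' which is precisely the $T^2$-rectangle Riemann-sum decomposition you describe, with Lemma \ref{L3.7} supplying both the main term and the per-rectangle error. Your bookkeeping of the three error contributions is accurate, including the use of $|\II_1|+|\II_2|\leqslant 2q$ and $A(q,\ell)\varphi(q)/(\ell^2 q^2)\ll 1/q$ to absorb the Riemann-sum discrepancy into the $\|\nabla f\|_\infty |\II_1||\II_2|/T$ term.
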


\begin{proof}
This plainly follows from Lemma \ref{L3.7} as in the proof of \cite[Lemma 2.2]{BGZ0}.
\end{proof}

Only the case $\ell=3$ and $x\equiv r\nequiv 0\hspace{-3pt}\pmod{3}$
is needed here, with main term given by
\begin{equation*}
\sum\limits_{\substack{x\in \II_1 ,\, y\in \II_2 \\ (x,q)=1,\, x\equiv r
\hspace{-6pt} \pmod{3} \\ xy\equiv h \hspace{-6pt}\pmod{3q}}} f(x,y)
\sim \frac{\varphi(q)}{q^2}\left(
\iint_{\II_1\times \II_2} f(x,y)\, dx\, dy\right) \cdot \begin{cases} \frac{1}{9} &
\mbox{\rm if $3\nmid q,$} \\ \frac{1}{6} & \mbox{\rm if $3\mid
q.$} \end{cases}
\end{equation*}

\section{Coding the linear flow in $\Z^2_{(3)}$ and the three-strip partition}\label{Sect4}
To keep notation short denote
\begin{equation*}
x\vee y=\max\{ x,y\},\quad x\wedge y =\min\{ x,y\},\quad x_+
=\max\{ x,0\}.
\end{equation*}
Consider $c,c^\prime,\delta,\xi$ and $I$ as in the statement of
Theorem \ref{T21} and $E_{c,c^\prime,\delta} (Q)$ as in
\eqref{2.5}. For $(A_Q)$, $(B_Q)$ sequences of real numbers we
write $A_Q\approxeq B_Q$ when $A_Q = B_Q +O_{\delta,\xi} \big(
E_{c,c^\prime,\delta}(Q)\big)$, uniformly for $\xi$ in compact subsets of $[0,\infty)$.
Our primary aim is to estimate the quantity $\G_{I,Q} (\xi)$ from
Theorem \ref{T21}, associated with lattice points from $\Z^2_{(3)}$ with corresponding vertical
scatterers of width $2\eps=\frac{1}{Q}$, as $Q\rightarrow \infty$.

It is useful to recall first the approach and notation from
\cite{BZ2}. $\FF(Q)$ denotes the set of Farey fractions of order $Q$,
consisting of rational numbers
$\gamma=\frac{a}{q}$, $0<a\leqslant q\leqslant Q$, with $(a,q)=1$.
The interval $I$ will be first partitioned into
intervals $I_\gamma =(\gamma,\gamma^\prime)$ with
$\gamma,\gamma^\prime$ consecutive in $\FF_I(Q):=\FF (Q) \cap I$.
Each interval $I_\gamma$ is further partitioned into subintervals
$I_{\gamma,k}$, $k\in \Z$, defined as
\begin{equation*}
I_{\gamma,k}=(t_k,t_{k-1}],\quad I_{\gamma,0}=(t_0,u_0], \quad
I_{\gamma,-k} =(u_{k-1},u_k],\quad k\in \N,
\end{equation*}
where
\begin{equation*}
t_k=\frac{a_k-2\eps}{q_k},\quad u_k=\frac{a^\prime_k + 2
\eps}{q_k^\prime},\quad k\in \N_0,
\end{equation*}
and
\begin{equation*}
q_k=q^\prime +kq,\quad a_k=a^\prime+ka,\quad q_k^\prime
=q+kq^\prime,\quad a_k^\prime =a+ka^\prime,\quad k\in \Z,
\end{equation*}
satisfy the fundamental relations
\begin{equation*}
\begin{cases}
a_{k-1} q_k -a_k q_{k-1} =1=a_{k-1}q-aq_{k-1} ,\\
a_k^\prime q_{k-1}^\prime -a_{k-1}^\prime q_k^\prime =1=a^\prime
q^\prime_{k-1} -a^\prime_{k-1} q^\prime ,\quad k\in\Z,\\
2\eps q_k \wedge 2\eps q_k^\prime \geqslant 2\eps (q+q^\prime)
> 1,\quad k\geqslant 1.
\end{cases}
\end{equation*}
Consider also
\begin{equation*}
t:=\tan\omega \quad \mbox{\rm and} \quad \gamma_{k} =\frac{a_k}{q_k},\quad k\in \N .
\end{equation*}

As it will be seen shortly, the coding of the linear flow is
considerably more involved than in the case of the square lattice.
As a result our attempt of providing asymptotic results for the
repartition of the free path length will require additional
partitioning for each of the interval $I_{\gamma,k}$. For symmetry
reasons\footnote{Which are not geometrically obvious but become
apparent after translating $\G_{I,Q}(\xi)$ into sums involving
(sub)intervals (of) $I_{\gamma,k}$ and Farey fractions from
$\FF_I(Q)$.} the mediant intervals $(\gamma,\gamma_1]$ and
$(\gamma_1,\gamma^\prime]$ will contribute by the same amount to the
main term, so we shall only consider $t\in (\gamma,\gamma_1]$ and redefine
\begin{equation*}
I_{\gamma,0}:=(t_0,t_{-1}],\qquad t_{-1}:=\gamma_1 =\frac{a^\prime +a}{q^\prime +q} .
\end{equation*}
This explains the appearance of the factor $2$ in formula
\eqref{2.4}.

As in \cite[Section 3]{BZ2} we shall consider\footnote{Here we
use $t=\tan\omega$ as variable and use $\frac{dt}{t^2+t+1}$
instead of $d\omega$.}, when $t=\tan\omega\in I_{\gamma,k}$,
$k\in\N_0$,
\begin{equation*}
\begin{split}
& w_{\AA_0}(t) = a +2\eps -qt =q(u_0-t) ,\qquad w_{\CC_k}(t)=
a_{k-1}-2\eps-q_{k-1}t =q_{k-1} (t_{k-1}-t) ,\\ &
w_{\BB_k}(t) =q_k t -a_k+2\eps =q_k (t-t_k) \in [0,2\eps] ,
\end{split}
\end{equation*}
representing the widths of the bottom, center, and respectively
top channels $\AA_0$, $\CC_k$, $\BB_k$, of the three-strip
partition of $[0,1)^2$ (see Figures \ref{Figure5} and
\ref{Figure6}). Clearly
\begin{equation*}
\begin{cases}
2\eps = w_{\AA_0}(t) +w_{\BB_k}(t) +w_{\CC_k} (t), \\
1 = qw_{\AA_0}(t) +q_{k+1} w_{\CC_k}(t) + q_k w_{\BB_k}(t),
\end{cases} \qquad
\forall t \in I_{\gamma,k} .
\end{equation*}
Recall \cite{BZ2,CG1} that in the case of the square lattice the three
weights corresponding to $\omega$ are given by
\begin{equation}\label{4.1}
\begin{split}
& W_{\AA_0}(t)=(q-\xi Q)_+ w_{\AA_0}(t),\quad
W_{\BB_k}(t)=(q_k-\xi Q)_+ w_{\BB_k}(t),\\ &
W_{\CC_k}(t)=(q_{k+1}-\xi Q)_+ w_{\CC_k}(t).
\end{split}
\end{equation}
They reflect the area of the parallelogram of height given
respectively by $w_{\AA_0}$, $w_{\CC_k}$ or $w_{\BB_k}$, and
length given by the distance from $\xi Q$ to the bottom of the
corresponding sub-channel (if $\xi Q$ is lesser than the total
length of the sub-channel).

\begin{figure}[ht]
\centering
\unitlength 0.37mm
\begin{picture}(300,125)(-10,0)
\texture{cccc 0} \shade\path(0,0)(85,30)(85,35)(0,5)(0,0)

\texture{cccc 0000}
\shade\path(0,5)(300,110.882)(300,113.382)(0,7.5)(0,5)

\texture{c 0000}
\shade\path(0,7.5)(215,83.382)(215,85.8823)(0,10)(0,7.5)


\path(0,0)(280,0) \path(0,0)(85,25)(85,35)(0,10)(0,0)
\path(0,0)(85,30) \path(85,35)(0,10)
\path(215,83.3823)(215,93.3823)(300,118.382)(300,108.382)(215,83.3823)
\dottedline{2}(215,85.882)(300,115.8823)
\path(130,57.8823)(130,67.8823) \path(0,7.5)(300,113.3823)
\path(0,5)(300,110.88236)

\put(-5,-7){\makebox(0,0){{\scriptsize $(0,-\eps)$}}}
\put(-8,17){\makebox(0,0){{\scriptsize $(0,\eps)$}}}
\put(-8,-1){\makebox(0,0){{\tiny ${\mathcal A_0}$}}}
\put(-8,5){\makebox(0,0){{\tiny ${\mathcal C_k}$}}}
\put(-8,10.5){\makebox(0,0){{\tiny ${\mathcal B_k}$}}}
\put(0,0){\makebox(0,0){{\tiny $\circ$}}}
\put(0,10){\makebox(0,0){{\tiny $\circ$}}}
\put(85,25){\makebox(0,0){{\tiny $\circ$}}}
\put(85,35){\makebox(0,0){{\tiny $\circ$}}}

\put(300,108.382){\makebox(0,0){{\tiny $\circ$}}}
\put(300,118.382){\makebox(0,0){{\tiny $\circ$}}}
\put(215,83.3823){\makebox(0,0){{\tiny $\circ$}}}
\put(215,93.3823){\makebox(0,0){{\tiny $\circ$}}}

\put(130,58.05){\makebox(0,0){{\tiny $\circ$}}}
\put(130,67.3823){\makebox(0,0){{\tiny $\circ$}}}

\put(100,58.05){\makebox(0,0){{\scriptsize $(q_{k-1},a_{k-1}-\eps)$}}}
\put(100,67.3823){\makebox(0,0){{\scriptsize $(q_{k-1},a_{k-1}+\eps)$}}}

\put(103,33){\makebox(0,0){{\scriptsize $(q,a+\eps)$}}}
\put(103,23){\makebox(0,0){{\scriptsize $(q,a-\eps)$}}}

\put(320,101){\makebox(0,0){{\scriptsize $(q_{k+1},a_{k+1}-\eps)$}}}
\put(320,124){\makebox(0,0){{\scriptsize $(q_{k+1},a_{k+1}+\eps)$}}}

\put(185,94){\makebox(0,0){{\scriptsize $(q_k,a_k+\eps)$}}}
\put(185,84){\makebox(0,0){{\scriptsize $(q_k,a_k-\eps)$}}}
\put(25,3.5){\makebox(0,0){{\scriptsize $\omega$}}}
\put(0,0){\arc{40}{-0.34}{0}}
\end{picture}

\caption{The three-strip partition of $\R^2/\Z^2$ when
$t\in I_{\gamma,k}$} \label{Figure5}
\end{figure}

\begin{figure}[ht]
\centering
\unitlength 0.27mm
\begin{picture}(200,140)(20,0)
\texture{cccc 0000} \shade\path(0,14)(0,5)(141,52)(141,61)(0,14)

\texture{c 0000} \shade\path(0,14)(0,20)(90,50)(90,44)(0,14)

\texture{cccc 0} \shade\path(0,0)(0,5)(51,22)(51,17)(0,0)

\path(51,2)(51,22)(141,52) \path(141,46)(192,63)
\path(192,54)(102,24) \path(102,19)(51,2)

\path(90,44)(90,64)(180,94) \path(180,88)(231,105)
\path(231,96)(141,66) \path(141,61)(90,44)

\path(141,46)(141,66)(231,96) \path(231,90)(282,107)
\path(282,98)(192,68) \path(192,63)(141,46)

\path(-51,-2)(-51,18)(39,48) \path(39,42)(90,59)
\path(90,50)(0,20) \path(0,15)(-51,-2)

\path(39,42)(39,62)(129,92) \path(129,86)(180,103)

\path(51,22)(51,2) \path(90,44)(90,64) \path(141,46)(141,66)

\path(141,66)(141,46) \path(180,88)(180,108)\path(231,90)(231,110)

\path(102,24)(102,4) \path(141,46)(141,66) \path(192,48)(192,68)

\path(192,68)(192,48) \path(231,90)(231,110)\path(282,92)(282,112)

\path(39,42)(39,62) \path(90,44)(90,64)

\path(129,86)(129,106)

\Thicklines \path(0,0)(0,20)(90,50) \path(90,44)(141,61)
\path(141,52)(51,22) \path(51,17)(0,0)

\end{picture}

\caption{The tiling $\SS_\omega$ of the plane (shaded region represents $\R^2/\Z^2$)} \label{Figure6}
\end{figure}

The range for $q_k=q^\prime + kq$, respectively $q_k^\prime$, will be
\begin{equation*}
q_k \in \II_{q,k}:=\big(Q+(k-1)q,Q+kq\big],\quad \mbox{\rm
respectively} \quad q^\prime_k \in \II_{q^\prime ,k}.
\end{equation*}

\begin{figure}[ht]
\center
\unitlength 0.35mm
\begin{picture}(550,310)(-40,0)

\texture{cccc 0000}

\shade\path(25,25)(50,25)(50,50)(25,50)(25,25)
\shade\path(50,50)(75,50)(75,75)(50,75)(50,50)
\shade\path(75,75)(100,75)(100,100)(75,100)(75,75)
\shade\path(100,100)(125,100)(125,125)(100,125)(100,100)
\shade\path(125,125)(150,125)(150,150)(125,150)(125,125)
\shade\path(150,150)(175,150)(175,175)(150,175)(150,150)

\shade\path(125,50)(125,75)(150,75)(150,50)(125,50)
\shade\path(150,75)(150,100)(175,100)(175,75)(150,75)
\shade\path(175,100)(175,125)(200,125)(200,100)(175,100)
\shade\path(200,125)(200,150)(225,150)(225,125)(200,125)
\shade\path(225,150)(225,175)(250,175)(250,150)(225,150)
\shade\path(250,175)(250,200)(275,200)(275,175)(250,175)
\shade\path(275,200)(275,225)(300,225)(300,200)(275,200)
\shade\path(300,225)(300,250)(325,250)(325,225)(300,225)
\shade\path(325,250)(325,275)(350,275)(350,250)(325,250)
\shade\path(350,275)(350,300)(375,300)(375,275)(350,275)

\texture{c 0000} \shade\path(50,25)(75,25)(75,50)(50,50)(50,25)
\shade\path(25,0)(25,25)(50,25)(50,0)(25,0)
\shade\path(75,50)(75,75)(100,75)(100,50)(75,50)
\shade\path(100,75)(100,100)(125,100)(125,75)(100,75)
\shade\path(125,100)(125,125)(150,125)(150,100)(125,100)
\shade\path(150,125)(150,150)(175,150)(175,125)(150,125)
\shade\path(175,150)(175,175)(200,175)(200,150)(175,150)
\shade\path(200,175)(200,200)(225,200)(225,175)(200,175)
\shade\path(225,200)(225,225)(250,225)(250,200)(225,200)
\shade\path(250,225)(250,250)(275,250)(275,225)(250,225)

\shade\path(225,125)(225,150)(250,150)(250,125)(225,125)
\shade\path(250,150)(250,175)(275,175)(275,150)(250,150)
\shade\path(275,175)(275,200)(300,200)(300,175)(275,175)
\shade\path(300,200)(300,225)(325,225)(325,200)(300,200)
\shade\path(325,225)(325,250)(350,250)(350,225)(325,225)
\shade\path(350,250)(350,275)(375,275)(375,250)(350,250)
\shade\path(375,275)(375,300)(400,300)(400,275)(375,275)

\texture{cccc 0}

\shade\path(0,25)(0,50)(25,50)(25,25)(0,25)
\shade\path(25,50)(25,75)(50,75)(50,50)(25,50)
\shade\path(50,75)(50,100)(75,100)(75,75)(50,75)

\shade\path(50,0)(50,25)(75,25)(75,0)(50,0)
\shade\path(75,25)(75,50)(100,50)(100,25)(75,25)
\shade\path(100,50)(100,75)(125,75)(125,50)(100,50)
\shade\path(125,75)(125,100)(150,100)(150,75)(125,75)
\shade\path(150,100)(150,125)(175,125)(175,100)(150,100)
\shade\path(175,125)(175,150)(200,150)(200,125)(175,125)
\shade\path(200,150)(200,175)(225,175)(225,150)(200,150)
\shade\path(225,175)(225,200)(250,200)(250,175)(225,175)
\shade\path(250,200)(250,225)(275,225)(275,200)(250,200)
\shade\path(275,225)(275,250)(300,250)(300,225)(275,225)
\shade\path(300,250)(300,275)(325,275)(325,250)(300,250)
\shade\path(325,275)(325,300)(350,300)(350,275)(325,275)

\shade\path(375,250)(375,275)(400,275)(400,250)(375,250)
\shade\path(350,225)(350,250)(375,250)(375,225)(350,225)
\shade\path(325,200)(325,225)(350,225)(350,200)(325,200)

\thinlines \path(0,200)(400,200) \path(0,175)(400,175)
\path(0,150)(400,150) \path(0,125)(400,125) \path(0,100)(400,100)
\path(0,75)(400,75) \path(0,50)(400,50) \path(0,25)(400,25)
\path(0,0)(400,0)

\path(0,0)(0,300) \path(400,0)(400,300) \path(25,0)(25,300)
\path(50,0)(50,300) \path(75,0)(75,300) \path(100,0)(100,300)
\path(125,0)(125,300) \path(150,0)(150,300) \path(175,0)(175,300)
\path(200,0)(200,300) \path(225,0)(225,300) \path(250,0)(250,300)
\path(275,0)(275,300) \path(300,0)(300,300) \path(325,0)(325,300)
\path(350,0)(350,300) \path(375,0)(375,300)


{\Thicklines
\path(0,25)(0,50)(25,50)(50,25)(50,0)(25,0)(0,25)}

\put(25,50){\makebox(0,0){{\tiny $\circ$}}}
\put(50,75){\makebox(0,0){{\tiny $\circ$}}}
\put(75,100){\makebox(0,0){{\tiny $\circ$}}}
\put(100,125){\makebox(0,0){{\tiny $\circ$}}}
\put(125,150){\makebox(0,0){{\tiny $\circ$}}}
\put(150,175){\makebox(0,0){{\tiny $\circ$}}}
\put(175,200){\makebox(0,0){{\tiny $\circ$}}}
\put(200,225){\makebox(0,0){{\tiny $\circ$}}}
\put(225,250){\makebox(0,0){{\tiny $\circ$}}}
\put(250,275){\makebox(0,0){{\tiny $\circ$}}}
\put(275,300){\makebox(0,0){{\tiny $\circ$}}}

\put(0,50){\makebox(0,0){{\tiny $\circ$}}}
\put(25,75){\makebox(0,0){{\tiny $\circ$}}}
\put(50,100){\makebox(0,0){{\tiny $\circ$}}}
\put(75,125){\makebox(0,0){{\tiny $\circ$}}}
\put(100,150){\makebox(0,0){{\tiny $\circ$}}}
\put(125,175){\makebox(0,0){{\tiny $\circ$}}}
\put(150,200){\makebox(0,0){{\tiny $\circ$}}}
\put(175,225){\makebox(0,0){{\tiny $\circ$}}}
\put(200,250){\makebox(0,0){{\tiny $\circ$}}}
\put(225,275){\makebox(0,0){{\tiny $\circ$}}}
\put(250,300){\makebox(0,0){{\tiny $\circ$}}}

\put(50,25){\makebox(0,0){{\tiny $\circ$}}}
\put(75,50){\makebox(0,0){{\tiny $\circ$}}}
\put(100,75){\makebox(0,0){{\tiny $\circ$}}}
\put(125,100){\makebox(0,0){{\tiny $\circ$}}}
\put(150,125){\makebox(0,0){{\tiny $\circ$}}}
\put(175,150){\makebox(0,0){{\tiny $\circ$}}}
\put(200,175){\makebox(0,0){{\tiny $\circ$}}}
\put(225,200){\makebox(0,0){{\tiny $\circ$}}}
\put(250,225){\makebox(0,0){{\tiny $\circ$}}}
\put(275,250){\makebox(0,0){{\tiny $\circ$}}}
\put(300,275){\makebox(0,0){{\tiny $\circ$}}}
\put(325,300){\makebox(0,0){{\tiny $\circ$}}}

\put(50,0){\makebox(0,0){{\tiny $\circ$}}}
\put(75,25){\makebox(0,0){{\tiny $\circ$}}}
\put(100,50){\makebox(0,0){{\tiny $\circ$}}}
\put(125,75){\makebox(0,0){{\tiny $\circ$}}}
\put(150,100){\makebox(0,0){{\tiny $\circ$}}}
\put(175,125){\makebox(0,0){{\tiny $\circ$}}}
\put(200,150){\makebox(0,0){{\tiny $\circ$}}}
\put(225,175){\makebox(0,0){{\tiny $\circ$}}}
\put(250,200){\makebox(0,0){{\tiny $\circ$}}}
\put(275,225){\makebox(0,0){{\tiny $\circ$}}}
\put(300,250){\makebox(0,0){{\tiny $\circ$}}}
\put(325,275){\makebox(0,0){{\tiny $\circ$}}}
\put(350,300){\makebox(0,0){{\tiny $\circ$}}}

\put(100,0){\makebox(0,0){{\tiny $\circ$}}}
\put(125,25){\makebox(0,0){{\tiny $\circ$}}}
\put(150,50){\makebox(0,0){{\tiny $\circ$}}}
\put(175,75){\makebox(0,0){{\tiny $\circ$}}}
\put(200,100){\makebox(0,0){{\tiny $\circ$}}}
\put(225,125){\makebox(0,0){{\tiny $\circ$}}}
\put(250,150){\makebox(0,0){{\tiny $\circ$}}}
\put(275,175){\makebox(0,0){{\tiny $\circ$}}}
\put(300,200){\makebox(0,0){{\tiny $\circ$}}}
\put(325,225){\makebox(0,0){{\tiny $\circ$}}}
\put(350,250){\makebox(0,0){{\tiny $\circ$}}}
\put(375,275){\makebox(0,0){{\tiny $\circ$}}}
\put(400,300){\makebox(0,0){{\tiny $\circ$}}}

\put(125,0){\makebox(0,0){{\tiny $\circ$}}}
\put(150,25){\makebox(0,0){{\tiny $\circ$}}}
\put(175,50){\makebox(0,0){{\tiny $\circ$}}}
\put(200,75){\makebox(0,0){{\tiny $\circ$}}}
\put(225,100){\makebox(0,0){{\tiny $\circ$}}}
\put(250,125){\makebox(0,0){{\tiny $\circ$}}}
\put(275,150){\makebox(0,0){{\tiny $\circ$}}}
\put(300,175){\makebox(0,0){{\tiny $\circ$}}}
\put(325,200){\makebox(0,0){{\tiny $\circ$}}}
\put(350,225){\makebox(0,0){{\tiny $\circ$}}}
\put(375,250){\makebox(0,0){{\tiny $\circ$}}}
\put(400,275){\makebox(0,0){{\tiny $\circ$}}}

\put(175,0){\makebox(0,0){{\tiny $\circ$}}}
\put(200,25){\makebox(0,0){{\tiny $\circ$}}}
\put(225,50){\makebox(0,0){{\tiny $\circ$}}}
\put(250,75){\makebox(0,0){{\tiny $\circ$}}}
\put(275,100){\makebox(0,0){{\tiny $\circ$}}}
\put(300,125){\makebox(0,0){{\tiny $\circ$}}}
\put(325,150){\makebox(0,0){{\tiny $\circ$}}}
\put(350,175){\makebox(0,0){{\tiny $\circ$}}}
\put(375,200){\makebox(0,0){{\tiny $\circ$}}}
\put(400,225){\makebox(0,0){{\tiny $\circ$}}}

\put(200,0){\makebox(0,0){{\tiny $\circ$}}}
\put(225,25){\makebox(0,0){{\tiny $\circ$}}}
\put(250,50){\makebox(0,0){{\tiny $\circ$}}}
\put(275,75){\makebox(0,0){{\tiny $\circ$}}}
\put(300,100){\makebox(0,0){{\tiny $\circ$}}}
\put(325,125){\makebox(0,0){{\tiny $\circ$}}}
\put(350,150){\makebox(0,0){{\tiny $\circ$}}}
\put(375,175){\makebox(0,0){{\tiny $\circ$}}}
\put(400,200){\makebox(0,0){{\tiny $\circ$}}}

\put(250,0){\makebox(0,0){{\tiny $\circ$}}}
\put(275,25){\makebox(0,0){{\tiny $\circ$}}}
\put(300,50){\makebox(0,0){{\tiny $\circ$}}}
\put(325,75){\makebox(0,0){{\tiny $\circ$}}}
\put(350,100){\makebox(0,0){{\tiny $\circ$}}}
\put(375,125){\makebox(0,0){{\tiny $\circ$}}}
\put(400,150){\makebox(0,0){{\tiny $\circ$}}}

\put(275,0){\makebox(0,0){{\tiny $\circ$}}}
\put(300,25){\makebox(0,0){{\tiny $\circ$}}}
\put(325,50){\makebox(0,0){{\tiny $\circ$}}}
\put(350,75){\makebox(0,0){{\tiny $\circ$}}}
\put(375,100){\makebox(0,0){{\tiny $\circ$}}}
\put(400,125){\makebox(0,0){{\tiny $\circ$}}}

\put(325,0){\makebox(0,0){{\tiny $\circ$}}}
\put(350,25){\makebox(0,0){{\tiny $\circ$}}}
\put(375,50){\makebox(0,0){{\tiny $\circ$}}}
\put(400,75){\makebox(0,0){{\tiny $\circ$}}}

\put(350,0){\makebox(0,0){{\tiny $\circ$}}}
\put(375,25){\makebox(0,0){{\tiny $\circ$}}}
\put(400,50){\makebox(0,0){{\tiny $\circ$}}}

\put(375,0){\makebox(0,0){{\tiny $\circ$}}}
\put(400,25){\makebox(0,0){{\tiny $\circ$}}}

\put(0,100){\makebox(0,0){{\tiny $\circ$}}}
\put(25,125){\makebox(0,0){{\tiny $\circ$}}}
\put(50,150){\makebox(0,0){{\tiny $\circ$}}}
\put(75,175){\makebox(0,0){{\tiny $\circ$}}}
\put(100,200){\makebox(0,0){{\tiny $\circ$}}}
\put(125,225){\makebox(0,0){{\tiny $\circ$}}}
\put(150,250){\makebox(0,0){{\tiny $\circ$}}}
\put(175,275){\makebox(0,0){{\tiny $\circ$}}}
\put(200,300){\makebox(0,0){{\tiny $\circ$}}}

\put(0,125){\makebox(0,0){{\tiny $\circ$}}}
\put(25,150){\makebox(0,0){{\tiny $\circ$}}}
\put(50,175){\makebox(0,0){{\tiny $\circ$}}}
\put(75,200){\makebox(0,0){{\tiny $\circ$}}}
\put(100,225){\makebox(0,0){{\tiny $\circ$}}}
\put(125,250){\makebox(0,0){{\tiny $\circ$}}}
\put(150,275){\makebox(0,0){{\tiny $\circ$}}}
\put(175,300){\makebox(0,0){{\tiny $\circ$}}}

\put(0,175){\makebox(0,0){{\tiny $\circ$}}}
\put(25,200){\makebox(0,0){{\tiny $\circ$}}}
\put(50,225){\makebox(0,0){{\tiny $\circ$}}}
\put(75,250){\makebox(0,0){{\tiny $\circ$}}}
\put(100,275){\makebox(0,0){{\tiny $\circ$}}}
\put(125,300){\makebox(0,0){{\tiny $\circ$}}}

\put(0,200){\makebox(0,0){{\tiny $\circ$}}}
\put(25,225){\makebox(0,0){{\tiny $\circ$}}}
\put(50,250){\makebox(0,0){{\tiny $\circ$}}}
\put(75,275){\makebox(0,0){{\tiny $\circ$}}}
\put(100,300){\makebox(0,0){{\tiny $\circ$}}}

\put(0,250){\makebox(0,0){{\tiny $\circ$}}}
\put(25,275){\makebox(0,0){{\tiny $\circ$}}}
\put(50,300){\makebox(0,0){{\tiny $\circ$}}}

\put(0,275){\makebox(0,0){{\tiny $\circ$}}}
\put(25,300){\makebox(0,0){{\tiny $\circ$}}}

\put(44,7){\makebox(0,0){{$\omega$}}}

\put(18,18){\makebox(0,0){$O$}}

{\color{magenta} \Thicklines \path(50,43.3325)(59.0925,50)
\path(75,61.665)(93.1849,75) \path(100,79.9975)(125,98.33)
\path(127.277,100)(150,116.6625) \path(161.3698,125)(175,134.995)
\path(195.4622,150)(200,153.3275) \path(325,245)(331.8321,250)
\path(350,263.3225)(365.9245,275) \path(375,281.655)(400,300)

\Thicklines \path(100,98.33)(102.277,100)
\path(125,116.6625)(136.3698,125) \path(150,134.995)(170.4622,150)
\path(175,153.3275)(200,171.66) \path(204.5547,175)(225,189.9925)
\path(238.6472,200)(250,208.325) \path(272.7396,225)(275,226.6575)

\Thicklines \path(25,0)(50,18.3325)  \path(59.0925,25)(75,36.665)
\path(93.1849,50)(100,54.9975) \path(225,146.66)(229.5547,150)
\path(250,164.9925)(263.6472,175) \path(275,183.325)(297.7396,200)
\path(300,201.6575)(325,220) \path(331.8321,225)(350,238.3225)
\path(365.9245,250)(375,256.655) }

\thinlines \path(0,300)(400,300) \path(0,275)(400,275)

{\color{blue} \Thicklines \path(25,25)(50,43.3325)
\path(59.0925,50)(75,61.665) \path(93.1849,75)(100,79.9975)
\path(225,171.66)(229.5547,175) \path(250,189.9925)(263.6472,200)
\path(275,208.325)(297.7396,225) \path(300,226.6575)(325,245)
\path(331.8321,250)(350,263.3225) \path(365.9245,275)(375,281.655)

\Thicklines \path(25,43.3325)(34.0925,50)
\path(50,61.665)(68.1849,75) \path(75,79.9975)(100,98.33)
\path(102.277,100)(125,116.6625) \path(136.3698,125)(150,134.995)
\path(170.4622,150)(175,153.3275) \path(300,245)(306.8321,250)
\path(325,263.3225)(340.9245,275) \path(350,281.655)(375,300)

\Thicklines \path(125,73.33)(127.277,75)
\path(150,91.6625)(161.3698,100) \path(175,109.995)(195.4622,125)
\path(200,128.3275)(225,146.66) \path(229.5547,150)(250,164.9925)
\path(263.6472,175)(275,183.325)
\path(297.7396,200)(300,201.6575)}

\thinlines \path(0,250)(400,250) \path(0,225)(400,225)




{\color{white} \Thicklines \path(125,98.33)(127.277,100)
\path(150,116.6625)(161.3698,125) \path(175,134.995)(195.4622,150)
\path(200,153.3275)(225,171.66) \path(229.5547,175)(250,189.9925)
\path(263.6472,200)(275,208.325) \path(297.7396,225)(300,226.6575)

\Thicklines \path(0,25)(25,43.3325) \path(34.0925,50)(50,61.665)
\path(68.1849,75)(75,79.9975) \path(200,171.66)(204.5547,175)
\path(225,189.9925)(238.6472,200) \path(250,208.325)(272.7396,225)
\path(275,226.6575)(300,245) \path(306.8321,250)(325,263.3225)
\path(340.9245,275)(350,281.655)

\Thicklines \path(50,18.3325)(59.0925,25)
\path(75,36.665)(93.1849,50) \path(100,54.9975)(125,73.33)
\path(127.277,75)(150,91.6625) \path(161.3698,100)(175,109.995)
\path(195.4622,125)(200,128.3275) \path(325,220)(331.8321,225)
\path(350,238.3225)(365.9245,250) \path(375,256.655)(400,275)}

\put(25,0){\makebox(0,0){{\tiny $\circ$}}}
\put(0,25){\makebox(0,0){{\tiny $\circ$}}}
\end{picture}

\caption{The channels $\CC_0$, $\CC_\leftarrow$ and
$\CC_\downarrow$}\label{Figure7}

\end{figure}

Denote by $r, r_k, r^\prime, r_k^\prime \in \Z / 3\Z$ the remainders
$\hspace{-3pt} \pmod{3}$ of $q-a$, $q_k-a_k$, $q^\prime
-a^\prime$, $q_k^\prime -a_k^\prime$ respectively. The equality
$a^\prime q-aq^\prime =1$ shows that at most one element of the
triple $(r,r_k,r_{k+1}) \in (\Z / 3\Z)^3$ can be equal to zero.
Similarly, at most one element of the triple
$(r^\prime,r_k^\prime,r_{k+1}^\prime)$ can be zero.

To ascertain the contribution of the slope $t=\tan\omega \in
I_{\gamma,k}$ to $\G_{I,Q}(\xi)$, we should look at the tiling
$\SS_\omega$ defined by the three-strip partition of $\R^2$ (shown
in Figure \ref{Figure6}), but also at $\SS_\omega^\leftarrow$, its
left-horizontal translate by $(1,0)$, and at
$\SS_\omega^\downarrow$, its down-vertical translate by $(0,1)$.
Since the slits $(m,n)\in\Z^2$ with $m\equiv n\pmod{3}$ are being
removed, sinks are going to arise in the channels. This phenomenon
will lead to frequent occurrence of trajectories much longer then
in the case of the square lattice. A careful analysis of
the bottom of the channels $\AA_0$, $\CC_k$ and $\BB_k$ is required when the
corresponding slit where trajectory ends in the case of the square
lattice has been removed. Besides, there is a manifest difference
between the three situations where the channel originates at
$O=(0,0)$ (this will be referred to as $\CC_O$
contribution), at $(-1,0)$ ($\CC_\leftarrow$ contribution), or at
$(-1,0)$ ($\CC_\downarrow$ contribution), resulting from the
different congruence conditions $\hspace{-3pt}\pmod{3}$ satisfied by the
centers of removed slits. This is shown in Figure \ref{Figure6}
where the small circles centered at lattice points $(m,n)$ with
$m\nequiv n \pmod{3}$ represent the vertical slits of width
$2\eps=\frac{1}{Q}$. We were not able to spot any symmetry that
would reduce the analysis to only one of these three types of
channels. The contributions to $\Phi^\hex$ of the five types
of situations that we analyze seem to be quite different (see Figure \ref{Figure19}).

To attain a better visualization of the structure of channels we
shall represent the slope $\tan\omega$ horizontally. The possible
situations are shown in Figure \ref{Figure8}, where dotted lines
indicate that the corresponding slit has been removed from
$\SS_\omega$ in the case of $\CC_O$, from $\SS_\omega^\leftarrow$
in the case of $\CC_\leftarrow$ and respectively from
$\SS_\omega^\downarrow$ in the case of $\CC_\downarrow$.

\begin{figure}[ht]
\centering
\unitlength 0.31mm
\begin{picture}(320,335)(-20,-10)

\put(-20,7.5){\makebox(0,0){$(-1,1,0)$}}

\path(60,0)(30,0)(30,15)(80,15) \path(30,11)(110,11)
\path(30,6)(110,6) \path(60,6)(60,-9) \path(80,11)(80,26)
\dottedline{2}(110,2)(110,17)

\path(160,0)(130,0)(130,15)(180,15) \path(130,11)(210,11)
\path(130,6)(210,6) \path(160,6)(160,-9)
\dottedline{2}(180,11)(180,26) \path(210,2)(210,17)

\path(260,0)(230,0)(230,15)(280,15) \path(230,11)(310,11)
\path(230,6)(310,6) \dottedline{2}(260,6)(260,-9)
\path(280,11)(280,26) \path(310,2)(310,17)

\put(-20,47.5){\makebox(0,0){$(1,-1,0)$}}

\path(60,40)(30,40)(30,55)(80,55) \path(30,51)(110,51)
\path(30,46)(110,46) \path(60,46)(60,31) \path(80,51)(80,66)
\dottedline{2}(110,42)(110,57)

\path(160,40)(130,40)(130,55)(180,55) \path(130,51)(210,51)
\path(130,46)(210,46) \dottedline{2}(160,46)(160,31)
\path(180,51)(180,66) \path(210,42)(210,57)

\path(260,40)(230,40)(230,55)(280,55) \path(230,51)(310,51)
\path(230,46)(310,46) \path(260,46)(260,31)
\dottedline{2}(280,51)(280,66) \path(310,42)(310,57)

\put(-20,87.5){\makebox(0,0){$(-1,0,-1)$}}

\path(60,80)(30,80)(30,95)(80,95) \path(30,91)(110,91)
\path(30,86)(110,86) \path(60,86)(60,71)
\dottedline{2}(80,91)(80,106) \path(110,82)(110,97)

\path(160,80)(130,80)(130,95)(180,95) \path(130,91)(210,91)
\path(130,86)(210,86) \path(160,86)(160,71) \path(180,91)(180,106)
\path(210,82)(210,97)

\path(260,80)(230,80)(230,95)(280,95) \path(230,91)(310,91)
\path(230,86)(310,86) \dottedline{2}(260,86)(260,71)
\path(280,91)(280,106) \dottedline{2}(310,82)(310,97)

\put(-20,127.5){\makebox(0,0){$(1,0,1)$}}

\path(60,120)(30,120)(30,135)(80,135) \path(30,131)(110,131)
\path(30,126)(110,126) \path(60,126)(60,111)
\dottedline{2}(80,131)(80,146) \path(110,122)(110,137)

\path(160,120)(130,120)(130,135)(180,135) \path(130,131)(210,131)
\path(130,126)(210,126) \dottedline{2}(160,126)(160,111)
\path(180,131)(180,146) \dottedline{2}(210,122)(210,137)

\path(260,120)(230,120)(230,135)(280,135) \path(230,131)(310,131)
\path(230,126)(310,126) \path(260,126)(260,111)
\path(280,131)(280,146) \path(310,122)(310,137)

\put(-20,167.5){\makebox(0,0){$(0,-1,-1)$}}

\path(60,160)(30,160)(30,175)(80,175) \path(30,171)(110,171)
\path(30,166)(110,166) \dottedline{2}(60,166)(60,151)
\path(80,171)(80,186) \path(110,162)(110,177)

\path(160,160)(130,160)(130,175)(180,175) \path(130,171)(210,171)
\path(130,166)(210,166) \path(160,166)(160,151)
\path(180,171)(180,186) \path(210,162)(210,177)

\path(260,160)(230,160)(230,175)(280,175) \path(230,171)(310,171)
\path(230,166)(310,166) \path(260,166)(260,151)
\dottedline{2}(280,171)(280,186) \dottedline{2}(310,162)(310,177)

\put(-20,207.5){\makebox(0,0){$(0,1,1)$}}

\path(60,200)(30,200)(30,215)(80,215) \path(30,211)(110,211)
\path(30,206)(110,206) \dottedline{2}(60,206)(60,191)
\path(80,211)(80,226) \path(110,202)(110,217)

\path(160,200)(130,200)(130,215)(180,215) \path(130,211)(210,211)
\path(130,206)(210,206) \path(160,206)(160,191)
\dottedline{2}(180,211)(180,226) \dottedline{2}(210,202)(210,217)

\path(260,200)(230,200)(230,215)(280,215) \path(230,211)(310,211)
\path(230,206)(310,206) \path(260,206)(260,191)
\path(280,211)(280,226) \path(310,202)(310,217)

\put(-20,247.5){\makebox(0,0){$(-1,-1,1)$}}

\path(60,240)(30,240)(30,255)(80,255) \path(30,251)(110,251)
\path(30,246)(110,246) \path(60,246)(60,231) \path(80,251)(80,266)
\path(110,242)(110,257)

\path(160,240)(130,240)(130,255)(180,255) \path(130,251)(210,251)
\path(130,246)(210,246) \path(160,246)(160,231)
\path(180,251)(180,266) \dottedline{2}(210,242)(210,257)

\path(260,240)(230,240)(230,255)(280,255) \path(230,251)(310,251)
\path(230,246)(310,246) \dottedline{2}(260,246)(260,231)
\dottedline{2}(280,251)(280,266) \path(310,242)(310,257)

\put(-20,287.5){\makebox(0,0){$(1,1,-1)$}}

\path(60,280)(30,280)(30,295)(80,295) \path(30,291)(110,291)
\path(30,286)(110,286) \path(60,286)(60,271) \path(80,291)(80,306)
\path(110,282)(110,297)

\path(160,280)(130,280)(130,295)(180,295) \path(130,291)(210,291)
\path(130,286)(210,286) \dottedline{2}(160,286)(160,271)
\dottedline{2}(180,291)(180,306) \path(210,282)(210,297)

\path(260,280)(230,280)(230,295)(280,295) \path(230,291)(310,291)
\path(230,286)(310,286) \path(260,286)(260,271)
\path(280,291)(280,306) \dottedline{2}(310,282)(310,297)

\put(-20,320){\makebox(0,0){$(r,r_k,r_{k+1})$}}
\put(70,320){\makebox(0,0){$\CC_O$}}
\put(170,320){\makebox(0,0){$\CC_\leftarrow$}}
\put(270,320){\makebox(0,0){$\CC_\downarrow$}}

\Thicklines \path(-50,-12)(330,-12) \path(-50,28)(330,28)
\path(-50,68)(330,68) \path(-50,108)(330,108)
\path(-50,148)(330,148) \path(-50,188)(330,188)
\path(-50,228)(330,228) \path(-50,268)(330,268)
\path(-50,308)(330,308) \path(-50,331)(330,331)
\path(-50,331)(-50,-12) \path(330,331)(330,-12)
\path(15,331)(15,-12) \path(120,331)(120,-12)
\path(220,331)(220,-12)
\end{picture}
\caption{The removed slits for $\CC_O$, $\CC_\leftarrow$ and
$\CC_\downarrow$} \label{Figure8}
\end{figure}

To clarify the terminology, by ``slit $q_k$" etc. we will mean
\underline{the} slit which is centered at some lattice point
$(q_k,m)$ and which intersects the channel that is analyzed (there
is at most one such point for given $q_k$).

\section{The contribution of channels whose slits are not
removed}\label{Sect5}

This resembles the situation of the square lattice and will be
discussed in this section. The more intricate situation of the channels where
bottom slits are removed will be analyzed
in Sections 6-9. When the ``first" slit (i.e. the one corresponding to $q$ for
$\AA_0$, $q_{k+1}$ for $\CC_k$, respectively $q_k$ for $\BB_k$) is
not removed, the table from Figure \ref{Figure8} shows that the corresponding
weight is described in the table from Figure \ref{Figure9}.

\begin{figure}[ht]
\centering
\unitlength 0.33mm
\begin{picture}(320,185)(-40,0)

\put(-30,10){\makebox(0,0){$(-1,1,0)$}}
\put(50,10){\makebox(0,0){$W_{\AA_0}+W_{\BB_k}$}}
\put(150,10){\makebox(0,0){$W_{\AA_0}+W_{\CC_k}$}}
\put(250,10){\makebox(0,0){$W_{\CC_k}+W_{\BB_k}$}}

\put(-30,30){\makebox(0,0){$(1,-1,0)$}}
\put(50,30){\makebox(0,0){$W_{\AA_0}+W_{\BB_k}$}}
\put(150,30){\makebox(0,0){$W_{\CC_k}+W_{\BB_k}$}}
\put(250,30){\makebox(0,0){$W_{\AA_0}+W_{\CC_k}$}}

\put(-30,50){\makebox(0,0){$(-1,0,-1)$}}
\put(50,50){\makebox(0,0){$W_{\AA_0}+W_{\CC_k}$}}
\put(150,50){\makebox(0,0){$W_{\AA_0}+W_{\CC_k}+W_{\BB_k}$}}
\put(250,50){\makebox(0,0){$W_{\BB_k}$}}

\put(-30,70){\makebox(0,0){$(1,0,1)$}}
\put(50,70){\makebox(0,0){$W_{\AA_0}+W_{\CC_k}$}}
\put(150,70){\makebox(0,0){$W_{\BB_k}$}}
\put(250,70){\makebox(0,0){$W_{\AA_0}+W_{\CC_k}+W_{\BB_k}$}}

\put(-30,90){\makebox(0,0){$(0,-1,-1)$}}
\put(50,90){\makebox(0,0){$W_{\CC_k}+W_{\BB_k}$}}
\put(150,90){\makebox(0,0){$W_{\AA_0}+W_{\CC_k}+W_{\BB_k}$}}
\put(250,90){\makebox(0,0){$W_{\AA_0}$}}

\put(-30,110){\makebox(0,0){$(0,1,1)$}}
\put(50,110){\makebox(0,0){$W_{\CC_k}+W_{\BB_k}$}}
\put(150,110){\makebox(0,0){$W_{\AA_0}$}}
\put(250,110){\makebox(0,0){$W_{\AA_0}+W_{\CC_k}+W_{\BB_k}$}}

\put(-30,130){\makebox(0,0){$(-1,-1,1)$}}
\put(50,130){\makebox(0,0){$W_{\AA_0}+W_{\CC_k}+W_{\BB_k}$}}
\put(150,130){\makebox(0,0){$W_{\AA_0}+W_{\BB_k}$}}
\put(250,130){\makebox(0,0){$W_{\CC_k}$}}

\put(-30,150){\makebox(0,0){$(1,1,-1)$}}
\put(50,150){\makebox(0,0){$W_{\AA_0}+W_{\CC_k}+W_{\BB_k}$}}
\put(150,150){\makebox(0,0){$W_{\CC_k}$}}
\put(250,150){\makebox(0,0){$W_{\AA_0}+W_{\BB_k}$}}

\put(-30,170){\makebox(0,0){$(r,r_k,r_{k+1})$}}
\put(50,170){\makebox(0,0){$\CC_O$}}
\put(150,170){\makebox(0,0){$\CC_\leftarrow$}}
\put(250,170){\makebox(0,0){$\CC_\downarrow$}}

\Thicklines \path(-60,-0)(300,0) \path(-60,20)(300,20) \path(-60,40)(300,40)
\path(-60,60)(300,60) \path(-60,80)(300,80) \path(-60,100)(300,100)
\path(-60,120)(300,120) \path(-60,140)(300,140)
\path(-60,159)(300,159) \path(-60,161)(300,161) \path(-60,180)(300,180)
\path(-60,0)(-60,180)(300,180)(300,0) \path(0,0)(0,180) \path(100,0)(100,180)
\path(200,0)(200,180) \path(2,0)(2,180)
\end{picture}
\caption{The contribution of channels whose slits are not removed} \label{Figure9}
\end{figure}

The weights $W_{\AA_0}$, $W_{\BB_k}$ and $W_{\CC_k}$ are given by
\eqref{4.1} as in the case of the square lattice. The cumulative
contribution is
\begin{equation*}
\G^{(0)}_{I,Q} (\xi)=\sum_{(\alpha,\beta)\in (\Z / 3\Z)^2 \setminus
\{ (0,0)\}} \G^{(0)}_{I,Q,\alpha,\beta} (\xi),
\end{equation*}
with $\G^{(0)}_{I,Q,\alpha,\beta}(\xi)$ given by
\begin{equation}\label{5.1}
\sum_{k=0}^\infty
\hspace{-6pt} \sum\limits_{\substack{\gamma\in \FF_I (Q) \\
q-a\equiv \alpha \hspace{-6pt} \pmod{3} \\ q_k -a_k \equiv \beta
\hspace{-6pt} \pmod{3}}} \hspace{-10pt} \int_{t_k}^{t_{k-1}} \frac{2\big(
(q-\xi Q)_+ w_{\AA_0}(t) + (q_{k+1}-\xi Q)_+ w_{\CC_k}(t)
+ (q_k -\xi Q)_+ w_{\BB_k}(t)\big)\, dt}{t^2+t+1} .
\end{equation}

\newtheorem*{R1}{Remark 1.}\label{R5.1}
\begin{R1}
Putting $x=q-a$, $y=q_k$, and employing
$q_k-a_k=q_k-\frac{1+aq_k}{q}=\frac{xy-1}{q}$, we see that the
summation conditions in \eqref{5.1} are equivalent to $(x,q)=1$,
$x\equiv \alpha \pmod{3}$, and $xy\equiv \beta q+1 \pmod{3q}$.
Note that $(\beta q+1,q)=1$ and sum as follows:
\begin{itemize}
\item When $\alpha \neq 0$ Lemma \ref{L3.8} may be applied {\em
(}because $(x,3q)=1${\em )}, followed by Lemma \ref{L3.3}. \item
When $\alpha = 0$ we have $\beta\neq 0$ and $x=3\tilde{x}$,
$\tilde{x}\in \frac{q}{3}(1-I)$. Furthermore, $\beta q+1 \equiv 0
\pmod{3}$, so $q\equiv -\beta \pmod{3}$
and one may first sum, as in Lemma \ref{L3.4}, over $(\tilde{x},y)$
and the conditions
\begin{equation}\label{5.2}
\begin{cases}
q-a=x=3\tilde{x},\quad \tilde{x} \in \frac{q}{3} (1-I) ,\quad (\tilde{x},q)=1, \\
q\equiv -\beta \hspace{-3pt} \pmod{3}, \quad \tilde{x} y \equiv
\frac{\beta q+1}{3} \hspace{-3pt} \pmod{q},\quad y\in \II_{q,k},
\end{cases}
\end{equation}
and then sum over $q\in [1,Q]$ with $q\equiv -\beta
\pmod{3}$ employing Lemma \ref{L3.3}.
\end{itemize}
In all situations where sums over $\gamma\in\FF_I(Q)$ with
$q-a\equiv \alpha \pmod{3}$ and $q_k-a_k \equiv \beta \pmod{3}$,
$(\alpha,\beta)\neq (0,0)$, have to be
evaluated, the resulting constant will be $\frac{1}{8\zeta(2)}$.
\end{R1}

The following elementary estimate will be used throughout.

\begin{lemma}\label{L5.2}
For $b,c,h,\lambda,\mu\in\R$ such that $0\leqslant b,c, c+h \leqslant 1$ and $\lambda c+\mu=0$, there exist
$\theta^\prime,\theta=\theta_{c,h}\in [-3,3]$ such that
\begin{equation*}
\int_c^{c+h} \frac{\lambda t+\mu}{t^2+t+1}\ du =\frac{h^2
\lambda}{2(b^2+b+1)}+ h^3 \theta (\vert \lambda\vert+\vert
\mu\vert) + h^2 \theta^\prime \vert b-c\vert \vert \lambda\vert.
\end{equation*}
\end{lemma}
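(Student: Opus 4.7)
The key observation is that the hypothesis $\lambda c + \mu = 0$ forces $\lambda t + \mu = \lambda(t-c)$, so the integrand vanishes at the left endpoint. Consequently the integral is naturally $O(h^2)$, and one expects the main term to come from replacing $1/(t^2+t+1)$ by its value at some reference point in the interval.

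My plan is to Taylor-expand $f(t) := 1/(t^2+t+1)$ around the point $b$ — not around $c$; using $b$ is precisely what produces the desired denominator $b^2+b+1$ in the main term: $f(t) = f(b) + (t-b)\,f'(\xi_t)$ with Lagrange remainder, where $\xi_t$ lies between $b$ and $t$. One checks that $|f'(s)| = |2s+1|/(s^2+s+1)^2 \leqslant 3$ uniformly on $[0,1]$, which will supply the constants $3$ in the final bound.

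Inserting the expansion into the integral produces a main term $f(b)\,\lambda\int_c^{c+h}(t-c)\,dt = \frac{h^2\lambda}{2(b^2+b+1)}$, matching the statement. For the remainder $\lambda\int_c^{c+h}(t-c)(t-b)f'(\xi_t)\,dt$, I would use the algebraic identity $(t-c)(t-b) = (t-c)^2 + (c-b)(t-c)$ to split it into two pieces. The first piece is bounded in absolute value by $3|\lambda|\int_c^{c+h}(t-c)^2\,dt = |\lambda|h^3$; since $|\mu| = |\lambda|c \leqslant |\lambda|$ (so that $|\lambda| \leqslant |\lambda|+|\mu|$), this fits into the claimed form $h^3\theta(|\lambda|+|\mu|)$ with $|\theta| \leqslant 1 \leqslant 3$. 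The second piece is bounded by $3|\lambda||b-c|\cdot h^2/2$, giving the form $h^2\theta'|b-c||\lambda|$ with $|\theta'| \leqslant 3/2 \leqslant 3$.

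There is no substantive difficulty here: the statement is a deterministic elementary estimate whose role is presumably to standardize remainder bookkeeping in the many case analyses that follow. The only care required is in choosing the expansion point to be $b$ rather than $c$ (so that the main-term denominator comes out as $b^2+b+1$, as used later), and in arranging the splitting so that the $|b-c|$ factor is cleanly isolated in exactly one of the two error terms.
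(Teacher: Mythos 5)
Your proof is correct. The paper also proceeds via first-order Taylor expansion, but organizes the bookkeeping differently: it expands $\int_c^{c+h}\frac{dt}{t^2+t+1}$ and $\int_c^{c+h}\frac{t\,dt}{t^2+t+1}$ around the left endpoint $c$, combines them and invokes $\lambda c+\mu=0$ to cancel the leading order-$h$ terms, and only afterward replaces $1/(c^2+c+1)$ by $1/(b^2+b+1)$ via the separate Lipschitz estimate $\bigl|\frac{1}{c^2+c+1}-\frac{1}{b^2+b+1}\bigr|\leqslant 3\vert b-c\vert$. You instead expand $1/(t^2+t+1)$ directly around $b$, so the denominator $b^2+b+1$ in the main term appears immediately, and the Taylor remainder and the $c\to b$ shift are merged into a single error integral which you then split cleanly through the identity $(t-c)(t-b)=(t-c)^2+(c-b)(t-c)$. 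The two routes yield identical constants; yours is slightly more economical because it avoids the two-step (expand-at-$c$, then Lipschitz-shift) accounting, while the paper's version keeps the two elementary integrals at $c$ explicit. Two cosmetic remarks: the parenthetical justification $\vert\mu\vert=\vert\lambda\vert c\leqslant\vert\lambda\vert$ is superfluous, since $\vert\lambda\vert\leqslant\vert\lambda\vert+\vert\mu\vert$ holds trivially; and the uniform bound $\vert f'\vert\leqslant 3$ on $[0,1]$ can be sharpened to $\vert f'\vert\leqslant 1$ (the function $s\mapsto(2s+1)/(s^2+s+1)^2$ is decreasing on $[0,1]$, attaining its maximum $1$ at $s=0$), though neither observation affects the conclusion.
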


\begin{proof}
This follows immediately applying Taylor's formula twice:
\begin{equation*}
\begin{split}
\int_c^{c+h} \frac{dt}{t^2+t+1} & =\frac{h}{c^2+c+1}
-\frac{h^2(2c+1)}{2(c^2+c+1)^2} + \xi^\prime h^3,\quad \vert \xi^\prime\vert \leqslant 1 , \\
\int_c^{c+h} \frac{t\ dt}{t^2+t+1} & = \left(\frac{h}{c^2+c+1}
-\frac{h^2(2c+1)}{2(c^2+c+1)^2}\right)c +\frac{h^2}{2(c^2+c+1)} +
\xi^{\prime \prime} h^3,\quad \vert \xi^{\prime\prime}\vert
\leqslant 2,
\end{split}
\end{equation*}
and employing
\begin{equation*}
\left| \frac{1}{c^2+c+1} -\frac{1}{b^2+b+1}\right| \leqslant
3\vert b-c\vert .
\end{equation*}
\end{proof}

Together with $0< q_1 -Q \leqslant q \wedge q^\prime$ Lemma \ref{L5.2} yields
$(t_{-1}=\gamma_1$)
\begin{equation}\label{5.3}
\int_{t_0}^{t_{-1}} \frac{w_{\BB_0}(t)\ dt}{t^2+t+1}
=\int_{t_0}^{\gamma_1} \frac{q^\prime t-a^\prime+2\eps}{t^2+t+1}\ dt
=\frac{(q_1-Q)^2}{2Q^2 q^\prime q_1^2 (\gamma^2+\gamma+1)}
+O\left( \frac{1}{Q^5} \right).
\end{equation}
\begin{equation}\label{5.4}
\begin{split}
\int_{t_0}^{t_{-1}} \frac{w_{\CC_0}(t)\ dt}{t^2+t+1} & =
\int_{\gamma}^{\gamma_1} \frac{qt-a}{t^2+t+1}\ dt
-\int_{\gamma}^{t_0} \frac{qt-a}{t^2+t+1}\ dt -
\int_{t_0}^{\gamma_1} \frac{q^\prime t-a^\prime+2\eps}{t^2+t+1}\ dt
\\ & =q\,\frac{(\gamma_1-\gamma)^2 -(t_0-\gamma)^2}{2(\gamma^2+\gamma+1)}
- q^\prime \frac{(\gamma_1-t_0)^2 }{2(\gamma^2+\gamma+1)} \\ & \qquad
+O\Big( q(\gamma_1-\gamma)^3 +q^\prime (\gamma_1-t_0)^3
+q^\prime (\gamma_1-t_0)^2 (t_0-\gamma)\Big) \\
& =\frac{q_1-Q}{2Q^2 q^\prime q_1 (\gamma^2 +\gamma +1)} \left(
\frac{2Q-q_1}{q_1}+\frac{Q-q}{q^\prime}\right) +O\left(
\frac{1}{Q^3 q^2}\right) .
\end{split}
\end{equation}
\begin{equation}\label{5.5}
\begin{split}
\int_{t_0}^{t_{-1}} \frac{w_{\AA_0}(t)\ dt}{t^2+t+1} & =
\int_{t_0}^{u_0} \frac{a+2\eps -qt}{t^2+t+1}\ dt
- \int_{\gamma_1}^{u_0} \frac{a+2\eps-qt}{t^2+t+1}\ dt  \\
& =q\,\frac{(u_0-t_0)^2 -(u_0-\gamma_1)^2}{2(\gamma^2+\gamma+1)}+
O\Big( q(u_0-t_0)^3 + q(u_0-t_0)^2 (u_0-\gamma) \Big) \\
& =\frac{(q_1-Q)^2 (q_1+q^\prime)}{2Q^2 q^{\prime 2} q_1^2
(\gamma^2 +\gamma+1)} +O\left(\frac{1}{Q^3 qq^\prime}\right).
\end{split}
\end{equation}

For every $k\geqslant 1$ we find
\begin{equation}\label{5.6}
\int_{t_k}^{t_{k-1}} \frac{w_{\BB_k}(t)\ dt}{t^2+t+1}  =
\int_{t_k}^{t_{k-1}} \frac{q_k t-a_k+2\eps}{t^2+t+1}\ dt =
\frac{(Q-q)^2}{2Q^2 q_{k-1}^2 q_k(\gamma^2+\gamma+1)} +O\left(
\frac{1}{qq_{k-1}^2 q_k^2}\right) .
\end{equation}
\begin{equation}\label{5.7}
\begin{split}
\int_{t_k}^{t_{k-1}} \frac{w_{\CC_k}(t)\ dt}{t^2+t+1}
& =\int_{t_k}^{t_{k-1}} \frac{a_{k-1}-2\eps-q_{k-1}t}{t^2+t+1}\ dt \\ & =
\frac{(Q-q)^2}{2Q^2 q_{k-1} q_k^2 (\gamma^2+\gamma +1)}+O\left(
\frac{1}{qq_{k-1}^2 q_k^2}\right) .
\end{split}
\end{equation}
\begin{equation}\label{5.8}
\begin{split}
\int_{t_k}^{t_{k-1}} \frac{2\eps\ dt}{t^2+t+1}  &
=\frac{t_{k-1}-t_k}{Q(\gamma^2+\gamma+1)} +O\Big( \eps
(t_{k-1}-t_k)(t_k-\gamma)+\eps (t_{k-1}-t_k)^2\Big)
\\ & =\frac{Q-q}{Q^2 q_{k-1} q_k
(\gamma^2+\gamma+1)} +O\left( \frac{1}{Qqq_{k-1} q_k^2}\right) .
\end{split}
\end{equation}
\begin{equation}\label{5.9}
\begin{split}
\int_{t_k}^{t_{k-1}} \frac{w_{\AA_0}(t)\ dt}{t^2+t+1} & =
\int_{t_k}^{t_{k-1}}
\frac{2\eps -w_{\BB_k}(t)-w_{\CC_k}(t)}{t^2+t+1} \ dt \\
& = \frac{Q-q}{2Q^2 q_{k-1}q_k (\gamma^2+\gamma+1)} \left(
\frac{q_{k+1}-Q}{q_k}+\frac{q_k -Q}{q_{k-1}} \right) +O\left(
\frac{1}{qq^\prime q_{k-1} q_k^2}\right) .
\end{split}
\end{equation}
The total contribution of the error terms from \eqref{5.3}-\eqref{5.7} and \eqref{5.9}
to $\G^{(0)}_{I,Q} (\xi)$ is
\begin{equation*}
\begin{split}
\ll \hspace{-2pt} \sum_{\gamma\in\FF_I(Q)} \hspace{-1pt} \frac{1}{(q q^\prime )^2} & + \sum_{k=1}^\infty
\sum_{\gamma\in\FF_I(Q)} \left( \frac{1}{q^\prime q_{k-1} q_k^2}
+\frac{1}{qq_{k-1}^2 q_k} \right) \leqslant \hspace{-2pt}
\sum_{\gamma\in\FF_I(Q)} \hspace{-1pt} \frac{1}{(qq^\prime)^2} + \hspace{-5pt}
\sum_{\gamma\in\FF_I(Q)} \hspace{-1pt} \frac{1}{qq^\prime} \sum_{k=1}^\infty
\frac{1}{q_{k-1} q_k} \\
& = 2\sum_{\gamma\in\FF_I(Q)} \frac{1}{(qq^\prime)^2} \leqslant
\frac{2}{Q} \sum_{\gamma\in\FF_I(Q)} \frac{1}{qq^\prime} \leqslant
\frac{2}{Q} \ \bigg( \vert I\vert +\frac{2}{Q}\bigg)
\leqslant \frac{6\vert I\vert}{Q}.
\end{split}
\end{equation*}
When summing over a family of intervals $I$ that partition $[0,1]$ this
adds up to $O(Q^{-1})$, thus all error terms above can be
discarded below. We emphasize that, since the contribution of
each interval $I_{\gamma,k}$ is $\leqslant \vert I_{\gamma,k}\vert$, we can
remove one element of $\FF_I(Q)$ for every $I$.

Applying Remark 1 to the inner sum in \eqref{5.1} with
$x=q-a\in q(1-I)$, $y=q_k \in \II_{q,k}$, and employing formulas
\eqref{5.3}-\eqref{5.9} we find
\begin{equation}\label{5.10}
\G_{I,Q,\alpha,\beta}^{(0)} (\xi) =\sum_{q=1}^Q
\sum_{k=1}^\infty \sum\limits_{\substack{x\in q(1-I),\,
y\in\II_{q,k} \\ x\equiv \alpha \hspace{-6pt} \pmod{3}
\\ xy\equiv \beta q+1 \hspace{-6pt} \pmod{3q}}} f_q (x,y)
+\sum_{q=1}^Q\sum\limits_{\substack{x\in q(1-I),\, y\in (Q-q,Q] \\
x\equiv\alpha \hspace{-6pt}\pmod{3} \\
xy\equiv \beta q+1 \hspace{-6pt} \pmod{3q}}} g_q (x,y),
\end{equation}
with $C^1$ functions $f_q$ and $g_q$ on $\R\times (\R_+ \setminus \{
q,\xi Q-q,\xi Q\})$ given by
\begin{equation*}
\begin{split}
f_q (x,y) = & \frac{q^2}{q^2+(q-x)^2} \, F_q (y), \qquad g_q
(x,y)=\frac{q^2}{q^2+(q-a)^2}\, G_q (y), \\
F_q (y) = & \frac{Q-q}{Q^2 (y-q)y} \left( \frac{y+q-Q}{y} +
\frac{y-Q}{y-q}\right)  (q-\xi Q)_+ \\ &
+\frac{(Q-q)^2}{Q^2 (y-q)y}  \left( \frac{(y-\xi Q)_+}{y-q} +
\frac{(y+q-\xi Q)_+}{y} \right) , \\
G_q (y) = & \frac{y+q-Q}{Q^2 y(y+q)} \left( \frac{2Q-q-y}{y+q} +
\frac{Q-q}{y}\right) (y+q-\xi Q)_+ \\ & + \frac{(y+q-Q)^2}{Q^2
y(y+q)^2} \left( \frac{2y+q}{y} \ (q-\xi Q)_+ +(y-\xi Q)_+
\right) .
\end{split}
\end{equation*}

The innermost sums in \eqref{5.10} will be estimated employing Lemmas
\ref{L3.4} or \ref{L3.8}. We first need to bound $\| f_q\|_\infty$,
$\| \nabla f_q\|_\infty$ on $q(1-I)\times \II_{q,k}$, $k\geqslant
1$, and respectively $\| g_q\|_\infty$, $\| \nabla g_q\|_\infty$
on $q(1-I)\times (Q-q,Q]$. From $y\geqslant Q$ in the first case
and $y+q-Q\leqslant Q\leqslant Q\leqslant y+q\leqslant 2Q$ in the
second one we find for all $y\in\II_{q,k}$, $k\geqslant 1$:
\begin{equation}\label{5.11}
0\leqslant f_q (x,y)\leqslant F_q (y) \leqslant \frac{Q-q}{Q^2
(y-q)y} \Bigg( q+(Q-q)\bigg( 2+\frac{q}{y-q}+\frac{q}{y}\bigg)
\Bigg) \leqslant \frac{4(Q-q)}{Q(y-q)y},
\end{equation}
\begin{equation}\label{5.12}
\vert F_q^\prime (y)\vert
\leqslant \frac{16(\xi +1)(Q-q)}{Q(y-q)^2 y},
\end{equation}
and for all $y\in (Q-q,Q]$:
\begin{equation}\label{5.13}
0\leqslant g_q (x,y) \leqslant G_q (y)\leqslant \frac{1}{Q^2
(y+q)}\cdot 2(y+q)+\frac{1}{Q^2 (y+q)^2}\cdot \big(
(2y+q)q+y^2\big) = \frac{3}{Q^2},
\end{equation}
\begin{equation}\label{5.14}
\vert G_q^\prime (y)\vert \leqslant \frac{20}{Qy^2}.
\end{equation}
From \eqref{5.11} we infer
\begin{equation*}
\sum_{k=1}^\infty \| F_q \|_{\II_{q,k}} \leqslant \frac{2(Q-q)}{Q}
\sum_{k=1}^\infty \frac{1}{\big( Q+(k-2)q\big)\big( Q+(k-1)q\big)}
=\frac{4}{Qq},
\end{equation*}
which leads in turn to
\begin{equation*}
\sum_{q=1}^Q q^{\frac{1}{2}+\delta} \sum_{k=1}^\infty \|
f_q\|_{q(1-I)\times \II_{q,k}} \leqslant \frac{2}{Q} \sum_{q=1}^Q
q^{-\frac{1}{2}+\delta} \leqslant 8Q^{\frac{1}{2}+\delta} .
\end{equation*}
From \eqref{5.12} we infer
\begin{equation*}
\begin{split}
\sum_{k=1}^\infty \| F_q^\prime \|_{\II_{q,k}} & \leqslant
\frac{16 (\xi +1)(Q-q)}{Q} \sum_{k=1}^\infty \frac{1}{\big(
Q+(k-2)q+1\big)^2 \big( Q+(k-1)q+1\big)} \\
& \leqslant \frac{16 (\xi +1)}{Q} \sum_{k=1}^\infty \frac{1}{\big(
Q+(k-2)q+1\big)\big( Q+(k-1)q+1\big)} \leqslant
\frac{16(\xi +1)}{Qq (Q-q+1)} ,
\end{split}
\end{equation*}
which leads to
\begin{equation*}
\sum_{k=1}^\infty \| \nabla f_q \|_{q(1-I)\times \II_{q,k}}
\leqslant \sum_{k=1}^\infty \Bigg( \frac{2}{q} \ \| F_q
\|_{\II_{q,k}} +\| F^\prime_q \|_{\II_{q,k}} \Bigg) \leqslant
\frac{24(\xi +1)}{q^2 (Q-q+1)} ,
\end{equation*}
and finally gives
\begin{equation}\label{5.15}
\begin{split}
\sum_{q=1}^Q q^{\frac{3}{2}+\delta} \sum_{k=1}^\infty \|\nabla f_q
\|_{q(1-I)\times \II_{q,k}} & \ll_\xi \sum_{1\leqslant
q\leqslant \frac{Q}{2}} \frac{q^{-1/2+\delta}}{Q-q+1}
+\sum_{\frac{Q}{2}
\leqslant q \leqslant Q} \frac{q^{-1/2+\delta}}{Q-q+1} \\
& \leqslant \frac{2}{Q} \sum_{q=1}^Q q^{-\frac{1}{2}+\delta}
+\left( \frac{Q}{2}\right)^{-\frac{1}{2}+\delta} \sum_{n=1}^Q
\frac{1}{n} \ll Q^{-\frac{1}{2}+\delta} \log Q, \end{split}
\end{equation}
\begin{equation}\label{5.16}
\sum_{q=1}^Q q^2 \sum_{k=1}^\infty \| \nabla f_q \|_{q(1-I)\times
\II_{q,k}} \ll_\xi \sum_{q=1}^Q \frac{1}{Q-q+1} \ll \log Q.
\end{equation}
Taking $T=[Q^{c^\prime}]$  we infer upon  \eqref{5.15} and \eqref{5.16}, with $E_{c,c^\prime,\delta} (Q)$ as in Theorem \ref{T21},
\begin{equation}\label{5.17}
\begin{split}
\sum_{q=1}^Q \sum_{k=1}^\infty & \Bigg(
Tq^{\frac{1}{2}+\delta} \Big( T\| f_q\|_{q(1-I)\times \II_{q,k}}
+q\| \nabla f_q \|_{q(1-I)\times \II_{q,k}} \Big) +\frac{q\vert
I\vert \vert \II_{q,k}\vert}{T} \ \| \nabla f_q \|_{q(1-I)\times
\II_{q,k}} \Bigg) \\
& \ll_{\xi,\delta} T^2 Q^{-\frac{1}{2}+\delta} + T
Q^{-\frac{1}{2}+\delta} \log Q +\vert I\vert T^{-1}\log Q \ll
E_{c,c^\prime,\delta}(Q),
\end{split}
\end{equation}
uniformly for $\xi$ in compact subsets of $[0,\infty)$.
From \eqref{5.13} and \eqref{5.14} we also get
\begin{equation}\label{5.18}
\sum_{q=1}^Q \Bigg( Tq^{\frac{1}{2}+\delta} \Big(
T\| g_q\|_{q(1-I)\times \II_{q,0}} +q\| \nabla g_q
\|_{q(1-I)\times \II_{q,0}} \Big) +\frac{q^2\vert I\vert }{T} \ \|
\nabla g_q \|_{q(1-I)\times \II_{q,0}} \Bigg) \ll_\delta
E_{c,c^\prime,\delta}(Q) .
\end{equation}

When $\alpha\neq 0$, Lemma \ref{L3.8} applies to the innermost sums in
\eqref{5.10}. Upon \eqref{5.17} and \eqref{5.18} we find
\begin{equation}\label{5.19}
\begin{split}
\G_{I,Q,\alpha,\beta}^{(0)} (\xi)  & \approxeq
\sum\limits_{\substack{1\leqslant q\leqslant Q \\ 3\nmid q}}
\frac{\varphi (q)}{9q^2} \cdot qc_I \left( \sum_{k=1}^\infty
\int_{\II_{q,k}} F_q (y)\, dy +\int_{Q-q}^Q G_q (y)\, dy \right) \\ &
\approxeq \sum\limits_{\substack{1\leqslant q\leqslant Q \\ 3\mid q}}
\frac{\varphi(q)}{6q^2}\cdot qc_I \left( \sum_{k=1}^\infty
\int_{\II_{q,k}} F_q (y)\, dy +\int_{Q-q}^Q G_q (y)\, dy \right) .
\end{split}
\end{equation}
Applying Lemma \ref{L3.3} with $\ell=3$ to the sum over $q$ in
\eqref{5.19} and making the substitution $(q,y)=(Qu,Qw)$ we
gather
\begin{equation*}
\begin{split}
\G_{I,Q,\alpha,\beta}^{(0)} (\xi) & \approxeq \left( \frac{3}{4}
\cdot\frac{1}{9} +\frac{1}{4} \cdot
\frac{1}{6}\right)\frac{c_I}{\zeta(2)} \cdot \int_0^1 \hspace{-3pt} du\left(
\int_{1-u}^1 \hspace{-2pt} dw\, F^{(0.1)} (\xi;u,w)+\int_1^\infty \hspace{-3pt} dw\,
F^{(0.2)}(\xi;u,w)\right) ,
\end{split}
\end{equation*}
with $F^{(0.1)}$ and $F^{(0.2)}$ as in \eqref{5.22}. The total
contribution of cases $\alpha\neq 0$ to $\G_{I,Q}^{(0)}(\xi)$
is obtained by multiplying the quantity above by $6$.

When $\alpha=0$ and $\beta=\pm 1$ we sum as above, with summation
conditions in the inner sum given by \eqref{5.2} and get, employing
Lemmas \ref{L3.8} and \ref{L3.2},
\begin{equation}\label{5.20}
\begin{split}
\G_{I,Q,0,1}^{(0} (\xi) & +\G_{I,Q,0,-1}^{(0)} (\xi)
\approxeq \sum\limits_{\substack{1\leqslant q\leqslant Q \\
(q,3)=1}} \frac{\varphi (q)}{q^2} \cdot \frac{qc_I}{3} \Bigg(
\int_Q^\infty F_q (y)\, dy +\int_{Q-q}^Q G_q (y)\, dy \Bigg) \\
& \approxeq\frac{c_I}{4\zeta (2)} \int_0^1 du \left( \int_{1-u}^1
dw \, F^{(0.1)}(\xi;u,w)+\int_1^\infty dw\,
F^{(0.2)}(\xi;u,w)\right)  .
\end{split}
\end{equation}
Applying Lemma \ref{L3.2} to \eqref{5.20} we finally find
\begin{equation}\label{5.21}
\G_{I,Q}^{(0)}(\xi)\approxeq \frac{c_I}{\zeta(2)} \int_0^1 du
\left( \int_{1-u}^1 dw\ F^{(0.1)} (\xi;u,w) + \int_1^\infty
dw\ F^{(0.2)} (\xi;u,w) \right),
\end{equation}
where
\begin{equation}\label{5.22}
\begin{split}
F^{(0.1)} (\xi;u,w) = & \frac{(w+u-1)^2 (2w+u)}{w^2 (w+u)^2}
 (u-\xi)_+ + \frac{(w+u-1)^2}{w(w+u)^2}
(w-\xi)_+ \\ & \quad + \frac{w+u-1}{w(w+u)} \left( \frac{2-u-w}{w+u}
+ \frac{1-u}{w} \right) (w+u-\xi)_+ , \\ F^{(0.2)}
(\xi;u,w)= & \frac{1-u}{(w-u)w} \left( \frac{w+u-1}{w} +
\frac{w-1}{w-u}\right) (u-\xi)_+ \\ & \quad
+\frac{(1-u)^2}{(w-u)^2 w}  ( w-\xi)_+ +
\frac{(1-u)^2}{(w-u)w^2}  ( w+u-\xi)_+  .
\end{split}
\end{equation}

\newtheorem*{R2}{Remark 2.}\label{R5.3}
\begin{R2}
For each angle $\omega$ the weight $W_{\gamma,k}(t)=W_{\gamma,k}^\hex (t)$ is clearly
no larger than the weight $W^\square_{\gamma,k}(t)$ from the situation where no slit is being removed
(and which corresponds up to some scaling to the case of the the unit square).\footnote{Note that because of the
scaling of $\xi$ this does not imply $\Phi^\hex (\xi) \leqslant \Phi^\square (\xi)$ (see also Figure \ref{Figure2}).}
Since the corresponding limiting distribution $\Phi^\square$ satisfies \eqref{1.2}
\cite{BGW,BZ2,Dah}, it follows that $\Phi^\hex$ satisfies the second inequality in \eqref{1.2}.
The first inequality in \eqref{1.2} follows for instance from
\begin{equation*}
G^{(0)}(\xi)  := \int_0^1 du \int_1^\infty dw\ \frac{(1-u)^2}{(w-u)^2 w} (w-\xi)_+
\geqslant \int_0^1 du\ (1-u)^2 \int_\xi^\infty \frac{v\ dv}{(v+\xi)^3} =\frac{3}{16 \xi} .
\end{equation*}
\end{R2}

\section{Channels with removed slits. The case $r=r_k=\pm 1$}\label{Sect6}
Assume first that $r=r_k=1$. Then $r_{k+1}=r+r_k=-1\pmod{3}$. One
has to analyze the $\CC_\leftarrow$ and the $\CC_\downarrow$
contributions. We shall sum as in Remark 1 above with $x=q-a\in
q(1-I)$, $\alpha=1$, $y=q_k\in\II_{q,k}$, $\beta=1$, considering
\begin{equation*}
\sideset{}{^*}\sum = \sideset{}{^*}\sum\limits_{\substack{\gamma\in\FF_I (Q) \\
q-a\equiv 1 \hspace{-6pt}\pmod{3} \\ q_k-a_k \equiv 1
\hspace{-6pt}\pmod{3}}} .
\end{equation*}

The table in Figure \ref{Figure8} shows that the weights
$W_{\gamma,k} (t)$ for $(\CC_\leftarrow ,r=r_k=1)$ and for
$(\CC_\downarrow,r=r_k=-1)$ do coincide, and so they do for
$(\CC_\downarrow,r=r_k=1)$ and $(\CC_\leftarrow,r=r_k=-1)$. This
eventually shows that the corresponding contribution for
$r=r_k=-1$ has the same main term and error terms as the one for
$r=r_k=1$ (we just need to replace $\beta$ by $-\beta$ and
$\alpha$ by $-\alpha$, which will produce the same main term and
error size). As a result we shall only take $r=r_k=1$ and double the
total contribution in the sequel.

\subsection{The $\CC_\leftarrow$ contribution}\label{Subsec6.1}
The slits $q$ and $q_k$ are removed, while $2q$, $q_{k+1}$,
$q_{k+2}$, $2q_k$ and $q_k+q_{k+1}$ are not because
$2r=r_{k+1}=2r_k=2$, $r_{k+2}=r_k+r_{k+1}=0\nequiv 1 \pmod{3}$.
Denote by $T(\bullet)$, respectively $B(\bullet)$, the height of
the top, respectively bottom, of the slit $\bullet$ with respect
to the top of the strip $\BB_k$, with positive downwards
direction. Since $B(q_{k+1})=2w_{\BB_k}+w_{\CC_k} < T(2q)
=2(w_{\BB_k}+w_{\CC_k}) <B(q_{k+2}) =3w_{\BB_k}+2w_{\CC_k}$ and
$B(2q_k) =w_{\BB_k}-w_{\AA_0}-w_{\CC_k}
<T(q_{k+1}+q_k)=w_{\BB_k}-w_{\AA_0}$, the slits $2q$, $q_{k+2}$,
$q_{k+1}$, $q_{k+1}+q_k$ and $2q_k$ lock all channels $\BB_k$,
$\CC_k$ and $\AA_0$. Two cases arise:

\begin{figure}[ht]
\centering
\unitlength 0.33mm
\begin{picture}(380,190)(-10,10)
\texture{ccccccc 0000}
\shade\path(0,90)(0,120)(30,120)(30,90)(0,90)
\shade\path(85,150)(130,150)(130,140)(85,140)(85,150)
\shade\path(220,75)(220,105)(250,105)(250,75)(220,75)
\shade\path(305,150)(350,150)(350,135)(305,135)(305,150)

\put(-22,75){\makebox(0,0){\small $(-1,-\eps)$}}
\put(-17,154){\makebox(0,0){\small $(-1,\eps)$}}
\put(198,75){\makebox(0,0){\small $(-1,-\eps)$}}
\put(203,154){\makebox(0,0){\small $(-1,\eps)$}}

\put(30,38){\makebox(0,0){\small $q$}}
\put(60,7){\makebox(0,0){\small $2q$}}
\put(85,82){\makebox(0,0){\small $\xi Q$}}
\put(102,192){\makebox(0,0){\small $q_k$}}
\put(130,192){\makebox(0,0){\small $q_{k+1}$}}
\put(160,162){\makebox(0,0){\small $q_{k+2}$}}
\put(250,23){\makebox(0,0){\small $q$}}
\put(280,7){\makebox(0,0){\small $2q$}}
\put(305,67){\makebox(0,0){\small $\xi Q$}}
\put(320,192){\makebox(0,0){\small $q_k$}}
\put(350,172){\makebox(0,0){\small $q_{k+1}$}}
\put(380,127){\makebox(0,0){\small $q_{k+2}$}}
\put(-10,100){\makebox(0,0){\small $\AA_0$}}
\put(-10,130){\makebox(0,0){\small $\CC_k$}}
\put(-10,143){\makebox(0,0){\small $\BB_k$}}
\put(210,90){\makebox(0,0){\small $\AA_0$}}
\put(210,120){\makebox(0,0){\small $\CC_k$}}
\put(210,142){\makebox(0,0){\small $\BB_k$}}

\put(47,100){\makebox(0,0){\small $w_{\AA_0}$}}
\put(148,145){\makebox(0,0){\small $w_{\BB_k}$}}
\put(148,130){\makebox(0,0){\small $w_{\CC_k}$}}
\put(368,142){\makebox(0,0){\small $w_{\BB_k}$}}
\put(368,117){\makebox(0,0){\small $w_{\CC_k}$}}
\put(267,95){\makebox(0,0){\small $w_{\AA_0}$}}
\put(148,115){\makebox(0,0){\small $w_{\BB_k}$}}
\put(368,97.5){\makebox(0,0){\small $w_{\BB_k}$}}
\put(396,75){\makebox(0,0){\small $w_{\CC_k}$}}
\put(176,98){\makebox(0,0){\small $w_{\CC_k}$}}
\put(115,165){\makebox(0,0){\small $w_{\AA_0}$}}

\path(30,75)(60,75) \path(0,90)(160,90) \path(160,110)(0,110)
\path(0,120)(30,120) \path(0,140)(100,140)\path(100,150)(130,150)

\path(250,75)(380,75)\path(380,90)(220,90)\path(220,105)(250,105)
\path(320,135)(220,135)\path(350,150)(320,150)

\dottedline{2}(30,45)(30,120) \dottedline{2}(100,140)(100,185)
\dottedline{2}(250,105)(250,30) \dottedline{2}(320,135)(320,185)

\thicklines \path(0,75)(30,75) \path(30,120)(130,120)
\path(100,140)(130,140)\path(0,150)(100,150)

\path(220,75)(250,75)\path(250,105)(350,105)\path(350,135)(320,135)
\path(320,150)(220,150)

\Thicklines \path(0,75)(0,150) \path(60,15)(60,90)
\path(130,110)(130,185) \path(160,80)(160,155)

\path(220,75)(220,150) \path(280,60)(280,15)
\path(380,45)(380,120) \path(350,90)(350,165)

\thinlines \path(125,140)(125,185)
\path(126.5,143)(125,140)(123.5,143)
\path(126.5,182)(125,185)(123.5,182)
\path(30,45)(60,45)\path(165,90)(165,110)\path(163.5,93)(165,90)(166.5,93)
\path(163.5,107)(165,110)(166.5,107) \path(385,60)(385,90)
\path(383.5,63)(385,60)(386.5,63)
\path(383.5,87)(385,90)(386.5,87) \path(355,90)(355,105)
\path(353.5,93)(355,90)(356.5,93)
\path(353.5,102)(355,105)(356.5,102) \path(135,110)(135,120)
\path(133.5,113)(135,110)(136.5,113)
\path(133.5,117)(135,120)(136.5,117)

\path(85,90)(85,150) \path(305,75)(305,150)
\path(60,15)(90,15)(90,60)(170,60) \path(130,185)(170,185)
\path(160,80)(170,80) \path(160,155)(170,155)

\path(35,75)(35,120) \path(33.5,78)(35,75)(36.5,78)
\path(33.5,117)(35,120)(36.5,117)

\path(135,120)(135,150) \path(133.5,123)(135,120)(136.5,123)
\path(133.5,137)(136.5,143) \path(133.5,143)(136.5,137)
\path(133.5,147)(135,150)(136.5,147)

\path(355,105)(355,150)\path(353.5,108)(355,105)(356.5,108)
\path(353.5,132)(356.5,138)\path(353.5,138)(356.5,132)\path(353.5,147)(355,150)(356.5,147)

\path(255,75)(255,105)\path(253.5,78)(255,75)(256.5,78)
\path(253.5,102)(255,105)(256.5,102)

\path(250.5,30)(280,30) \path(280,60)(380,60)
\path(380,45)(390,45) \path(380,120)(390,120)
\path(350,165)(390,165)

\end{picture}

\caption{The case $t \in I_{\gamma,k}$, $(\CC_\leftarrow, r=r_k=1)$,
 and $w_{\BB_k}+w_{\CC_k}<w_{\AA_0}$
respectively $w_{\BB_k}+w_{\CC_k}>w_{\AA_0}$} \label{Figure10}
\end{figure}

\subsubsection{$w_{\BB_k}<w_{\AA_0} \ (\Longleftrightarrow
t=\tan\omega <\gamma_{k+1})$}\label{Subsub6.1.1}

In this case $\BB_k$ is locked by the slit $q_{k+1}$ while $\AA_0$
is locked by the slits $q_{k+1}$, $q_{k+2}$ and $2q$. Two subcases
arise:

(I) $\ w_{\BB_k}+w_{\CC_k}<w_{\AA_0} \ (\Longleftrightarrow t <\frac{a+\eps}{q})$.
Then $B(q_{k+1})<T(2q)<2\eps$, so $\AA_0$ is
locked by the slits $q_{k+1}$, $q_{k+2}$ and $2q$ (see left-hand
side of Figure \ref{Figure10}). The widths of the relevant three
sub-channels of $\AA_0$ are (from bottom to top) $2\eps
-T(2q)=2(a+\eps -qt)$, $w_{\CC_k}$ and $w_{\BB_k}$,
so\footnote{Only $\AA_0$ and $\BB_k$ have to taken into account
here because $\CC_k$ has been already considered in the previous
section.}
\begin{equation}\label{6.1}
\begin{split}
W_{\gamma,k}(t)=W^{(1)}_{\gamma,k}(t) = & 2(a+\eps-qt) \cdot
(2q-\xi Q)_+ \wedge q +w_{\CC_k}(t) \cdot (q_{k+2}-\xi Q)_+ \wedge q \\
& +w_{\BB_k}(t) \cdot (q_{k+1}-\xi Q)_+ \wedge q+w_{\BB_k}(t)
\cdot (q_{k+1}-\xi Q)_+ \wedge q_k .
\end{split}
\end{equation}

(II) $\ w_{\BB_k} <w_{\AA_0}<w_{\BB_k}+w_{\CC_k}\
(\Longleftrightarrow t >\frac{a+\eps}{q})$. Then
$B(q_{k+1})=2w_{\BB_k}+w_{\CC_k} <2\eps <
T(2q)=2w_{\BB_k}+2w_{\CC_k}$, so $\AA_0$ is locked by the slits
$q_{k+1}$ and $q_{k+2}$ (see right-hand side of Figure
\ref{Figure10}). The widths of the relevant two sub-channels of
$\AA_0$ are (from bottom to top) $2\eps - B(q_{k+1}) = a_{k+1} -
q_{k+1}t$ and $w_{\BB_k}$, hence in this case
\begin{equation}\label{6.2}
\begin{split}
W_{\gamma,k}(t) = W_{\gamma,k}^{(2)}(t) = & (a_{k+1}-q_{k+1}t)\cdot
(q_{k+2}-\xi Q)_+ \wedge q +w_{\BB_k}(t)\cdot (q_{k+1}-\xi
Q)_+ \wedge q \\ & +w_{\BB_k}(t) \cdot (q_{k+1}-\xi Q)_+
\wedge q_k .
\end{split}
\end{equation}

\subsubsection{$w_{\AA_0} <w_{\BB_k}\
(\Longleftrightarrow t > \gamma_{k+1})$}\label{Subsub6.1.2}

In this case $\AA_0$ is locked by the slits $q_{k+1}$ and $\BB_k$
by $q_{k+1}$, $q_{k+1}+q_k$ and $2q_k$. Two subcases arise:

(III) $\ w_{\AA_0}<w_{\BB_k}<w_{\AA_0}+w_{\CC_k} \
(\Longleftrightarrow \gamma_{k+1} < t <\frac{a_k-\eps}{q_k})$.
Then $B(2q_k)<0<T(q_{k+1})$, so $\BB_k$ is locked by the slits
$q_{k+1}$ and $q_{k+1}+q_k$. The widths of the relevant two
sub-channels of $\BB_k$ are $w_{\AA_0}$ and $T(q_{k+1})=q_{k+1}t
-a_{k+1}$, hence in this case
\begin{equation}\label{6.3}
\begin{split}
W_{\gamma,k}(t) = W^{(3)}_{\gamma,k}(t) = & w_{\AA_0}(t) \cdot
(q_{k+1}-\xi Q)_+ \wedge q +w_{\AA_0}(t) \cdot
(q_{k+1}-\xi Q)_+ \wedge q_k \\ & + (q_{k+1} t -a_{k+1}) \cdot
(q_{k+1}+q_k -\xi Q)_+ \wedge q_k .
\end{split}
\end{equation}

(IV) $\ w_{\AA_0}+w_{\CC_k} <w_{\BB_k} \ (\Longleftrightarrow
\frac{a_k-\eps}{q_k}< t)$. Then $0<B(2q_k)<T(q_{k+1})$, so $\BB_k$
is locked by the slits $q_{k+1}$, $q_{k+1}+q_k$ and $2q_k$. The
widths of the relevant three sub-channels of $\BB_k$ are
$w_{\AA_0}$, $T(q_{k+1})-B(2q_k)=w_{\CC_k}$ and $B(2q_k)=2 (q_k t
-a_k+\eps)$, showing that in this case
\begin{equation}\label{6.4}
\begin{split}
W_{\gamma,k}(t) = & W^{(4)}_{\gamma,k} (t) = w_{\AA_0}(t) \cdot
(q_{k+1}-\xi Q )_+ \wedge q +w_{\AA_0}(t) \cdot (q_{k+1}-\xi Q)_+ \wedge q_k \\
& + w_{\CC_k}(t) \cdot (q_{k+1}+q_k -\xi Q)_+ \wedge q_k
+2(q_k t -a_k +\eps) \cdot (2q_k -\xi Q)_+ \wedge q_k .
\end{split}
\end{equation}

\subsection{The $\CC_\downarrow$ contribution}\label{Subsec6.2}
Since $r=1,r_k=1,r_k+r_{k+1}=r+r_{k+1}=0\nequiv 1 \pmod{3}$, none
of the corresponding slits is being removed. Moreover
$2r_{k+1}=1\nequiv 0\pmod{3}$, thus the slit $2q_{k+1}$ is not
removed either and the slits $q_{k+2}=q+q_{k+1}$, $q_k+q_{k+1}$
and $2q_{k+1}$ lock the central channel (see Figure
\ref{Figure11}). Furthermore, ordering $w_{\BB_k}$,
$B(q_k+q_{k+1})=2w_{\BB_k}-w_{\AA_0}$, $w_{\BB_k}+w_{\CC_k}$ and
$T(q_{k+2})=2w_{\BB_k}+w_{\CC_k}-w_{\AA_0}$ we find as in
Subsection \ref{Subsec6.1} that the corresponding weight $W_{\gamma,k} (t)$ is given
by\footnote{Only the contribution of $\CC_k$ needs to be taken
into account here.}

\begin{equation}\label{6.5}
\begin{split}
 & \begin{cases}
(w_{\BB_k}-w_{\AA_0})  (q_k+q_{k+1}-\xi Q)_+ \wedge q_{k+1} & \\
\quad +(w_{\AA_0}+w_{\CC_k}-w_{\BB_k}) (2q_{k+1}-\xi Q)_+
\wedge q_{k+1} & \mbox{\rm if
$w_{\AA_0}<w_{\BB_k}<w_{\AA_0}+w_{\CC_k},$} \\
w_{\CC_k}  (q_k+q_{k+1}-\xi Q)_+ \wedge q_{k+1} & \mbox{\rm
if $w_{\AA_0}+w_{\CC_k} <w_{\BB_k},$} \\
(w_{\AA_0}-w_{\BB_k})  (q_{k+2}-\xi Q)_+ \wedge q_{k+1} &
\\ \quad +(w_{\BB_k}+w_{\CC_k}-w_{\AA_0})  (2q_{k+1}-\xi Q)_+
\wedge q_{k+1} & \mbox{\rm if $w_{\BB_k}<w_{\AA_0}
<w_{\BB_k}+w_{\CC_k},$} \\ w_{\CC_k}  (q_{k+2}-\xi Q)_+ \wedge
q_{k+1} & \mbox{\rm if $w_{\BB_k}+w_{\CC_k}<w_{\AA_0},$}
\end{cases} \\
& =\begin{cases} W_{\gamma,k}^{(5)} (t) := w_{\CC_k}(t) \cdot
(q_{k+2}-\xi Q)_+ \wedge q_{k+1} & \mbox{\rm if $t
< \frac{a+\eps}{q} \wedge \gamma_{k+1} ,$} \\
W_{\gamma,k}^{(6)} (t) := (a_{k+1}-q_{k+1} t) \cdot
(q_{k+2}-\xi Q)_+ \wedge q_{k+1} &
\\ \qquad \qquad +2 (qt-a-\eps) \cdot (2q_{k+1}-\xi Q)_+ \wedge
q_{k+1} & \mbox{\rm if $\frac{a+\eps}{q} <t <\gamma_{k+1},$} \\
W_{\gamma,k}^{(7)}(t) := (q_{k+1}t -a_{k+1}) \cdot
(q_k+q_{k+1}-\xi Q)_+ \wedge q_{k+1} & \\
\qquad \qquad +2( a_k-\eps-q_k t ) \cdot (2q_{k+1}-\xi Q)_+ \wedge
q_{k+1} & \mbox{\rm if $\gamma_{k+1} <t <\frac{a_k-\eps}{q_k},$} \\
W_{\gamma,k}^{(8)}(t) := w_{\CC_k}(t) \cdot (q_k+q_{k+1}-\xi
Q)_+ \wedge q_{k+1} & \mbox{\rm if $ \frac{a_k-\eps}{q_k} \vee
\gamma_{k+1} <t.$}
\end{cases}
\end{split}
\end{equation}

\begin{figure}[ht]
\centering
\unitlength 0.36mm
\begin{picture}(270,175)(0,0)
\texture{ccccccc 0000}
\shade\path(95,100)(190,100)(190,90)(220,90)(220,75)(110,75)(110,90)(95,90)(95,100)

\put(-25,57){\makebox(0,0){\small $(0,-1-\eps)$}}
\put(-25,127){\makebox(0,0){\small $(0,-1+\eps)$}}

\put(30,2){\makebox(0,0){\small $q$}}
\put(80,172){\makebox(0,0){\small $q_k$}}
\put(95,68){\makebox(0,0){\small $\xi Q$}}
\put(110,42){\makebox(0,0){\small $q_{k+1}$}}
\put(140,2){\makebox(0,0){\small $q_{k+2}$}}
\put(190,162){\makebox(0,0){\small $q_k+q_{k+1}$}}
\put(220,112){\makebox(0,0){\small $2q_{k+1}$}}

\put(-10,69){\makebox(0,0){\small $\AA_0$}}
\put(-10,87){\makebox(0,0){\small $\CC_k$}}
\put(-10,110){\makebox(0,0){\small $\BB_k$}}
\put(263,82){\makebox(0,0){\small $w_{\AA_0}+w_{\CC_k}-w_{\BB_k}$}}
\put(160,95){\makebox(0,0){\small $w_{\BB_k}-w_{\AA_0}$}}
\put(96,112){\makebox(0,0){\small $w_{\BB_k}$}}
\put(46,67){\makebox(0,0){\small $w_{\AA_0}$}}

\path(0,75)(30,75) \path(110,75)(220,75) \path(220,90)(190,90)
\path(190,100)(110,100)\path(80,100)(0,100)

\dottedline{2}(110,50)(110,115)

\thicklines \path(30,75)(110,75)
\path(0,60)(30,60)\path(110,100)(80,100) \path(80,125)(0,125)

\Thicklines \path(0,60)(0,125) \path(30,75)(30,10)
\path(80,100)(80,165) \path(190,90)(190,155)
\path(220,40)(220,105) \path(140,65)(140,10)

\thinlines \path(110,50)(140,50) \path(140,65)(220,65)
\path(110,115)(190,115) \path(95,100)(95,75) \path(220,90)(0,90)
\path(30,10)(60,10)(60,25)(140,25)
\path(80,165)(160,165)(160,140)(190,140)
\path(220,40)(250,40)(250,55)(260,55) \path(220,105)(260,105)
\path(190,155)(260,155) \path(225,75)(225,90)
\path(223.5,78)(225,75)(226.5,78)
\path(223.5,87)(225,90)(226.5,87) \path(185,90)(185,100)
\path(183.5,93)(185,90)(186.5,93)
\path(183.5,97)(185,100)(186.5,97) \path(85,100)(85,125)
\path(83.5,103)(85,100)(86.5,103)
\path(83.5,122)(85,125)(86.5,122) \path(35,60)(35,75)
\path(33.5,63)(35,60)(36.5,63) \path(33.5,72)(35,75)(36.5,72)

\end{picture}

\caption{The case $t \in I_{\gamma,k}$, $(\CC_\downarrow, r=r_k=1)$,
 $w_{\AA_0}<w_{\BB_k}<w_{\AA_0}+w_{\CC_k}$}
\label{Figure11}
\end{figure}

\subsection{Estimating the total contribution}\label{Subsec6.3}
\subsubsection{$q_{k+1}>2Q$}\label{Subsub6.3.1}

In this case $k\geqslant 1$ and $t_{k-1}<\frac{a_k-\eps}{q_k}
<\gamma_{k+1}<\frac{a+\eps}{q}$. The cumulative contribution of
$\CC_\leftarrow$ and $\CC_\downarrow$ when $r=r_k=1$ or $r=r_k=-1$
and arising from \eqref{6.1}-\eqref{6.5} is
\begin{equation*}
\begin{split}
\G_{I,Q}^{(1.1)}(\xi) & =2\sum_{k=1}^\infty \
\sideset{}{^*}\sum_{\substack{q_k \in \II_{q,k} \\ q_{k+1}> 2Q}}
\int_{t_k}^{t_{k-1}} \frac{W^{(1)}_{\gamma,k}(t)+
+W^{(5)}_{\gamma,k}(t)}{t^2+t+1}\, dt = \G_{I,Q}^{(1.1.1)}
(\xi) + \G_{I,Q}^{(1.1.2)}
(\xi)+\G_{I,Q}^{(1.1.3)}(\xi),
\end{split}
\end{equation*}
with
\begin{equation*}
\begin{split}
& \G_{I,Q}^{(1.1.1)} (\xi) =2\sum_{k=1}^\infty  \sideset{}{^*}\sum_{\substack{q_k \in \II_{q,k} \\
q_{k+1}> 2Q}} \Big( (q_{k+1}-\xi Q)_+ \wedge q+(q_{k+1}-\xi Q)_+ \wedge
q_k \Big) \int_{t_k}^{t_{k-1}} \frac{w_{\BB_k}(t)\, dt}{t^2+t+1} , \\
& \G_{I,Q}^{(1.1.2)}(\xi)= 2 \sum_{k=1}^\infty \ \sideset{}{^*}\sum_{\substack{q_k \in \II_{q,k} \\
q_{k+1}> 2Q}} (2q-\xi Q)_+ \wedge q \int_{t_k}^{t_{k-1}}
\frac{2 (a+\eps-qt)\, dt}{t^2+t+1} ,\\
& \G_{I,Q}^{(1.1.3)}(\xi)=2\sum_{k=1}^\infty \ \sideset{}{^*}\sum_{\substack{q_k \in \II_{q,k} \\
q_{k+1}> 2Q}} \Big( (q_{k+2}-\xi Q)_+ \wedge q
+(q_{k+2}-\xi Q)_+ \wedge q_{k+1}\Big) \int_{t_k}^{t_{k-1}}
\frac{w_{\CC_k}(t)\, dt}{t^2+t+1} .
\end{split}
\end{equation*}
This quantity is estimated employing \eqref{5.6}, \eqref{A1.1}
and \eqref{5.7}. The cumulative contribution of the error terms from
those formulas to $\sum_I \G^{(1.1)}_{I,Q}(\xi)$ is
\begin{equation*}
\begin{split}
\ll \sum_I \sum_{\gamma\in\FF_I (Q)} \sum_{k=1}^\infty
\frac{1}{Qqq_{k-1}^2} & \leqslant \sum_I \sum_{\gamma\in\FF_I (Q)}
\frac{1}{Qq} \left( \frac{1}{q^{\prime
2}}+\frac{1}{qq^\prime}\right) \leqslant \frac{2}{Q} \sum_I
\sum\limits_{\gamma \in \FF_I(Q)} \frac{1}{qq^\prime} \leqslant
\frac{6\vert I\vert}{Q} ,
\end{split}
\end{equation*}
so they can be discarded. Following the outline from the end of Section \ref{Sect5} we
find
\begin{equation}\label{6.6}
\G_{I,Q}^{(1.1)}(\xi) \approxeq \frac{c_I}{8\zeta(2)} \int_0^1
du \int_{2-u}^\infty dw \, F^{(1.1)}(\xi;u,w),
\end{equation}
where
\begin{equation*}
\begin{split}
F^{(1.1)}(\xi;u,w)= & \frac{(1-u)^2}{(w-u)^2 w}\cdot
\Big( (w+u-\xi)_+ \wedge u+(w+u-\xi)_+ \wedge w\Big)
\\ & +\frac{1-u}{(w-u)w} \left(
\frac{w+u-2}{w-u}+\frac{w+2u-2}{w}\right) \cdot (2u-\xi)_+ \wedge u \\
& +\frac{(1-u)^2}{(w-u)w^2}\cdot \Big( (w+2u-\xi)_+ \wedge
u+(w+2u-\xi)_+ \wedge (w+u)\Big).
\end{split}
\end{equation*}

\subsubsection{$q_{k+1}\leqslant 2Q$}\label{Subsub6.3.2}
In this case $\frac{a+\eps}{q}\leqslant \gamma_{k+1}\leqslant
\frac{a_k-\eps}{q_k}$. Furthermore, when $k\geqslant 1$ we have
$\frac{a+\eps}{q}\leqslant t_k \leqslant \gamma_{k+1}\leqslant
\frac{a_k-\eps}{q_k}\leqslant t_{k-1}$ if $q_{k+1}\leqslant 2Q-q$,
and $t_k \leqslant \frac{a+\eps}{q} \leqslant
\gamma_{k+1}\leqslant \frac{a_k-\eps}{q_k} \leqslant t_{k-1}$ if
$2Q-q\leqslant q_{k+1}\leqslant 2Q$. When $k=0$ the only
difference is that $\gamma_1=t_{-1}\leqslant
\frac{a_0-\eps}{q_0}=\frac{a^\prime -\eps}{q^\prime}$. In this
case the cumulative contribution of $\CC_\leftarrow$ and
$\CC_\downarrow$ arising from \eqref{6.1}-\eqref{6.5} when
$r=r_k=1$ or $r=r_k=-1$ is
\begin{equation*}
\G_{I,Q}^{(1.2)}(\xi)=\G_{I,Q}^{(1.2.1)}(\xi)+\cdots
+\G_{I,Q}^{(1.2.5)}(\xi),
\end{equation*}
with
\begin{equation*}
\G_{I,Q}^{(1.2.1)}(\xi)=2\sum_{k=1}^\infty
\sideset{}{^*}\sum_{\substack{q_k\in\II_{q,k} \\ q_{k+1}\leqslant
2Q}} \int_{\frac{a_k-\eps}{q_k}}^{t_{k-1}} \frac{
W_{\gamma,k}^{(4)} (t)+W_{\gamma,k}^{(8)}(t)}{t^2+t+1}\, dt ,
\end{equation*}
\begin{equation*}
\G_{I,Q}^{(1.2.2)}(\xi)= 2 \sum_{k=1}^\infty
\sideset{}{^*}\sum_{\substack{q_k\in\II_{q,k} \\ q_{k+1}\leqslant
2Q}} \int_{\gamma_{k+1}}^{\frac{a_k-\eps}{q_k}} \frac{
W_{\gamma,k}^{(3)} (t) +W_{\gamma,k}^{(7)}(t)}{t^2+t+1}\, dt ,
\end{equation*}
\begin{equation*}
\G_{I,Q}^{(1.2.3)}(\xi) = 2\sum_{k=0}^\infty
\sideset{}{^*}\sum_{\substack{q_k\in\II_{q,k} \\ q_{k+1}\leqslant
2Q-q}} \int_{t_k}^{\gamma_{k+1}} \frac{W_{\gamma,k}^{(2)}(t) +
W_{\gamma,k}^{(6)}(t)}{t^2+t+1}\, dt ,
\end{equation*}
\begin{equation*}
\G_{I,Q}^{(1.2.4)}(\xi)=2\sum_{k=0}^\infty
\sideset{}{^*}\sum_{\substack{q_k\in\II_{q,k} \\ 2Q-q< q_{k+1}
\leqslant 2Q}} \int_{t_k}^{\frac{a+\eps}{q}}
\frac{W_{\gamma,k}^{(1)} (t)+ W_{\gamma,k}^{(5)}(t)}{t^2+t+1}\, dt
,
\end{equation*}
\begin{equation*}
\G_{I,Q}^{(1.2.5)}(\xi)=2\sum_{k=0}^\infty
\sideset{}{^*}\sum_{\substack{q_k\in\II_{q,k} \\ 2Q-q <
q_{k+1}\leqslant 2Q}} \int_{\frac{a+\eps}{q}}^{\gamma_{k+1}}
\frac{W_{\gamma,k}^{(2)} (t)+ W_{\gamma,k}^{(6)}(t)}{t^2+t+1}\, dt
.
\end{equation*}
The total contribution of the error terms from \eqref{A1.2},
\eqref{A1.3} and \eqref{A1.4} to $\sum_I \G_{I,Q}^{(1.2.1)}
(\xi)$ is
\begin{equation*}
\begin{split}
\ll \sum_{I} \sum_{\gamma\in\FF_I(Q)} & \sum_{k=1}^\infty \left(
\frac{1}{qq_{k-1}^3} +\frac{1}{q^2 q_{k-1} q_k}\right) \leqslant
\sum_{I} \sum_{\gamma\in\FF_I(Q)} \left( \frac{1}{q q^{\prime 3}}
+\frac{1}{Qq^2 q^\prime}+ \frac{1}{(qq^\prime)^2}
+\frac{1}{q^3 q^\prime} \right) \\ &
\leqslant \frac{2}{Q} \sum_{\gamma\in \FF(Q)} \frac{1}{qq^\prime}
+\frac{2}{Q} \sum_{q=1}^Q \frac{\varphi(q)}{q^2} +\sum_I \frac{8}{\vert I\vert Q}
\ll Q^{c-1},
\end{split}
\end{equation*}
showing that they can be discarded in the sequel. The same holds
for $\G^{(1.2.2)}_{I,Q}(\xi) , \ldots ,
\G_{I,Q}^{(1.2.5)}(\xi)$, where for
$\G_{I,Q}^{(1.2.4)}(\xi)$ and $\G_{I,Q}^{(1.2.5)}(\xi)$
one uses the fact that $k$ takes exactly one value as a result of
$q_{k+1}\in (2Q-q,2Q]$.

Employing \eqref{A1.2}, \eqref{A1.3}, \eqref{A1.4}, and proceeding
as in Section \ref{Sect5} we find
\begin{equation}\label{6.7}
\G_{I,Q}^{(1.2.1)}(\xi) \approxeq \frac{c_I}{8\zeta(2)}
\int_0^1 du \int_1^{2-u} dw\, F^{(1.2.1)}(\xi;u,w),
\end{equation}
with
\begin{equation*}
\begin{split}
F^{(1.2.1)} (\xi; u,w) & = \frac{(2-u-w)^2}{4(w-u)w^2}\cdot
\Big( (2w+u-\xi)_+ \wedge w+(2w+u-\xi)_+ \wedge (w+u)\Big) \\
& +\frac{2-u-w}{2(w-u)w} \left(
\frac{2w+u-2}{2w}+\frac{w-1}{w-u}\right) \cdot
\Big( (w+u-\xi)_+ \wedge u +(w+u-\xi)_+ \wedge w\Big) \\ &
 +\frac{(2-u-w)^2}{2(w-u)^2 w} \cdot (2w-\xi)_+
\wedge w .
\end{split}
\end{equation*}

Employing \eqref{A1.5}, \eqref{A1.6} and \eqref{A1.7} we find
\begin{equation}\label{6.8}
\G_{I,Q}^{(1.2.2)}(\xi) \approxeq \frac{c_I}{8\zeta(2)}
\int_0^1 du \int_1^{2-u} dw\, F^{(1.2.2)}(\xi;u,w),
\end{equation}
with
\begin{equation*}
\begin{split}
F^{(1.2.2)} & (\xi; u,w) = \frac{(2-u-w)^2}{4w^2 (w+u)} \cdot
\Big( (2w+u-\xi)_+ \wedge w +(2w+u-\xi)_+ \wedge (w+u)\Big) \\
& +\frac{2-u-w}{2w(w+u)} \left( \frac{w+u-1}{w+u} +
\frac{2w+u-2}{2w}\right)\cdot \Big( (w+u-\xi)_+
\wedge u +(w+u-\xi)_+ \wedge w\Big)  \\
& +\frac{(2-u-w)^2}{2w(w+u)^2} \cdot ( 2w +2u - \xi )_+ \wedge (w+u).
\end{split}
\end{equation*}

Employing \eqref{A1.8}, \eqref{A1.9} and \eqref{A1.10} we find
\begin{equation}\label{6.9}
\G_{I,Q}^{(1.2.3)} (\xi)\approxeq \frac{c_I}{8\zeta(2)}
\int_0^1 du \int_{1-u}^{2-2u} dw\, F^{(1.2.3)} (\xi;u,w) ,
\end{equation}
with
\begin{equation*}
\begin{split}
F^{(1.2.3)}(\xi;u,w) = & \frac{(w+u-1)^2}{w(w+u)^2} \cdot
\Big( (w+u-\xi)_+ \wedge u+(w+u-\xi)_+ \wedge w\Big) \\
& +\frac{w+u-1}{w(w+u)} \left( \frac{2-u-w}{w+u} +\frac{2-2u-w}{w}
\right) \cdot (2w+2u-\xi)_+ \wedge (w+u) \\
& +\frac{(w+u-1)^2}{w^2 (w+u)} \cdot \Big( (w+2u-\xi)_+ \wedge
u+(w+2u-\xi)_+ \wedge (w+u)\Big).
\end{split}
\end{equation*}

Employing \eqref{A1.13}-\eqref{A1.17} we find
\begin{equation}\label{6.10}
\G_{I,Q}^{(1.2.4)} (\xi) \approxeq \frac{c_I}{8\zeta(2)}
\int_0^1 du \int_{2-2u}^{2-u} dw\, F^{(1.2.4)} (\xi;u,w) ,
\end{equation}
with
\begin{equation*}
\begin{split}
& F^{(1.2.4)} (\xi; u,w) = \frac{(w+2u-2)^2}{4u^2 w} \cdot
\Big( (w+u-\xi)_+ \wedge u+(w+u-\xi)_+ \wedge w\Big) \\
& \quad +\frac{w+2u-2}{2uw} \left( \frac{2-u-w}{2u}
+\frac{1-u}{w} \right) \cdot \Big( (w+2u-\xi)_+ \wedge u+(w+2u-\xi)_+ \wedge (w+u)\Big) \\
& \quad +\frac{(w+2u-2)^2}{2uw^2} \cdot (2u-\xi)_+ \wedge u .
\end{split}
\end{equation*}

Employing \eqref{A1.17}, \eqref{A1.18} and \eqref{A1.19} we find
\begin{equation}\label{6.11}
\G_{I,Q}^{(1.2.5)} (\xi) \approxeq \frac{c_I}{8\zeta(2)}
\int_0^1 du \int_{2-2u}^{2-u} dw\, F^{(1.2.5)} (\xi;u,w) ,
\end{equation}
with
\begin{equation*}
\begin{split}
F^{(1.2.5)} & (\xi; u,w) = \frac{(2-u-w)^2}{4u^2(w+u)} \cdot
\Big( (w+2u-\xi)_+ \wedge u+(w+2u-\xi)_+ \wedge (w+u)\Big) \\
& +\frac{2-u-w}{2u(w+u)} \left( \frac{w+u-1}{w+u} +
\frac{w+2u-2}{2u} \right) \cdot \Big( (w+u-\xi)_+ \wedge u+(w+u-\xi)_+ \wedge w \Big) \\
& +\frac{(2-u-w)^2}{2u(w+u)^2} \cdot (2w+2u-\xi)_+ \wedge (w+u).
\end{split}
\end{equation*}

\section{Channels with removed slits. The case $r=0$}\label{Sect7}

In this case $r_k=\pm 1$. The $\CC_O$ contributions for
$(r,r_k)=(0,1)$ and respectively $(r,r_k)=(0,-1)$ do coincide. The
$\CC_\leftarrow$ contribution for $(r,r_k)=(0,1)$ coincides with
the $\CC_\downarrow$ contribution for $(r,r_k)=(0,-1)$. We shall
sum as in \eqref{5.2}, considering

\begin{equation*}
\sideset{}{^*}\sum =\sideset{}{^*}\sum_{\substack{\gamma\in\FF_I(Q) \\
q\equiv a \hspace{-6pt} \pmod{3} \\ q_k \nequiv a_k \hspace{-6pt}
\pmod{3}}} .
\end{equation*}
It suffices to only analyze the $\CC_O$ and $\CC_\leftarrow$
contributions of $(r,r_k)=(0,1)$, allowing at the very end $\beta$
to take both values $-1$ and $1$. The final result will express
the $\CC_O$ contribution when $(r,r_k)=(0,\pm 1)$, and
respectively the sum of the $\CC_\leftarrow$ contribution for
$(r,r_k)=(0,1)$ and of the $\CC_\downarrow$ contribution for
$(r,r_k)=(0,-1)$.

\subsection{The $\CC_O$ contribution}\label{Subsec7.1}
The situation is described in Figure \ref{Figure12}.
Two cases arise:

\subsubsection{$w_{\AA_0} \leqslant w_{\BB_k} \ (\Longleftrightarrow
t \geqslant\gamma_{k+1})$}\label{Subsub7.1.1}

In this case $k\geqslant 1$. Since $\gamma_{k+1}\leqslant
t_{k-1}$, we must also have $q_{k+1}\leqslant 2Q$. The channel
$\AA_0$ is locked by the slit $q_{k+1}$ and
$W_{\gamma,k}(t)=w_{\AA_0}(t)\cdot (q_{k+1}-\xi Q)_+ \wedge
q$, with contribution
\begin{equation*}
\G_{I,Q}^{(2.1)}(\xi)=\sum_{k=1}^\infty
\sideset{}{^*}\sum_{\substack{q_k \in\II_{q,k} \\ q_{k+1}\leqslant
2Q}} (q_{k+1}-\xi Q)_+ \wedge q \int_{\gamma_{k+1}}^{t_{k-1}}
\frac{w_{\AA_0}(t)\, dt}{t^2+t+1}  ,
\end{equation*}
estimated in Subsection \ref{SubSub11.1.1} as
\begin{equation}\label{7.1}
\approxeq \frac{c_I}{8\zeta(2)} \int_0^1
\hspace{-3pt} du \int_1^{2-u}  dw\, \frac{2-u-w}{(w-u)(w+u)}  \left(
\frac{w+u-1}{w+u}+\frac{w-1}{w-u}\right) \cdot (w+u-\xi)_+ \wedge u.
\end{equation}

\subsubsection{$w_{\AA_0} >w_{\BB_k} \ (\Longleftrightarrow t <\gamma_{k+1})$ and $\xi Q
<q_{k+1}$}\label{Subsub7.1.2}

In this situation we have
\begin{equation*}
W_{\gamma,k}(t) =w_{\BB_k}(t) (q_{k+1}-\xi Q) + \big(
w_{\AA_0} (t) -w_{\BB_k}(t)\big) q,
\end{equation*}
with contribution (according to whether $\gamma_{k+1}\leqslant t_{k-1}$ or $t_{k-1}< \gamma_{k+1}$)
\begin{equation*}
\G_{I,Q}^{(2.2)} (\xi)=\sum_{k=0}^\infty
\sideset{}{^*}\sum_{\substack{q_k\in\II_{q,k}
\\ \xi Q<q_{k+1}\leqslant 2Q}} \int_{t_k}^{\gamma_{k+1}}
\frac{W_{\gamma,k}(t)\, dt}{t^2+t+1}  + \sum_{k=1}^\infty
\sideset{}{^*}\sum_{\substack{q_k \in \II_{q,k} \\
q_{k+1}>(\xi\vee 2)Q}} \int_{t_k}^{t_{k-1}}
\frac{W_{\gamma,k}(t)\, dt}{t^2+t+1} .
\end{equation*}

Employing \eqref{A1.8}, \eqref{A1.9}, respectively \eqref{5.6},
\eqref{A2.2} and the procedure described at the beginning of
Appendix 2 we find
\begin{equation}\label{7.2}
\G_{I,Q}^{(2.2)}(\xi) \approxeq \frac{c_I}{8\zeta(2)} \int_0^1
\hspace{-3pt} du \left(\int_{(\xi\vee 1) \wedge 2 - u}^{2-u} \hspace{-6pt} dw\,
F^{(2.2.1)}(\xi;u,w) + \int_{\xi\vee 2 -u}^\infty \hspace{-6pt} dw\,
F^{(2.2.2)}(\xi;u,w) \right),
\end{equation}
with
\begin{equation*}
\begin{split}
& F^{(2.2.1)}(\xi;u,w)=\frac{(w+u-1)^2}{w(w+u)} \left(
\frac{w+u}{w}-\frac{\xi}{w+u}\right) ,\\ &
F^{(2.2.2)}(\xi;u,w)=\frac{(1-u)^2(w+u-\xi)}{(w-u)^2 w}
+\frac{(1-u)u}{(w-u)w} \left(
\frac{w+u-1}{w}+\frac{w+u-2}{w-u}\right) .
\end{split}
\end{equation*}

\begin{figure}[ht]
\centering
\unitlength 0.43mm
\begin{picture}(220,160)(-40,-7)
\texture{ccccccc 0000}
\shade\path(0,35)(0,40)(30,40)(30,35)(0,35)
\shade\path(155,40)(170,40)(170,75)(155,75)(155,40)

\put(-17,32){\makebox(0,0){\small $(0,-\eps)$}}
\put(-14,124){\makebox(0,0){\small $(0,\eps)$}}

\put(30,-7){\makebox(0,0){\small $q$}}
\put(60,-7){\makebox(0,0){\small $2q$}}
\put(110,145){\makebox(0,0){\small $q_k$}}
\put(155,-6){\makebox(0,0){\small $\xi Q$}}
\put(140,145){\makebox(0,0){\small $q_{k+1}$}}
\put(170,130){\makebox(0,0){\small $q_{k+2}$}}
\put(200,95){\makebox(0,0){\small $q_{k+3}$}}

\put(-10,60){\makebox(0,0){\small $\AA_0$}}
\put(-10,96){\makebox(0,0){\small $\CC_k$}}
\put(-10,114){\makebox(0,0){\small $\BB_k$}}

\put(154,98){\makebox(0,0){\small $w_{\CC_k}$}}
\put(154,80){\makebox(0,0){\small $w_{\BB_k}$}}
\put(184,63){\makebox(0,0){\small $w_{\CC_k}$}}
\put(184,45){\makebox(0,0){\small $w_{\BB_k}$}}
\put(-55,72){\makebox(0,0){\small $w_{\AA_0}+w_{\CC_k}$}}

\path(140,75)(170,75) \path(0,35)(200,35)\path(170,40)(200,40)

\dottedline{2}(0,35)(0,120) \dottedline{2}(30,0)(30,85)
\dottedline{2}(60,0)(60,50)

\thicklines \path(0,120)(110,120) \path(110,110)(140,110)
\path(140,85)(30,85) \path(0,35)(30,35)

\Thicklines \path(110,110)(110,140) \path(140,75)(140,140)
\path(170,40)(170,125) \path(200,5)(200,90)

\thinlines \path(0,110)(110,110)\path(0,85)(30,85)
\path(0,75)(140,75)\path(170,50)(0,50)\path(0,40)(170,40)

\path(145,75)(145,110) \path(143.5,107)(145,110)(146.5,107)
\path(143.5,88)(145,85)(146.5,88)
\path(143.5,82)(145,85)(146.5,82)
\path(143.5,78)(145,75)(146.5,78)

\path(175,40)(175,75) \path(173.5,72)(175,75)(176.5,72)
\path(173.5,53)(175,50)(176.5,53)
\path(173.5,47)(175,50)(176.5,47)
\path(173.5,43)(175,40)(176.5,43)

\path(-35,35)(-35,110) \path(-36.5,38)(-35,35)(-33.5,38)
\path(-36.5,107)(-35,110)(-33.5,107)

\path(155,0)(155,75)

\end{picture}

\caption{The case $t \in I_{\gamma ,k}$, $r=0$, $r_k=\pm 1$,
$\CC_{O}$, $w_{\BB_k}<w_{\AA_0}$, $n_0=2$, $N=1$} \label{Figure12}
\end{figure}

\subsubsection{$w_{\AA_0} >w_{\BB_k} \ (\Longleftrightarrow t <\gamma_{k+1})$ and $\xi Q \geqslant
q_{k+1}$}\label{7.1.3}

Consider the integer $N$ for which $q_{k+N} \leqslant \xi
Q<q_{k+N+1}$, that is $1\leqslant N:=\Big\lfloor \frac{\xi
Q-q_k}{q}\Big\rfloor$. We will keep $N\geqslant 1$ and $q\leqslant
Q$ fixed and sum over
\begin{equation*}
y=q_k\in\JJ_{q,N}:=\big( \xi Q-(N+1)q,\xi Q-Nq\big].
\end{equation*}
Let $n_0:=\Big\lfloor
\frac{w_{\AA_0}+w_{\CC_k}}{w_{\BB_k}+w_{\CC_k}}\Big\rfloor
=\Big\lfloor \frac{a_k-q_k t}{qt -a}\Big\rfloor \geqslant 1$, so
$\gamma_{k+n_0+1}<t \leqslant \gamma_{k+n_0}$ and
$B(q_{k+n_0})=w_{\BB_k}+n_0 (w_{\BB_k}+w_{\CC_k})\leqslant 2\eps
<B(q_{k+n_0+1})$. This shows that the channel $\AA_0$ is locked by
the slits $q_{k+1},\ldots,q_{k+n_0},q_{k+n_0+1}$. Suppose that
$N\geqslant \xi$. Then $q_k \leqslant \xi Q-Nq \leqslant
N(Q-q)$, showing that $q_{k+N}\leqslant NQ$ and $\gamma_{k+N}
<t_k$. So $n_0 <N$ and $\xi Q \geqslant q_{k+N} \geqslant
q_{k+n_0+1}$, showing that in this case $W_{\gamma,k,N}(t)=0$, $\forall
t\in I_{\gamma,k}$.
It remains that $N<\xi$. The following cases arise:

(1) $\ B(q_{k+N}) \leqslant 2\eps <B(q_{k+N+1})$
($\Longleftrightarrow \gamma_{k+N+1} <t\leqslant \gamma_{k+N}$).
In this case
\begin{equation}\label{7.3}
\begin{split}
W_{\gamma,k,N}(t)=W_{\gamma,k,N}^{(1)}(t) & := \big( 2\eps -B(q_{k+N})\big) (q_{k+N+1}-\xi
Q) \\ & =(a_{k+N}-q_{k+N}t) (q_{k+N+1}-\xi Q) .
\end{split}
\end{equation}

(2) $\ B(q_{k+N+1})\leqslant 2\eps$ ($\Longleftrightarrow
t\leqslant \gamma_{k+N+1}$). In this case
\begin{equation}\label{7.4}
\begin{split}
W_{\gamma,k,N}(t) = W_{\gamma,k}^{(2)}(t) := & \big( w_{\BB_k}(t)+w_{\CC_k}(t)\big)
(q_{k+N+1}-\xi Q) +\big( 2\eps -B(q_{k+N+1})\big) q \\  = &
1-\xi Q \big( w_{\BB_k}(t)+w_{\CC_k}(t)\big) .
\end{split}
\end{equation}

We find
\begin{equation*}
\begin{split}
\G_{I,Q}^{(2.3)}(\xi) & =\sum_{1\leqslant N<\xi}
\sum_{k=0}^\infty\ \sideset{}{^*}\sum_{q_k \in \II_{q,k}\cap\JJ_{q,N}} \left(
\int_{I_{\gamma,k}\cap (\gamma_{k+N+1},\gamma_{k+N}]}
\frac{W^{(1)}_{\gamma,k,N}(t)\, dt}{t^2+t+1}
+\int_{I_{\gamma,k}\cap (\gamma,\gamma_{k+N+1}]}
\frac{W^{(2)}_{\gamma,k,N}(t)\, dt}{t^2+t+1} \right) \\
& =\G_{I,Q}^{(2.3.1)}(\xi) +\G_{I,Q}^{(2.3.2)}(\xi) .
\end{split}
\end{equation*}
Note that
\begin{equation*}
I_{\gamma,k}\cap (\gamma_{k+N+1},\gamma_{k+N}] = \begin{cases}
\emptyset & \mbox{\rm if $q_k<N(Q-q)$ or $Q+(N+1)(Q-q)\leqslant q_k,$} \\
(t_k,\gamma_{k+N}] &
\mbox{\rm if $N(Q-q)\leqslant q_k <(N+1)(Q-q),$} \\
(\gamma_{k+N+1},\gamma_{k+N}] & \mbox{\rm if $(N+1)(Q-q) \leqslant
q_k <Q+N(Q-q),$} \\
(\gamma_{k+N+1},t_{k-1}] & \mbox{\rm if $Q+N(Q-q) \leqslant q_k <
Q+(N+1)(Q-q),$}
\end{cases}
\end{equation*}
and respectively
\begin{equation*}
I_{\gamma,k}\cap (\gamma,\gamma_{k+N+1}]=
\begin{cases}
\emptyset & \mbox{\rm if $q_k <(N+1)(Q-q),$} \\
(t_k,\gamma_{k+N+1}] & \mbox{\rm if $(N+1)(Q-q) \leqslant q_k
<Q+(N+1)(Q-q),$} \\
I_{\gamma,k} & \mbox{\rm if $Q+(N+1)(Q-q) \leqslant q_k .$}
\end{cases}
\end{equation*}

Summing as in \eqref{5.2} and
employing \eqref{A2.3}, \eqref{A2.4}, \eqref{A2.5} and Lemmas
\ref{L3.4} and \ref{L3.2}, then changing $y+Nq$ to $y$ and making
the substitution $(q,y)=(Qu,Qw)$, we find
\begin{equation}\label{7.5}
\begin{split}
& \G_{I,Q}^{(2.3.1)} (\xi) \approxeq \frac{c_I}{8\zeta(2)}
\sum_{1\leqslant N < \xi} \int_0^1 du \left\{
\int_{[\xi-u,\xi] \cap [N,N+1-u]} dw \,
\frac{(w+u-\xi)(w-N)^2}{(w-Nu)^2 w} \right.
\\ & \hspace{5cm} + \int_{[\xi-u,\xi] \cap [N+1-u,N+1]} dw\,
\frac{w+u-\xi}{(w+u)^2 w} \\ & + \left. \int_{[\xi -u,\xi] \cap [N+1,N+2-u]}
 dw\, \frac{(w+u-\xi)(N+2-u-w)}{\big(
w-(N+1)u\big) (w+u)} \left(
\frac{1}{w+u}+\frac{w-(N+1)}{w-(N+1)u}\right)\right\} .
\end{split}
\end{equation}
In a similar way (but replacing $y+(N+1)q$ by $y$) we infer from
\eqref{A2.6} and \eqref{A2.7} that
\begin{equation}\label{7.6}
\begin{split}
\G_{I,Q}^{(2.3.2)}(\xi) & \approxeq \frac{c_I}{8\zeta (2)}
\sum_{1\leqslant N<\xi} \int_0^1 du \int_{[\xi ,\xi+u]\cap
[N+1,N+2]}dw\, \frac{w-(N+1)}{\big( w-(N+1)u\big) w} \\ & \hspace{3cm} \cdot \left(
2-\frac{\xi}{w}-\frac{\xi (1-u)}{w-(N+1)u}\right)
 \\ &  +\frac{c_I}{8\zeta (2)}
\sum_{1\leqslant N<\xi} \int_0^1 du \int_{[\xi ,\xi+u] \cap [N+2,\infty)} dw\,
\frac{1-u}{\big( w-(N+2)u\big) \big( w-(N+1)u\big)} \\ & \hspace{3cm} \cdot \left(
2-\frac{\xi (1-u)}{w-(N+2)u} -\frac{\xi (1-u)}{w-(N+1)u}
\right) .
\end{split}
\end{equation}

The inner integrals in \eqref{7.5} and the first one in
\eqref{7.6} can be nonzero only when $N>\xi -1$,
$\xi-2<N<\xi$ or $\xi -2<N<\xi -1$.

\subsection{The $\CC_\leftarrow$ contribution}\label{Subsec7.2}

In this case we shall analyze the contribution of the channels
$\BB_k$ and $\CC_k$. All slits $q_{k+n}$, $n\in\Z$, are removed,
while neither $q$ nor any of $2q_k+nq=q_k+q_{k+n}$, $n\in\Z$, is
being removed. Since $T(2q_k)=-2w_{\AA_0}-2w_{\CC_k}<0$ and
$B(2q_k+nq) = w_{\BB_k} - w_{\AA_0} - w_{\CC_k} + n (w_{\BB_k} +
w_{\CC_k})$, it follows that $\BB_k \cup \CC_k$ is locked exactly
by two of the slits $2q_k+nq$, $n\geqslant 0$. To make this
precise let
$n_0:=\Big\lfloor\frac{w_{\AA_0}+2w_{\CC_k}}{w_{\BB_k}+w_{\CC_k}}
\Big\rfloor =\Big\lfloor \frac{a_k+a_{k-1}-2\eps
-(q_k+q_{k-1})t}{qt -a} \Big\rfloor\geqslant 0$. We have $0<B(2q_k
+n_0 q)=w_{\BB_k}-w_{\AA_0}-w_{\CC_k} +n_0 (w_{\BB_k}+w_{\CC_k})
\leqslant w_{\BB_k}+w_{\CC_k}$. The situation is described in
Figure \ref{Figure13}. Consider also
\begin{equation}\label{7.7}
\lambda_{k,n}:=\frac{a_{k}+a_{k+n-1}-2\eps}{q_{k}+q_{k+n-1}}
\searrow \gamma \quad \mbox{\rm as $n\rightarrow \infty,$}
\end{equation}
so $\lambda_{k,n_0+1} <t \leqslant \lambda_{k,n_0}$. Note that
$\lambda_{0,1}\geqslant t_{-1} =\gamma_1 >\lambda_{0,2}$ and
$\lambda_{k,0}>t_{k-1}$ when $k\geqslant 1$, showing that for
every $k\geqslant 0$ the intervals
$(\lambda_{k,n+1},\lambda_{k,n}]$ cover $I_{\gamma,k}$. The
following cases arise:

\subsubsection{$0<B(2q_k+n_0 q)\leqslant w_{\BB_k}\ (\Longleftrightarrow\
\lambda_{k,n_0+1}<t \leqslant \gamma_{k+n_0})$}\label{Subsub7.2.1}

In this case $\CC_k$ is locked by the slit $2q_k+(n_0+1)q$ and
$\BB_k$ by the slits $2q_k+n_0q=q_k+q_{k+n_0}$ and
$2q_k+(n_0+1)q=q_k+q_{k+n_0+1}$. The widths of the three relevant
sub-channels of $\BB_k \cup \CC_k$ are (from bottom to top)
$w_{\CC_k}$, $w_{\BB_k}-B(2q_k+n_0 q)= a_{k+n_0}-q_{k+n_0}t$,
$B(2q_k+n_0 q) =w_{\BB_k} -(a_{k+n_0}-q_{k+n_0}t)$, and so
\begin{equation*}
\begin{split}
W_{\gamma,k}(t)  = & W_{\gamma,k,n_0}^{(1)} (t) = w_{\CC_k}(t) \cdot
(q_{k+1}+q_{k+n_0}-\xi Q)_+ \wedge q_{k+1} +w_{\BB_k}(t) \cdot
(q_k+q_{k+n_0}-\xi Q)_+ \wedge q_k \\
& +(a_{k+n_0}-q_{k+n_0}t) \cdot \Big( (q_{k+1}+q_{k+n_0}-\xi Q
)_+\wedge q_k -(q_k+q_{k+n_0}-\xi Q)_+ \wedge q_k \Big).
\end{split}
\end{equation*}

\begin{figure}[ht]
\centering
\unitlength 0.38mm
\begin{picture}(240,200)(0,0)
\texture{ccccccc 0000}
\shade\path(0,85)(0,95)(80,95)(80,85)(0,85)
\shade\path(0,115)(0,130)(50,130)(50,115)(0,115)
\shade\path(100,95)(100,115)(160,115)(160,95)(100,95)

\put(-17,42){\makebox(0,0){\small $(0,-\eps)$}}
\put(-14,134){\makebox(0,0){\small $(0,\eps)$}}

\put(30,-6){\makebox(0,0){\small $q$}}
\put(50,196){\makebox(0,0){\small $q_k$}}
\put(127,197){\makebox(0,0){\small $2q_k+q$}}
\put(160,186){\makebox(0,0){\small $2q_k+2q$}}
\put(190,39){\makebox(0,0){\small $2q_k+3q$}}
\put(100,166){\makebox(0,0){\small $\xi Q$}}
\put(80,64){\makebox(0,0){\small $q_{k+1}$}}
\put(140,-6){\makebox(0,0){\small $q_{k+3}$}}
\put(112,19){\makebox(0,0){\small $q_{k+2}$}}

\put(-10,65){\makebox(0,0){\small $\AA_0$}}
\put(-10,95){\makebox(0,0){\small $\CC_k$}}
\put(-10,123){\makebox(0,0){\small $\BB_k$}}

\put(186.5,117){\makebox(0,0){\small $w_{\BB_k}+w_{\CC_k}$}}
\put(218,72){\makebox(0,0){\small $w_{\BB_k}+w_{\CC_k}$}}

\path(110,25)(140,25) \path(140,25)(140,65)(190,65)
\path(80,85)(190,85) \path(160,95)(190,95) \path(130,140)(160,140)
\path(80,155)(130,155) \path(160,180)(210,180)
\path(210,165)(240,165) \path(190,135)(240,135)
\path(80,70)(110,70) \path(160,95)(190,95)
 \path(80,115)(160,115)\path(50,130)(160,130)

\dottedline{2}(50,115)(50,190) \dottedline{2}(80,70)(80,155)
\dottedline{2}(110,25)(110,110) \dottedline{2}(140,65)(140,0)

\thicklines \path(0,130)(50,130) \path(50,115)(80,115)
\path(80,85)(30,85) \path(30,45)(0,45)

\Thicklines \path(0,45)(0,130) \path(130,140)(130,190)
\path(160,95)(160,180) \path(190,50)(190,135) \path(30,0)(30,85)
\path(210,165)(210,190) \path(240,135)(240,190)

\thinlines \path(100,0)(100,160) \path(0,85)(30,85)
\path(0,110)(160,110) \path(0,115)(50,115) \path(0,95)(160,95)
\path(165,95)(165,140) \path(195,50)(195,95)
\path(163.5,98)(165,95)(166.5,98)
\path(163.5,137)(165,140)(166.5,137)
\path(193.5,53)(195,50)(196.5,53)
\path(193.5,92)(195,95)(196.5,92)

\end{picture}
\caption{The case $t \in I_{\gamma,k}$, $r=0$, $r_k=1$,
$\CC_{\leftarrow}$, $n_0=2$, $w_{\BB_k} <B(2q_k +n_0 q)\leqslant
w_{\BB_k}+w_{\CC_k}$} \label{Figure13}
\end{figure}

\subsubsection{$w_{\BB_k}<B(2q_k+n_0q) \leqslant
w_{\BB_k}+w_{\CC_k}\ (\Longleftrightarrow\ \gamma_{k+n_0} <t
\leqslant \lambda_{k,n_0})$}\label{Subsub7.2.2}

In this case $\BB_k$ is locked by the slit $q_k+q_{k+n_0}$ and
$\CC_k$ by the slits $q_k+q_{k+n_0}$ and $q_k+q_{k+n_0+1}$. The
widths of the three relevant sub-channels of $\BB_k \cup \CC_k$
are $w_{\BB_k}+w_{\CC_k}-B(2q_k+n_0 q)=w_{\CC_k}-(q_{k+n_0}t -
a_{k+n_0})$, $B(2q_k+n_0 q)-w_{\BB_k}=q_{k+n_0} t -a_{k+n_0}$,
$w_{\BB_k}$, and so
\begin{equation*}
\begin{split}
W_{\gamma,k}(t) = & W_{\gamma,k,n_0}^{(2)} (t) = w_{\CC_k}(t)\cdot
(q_{k+1}+q_{k+n_0}-\xi Q)_+ \wedge q_{k+1}  +w_{\BB_k}(t) \cdot
(q_k+q_{k+n_0}-\xi Q)_+ \wedge q_k \\
& -(q_{k+n_0}t -a_{k+n_0})\cdot\Big( (q_{k+1}+q_{k+n_0}-\xi Q
)_+ \wedge q_{k+1} -(q_k+q_{k+n_0}-\xi Q)_+ \wedge q_{k+1}
\Big) .
\end{split}
\end{equation*}

The following five cases arise:

(I) $\ q_k <(n-1)(Q-q)$. Then $\lambda_{k,n} <t_k$, thus
$I_{\gamma,k}\cap (\lambda_{k,n+1},\lambda_{k,n}]=\emptyset$.

(II) $\ (n-1)(Q-q) \leqslant q_k <n(Q-q)$. Then $n\geqslant
2$ and $\gamma_{k+n} <t_k \leqslant \lambda_{k,n} < t_{k-1}$ in
both cases $k\geqslant 1$ and $k=0$. The corresponding
contribution is
\begin{equation*}
\G_{I,Q}^{(2.4.1)}(\xi)=\sum_{n=2}^\infty \sum_{k=0}^\infty \
\sideset{}{^*}\sum_{\substack{q_k\in \II_{q,k} \\ (n-1)(Q-q)
\leqslant q_k <n(Q-q)}} \int_{t_k}^{\lambda_{k,n}}
\frac{W^{(2)}_{\gamma,k,n}(t)\, dt}{t^2+t+1} .
\end{equation*}
Employing \eqref{A2.8}-\eqref{A2.11}, Lemmas \ref{L3.8}
and \ref{L3.3}, and the change of variable $(q,y)=(Qu,Qw)$ we find
(according to whether $k=0$ or $k\geqslant 1$)
\begin{equation}\label{7.8}
\begin{split}
\G_{I,Q}^{(2.4.1)} (\xi) \approxeq & \frac{c_I}{8\zeta(2)}
\sum_{n=2}^\infty \int_{1-\frac{1}{n-1}}^1 du \int_{(n-1)(1-u)}^{1} dw\
F_n^{(2.4.1)}(\xi;u,w) \\ &  + \frac{c_I}{8\zeta(2)} \sum_{n=2}^\infty
\int_0^{1-\frac{1}{n-1}} du
\int_{(n-1)(1-u)}^{n(1-u)} dw\,   F_n^{(2.4.1)}(\xi;u,w)
\\ &  + \frac{c_I}{8\zeta(2)} \sum_{n=2}^\infty
\int_{1-\frac{1}{n-1}}^{1-\frac{1}{n}} du \int_1^{n(1-u)} dw\,
F_n^{(2.4.1)}(\xi;u,w) ,
\end{split}
\end{equation}
where
\begin{equation*}
\begin{split}
& F_n^{(2.4.1)} (\xi;u,w) = \frac{\big( w - (n-1) (1-u) \big)^2
}{w\big( 2w+(n-1)u\big)^2}
\cdot (2w+nu-\xi)_+ \wedge w  \\
& \quad +\frac{w-(n-1)(1-u)}{w\big( 2w+(n-1)u\big)} \left(
\frac{1-u}{w}-\frac{n(1-u)-w}{w}\right) \cdot \big(
2w+(n+1)u-\xi\big)_+ \wedge (w+u)
\\ & \quad +\frac{w-(n-1)(1-u)}{w\big( 2w+(n-1)u\big)} \left(
\frac{1+n(1-u)-w}{2w+(n-1)u} +\frac{n(1-u)-w}{w} \right)  \cdot
(2w+nu-\xi)_+ \wedge (w+u) .
\end{split}
\end{equation*}

(III) $\ n(Q-q) \leqslant q_k <Q+n(Q-q)$. Upon
$\lambda_{0,2}<\gamma_1 =t_{-1}\leqslant
\lambda_{0,1}=\frac{a^\prime-\eps}{q^\prime}$ we see that
$I_{\gamma,k}\cap(\lambda_{k,n+1},\lambda_{k,n}] =
(\lambda_{k,n+1},\gamma_{k+n}] \cup (\gamma_{k+n},\lambda_{k,n}]$
when $k\geqslant 1$ and when $k=0$ and $n\geqslant 2$. When $k=0$
and $n=1$ this interval coincides with $(\lambda_{0,2},\gamma_1]$.
The cumulative contribution is thus
\begin{equation*}
\begin{split}
& \G^{(2.4.2)}_{I,Q}(\xi) = \sum_{n=1}^\infty \sum_{k=1}^\infty\
\sideset{}{^*}\sum_{\substack{q_k\in\II_{q,k} \\
n(Q-q) \leqslant q_k < Q+n(Q-q)}} \left(
\int_{\lambda_{k,n+1}}^{\gamma_{k+n}}
\frac{W^{(1)}_{\gamma,k,n}(t)\, dt}{t^2+t+1}
+\int_{\gamma_{k+n}}^{\lambda_{k,n}} \frac{W^{(2)}_{\gamma,k,n}(t)\ dt}{t^2+t+1}\right) \\
& \qquad +\sum_{n=2}^\infty \
\sideset{}{^*}\sum_{\substack{q^\prime \in \II_{q,0} \\
q^\prime \geqslant n(Q-q)}} \left(
\int_{\lambda_{0,n+1}}^{\gamma_{n}} \frac{W^{(1)}_{\gamma,0,n}(t)\,
dt}{t^2+t+1} +\int_{\gamma_{n}}^{\lambda_{0,n}}
\frac{W^{(2)}_{\gamma,0,n}(t)\, dt}{t^2+t+1} \right)
+\sideset{}{^*}\sum_{q^\prime \in \II_{q,0}}
\int_{\lambda_{0,2}}^{\gamma_1} \frac{W^{(1)}_{\gamma,0,1}(t)\, dt}{t^2+t+1} \\
& \quad = \G_{I,Q}^{(2.4.2.1)}(\xi) + \G_{I,Q}^{(2.4.2.2)}(\xi)
+ \G_{I,Q}^{(2.4.2.3)}(\xi) .
\end{split}
\end{equation*}

For fixed $k$ we have $\frac{q_k-Q}{Q-q} <n\leqslant
\frac{q_k}{Q-q}$, so $n$ can take at most $1+\Big\lfloor
\frac{Q}{Q-q}\Big\rfloor \leqslant \frac{2Q}{Q-q}$ values.
Employing \eqref{A2.12}-\eqref{A2.15} we find
\begin{equation}\label{7.9}
\G^{(2.4.2.1)}_{I,Q} (\xi) \approxeq \frac{c_I}{8\zeta(2)}
\sum_{n=1}^\infty \int_0^1 du
\int_{n(1-u)\vee 1}^{n(1-u)+1} dw \, F_n^{(2.4.2)}(\xi;u,w),
\end{equation}
where
\begin{equation*}
\begin{split}
& F_n^{(2.4.2)}(\xi;u,w) =\frac{2-u}{\big( 2w+(n-1)u\big)
(2w+nu)} \\ & \qquad \cdot \left( \frac{1+(n+1)(1-u)-w}{2w+nu}
+\frac{1+n(1-u)-w}{2w+(n-1)u} \right) \cdot
\big( 2w+(n+1)u-\xi \big)_+ \wedge (w+u) \\
&  + \frac{2-u}{\big( 2w+(n-1)u\big) (2w+nu)} \left(
\frac{w-(n-1)(1-u)}{2w+(n-1)u} + \frac{ w-n(1-u)}{ 2w+nu}\right)
 \cdot (2w+nu-\xi)_+ \wedge w
\\ & + \frac{\big( w -n(1-u) \big)^2}{(w+nu)(2w+nu)^2} \cdot \Big(
\big( 2w+(n+1)u-\xi\big)_+ \wedge w -(2w+nu -\xi)_+ \wedge w \Big) \\
& - \frac{\big( 1+n(1-u)-w\big)^2}{(w+nu)\big(
2w+(n-1)u\big)^2}\cdot \Big( (2w+ (n+1) u-\xi)_+ \wedge (w+u)
-(2w+nu-\xi)_+ \wedge (w+u)\Big) .
\end{split}
\end{equation*}
Employing \eqref{A2.13}-\eqref{A2.16} we find
\begin{equation}\label{7.10}
\G_{I,Q}^{(2.4.2.2)}(\xi) \approxeq \frac{c_I}{8\zeta(2)}
\sum_{n=2}^\infty \int_0^1 du \int_{n(1-u)\wedge 1}^1 dw\,
F^{(2.4.2)}_n (\xi;u,w) .
\end{equation}
From \eqref{7.9} and \eqref{7.10} we gather
\begin{equation}\label{7.11}
\begin{split}
\G_{I,Q}^{(2.4.2.1)}(\xi)+ \G_{I,Q}^{(2.4.2.2)}(\xi) \approxeq &
\frac{c_I}{8\zeta(2)} \int_0^1 du \int_1^{2-u} dw\ F_1^{(2.4.2)}(\xi;u,w) \\
& + \frac{c_I}{8\zeta(2)} \sum_{n=1}^\infty \int_{n(1-u)}^{n(1-u)+1}
dw\ F_n^{(2.4.2)} (\xi; u,w).
\end{split}
\end{equation}
Employing \eqref{A2.17}, \eqref{A2.18}, \eqref{A2.19} we find
\begin{equation}\label{7.12}
\G_{I,Q}^{(2.4.2.3)}(\xi) \approxeq \frac{c_I}{8\zeta(2)}
\int_0^1 du \int_{1-u}^1 dw\, F^{(2.4.2.3)} (\xi;u,w) ,
\end{equation}
where
\begin{equation*}
\begin{split}
& F^{(2.4.2.3)} (\xi;u,w) = \frac{w+u-1}{(w+u)(2w+u)} \left(
\frac{3-2u-w}{2w+u} + \frac{2-u-w}{w+u} \right) \cdot (2w+2u-\xi)_+ \wedge (w+u) \\
& \qquad + \frac{(w+u-1)^2}{(w+u)^2 (2w+u)} \cdot (2w+u-\xi)_+
\wedge w + \frac{(w+u-1)^2}{(w+u)(2w+u)^2} \cdot (2w+2u-\xi)_+
\wedge w .
\end{split}
\end{equation*}

(IV) $\ Q+n(Q-q) \leqslant q_k <Q+(n+1)(Q-q)$. Then
$k\geqslant 1$ and $(n+2)Q-q_{k+n+1} \leqslant Q-q$. In this case
$I_{\gamma,k} \cap (\lambda_{k,n+1},\lambda_{k,n}]
=(\lambda_{k,n+1},t_{k-1}]$ and $t_{k-1}<\gamma_{k+n}$, so the
corresponding contribution is
\begin{equation*}
\G_{I,Q}^{(2.4.3)} (\xi)=\sum_{n=0}^\infty \sum_{k=1}^\infty \
\sideset{}{^*}\sum_{\substack{q_k\in\II_{q,k} \\
Q+n(Q-q) \leqslant q_k < Q+(n+1)(Q-q)}}
\int_{\lambda_{k,n+1}}^{t_{k-1}} \frac{W^{(1)}_{\gamma,k,n}(t)\
dt}{t^2+t+1} .
\end{equation*}
Note also that $n$ can only take the value $n=\left\lfloor
\frac{q_k-Q}{Q-q}\right\rfloor$ for each $k$. Employing
\eqref{A2.20}, \eqref{A2.21}, \eqref{A2.22} we find
\begin{equation}\label{7.13}
\G_{I,Q}^{(2.4.3)} (\xi) \approxeq \frac{c_I}{8\zeta(2)}
\sum_{n=0}^\infty \int_0^1 du \int_{n(1-u)+1}^{(n+1)(1-u)+1} dw\,
F_n^{(2.4.3)} (\xi;u,w),
\end{equation}
where
\begin{equation*}
\begin{split}
& F_n^{(2.4.3)} (\xi;u,w) = \frac{\big( 1 +
(n+1)(1-u)-w\big)^2}{(w-u)(2w+nu)^2} \cdot \big( 2w+(n+1)u-\xi\big)_+ \wedge (w+u) \\
&  +\frac{\big(1+(n+1)(1-u)-w\big)^2}{(w-u)^2 (2w+nu)}
\cdot (2w+nu-\xi)_+ \wedge w \\
&  + \frac{1+(n+1)(1-u)-w}{(w-u)(2w+nu)} \left(
\frac{w-n(1-u)}{2w+nu} + \frac{w-n(1-u)-1}{w-u}\right)
\cdot \big( 2w+(n+1)u-\xi\big)_+ \wedge w .
\end{split}
\end{equation*}

(V) $\ q_k \geqslant Q+(n+1)(Q-q)$. Then $k\geqslant 1$,
$\lambda_{k,n+1} \geqslant t_{k-1}$ and $I_{\gamma,k}\cap
(\lambda_{k,n+1},\lambda_{k,n}]=\emptyset$.

\section{Channels with removed slits. The case $r_k=0$}\label{Sect8}
The main term and the error term of the $\CC_O$ contribution of
$(r,r_k)=(1,0)$ and of $(r,r_k)=(-1,0)$ coincide because the
corresponding weights are given by the same formulas and one only
has to replace the summation condition $q-a\equiv 1 \pmod{3}$ by
$q-a\equiv -1 \pmod{3}$. The same thing holds for the
$\CC_\leftarrow$ contribution of $(r,r_k)=(1,0)$ and the
$\CC_\downarrow$ contribution of $(r,r_k)=(-1,0)$ (see Figure
\ref{Figure8}). So it suffices to take $(r,r_k)=(0,1)$ in the
sequel, doubling the $\CC_O$ and the $\CC_\leftarrow$
contributions. This time we consider
\begin{equation*}
\sideset{}{^*}\sum =\sideset{}{^*}\sum\limits_{\substack{\gamma\in\FF_I (Q) \\
q-a\equiv 1\hspace{-4pt} \pmod{3} \\ q_k \equiv a_k
\hspace{-4pt}\pmod{3}}}.
\end{equation*}

\subsection{The channel $\CC_O$}\label{Subsec8.1}
The slit $q_k$ is removed, while $q+n q_k$, $n \geqslant 0$, are
not, as shown in Figure \ref{Figure14}. The
following three cases arise:

\begin{figure}[ht]
\centering
\unitlength 0.38mm
\begin{picture}(247,200)(-10,-7)
\texture{ccccccc 0000}
\shade\path(0,135)(70,135)(70,155)(0,155)(0,135)
\shade\path(120,110)(120,135)(170,135)(170,110)(120,110)

\put(-17,70){\makebox(0,0){\small $(0,-\eps)$}}
\put(-14,159){\makebox(0,0){\small $(0,\eps)$}}

\put(30,-2){\makebox(0,0){\small $q$}}
\put(70,187){\makebox(0,0){\small $q_k$}}
\put(140,187){\makebox(0,0){\small $2q_k$}}
\put(210,187){\makebox(0,0){\small $3q_k$}}
\put(120,173){\makebox(0,0){\small $\xi Q$}}
\put(100,22){\makebox(0,0){\small $q_{k+1}$}}
\put(170,47){\makebox(0,0){\small $q+2q_k$}}
\put(240,72){\makebox(0,0){\small $q+3q_k$}}

\put(-10,80){\makebox(0,0){\small $\AA_0$}}
\put(-10,92){\makebox(0,0){\small $\CC_k$}}
\put(-10,130){\makebox(0,0){\small $\BB_k$}}

\put(117,92){\makebox(0,0){\small $w_{\CC_k}$}}
\put(117,103){\makebox(0,0){\small $w_{\AA_0}$}}
\put(-53,120){\makebox(0,0){\small $w_{\BB_k}+w_{\CC_k}$}}
\put(200,122){\makebox(0,0){\small $w_{\AA_0}+w_{\CC_k}$}}
\put(270,147){\makebox(0,0){\small $w_{\AA_0}+w_{\CC_k}$}}

\path(30,85)(100,85) \path(100,110)(170,110)
\path(170,135)(240,135)

\dottedline{2}(0,75)(0,155) \dottedline{2}(70,100)(70,180)
\dottedline{2}(140,125)(140,180) \dottedline{2}(210,150)(210,180)

\thicklines \path(0,75)(30,75) \path(30,85)(100,85)
\path(100,100)(70,100) \path(0,155)(70,155)

\Thicklines \path(30,5)(30,85)\path(100,30)(100,110)
\path(170,55)(170,135) \path(240,80)(240,160)

\thinlines \path(0,85)(30,85) \path(0,100)(100,100)
\path(0,110)(100,110) \path(0,125)(170,125) \path(0,135)(170,135)
\path(0,150)(240,150) \path(0,155)(240,155)

\path(105,85)(105,110)
\path(103.5,88)(105,85)(106.5,88)
\path(103.5,97)(105,100)(106.5,97)
\path(103.5,103)(105,100)(106.5,103)
\path(103.5,107)(105,110)(106.5,107)

\thinlines \path(-30,85)(-30,155)
\path(-31.5,88)(-30,85)(-28.5,88)
\path(-31.5,152)(-30,155)(-28.5,152)
\path(120,110)(120,165)

\path(175,110)(175,135) \path(173.5,113)(175,110)(176.5,113)
\path(173.5,132)(175,135)(176.5,132)

\path(245,135)(245,160) \path(243.5,138)(245,135)(246.5,138)
\path(243.5,157)(245,160)(246.5,157)

\end{picture}
\caption{The case $t \in I_{\gamma,k}$, $r_k=0$, $\CC_{O}$,
$w_{\AA_0}<w_{\BB_k}$, $n_0=2$, $N=1$} \label{Figure14}
\end{figure}

\subsubsection{$w_{\AA_0}\geqslant w_{\BB_k}\
(\Longleftrightarrow t \leqslant
\gamma_{k+1})$}\label{Subsub8.1.1}

The channel $\BB_k$ is locked by the slit $q_{k+1}$ and
$W_{\gamma,k}(t) =w_{\BB_k}(t) \cdot (q_{k+1}-\xi Q)_+ \wedge
q_k$, with contribution
\begin{equation*}
\G_{I,Q}^{(3.1)}(\xi)=\sum_{k=0}^\infty
\sideset{}{^*}\sum_{\substack{q_k\in \II_{q,k} \\ q_{k+1}\leqslant
2Q}} \int_{t_k}^{\gamma_{k+1}} \frac{W_{\gamma,k} (t)\,
dt}{t^2+t+1} + \sum_{k=1}^\infty \sideset{}{^*}\sum_{\substack{q_k\in\II_{q,k} \\
q_{k+1} > 2Q}} \int_{t_k}^{t_{k-1}} \frac{W_{\gamma,k}(t)\,
dt}{t^2+t+1} .
\end{equation*}
Employing \eqref{A1.8} (which also holds for $k=0$) and \eqref{5.6} we find
\begin{equation}\label{8.1}
\G_{I,Q}^{(3.1)}(\xi) \approxeq \frac{c_I}{8\zeta(2)} \int_0^1
du \left( \int_{1-u}^{2-u} dw\, F^{(3.1.1)}(\xi;u,w) +
\int_{2-u}^\infty dw\, F^{(3.1.2)}(\xi;u,w) \right),
\end{equation}
where
\begin{equation*}
\begin{split}
F^{(3.1.1)}(\xi;u,w) & =\frac{(w+u-1)^2}{(w+u)^2w} \cdot
(w+u-\xi)_+ \wedge w,\\
F^{(3.1.2)}(\xi;u,v) & =\frac{(1-u)^2}{(w-u)^2 w} \cdot
(w+u-\xi)_+ \wedge w.
\end{split}
\end{equation*}

\subsubsection{$w_{\AA_0}<w_{\BB_k}\ (\Longleftrightarrow\
t >\gamma_{k+1})$ and $\xi Q<q_{k+1}$}\label{Subsub8.1.2}

In this case $k\geqslant 1$, $q_{k+1}\leqslant 2Q$, and
\begin{equation*}
W_{\gamma,k} (t)=w_{\AA_0}(t) \cdot (q_{k+1}-\xi Q)_+ \wedge
q_k +\big( w_{\BB_k}(t) -w_{\AA_0}(t)\big)  q_k ,
\end{equation*}
with contribution
\begin{equation*}
\G_{I,Q}^{(3.2)}(\xi) =\sum_{k=1}^\infty
\sideset{}{^*}\sum_{\substack{q_k\in \II_{q,k}
\\ \xi Q < q_{k+1} \leqslant 2Q}} \int_{\gamma_{k+1}}^{t_{k-1}}
\frac{W_{\gamma,k} (t)\, dt}{t^2+t+1}.
\end{equation*}
Employing \eqref{A2.1} and \eqref{A3.2} we find
\begin{equation}\label{8.2}
\G_{I,Q}^{(3.2)}(\xi) \approxeq \frac{c_I}{8\zeta(2)} \int_0^1
du \int_{(1 \vee (\xi-u)) \wedge (2-u)}^{2-u} dw\,
F^{(3.2)}(\xi;u,w),
\end{equation}
where
\begin{equation*}
F^{(3.2)}(\xi;u,w) = \frac{2-u-w}{(w-u)(w+u)} \left(
\frac{w+u-1}{w+u} + \frac{w-1}{w-u} \right) \cdot (w+u-\xi)_+
\wedge w + \frac{(2-u-w)^2 w}{(w-u)^2 (w+u)} .
\end{equation*}

\subsubsection{$w_{\AA_0}<w_{\BB_k} \ (\Longleftrightarrow
t > q_{k+1})$ and $q_{k+1}\leqslant \xi Q$}\label{Subsub8.1.3}

Again $k\geqslant 1$ and $q_{k+1}\leqslant 2Q$. Consider the
integer $N$ for which $q+Nq_k \leqslant \xi Q <q+(N+1)q_k$,
that is $1\leqslant N=\Big\lfloor \frac{\xi
Q-q}{q_k}\Big\rfloor \leqslant \xi$. Consider also
$n_0:=\Big\lfloor \frac{w_{\BB_k}+w_{\CC_k}}{w_{\AA_0}+w_{\CC_k}}
\Big\rfloor =\Big\lfloor \frac{qt -a}{a_k -q_k t} \Big\rfloor
\geqslant 1$, and let
\begin{equation}\label{8.3}
\lambda_{k,n} := \frac{a+na_k}{q+nq_k} \nearrow \gamma_k \quad
\mbox{\rm as $n\rightarrow\infty$,}
\end{equation}
hence $\lambda_{k,n_0} \leqslant t <\lambda_{k,n_0+1}$ and
$T(q+(n_0+1)q_k)<0\leqslant T(q+n_0 q_k)$. The channel $\BB_k$ is
locked exactly by the slits $q_{k+1}=q+q_k,\ldots ,q+n_0 q_k,
q+(n_0+1)q_k$ (see Figure \ref{Figure14}) and $W_{\gamma,k}(t)$ is given by
\begin{equation*}
\begin{cases} 0 & \mbox{\rm if $N>n_0,$} \\
\big( w_{\BB_k}+w_{\CC_k} -n_0 (w_{\AA_0}+w_{\CC_k})\big) \big(
q+(n_0+1)q_k -\xi Q\big) & \mbox{\rm if $n_0=N,$} \\
(w_{\AA_0}+w_{\CC_k}) \big(q+(N+1)q_k -\xi Q\big)+\big(
w_{\BB_k} +w_{\CC_k} -(N+1)(w_{\AA_0}+w_{\CC_k}) \big)
q_k & \mbox{\rm if $N<n_0,$} \end{cases}
\end{equation*}
or equivalently by
\begin{equation}\label{8.4}
\begin{cases} 0 & \mbox{\rm if $t \in I_{\gamma,k}
\cap (t_k,\lambda_{k,N}],$} \\
W^{(1)}_{\gamma,k,N}(t):= \big( q_{k+1}+Nq_k -\xi Q\big) \big(
(q+Nq_k)t -a-Na_k\big) & \mbox{\rm if $t \in
I_{\gamma,k} \cap (\lambda_{k,N} ,\lambda_{k,N+1}],$} \\
W^{(2)}_{\gamma,k}(t): =1-\xi Q \big( w_{\AA_0}(t) +
w_{\CC_k}(t)\big) & \mbox{\rm if $t\in I_{\gamma,k} \cap
(\lambda_{k,N+1},t_{k-1}].$}
\end{cases}
\end{equation}

We shall start by keeping $N\geqslant 1$ and $q\leqslant Q$ fixed, and
summing over
\begin{equation*}
y=q_k\in\JJ_{q,N}:=\left( \frac{\xi Q-q}{N+1},\frac{\xi Q-q}{N}\right].
\end{equation*}
Since $\lambda_{k,1}=\gamma_{k+1} >t_k$ and $\gamma_k >t_{k-1}$
for all $k$, nonzero contribution only arises from one of the
following two subcases:

(I) $\ N(q_k -Q)\leqslant Q-q<(N+1)(q_k -Q).$ Then $t_k
<\gamma_{k+1}\leqslant \lambda_{k,N} \leqslant t_{k-1}
<\lambda_{k,N+1}$ and the contribution is
\begin{equation*}
\G_{I,Q}^{(3.3.1)}(\xi) =\sum_{1\leqslant N\leqslant \xi}
\sum_{k=1}^\infty \sideset{}{^*}\sum_{\substack{q_k\in \II_{q,k}
\cap \JJ_{q,N} \\ Q+\frac{Q-q}{N+1} <q_k \leqslant
Q+\frac{Q-q}{N}}} \int_{\lambda_{k,N}}^{t_{k-1}}
\frac{W^{(1)}_{\gamma,k,N}(t)\, dt}{t^2+t+1} .
\end{equation*}
In this case $N$ can only take the value $N=\left\lfloor
\frac{Q-q}{q_k -Q} \right\rfloor$ for fixed $k$. Employing
\eqref{A3.3} we find
\begin{equation}\label{8.5}
\G_{I,Q}^{(3.3.1)}(\xi) \approxeq \frac{c_I}{8\zeta(2)}
\sum_{1\leqslant N\leqslant \xi} \int_0^1 du
\int_{\left[1+\frac{1-u}{N+1},1+\frac{1-u}{N}\right] \cap \left[
\frac{\xi-u}{N+1},\frac{\xi-u}{N}\right]} dw\,
F_N^{(3.3.1)} (\xi;u,w),
\end{equation}
where
\begin{equation*}
F_N^{(3.3.1)}(\xi;u,w) =\frac{\big( 1-u-N(w-1) \big)^2 \big(
(N+1)w+u-\xi\big) }{(w-u)^2 (Nw+u)} .
\end{equation*}

(II) $\ (N+1)(q_k-Q) \leqslant Q-q$. Then $t_k
<\gamma_{k+1}\leqslant \lambda_{k,N} <\lambda_{k,N+1} \leqslant
t_{k-1}$. The contribution
\begin{equation*}
\G_{I,Q}^{(3.3.2)}(\xi) =\sum_{1\leqslant N\leqslant \xi}
\sum_{k=1}^\infty
\sideset{}{^*}\sum_{\substack{q_k\in\II_{q,k}\cap \JJ_{q,N} \\ q_k
\leqslant Q+\frac{Q-q}{N+1}}} \left(
\int_{\lambda_{k,N}}^{\lambda_{k,N+1}}
\frac{W^{(1)}_{\gamma,k,N}(t)\, dt}{t^2+t+1} +
\int_{\lambda_{k,N+1}}^{t_{k-1}} \frac{W^{(2)}_{\gamma,k}(t)\,
dt}{t^2+t+1} \right)
\end{equation*}
is estimated upon \eqref{A3.4} and \eqref{A3.7} as
\begin{equation}\label{8.6}
\G_{I,Q}^{(3.3.2)}(\xi) \approxeq \frac{c_I}{8\zeta(2)}
\sum_{1\leqslant N\leqslant \xi} \int_0^1 du \int_{\left[ 1
,1+\frac{1-u}{N+1} \right] \cap \left[
\frac{\xi-u}{N+1},\frac{\xi-u}{N} \right]} dw\,
F_N^{(3.3.2)}(\xi;u,w),
\end{equation}
where \begin{equation*}
\begin{split}
F_N^{(3.3.2)}(\xi;u,w) = & \frac{(N+1)w+u-\xi}{(Nw+u)\big(
(N+1)w+u\big)^2} \\ & + \frac{1-u-(N+1)(w-1)}{(w-u)\big((N+1)w+u
\big)} \left( 2-\frac{\xi}{(N+1)w+u} -
\frac{\xi (w-1)}{w-u} \right) .
\end{split}
\end{equation*}
In this case $N\in \{\lfloor \xi\rfloor -1,\lfloor
\xi\rfloor \}$ suffices as a result of the inequality $\xi
- u < N+2-u$.

\subsection{The channel $\CC_{\leftarrow}$}\label{Subsec8.2}

In this case all slits $q+nq_k$ are removed, while slits
$2q+nq_k$, $n\geqslant 0$, are not. Exactly two of the later ones
lock $\AA_0 \cup \CC_k$. Letting $n_0:=\Big\lfloor
\frac{w_{\BB_k}+2w_{\CC_k}}{w_{\AA_0}+w_{\CC_k}} \Big\rfloor=
\Big\lfloor \frac{a_{k-2}-2\eps -q_{k-2}t}{a_k -q_k t} \Big\rfloor
\geqslant 0$, we have $w_{\BB_k}\leqslant
T(2q+n_0q_k)=2(w_{\BB_k}+w_{\CC_k})-n_0
(w_{\AA_0}+w_{\CC_k})<2\eps$, so $\AA_0 \cup \CC_k$ is locked
exactly by the slits $2q+n_0 q_k$ and $2q+(n_0+1)q_k$. The
situation is described in Figure \ref{Figure15}. Consider also
\begin{equation}\label{8.7}
\mu_{k,n}:=\frac{2a+na_k+2\eps}{2q+nq_k} \quad \mbox{\rm and}
\quad \nu_{k,n}:=\frac{a+na_k}{q+nq_k}.
\end{equation}

When $q_{k} \leqslant 2Q$, $\mu_{k,n}\nearrow \gamma_k
> t_{k-1}$ as $n\rightarrow \infty$ and $\mu_{k,0}
=\frac{a+\eps}{q} \leqslant t_{k-1}$, $\forall k\geqslant 0$. We
also have $\mu_{k,n_0-1}\leqslant t <\mu_{k,n_0}$ and the
following two situations can arise:

(I) $\ w_{\BB_k} \leqslant T(2q+n_0q_k)<w_{\BB_k}+w_{\CC_k}\
(\Longleftrightarrow  \mu_{k,n_0-1} \leqslant t <\nu_{k,n_0}).$
Then $n_0 \geqslant 1$ and the widths of the relevant three
sub-channels of $\AA_0 \cup \BB_k$ are (from bottom to top)
$w_{\AA_0}$, $w_{\BB_k}+w_{\CC_k} -T(2q+n_0q_k)= a+n_0 q_k-(q+n_0
q_k)t$, $T(2q+n_0 q_k)-w_{\BB_k}=w_{\CC_k} -\big(a+n_0a_k
-(q+n_0q_k)t\big)$, yielding
\begin{equation*}
\begin{split}
& W_{\gamma,k}(t) = W_{\gamma,k,n_0}^{(1)} (t) = w_{\AA_0}(t) \cdot (2q+n_0 q_k
-\xi Q)_+ \wedge q +w_{\CC_k}(t) \cdot ( q_{k+2}+n_0q_k -\xi Q)_+ \wedge q_{k+1} \\
&  \quad -\big( a+n_0 a_k-(q+n_0 q_k)t)\big) \cdot \Big( (
q_{k+2}+n_0q_k -\xi Q)_+ \wedge q_{k+1} - (2q+n_0q_k-\xi
Q)_+\wedge q_{k+1} \Big) .
\end{split}
\end{equation*}

\begin{figure}[ht]
\centering
\unitlength 0.32mm
\begin{picture}(310,220)(-10,0)
\texture{ccccccc 0000}
\shade\path(0,130)(0,140)(110,140)(110,130)(0,130)
\shade\path(220,130)(220,110)(170,110)(170,130)(220,130)

\put(-22,88){\makebox(0,0){\small $(-1,-\eps)$}}
\put(-19,183){\makebox(0,0){\small $(-1,\eps)$}}

\put(30,18){\makebox(0,0){\small $q$}}
\put(60,-8){\makebox(0,0){\small $2q$}}
\put(80,217){\makebox(0,0){\small $q_k$}}
\put(110,162){\makebox(0,0){\small $q_{k+1}$}}
\put(140,-7){\makebox(0,0){\small $2q+q_{k}$}}
\put(170,62){\makebox(0,0){\small $\xi Q$}}
\put(220,37){\makebox(0,0){\small $2q+2q_{k}$}}
\put(300,82){\makebox(0,0){\small $2q+3q_{k}$}}
\put(160,219){\makebox(0,0){\small $2q_k$}}
\put(270,217){\makebox(0,0){\small $q+3q_{k}$}}
\put(192,208){\makebox(0,0){\small $q+2q_{k}$}}

\put(-10,103){\makebox(0,0){\small $\AA_0$}}
\put(-10,125){\makebox(0,0){\small $\CC_k$}}
\put(-10,160){\makebox(0,0){\small $\BB_k$}}

\put(256,110){\makebox(0,0){\small $w_{\AA_0}+w_{\CC_k}$}}
\put(-53,118){\makebox(0,0){\small $w_{\AA_0}+w_{\CC_k}$}}
\put(333,157){\makebox(0,0){\small $w_{\AA_0}+w_{\CC_k}$}}

\path(30,95)(220,95) \path(220,130)(300,130)
\path(110,140)(300,140)

\dottedline{2}(30,25)(30,110) \dottedline{2}(110,70)(110,155)
\dottedline{2}(190,115)(190,200) \dottedline{2}(270,160)(270,210)

\thicklines \path(0,95)(30,95) \path(30,110)(110,110)
\path(110,140)(80,140) \path(0,180)(80,180)

\Thicklines \path(0,95)(0,180) \path(60,0)(60,40)
\path(140,85)(140,0) \path(220,130)(220,45) \path(300,90)(300,175)
\path(80,140)(80,210)\path(160,185)(160,210)

\thinlines \path(220,45)(250,45)(250,60)(315,60)
\path(300,90)(315,90) \path(300,175)(315,175)

\path(0,110)(30,110) \path(60,110)(220,110)
\path(190,115)(220,115) \path(0,130)(220,130) \path(0,140)(80,140)
\path(225,130)(225,85) \path(223.5,88)(225,85)(227.5,88)
\path(223.5,127)(225,130)(227.5,127) \path(140,85)(220,85)
\path(110,70)(140,70) \path(60,40)(140,40) \path(30,25)(60,25)

\path(170,145)(170,70) \path(110,155)(190,155)
\path(190,185)(160,185)\path(190,200)(270,200)
\path(270,160)(300,160) \path(-25,95)(-25,140)
\path(-26.5,98)(-25,95)(-23.5,98)
\path(-26.5,137)(-25,140)(-23.5,137)

\path(305,175)(305,130) \path(303.5,133)(305,130)(307.5,133)
\path(303.5,172)(305,175)(307.5,172)

\end{picture}
\caption{The case $t \in I_{\gamma,k}$, $r=1$, $r_k=0$,
$r_{k+1}=-1$, $\CC_{\leftarrow}$, $n_0=2$} \label{Figure15}
\end{figure}

(II) $\ w_{\BB_k}+w_{\CC_k} \leqslant T(2q+n_0q_k)<2\eps\
(\Longleftrightarrow \nu_{k,n_0} \leqslant t <\mu_{k,n_0}).$ Then
$n_0 \geqslant 0$ and the widths of the relevant three
sub-channels of $\AA_0 \cup \CC_k$ are: $2\eps -T(2q+n_0 q_k) =
w_{\AA_0}-\big((q+n_0 q_k) t-a-n_0 q_k \big)$, $T(2q+n_0
q_k)-(w_{\BB_k}+w_{\CC_k})=(q+n_0 q_k)t-a-n_0 a_k$, $w_{\CC_k}$,
yielding
\begin{equation*}
\begin{split}
W_{\gamma,k}(t) & = W_{\gamma,k,n_0}^{(2)} (t) = w_{\AA_0}(t)\cdot (2q+n_0 q_k
-\xi Q)_+ \wedge q +w_{\CC_k}(t) \cdot ( q_{k+2} + n_0 q_k
-\xi Q \big)_+ \wedge q_{k+1} \\ & \quad + \big( (q+n_0 q_k) t -a-
n_0 a_k \big) \cdot \Big( \big( q_{k+2}+n_0 q_k -\xi Q \big)_+
\wedge q -(2q+n_0 q_k -\xi Q)_+ \wedge q\Big) .
\end{split}
\end{equation*}
The following three cases arise:

\subsubsection{$n_0=0 \ (\Longleftrightarrow t
< \frac{a+\eps}{q})$}\label{Subsub8.2.1}

In this case $\AA_0$ is locked by the slits $2q$ and $q_{k+2}$.
One sees that $W_{\gamma,k}(t)=W_{\gamma,k,0}^{(2)}(t)$. When
$q_{k+2}\leqslant 2Q$ we have $\frac{a+\eps}{q} <t_k$ with zero
contribution, so we must take $q_{k+1}>2Q-q$. When $q_{k+1}>2Q$ we
have $k\geqslant 1$ and $\frac{a+\eps}{q}> t_{k-1}$, with contribution
\begin{equation*}
\G_{I,Q}^{(3.4.1)}(\xi)=\sum_{k=1}^\infty
\sideset{}{^*}\sum_{\substack{q_k\in\II_{q,k} \\ q_{k+1}>2Q}}
\int_{t_k}^{t_{k-1}} \frac{W_{\gamma,k,0}^{(2)}(t)\, dt}{t^2+t+1} .
\end{equation*}
Employing \eqref{5.6}, \eqref{5.7}, \eqref{5.9} and $qt - a =
w_{\BB_k} (t) +w_{\CC_k} (t)$ we find
\begin{equation}\label{8.8}
\G_{I,Q}^{(3.4.1)}(\xi) \approxeq \frac{c_I}{8\zeta (2)}
\int_0^1 du \int_{2-u}^\infty dw\, F^{(3.4.1)}(\xi;u,w),
\end{equation}
where
\begin{equation*}
\begin{split}
& F^{(3.4.1)} (\xi;u,w) = \frac{(1-u)^2(2w-u)}{(w-u)^2 w^2} \cdot
\Big( (w+2u-\xi)_+ \wedge u -(2u-\xi)_+ \wedge u \Big) \\
&  +\frac{1-u}{(w-u)w} \left(
\frac{w+u-1}{w}+\frac{w-1}{w-u}\right) \cdot (2u-\xi)_+ \wedge
u +\frac{(1-u)^2}{(w-u)w^2} \cdot (w+2u-\xi)_+ \wedge (w+u).
\end{split}
\end{equation*}
When $2Q-q<q_{k+1}\leqslant 2Q$ we have
$t_k<\frac{a+\eps}{q}\leqslant t_{k-1}$ and $q_k$ takes exactly
one value. The corresponding contribution
\begin{equation*}
\G_{I,Q}^{(3.4.2)}(\xi)=\sum_{k=0}^\infty
\sideset{}{^*}\sum_{\substack{q_k\in\II_{q,k} \\ 2Q-q <q_{k+1}
\leqslant 2Q}} \int_{t_k}^{\frac{a+\eps}{q}}
\frac{W^{(2)}_{\gamma,k,0}(t)\, dt}{t^2+t+1}
\end{equation*}
is estimated upon \eqref{A1.11}, \eqref{A1.12}, \eqref{A1.15}, \eqref{A1.16} as
\begin{equation}\label{8.9}
\G_{I,Q}^{(3.4.2)} (\xi) \approxeq \frac{c_I}{8\zeta(2)}
\int_0^1 du \int_{2-2u}^{2-u} dw\, F^{(3.4.2)}(\xi;u,w) ,
\end{equation}
where
\begin{equation*}
\begin{split}
& F^{(3.4.2)} (\xi;u,w) = \frac{w+2u-2}{2uw} \left(
\frac{1-u}{w}+\frac{2-u-w}{2u}\right)
\cdot (w+2u-\xi)_+ \wedge (w+u) \\
& \qquad + \frac{(w+2u-2)(w-2u+2)}{4uw^2} \cdot (w+2u-\xi)_+
\wedge u +\frac{(w+2u-2)^2}{2uw^2}\cdot (2u-\xi)_+ \wedge u.
\end{split}
\end{equation*}

\subsubsection{$n_0 \geqslant 1$ and $k=0$}\label{Subsub8.2.2}

Then $\mu_{0,1}=\frac{2a^\prime+a+2\eps}{2q+q^\prime}
>\gamma_1=t_{-1}=\nu_{0,1} >\frac{a+\eps}{q}$, with
contribution
\begin{equation*}
\G_{I,Q}^{(3.4.3)} (\xi) = \sideset{}{^*}\sum_{\substack{q >
Q/2 \\ q^\prime > 2(Q-q)}} \int_{\frac{a+\eps}{q}}^{\gamma_1}
\frac{W^{(1)}_{\gamma,0,1}(t)\, dt}{t^2+t+1} .
\end{equation*}
Employing \eqref{A3.8}, \eqref{A3.9}, \eqref{A3.10} we find
\begin{equation}\label{8.10}
\G_{I,Q}^{(3.4.3)}(\xi) \approxeq \frac{c_I}{8\zeta(2)}
\int_{\frac{1}{2}}^1 du \int_{2-2u}^1 dw\, F^{(3.4.3)}
(\xi;u,w),
\end{equation}
where
\begin{equation*}
\begin{split}
& F^{(3.4.3)}(\xi;u,w) = \frac{(2-u-w)(3w+3u-2)}{4u (w+u)^2}
\cdot (w+2u-\xi)_+ \wedge u \\
&  +\frac{(2-u-w)^2}{2u(w+u)^2} \cdot ( 2w +2u - \xi )_+ \wedge (w+u)  +
\frac{(2-u-w)^2}{4u^2(w+u)} \cdot (w+2u-\xi)_+ \wedge (w+u)  .
\end{split}
\end{equation*}

\subsubsection{$n_0\geqslant 1$ and $k\geqslant 1$}\label{Subsub8.2.3}

Note first that $\mu_{k,n} \leqslant t_k$ when $q_k \leqslant
\frac{2(Q-q)}{n+1}$ and $t_{k-1} <\mu_{k,n-1}$ when $q_k
>Q+\frac{Q-q}{n}$. In both cases $I_{\gamma,k} \cap
[\mu_{k,n-1},\mu_{k,n}]$ has measure zero, so we shall only
consider $q_k \in \left[ \frac{2(Q-q)}{n+1} , Q + \frac{Q-q}{n}
\right]$. The following four subcases arise:

(I) $\ \frac{2(Q-q)}{n+1} < q_k \leqslant \frac{2(Q-q)}{n}
\wedge \left( Q+\frac{Q-q}{n+1}\right)$. Since $q_k
>Q$ we have $Q\leqslant \frac{2(Q-q)}{n}$, so $n=1$ and
$q\leqslant \frac{Q}{2}$. Furthermore $\mu_{k,0} < t_k <\nu_{k,1}
< \mu_{k,1} < t_{k-1}$ and the contribution is
\begin{equation*}
\G^{(3.4.4)}_{I,Q} (\xi) = \sum_{k=1}^\infty
\sideset{}{^*}\sum_{\substack{q_k \in \II_{q,k},\, q\leqslant Q/2 \\
q_k \leqslant 2(Q-q) \wedge \frac{3Q-q}{2}}} \left(
\int_{t_k}^{\nu_{k,1}} \frac{W^{(1)}_{\gamma,k,1}(t)\, dt}{t^2+t+1}
+\int_{\nu_{k,1}}^{\mu_{k,1}} \frac{W^{(2)}_{\gamma,k,1}(t)\,
dt}{t^2+t+1} \right) .
\end{equation*}
Employing \eqref{A3.11}-\eqref{A3.14} we find
\begin{equation}\label{8.11}
\begin{split}
\G^{(3.4.4)}_{I,Q}(\xi) \approxeq & \frac{c_I}{8\zeta(2)}
\int_{0}^{\frac{1}{3}} du \int_1^{\frac{3-u}{2}} dw\,
F^{(3.4.4)}(\xi;u,w) \\ & +\frac{c_I}{8\zeta(2)} \int_{\frac{1}{3}}^{\frac{1}{2}} du
\int_1^{2-2u} dw\, F^{(3.4.4)}(\xi;u,w) ,
\end{split}
\end{equation}
where
\begin{equation*}
\begin{split}
F^{(3.4.4)} & (\xi;u,w) = \frac{(w+u-1)^2(3w+2u)}{w^2
(w+u)(w+2u)} \cdot (w+2u-\xi)_+ \wedge u \\
&  + \left\{ \frac{2(w+u-1)}{w(w+2u)} \left( \frac{1-u}{w}
+\frac{3-u-2w}{w+2u}\right) -\frac{(w+u-1)^2}{w^2(w+u)} \right\} \cdot (2w+2u-\xi)_+ \wedge (w+u) \\
&  +\frac{(w+u-1)^2}{w^2(w+u)} \cdot (w+2u-\xi)_+ \wedge
(w+u) +\frac{(w+u-1)^2}{(w+u)(w+2u)^2} \cdot (2w+2u- \xi)_+
\wedge u .
\end{split}
\end{equation*}

(II) $\ \frac{2(Q-q)}{n} < q_k \leqslant Q +
\frac{Q-q}{n+1}$. Then $t_k <\mu_{k,n-1} < \nu_{k,n} <\mu_{k,n}
\leqslant t_{k-1}$, with contribution
\begin{equation*}
\G^{(3.4.5)}_{I,Q}(\xi)=\sum_{k=1}^\infty \sum_{n=1}^\infty
\sideset{}{^*}\sum_{\substack{q_k\in\II_{q,k}
\\ \frac{2(Q-q)}{n} < q_k \leqslant
Q + \frac{Q-q}{n+1}}} \left( \int_{\mu_{k,n-1}}^{\nu_{k,n}}
\frac{W^{(1)}_{\gamma,k,n}(t)\, dt}{t^2+t+1}
+\int_{\nu_{k,n}}^{\mu_{k,n}} \frac{W^{(2)}_{\gamma,k,n}(t)\,
dt}{t^2+t+1} \right) .
\end{equation*}
Employing \eqref{A3.15}-\eqref{A3.19} we find
(according to whether $n=1$ or $n\geqslant 2$)\footnote{Note that $2(Q-q)<q_k \leqslant Q+\frac{Q-q}{2}$
implies $q>\frac{Q}{3}$.}
\begin{equation}\label{8.12}
\begin{split} \G_{I,Q}^{(3.4.5)}(\xi) \approxeq &
\frac{c_I}{8\zeta(2)}\int_{\frac{1}{3}}^{\frac{1}{2}} du
\int_{2-2u}^{\frac{3-u}{2}} dw \, F^{(3.4.5)}_1 (\xi;u,w) \\ & +\frac{c_I}{8\zeta (2)}
\int_{\frac{1}{2}}^1 du \int_u^{\frac{3-u}{2}} dw\,
F^{(3.4.5)}_1 (\xi;u,w) \\ & +\frac{c_I}{8\zeta (2)} \sum_{n=2}^\infty \int_0^1 du
\int_1^{1+\frac{1-u}{n+1}}\hspace{-3pt} dw\,
F_n^{(3.4.5)}(\xi;u,w) ,
\end{split}
\end{equation}
where
{\small \begin{equation*}
\begin{split}
& F_n^{(3.4.5)}(\xi;u,w) = \frac{\big(
n(w-1)+u\big)^2}{(nw+u)(nw+2u)^2} \cdot \big(
(n+1)w+2u-\xi\big)_+ \wedge u
\\ & \qquad \qquad \qquad + \frac{\big(
1-u-n(w-1)\big)^2}{(nw+u)\big( (n-1)w+2u\big)^2} \cdot
(nw+2u-\xi)_+ \wedge (w+u) \\
& +\left\{ \frac{2-w}{(nw+2u)\big( (n-1)w+2u\big)} \left(
\frac{(n-1)(w-1)+u}{(n-1)w+2u} + \frac{n(w-1)+u}{nw+2u} \right) -
\frac{\big( n(w-1) +u\big)^2}{(nw+u)(nw+2u)^2} \right\} \\ &
\hspace{4cm} \cdot (nw+2u-\xi)_+ \wedge u \\
& + \Bigg\{ \frac{2-w}{(nw+2u)\big( (n-1)w+2u\big)} \left(
\frac{1-u-n(w-1)}{(n-1)w+2u} + \frac{1-u-(n+1)(w-1)}{nw+2u} \right)
\\ & \qquad \hspace{2cm} - \left. \frac{\big(
1-u-n(w-1)\big)^2}{(nw+u)\big( (n-1)w+2u\big)^2} \right\}
\cdot\big( (n+1)w+2u-\xi\big)_+ \wedge (w+u).
\end{split}
\end{equation*}}

(III) $\ Q+\frac{Q-q}{n+1} < q_k \leqslant \frac{2(Q-q)}{n}$.
This gives $n=1$, $\mu_{k,0} \leqslant t_k <\nu_{k,1} =\gamma_{k+1}
<t_{k-1}<\mu_{k,1}$, and
\begin{equation*}
\G_{I,Q}^{(3.4.6)}(\xi) =\sum_{k=1}^\infty
\sideset{}{^*}\sum_{\substack{q_k\in\II_{q,k} \\ \frac{3Q-q}{2} <
q_k \leqslant 2(Q-q)}} \left( \int_{t_k}^{\nu_{k,1}}
\frac{W^{(1)}_{\gamma,k,n}(t)\, dt}{t^2+t+1} +
\int_{\nu_{k,1}}^{t_{k-1}} \frac{W^{(2)}_{\gamma,k,n}(t)\,
dt}{t^2+t+1} \right) .
\end{equation*}
Employing \eqref{5.7}, \eqref{5.9}, \eqref{A1.9} and
\eqref{A3.20} we find
\begin{equation}\label{8.13}
\G_{I,Q}^{(3.4.6)}(\xi) \approxeq \frac{c_I}{8\zeta(2)}
\int_0^{\frac{1}{3}} du \int_{\frac{3-u}{2}}^{2-2u} dw\,
F^{(3.4.6)}(\xi;u,w) ,
\end{equation}
where
\begin{equation*}
\begin{split}
F^{(3.4.6)} & (\xi;u,w) = \left\{ \frac{1-u}{(w-u)w} \left(
\frac{w+u-1}{w}+\frac{w-1}{w-u}\right) -
\frac{(2-w-u)^2}{(w-u)^2(w+u)} \right\} \cdot (w+2u-\xi)_+
\wedge u \\ & + \left\{ \frac{(1-u)^2}{(w-u)w^2}
-\frac{(w+u-1)^2}{(w+u)w^2} \right\} \cdot (2w+2u-\xi)_+
\wedge (w+u) \\ & + \frac{(w+u-1)^2}{(w+u)w^2} \cdot
(w+2u-\xi)_+ \wedge (w+u) + \frac{(2-w-u)^2}{(w-u)^2 (w+u)}
\cdot (2w+2u-\xi)_+ \wedge u  .
\end{split}
\end{equation*}

(IV) $\ \frac{2(Q-q)}{n}\vee \left(Q+\frac{Q-q}{n+1}\right)
< q_k \leqslant Q+\frac{Q-q}{n}$. For fixed $k$ there is only one
value $n$ can take, namely $\Big\lfloor \frac{Q-q}{q_k
-Q}\Big\rfloor$. We have $t_k < \mu_{k,n-1} \leqslant \nu_{k,n}
\leqslant t_{k-1} < \mu_{k,n}$ and
\begin{equation*}
\G_{I,Q}^{(3.4.7)} (\xi) =\sum_{k=1}^\infty \sum_{n=1}^\infty
\sideset{}{^*}\sum_{\substack{q_k\in \II_{q,k} \\
\frac{2(Q-q)}{n} \vee \left(Q+\frac{Q-q}{n+1}\right) < q_k
\leqslant Q+\frac{Q-q}{n}}} \left( \int_{\mu_{k,n-1}}^{\nu_{k,n}}
\frac{W^{(1)}_{\gamma,k,n}(t)\, dt}{t^2+t+1}
+\int_{\nu_{k,n}}^{t_{k-1}} \frac{W^{(2)}_{\gamma,k,n}(t)\,
dt}{t^2+t+1} \right) .
\end{equation*}
Employing \eqref{A3.21}-\eqref{A3.25} we find
\begin{equation}\label{8.14}
\begin{split}
\G_{I,Q}^{(3.4.7)}(\xi) \approxeq & \frac{c_I}{8\zeta(2)}
\int_0^{\frac{1}{3}} du \int_{2-2u}^{2-u} dw\, F_1^{(3.4.7)}
(\xi;u,w) \\ & +\frac{c_I}{8\zeta(2)} \int_{\frac{1}{3}}^1 du
\int_{\frac{3-u}{2}}^{2-u} dw\, F_1^{(3.4.7)}(\xi;u,w) \\ &
 +\frac{c_I}{8\zeta(2)} \sum_{n=2}^\infty \int_0^1 du
\int_{1+\frac{1-u}{n+1}}^{1+\frac{1-u}{n}} dw\,
F_n^{(3.4.7)}(\xi;u,w) ,
\end{split}
\end{equation}
where
\begin{equation*}
\begin{split}
F_n^{(3.4.7)} & (\xi;u,w) = \frac{\big(
1-u-n(w-1)\big)^2}{(w-u)(nw+2u)\big( (n-1)w+2u\big)} \cdot \big(
(n+1)w+2u-\xi\big)_+ \wedge (w+u) \\
& + \left\{\frac{1-u-n(w-1)}{(w-u)\big( (n-1)w+2u\big)}
\left( \frac{(n-1)(w-1)+u}{(n-1)w+2u} +\frac{w-1}{w-u}\right)
-\frac{\big( 1-u-n(w-1)\big)^2}{(w-u)^2 (nw+u)} \right\} \\ & \qquad \qquad \cdot (nw+2u-\xi)_+ \wedge u
\\ &  +\frac{\big(1-u-n(w-1)\big)^2}{(nw+u)\big( (n-1)w+2u\big)^2}
\cdot (nw+2u-\xi)_+ \wedge (w+u) \\ &
+\frac{\big( 1-u-n(w-1)\big)^2}{(w-u)^2 (nw+u)}
\cdot \big( (n+1)w+2u-\xi\big)_+ \wedge u .
\end{split}
\end{equation*}

\section{Channels with removed slits. The case $r_{k+1}=0$}\label{Sect9}
In this section we consider
\begin{equation*}
\sideset{}{^*}\sum =\sum\limits_{\substack{\gamma\in \FF_I (Q) \\
q\nequiv a \hspace{-6pt}\pmod{3} \\ q_{k+1} \equiv a_{k+1}
\hspace{-6pt}\pmod{3} }}.
\end{equation*}
We shall actually sum as in Remark 1 of Section 5 over $x=q-a\in
q(1-I)$, $x\equiv \pm 1 \pmod{3}$, $y= q_{k+1} \in \II_{q,k+1}$,
$\beta =0$, $xy\equiv 1 \pmod{3q}$. The contributions arising from
$x\equiv 1 \pmod{3}$ respectively $x\equiv -1 \pmod{3}$ will have
the same main term and error.

\subsection{The channel $\CC_O$}\label{Subsec9.1}
In the formulas for $\CC_O$ and $(r,r_k)=(1,-1)$, respectively $(r,r_k)=(-1,1)$,
the corresponding main term and error coincide. Hence we only
take $(r,r_k,r_{k+1})=(1,-1,0)$ and double its $\CC_O$
contribution. The slits $q+\ell q_{k+1}$, $l\geqslant 1$, are not
removed and lock the channel $\CC_k$ (see Figure \ref{Figure16}).
The following two situations arise:

\begin{figure}[ht]
\centering
\unitlength 0.32mm
\begin{picture}(340,180)(0,0)
\texture{ccccccc 0000}
\shade\path(0,100)(0,105)(100,105)(100,100)(0,100)
\shade\path(210,90)(210,100)(230,100)(230,90)(210,90)

\put(-17,47){\makebox(0,0){\small $(0,-\eps)$}}
\put(-14,128){\makebox(0,0){\small $(0,\eps)$}}

\put(30,-2){\makebox(0,0){\small $q$}}
\put(70,167){\makebox(0,0){\small $q_k$}}
\put(210,116){\makebox(0,0){\small $\xi Q$}}
\put(100,52){\makebox(0,0){\small $q_{k+1}$}}
\put(130,7){\makebox(0,0){\small $q+q_{k+1}$}}
\put(170,167){\makebox(0,0){\small $q_k +q_{k+1}$}}
\put(197,62){\makebox(0,0){\small $2q_{k+1}$}}
\put(230,17){\makebox(0,0){\small $q+2q_{k+1}$}}
\put(300,72){\makebox(0,0){\small $3q_{k+1}$}}
\put(340,27){\makebox(0,0){\small $q+3q_{k+1}$}}
\put(270,167){\makebox(0,0){\small $q_k+2q_{k+1}$}}

\put(-10,65){\makebox(0,0){\small $\AA_0$}}
\put(-10,92){\makebox(0,0){\small $\CC_k$}}
\put(-10,115){\makebox(0,0){\small $\BB_k$}}

\put(165,83){\makebox(0,0){\small $w_{\AA_0}-w_{\BB_k}$}}
\put(-36,92){\makebox(0,0){\small $w_{\CC_k}$}}
\put(265,93){\makebox(0,0){\small $w_{\AA_0}-w_{\BB_k}$}}
\put(365,103){\makebox(0,0){\small $w_{\AA_0}-w_{\BB_k}$}}

\path(100,105)(330,105) \path(100,80)(130,80)
\path(130,90)(230,90) \path(270,100)(330,100)

\dottedline{2}(100,60)(100,135) \dottedline{2}(200,70)(200,145)
\dottedline{2}(300,80)(300,155) \dottedline{2}(0,50)(0,125)

\thicklines \path(0,125)(70,125) \path(70,105)(100,105)
\path(100,80)(30,80) \path(0,50)(30,50)

\Thicklines \path(30,5)(30,80) \path(130,15)(130,90)
\path(230,25)(230,100) \path(330,35)(330,110)
\path(170,115)(170,160) \path(70,105)(70,160)
\path(270,125)(270,160)

\thinlines \path(0,105)(70,105) \path(100,60)(130,60)
\path(0,80)(30,80) \path(0,90)(130,90) \path(0,100)(270,100)
\path(135,80)(135,90) \path(133.5,83)(135,80)(136.5,83)
\path(133.5,87)(135,90)(136.5,87) \path(-25,80)(-25,105)
\path(-26.5,83)(-25,80)(-23.5,83)
\path(-26.5,102)(-25,105)(-23.5,102)\path(210,80)(210,110)

\path(30,5)(60,5)(60,35)(130,35) \path(100,135)(170,135)
\path(170,115)(200,115) \path(200,145)(270,145)
\path(270,125)(300,125) \path(300,155)(340,155)
\path(330,110)(340,110) \path(330,35)(340,35)
\path(300,80)(330,80) \path(130,15)(160,15)(160,45)(230,45)
\path(200,70)(230,70) \path(230,25)(260,25)(260,55)(330,55)

\path(235,90)(235,100) \path(335,100)(335,110)
\path(233.5,93)(235,90)(236.5,93)
\path(233.5,97)(235,100)(236.5,97)
\path(333.5,103)(335,100)(336.5,103)
\path(333.5,107)(335,110)(336.5,107)

\end{picture}
\caption{The case $t \in I_{\gamma,k}$, $r=\pm 1$, $r_k=\mp 1$,
$r_{k+1}=0$, $\CC_{O}$, $w_{\AA_0}>w_{\BB_k}$, $N=1$, $n_0=2$}
\label{Figure16}
\end{figure}

\subsubsection{$w_{\AA_0}>w_{\BB_k}$ $(\Longleftrightarrow t
< \gamma_{k+1})$}\label{Subsub9.1.1}

We split the analysis in the following two cases:

(I) $\ \xi Q < q_{k+2}$. Then (since
$w_{\AA_0}-w_{\BB_k} > w_{\CC_k} \Longleftrightarrow t<\frac{a+\eps}{q}$) we find
\begin{equation*}
W_{\gamma,k}(t)=\begin{cases} W^{(1)}_{\gamma,k} (t) :=
w_{\CC_k}(t) \cdot (q_{k+2}-\xi Q) \wedge q_{k+1} &
\mbox{\rm if $t<\frac{a+\eps}{q},$} \\
W^{(2)}_{\gamma,k}(t) : = \big( (w_{\AA_0}(t)-w_{\BB_k}(t)\big) \cdot
(q_{k+2}-\xi Q) \wedge q_{k+1} & \\ \qquad \qquad + \big( w_{\CC_k}(t)-w_{\AA_0}(t) +w_{\BB_k}(t)\big) q_{k+1}
& \mbox{\rm if $t\geqslant \frac{a+\eps}{q}.$}
\end{cases}
\end{equation*}

$\bullet$ \ When $q_{k+1} >2Q$ we have $k\geqslant 1$ and $t_{k-1} < \gamma_{k+1} < \frac{a+\eps}{q}$.
with contribution
\begin{equation*}
\G_{I,Q}^{(4.1.1.1)}(\xi) = \sum_{k=1}^\infty
\sideset{}{^*} \sum\limits_{\substack{q_{k+1}\in \II_{q,k+1} \\
q_{k+1} > (\xi Q-q) \vee 2Q}} \int_{t_k}^{t_{k-1}}
W^{(1)}_{\gamma,k} (t)\, dt
\end{equation*}
estimated upon \eqref{5.7} as
\begin{equation}\label{9.1}
\G_{I,Q}^{(4.1.1.1)} (\xi) \approxeq \frac{c_I}{8\zeta(2)}
\int_0^1 du \int_{ 2 \vee (\xi -u) \vee (1+u)}^\infty
dw\, \frac{(1-u)^2}{(w-2u)(w-u)^2} \cdot (w+u-\xi)_+ \wedge w.
\end{equation}

$\bullet$ \ When $q_{k+1} \leqslant 2Q$ we have $\frac{a+\eps}{q} \leqslant \gamma_{k+1}
\leqslant t_{k-1}$ and $t_k < \frac{a+\eps}{q} \Longleftrightarrow
q_{k+2} > 2Q$ for all $k\geqslant 0$. In this case the contribution
\begin{equation*}
\begin{split}
\G_{I,Q}^{(4.1.1.2)}(\xi) & = \sum_{k=0}^\infty
\sideset{}{^*} \sum\limits_{\substack{q_{k+1} \in \II_{q,k+1} \\
2Q \geqslant q_{k+1} >(\xi \vee 2)Q -q}} \left(
\int_{t_k}^{\frac{a+\eps}{q}} W^{(1)}_{\gamma,k}(t) \, dt +
\int_{\frac{a+\eps}{q}}^{\gamma_{k+1}} W^{(2)}_{\gamma,k}(t)\, dt
\right) \\ & \quad + \sum_{k=0}^\infty \sideset{}{^*}
\sum\limits_{\substack{q_{k+1} \in \II_{q,k+1} \\
2Q-q \geqslant q_{k+1} > \xi Q-q}} \int_{t_k}^{\gamma_{k+1}}
W^{(2)}_{\gamma,k} (t)\, dt
\end{split}
\end{equation*}
is estimated upon \eqref{A1.15}, \eqref{A1.16}, \eqref{A1.18} (which also holds for $k=0$),
\eqref{A1.9}, \eqref{A1.20}, \eqref{A4.1}, \eqref{5.4}, as
\begin{equation}\label{9.2}
\begin{split}
\G_{I,Q}^{(4.1.1.2)}(\xi) \approxeq & \frac{c_I}{8\zeta (2)} \int_0^1 du
 \int_{(\xi \vee 2 -u) \wedge 2}^2  dw\, F^{(4.1.1.2.1)}(\xi; u,w) \\ & +
\frac{c_I}{8\zeta (2)} \int_0^1 du \int_{((\xi-u) \vee 1) \wedge (2-u)}^{2-u}
dw\, F^{(4.1.1.2.2)}(\xi; u,w) ,
\end{split}
\end{equation}
where
\begin{equation*}
\begin{split}
F^{(4.1.1.2.1)}(\xi; u,w) & =
\left( \frac{w+u-2}{2u(w-u)} \bigg( \frac{1-u}{w-u}
+\frac{2-w}{2u}\bigg) + \frac{(2-w)^2}{4u^2 w} \right) \cdot
(w+u-\xi) \wedge w  +\frac{(2-w)^2}{2uw} , \\
F^{(4.1.1.2.2)}(\xi;u,w) & = \frac{w-1}{w-u} \left(
\frac{2-u-w}{w-u} +\frac{2-w}{w}\right) +
\frac{(w-1)^2}{(w-u)^2 w} \cdot (w+u-\xi) \wedge w .
\end{split}
\end{equation*}

(II) $\ \xi Q \geqslant q_{k+2}$. Let $N$ be the unique integer for which
$q+Nq_{k+1} \leqslant \xi Q < q+(N+1)q_{k+1} =q_{k+2}+Nq_{k+1}$, that is
$1\leqslant N:=\left\lfloor \frac{\xi Q-q}{q_{k+1}}\right\rfloor \leqslant \xi$.
The corresponding range of $q_{k+1}$ is
\begin{equation*}
y=q_{k+1}\in \JJ_{q,N}:=\left( \frac{\xi Q-q}{N+1},\frac{\xi Q-q}{N}\right] .
\end{equation*}
We take $n_0:=\Big\lfloor \frac{w_{\CC_k}}{w_{\AA_0}-w_{\BB_k}} \Big\rfloor
= \Big\lfloor \frac{a_{k-1}-2\eps -q_{k-1}t}{a_{k+1}-q_{k+1}t} \Big\rfloor \geqslant 1$.
Then $t \geqslant
\frac{a+\eps}{q}$. We only need $\frac{a+\eps}{q} \leqslant
\gamma_{k+1}$, that is $q_{k+1} \leqslant 2Q$. In this case we
take
\begin{equation}\label{9.3}
\lambda_{k,n} :=\frac{na_{k+1}-a_{k-1}+2\eps}{nq_{k+1}-q_{k-1}}
\nearrow \gamma_{k+1} \leqslant t_{k-1} \quad \mbox{\rm as
$n\rightarrow \infty$,}
\end{equation}
hence $\lambda_{k,n_0} \leqslant t <\lambda_{k,n_0+1}$ and $T(q+n_0
q_{k+1}) =w_{\BB_k}+w_{\CC_k}-n_0 (w_{\AA_0}-w_{\BB_k})\geqslant
w_{\BB_k} > T\big( q+(n_0+1)q_{k+1}\big)$.
This shows that $\CC_k$
is locked by the slits $q+q_{k+1},\ldots,q+(n_0+1)q_{k+1}$ and
$W_{\gamma,k}(t)=W_{\gamma,k,N}(t)$ is given by
\begin{equation*}
\begin{split}
& \begin{cases} 0 & \mbox{\rm if $n_0 <N,$} \\
\big( w_{\CC_k} -N(w_{\AA_0}-w_{\BB_k})\big)\big (
q +(N+1) q_{k+1}-\xi Q\big) & \mbox{\rm if $n_0=N,$} \\
(w_{\AA_0}-w_{\BB_k}) \big( q +(N+1) q_{k+1} -\xi Q\big) +\big(
w_{\CC_k}-(N+1) (w_{\AA_0}-w_{\BB_k})\big) q_{k+1}
& \mbox{\rm if $n_0 >N,$} \end{cases} \\
 & = \begin{cases} 0 & \mbox{\rm if $t \in I_{\gamma,k} \cap
(\gamma,\lambda_{k,N}],$} \\ W^{(1)}_{\gamma,k,N}(t) := \big(
q+(N+1)q_{k+1}-\xi Q\big) & \\
\qquad \qquad \quad \cdot \big( (Nq_{k+1}-q_{k-1})t
-Na_{k+1}+a_{k-1}-2\eps \big) &
\mbox{\rm if $t \in I_{\gamma,k} \cap (\lambda_{k,N} , \lambda_{k,N+1}],$} \\
W^{(2)}_{\gamma,k}(t) := w_{\CC_k}(t) q_{k+1} - \big( w_{\AA_0}(t)
- w_{\BB_k}(t)\big) (\xi Q- q) & \mbox{\rm if $t \in
I_{\gamma,k} \cap (\lambda_{k,N+1},\gamma_{k+1}].$}
\end{cases}
\end{split}
\end{equation*}
The following three subcases arise:

(II$_1$) $\ q_{k+1} \leqslant Q+\frac{Q-q}{N+1}$. Then
$\lambda_{k,N+1}\leqslant t_k$,
$W_{\gamma,k}(t)=W^{(2)}_{\gamma,k}(t)$, $\forall t \in
(t_k,\gamma_{k+1}]$, with contribution
\begin{equation*}
\G^{(4.1.2)}_{I,Q}(\xi)=\sum_{1\leqslant N\leqslant \xi}
\sum_{k=0}^\infty\ \sideset{}{^*}\sum_{\substack{q \leqslant
\xi Q \\ q_{k+1}\in \II_{q,k+1}\cap\JJ_{q,N} \\ q_{k+1}
\leqslant Q+\frac{Q-q}{N+1}}} \int_{t_k}^{\gamma_{k+1}}
\frac{W^{(2)}_{\gamma,k} (t)\, dt}{t^2+t+1}.
\end{equation*}
Employing \eqref{5.4}, \eqref{A4.1}, \eqref{A1.9} (which also holds
for $k=0$) we find
\begin{equation}\label{9.4}
\G^{(4.1.2)}_{I,Q} (\xi) \approxeq \frac{c_I}{8\zeta(2)}
\sum_{1\leqslant N \leqslant \xi} \int_{0}^{\xi \wedge 1}
du \int_{\left[1,1+\frac{1-u}{N+1}\right] \cap \left[
\frac{\xi-u}{N+1},\frac{\xi-u}{N}\right]} dw\,
F^{(4.1.2)}(\xi;u,w),
\end{equation}
where
\begin{equation*}
F^{(4.1.2)}(\xi;u,w) = \frac{w-1}{w-u} \left(
\frac{1-u}{w-u}+\frac{2-w}{w}\right) -
\frac{(w-1)^2 (\xi -u)}{(w-u)^2 w} .
\end{equation*}

(II$_2$) $\ Q+\frac{Q-q}{N+1} < q_{k+1} \leqslant
Q+\frac{Q-q}{N}$. In this case $\lambda_{k,N} \leqslant t_k <
\lambda_{k,N+1}$. The contribution
\begin{equation*}
\G^{(4.1.3)}_{I,Q} (\xi) =\sum_{1\leqslant N\leqslant \xi}
\sum_{k=0}^\infty\ \sideset{}{^*}\sum_{\substack{q\leqslant \xi Q\\
q_{k+1}\in \II_{q,k+1}\cap\JJ_{q,N} \\ Q+\frac{Q-q}{N+1} <q_{k+1}
\leqslant Q+\frac{Q-q}{N}}} \left( \int_{t_k}^{\lambda_{k,N+1}}
\frac{W^{(1)}_{\gamma,k,N}(t)\, dt}{t^2+t+1} +
\int_{\lambda_{k,N+1}}^{\gamma_{k+1}} \frac{W^{(2)}_{\gamma,k}(t)\,
dt}{t^2+t+1} \right)
\end{equation*}
is estimated upon \eqref{A4.4}, \eqref{A4.5}, \eqref{A4.6}, \eqref{A4.9} as
\begin{equation}\label{9.5}
\G^{(4.1.3)}_{I,Q}(\xi) \approxeq \frac{c_I}{8\zeta (2)}
\sum_{1\leqslant N \leqslant \xi} \int_0^{\xi \wedge 1} du
\int_{\left[ 1 + \frac{1-u}{N+1} ,1+\frac{1-u}{N} \right] \cap
\left[ \frac{\xi-u}{N+1},\frac{\xi-u}{N}\right]} dw\,
F_N^{(4.1.3)}(\xi;u,w),
\end{equation}
where
\begin{equation*}
\begin{split}
F_N^{(4.1.3)}  & (\xi;u,w) = \frac{(2-w)^2 \big(
(2N+1)w+3u-\xi\big)}{(Nw+2u)^2 w} \\ &  +  \frac{\big(
(N+1)(w-1)+u-1\big) \big( (N+1)w+u-\xi\big)}{(w-u)(Nw+2u)}
\left( \frac{2-w}{Nw+2u}+\frac{1-u-N(w-1)}{w-u}\right) .
\end{split}
\end{equation*}

(II$_3$) $\ Q+\frac{Q-q}{N}< q_{k+1} \leqslant 2Q$. In this case
$t_k < \lambda_{k,N}$. The contribution
\begin{equation*}
\G^{(4.1.4)}_{I,Q}(\xi) =\sum_{1\leqslant N\leqslant \xi}
\sum_{k=0}^\infty\ \sideset{}{^*}\sum_{\substack{q\leqslant
\xi Q \\ q_{k+1}\in\II_{q,k+1}\cap\JJ_{q,N} \\
Q+\frac{Q-q}{N}<q_{k+1}\leqslant 2Q}} \left(
\int_{\lambda_{k,N}}^{\lambda_{k,N+1}}
\frac{W^{(1)}_{\gamma,k,N}(t)\, dt}{t^2+t+1}
+\int_{\lambda_{k,N+1}}^{\gamma_{k+1}}
\frac{W^{(2)}_{\gamma,k}(t)\, dt}{t^2+t+1} \right)
\end{equation*}
is estimated upon \eqref{A4.4}-\eqref{A4.7} as
\begin{equation}\label{9.6}
\G^{(4.1.4)}(\xi) \approxeq \frac{c_I}{8\zeta(2)}
\sum_{1\leqslant N \leqslant \xi} \int_0^{\xi \wedge 1} du
\int_{\left[ 1 + \frac{1-u}{N} , 2\right] \cap \left[
\frac{\xi-u}{N+1},\frac{\xi-u}{N}\right]} dw\,
F_N^{(4.1.4)} (\xi;u,w),
\end{equation}
where
\begin{equation*}
F_N^{(4.1.4)}(\xi;u,w) = \frac{(2-w)^2}{(Nw+2u)^2} \left(
\frac{(N+1)w+u-\xi}{(N-1)w+2u} + \frac{(2N+1)w+3u-\xi}{w}\right) .
\end{equation*}

\subsubsection{$w_{\AA_0}<w_{\BB_k}$ $(\Longleftrightarrow t
> \gamma_{k+1})$}\label{Subsub9.1.2}

In this situation $k\geqslant 1$ and $\gamma_{k+1}\leqslant
t_{k-1}$, so $q_{k+1} \leqslant 2Q$. Two cases arise:

(I) $\ \xi Q<q_k$. Then $W_{\gamma,k}(t)=w_{\CC_k}(t)
q_{k+1}$, $\forall t \in (\gamma_{k+1},t_{k-1}]$, with
contribution
\begin{equation*}
\G^{(4.2.1)}_{I,Q}(\xi) =\sum_{k=1}^\infty \
\sideset{}{^*}\sum_{q+\xi Q < q_{k+1} \leqslant 2Q}
\int_{\gamma_{k+1}}^{t_{k-1}} \frac{W_{\gamma,k}(t)\, dt}{t^2+t+1}.
\end{equation*}
Employing \eqref{A3.1} we find
\begin{equation}\label{9.7}
\G_{I,Q}^{(4.2.1)}(\xi) \approxeq \frac{c_I}{8\zeta(2)}
\int_0^1 du \int_{2\wedge (u+ \xi\vee 1)}^2 dw\, \frac{(2-w)^2}{(w-2u)w} .
\end{equation}

(II) $\ \xi Q\geqslant q_k$. Consider the unique integer
$0\leqslant N=\left\lfloor \frac{\xi Q-q_k}{q_{k+1}}
\right\rfloor \leqslant \xi$ for which $q_k+N q_{k+1} <\xi
Q\leqslant q_k+(N+1)q_{k+1}=q_{k+2}+Nq_{k+1}$, so the range of
$q_{k+1}$ is
\begin{equation*}
y=q_{k+1} \in \JJ_{q,N}:= \left( \frac{\xi Q + q }{ N + 2 } ,
\frac{\xi Q+q}{N+1}\right] .
\end{equation*}
This time we take $n_0:=\left\lfloor
\frac{w_{\CC_k}}{w_{\BB_k}-w_{\AA_0}}\right\rfloor = \left\lfloor
\frac{a_{k-1}-2\eps-q_{k-1}t}{q_{k+1}t -a_{k+1}} \right\rfloor
\geqslant 0$ and
\begin{equation}\label{9.8}
\lambda_{k,n}:=\frac{na_{k+1}+a_{k-1}-2\eps}{nq_{k+1}+q_{k-1}}
\searrow \gamma_{k+1} > t_k \quad \mbox{\rm as $n\rightarrow
\infty,$}
\end{equation}
hence $\lambda_{k,n_0+1} <t \leqslant \lambda_{k,n_0}$ and $B(q_k+n_0
q_{k+1})=w_{\BB_k}+n_0 (w_{\BB_k}-w_{\AA_0}) \leqslant
w_{\BB_k}+w_{\CC_k} <B\big( q_k +(n_0+1)q_{k+1}\big)$. This shows
that $\CC_k$ is locked by the slits
$q_k+q_{k+1},\ldots,q_k+(n_0+1)q_{k+1}$ and $W_{\gamma,k}(t) = W_{\gamma,k,N} (t)$ is given by
\begin{equation*}
\begin{split}
 & \begin{cases} 0 & \mbox{\rm if $n_0<N,$} \\
\big( w_{\CC_k}-N(w_{\BB_k}-w_{\AA_0})\big)\big(q_k+(N+1)q_{k+1}
-\xi Q\big) & \mbox{\rm if $n_0=N,$} \\
(w_{\BB_k}-w_{\AA_0})\big( q_k +(N+1)q_{k+1}-\xi Q\big)+\big(
w_{\CC_k}-(N+1)(w_{\BB_k}-w_{\AA_0})\big) q_{k+1} & \mbox{\rm if
$n_0 >N,$} \end{cases} \\
& =\begin{cases} 0 & \mbox{\rm if $t\in (\lambda_{k,N},t_{k-1}],$} \\
W^{(3)}_{\gamma,k,N}(t):=\big(q_{k}+(N+1) q_{k+1}-\xi Q\big) & \\
\qquad \qquad \qquad \cdot \big( Na_{k+1}+a_{k-1}-2\eps -
(Nq_{k+1}+q_{k-1}) t\big) &
\mbox{\rm if $t \in (\lambda_{k,N+1},\lambda_{k,N}],$} \\
W^{(4)}_{\gamma,k}(t): = w_{\CC_k}(t) q_{k+1} - \big( w_{\BB_k}(t)
- w_{\AA_0}(t)\big) (\xi Q-q_k) & \mbox{\rm if $t \in
(\gamma_{k+1},\lambda_{k,N+1}].$}
\end{cases}
\end{split}
\end{equation*}
Since $\lambda_{k,0}=t_{k-1}$, the corresponding contribution is given by
\begin{equation*}
\G^{(4.2.2)}_{I,Q}(\xi) =\sum_{0\leqslant N\leqslant \xi}
\sum_{k=1}^\infty
\sideset{}{^*}\sum_{\substack{q_{k+1}\leqslant 2Q \\
q_{k+1}\in\II_{q,k+1}\cap\JJ_{q,N}}} \left(
\int_{\lambda_{k,N+1}}^{\lambda_{k,N}}
\frac{W^{(3)}_{\gamma,k,N}(t)\, dt}{t^2+t+1} +
\int_{\gamma_{k+1}}^{\lambda_{k,N+1}} \frac{W^{(4)}_{\gamma,k}(t)\,
dt}{t^2+t+1} \right).
\end{equation*}
Employing \eqref{A4.12}, \eqref{A4.13}, \eqref{A4.14} we find
\begin{equation}\label{9.9}
\G_{I,Q}^{(4.2.2)}(\xi) \approxeq \frac{c_I}{8\zeta (2)}
\sum_{0\leqslant N \leqslant \xi} \int_0^1 du \int_{[1+u,2]
\cap \left[ \frac{\xi + u}{N+2}, \frac{\xi
+u}{N+1}\right]} dw \, F_N^{(4.2.2)} (\xi;u,w) ,
\end{equation}
where
\begin{equation*}
F_N^{(4.2.2)} (\xi;u,w) = \frac{(2-w)^2}{\big( (N+2) w - 2u
\big)^2} \left( \frac{(N+2)w-u-\xi}{(N+1)w-2u}
+\frac{(2N+4)w-3u-\xi}{w}  \right) .
\end{equation*}

\subsection{The channel $\CC_\leftarrow$ when $r=1$ and $r_k=-1$}\label{Subsec9.2}
The contributions of $(\CC_\leftarrow,r=1,r_k=-1)$ and of
$(\CC_\downarrow,r=-1,r_k=1)$ have the same main and error terms,
so we shall consider below the former situation and double the
result. The slits $q+n q_{k+1}$ are removed, while $2q+n q_{k+1}$
are not, $n \geqslant 0$ (see Figure \ref{Figure17}). Note that
$B(2q)=2(w_{\BB_k}+w_{\CC_k}) > B(q_{k+1}) =w_{\CC_k}+2w_{\BB_k}$.
Again, two cases arise:

\subsubsection{$w_{\AA_0}\leqslant w_{\BB_k} \
(\Longleftrightarrow t \geqslant
\gamma_{k+1})$}\label{Subsub9.2.1}

The slit $q_{k+1}$ locks the channel $\AA_0$ and $W_{\gamma,k} (t)
=w_{\AA_0}(t) \cdot (q_{k+1}-\xi Q)_+ \wedge q$. We must have
$\gamma_{k+1} < t_{k-1}$, so $k\geqslant 1$ and $q_{k+1}\leqslant 2Q$. The
corresponding contribution is
\begin{equation*}
\G_{I,Q}^{(4.3.1)} (\xi)= \sum_{k=1}^\infty
\sideset{}{^*}\sum_{\substack{q_{k+1}\in \II_{q,k+1} \\ q_{k+1}
\leqslant 2Q}} (q_{k+1}-\xi Q)_+ \wedge q
\int_{\gamma_{k+1}}^{t_{k-1}} \frac{w_{\AA_0}(t)\, dt}{t^2+t+1} .
\end{equation*}
Employing \eqref{A4.2} we find
\begin{equation}\label{9.10}
\G_{I,Q}^{(4.3.1)}(\xi) \approxeq \frac{c_I}{8\zeta(2)}
\int_0^1 du \int_{1+u}^2 dw\, \frac{2-w}{(w-2u)w} \left(
\frac{w-1}{w} +\frac{w-u-1}{w-2u}\right) \cdot
(w-\xi)_+ \wedge u .
\end{equation}

\subsubsection{$w_{\AA_0}>w_{\BB_k}\ (\Longleftrightarrow t
< \gamma_{k+1})$}\label{Subsub9.2.2}

Consider first the sub-channel of $\AA_0$ of width $w_{\BB_k}$
locked by the slit $q_{k+1}$. Its contribution is
\begin{equation*}
\begin{split}
\G_{I,Q}^{(4.3.2)}(\xi) & = \sum_{k=0}^\infty\
\sideset{}{^*}\sum_{\substack{q_{k+1}\in\II_{q,k+1} \\
q_{k+1}\leqslant 2Q}} (q_{k+1}-\xi Q)_+ \wedge q
\int_{t_k}^{\gamma_{k+1}} \frac{w_{\BB_k}(t)\, dt}{t^2+t+1} \\ &
\quad +\sum_{k=1}^\infty\
\sideset{}{^*}\sum_{\substack{q_{k+1}\in\II_{q,k+1}
\\ q_{k+1} > 2Q}} (q_{k+1}-\xi Q)_+ \wedge q
\int_{t_k}^{t_{k-1}} \frac{w_{\BB_k}(t)\, dt}{t^2+t+1}.
\end{split}
\end{equation*}
Employing \eqref{A1.8} and \eqref{5.6} we find
\begin{equation}\label{9.11}
\G_{I,Q}^{(4.3.2)}(\xi) \approxeq \frac{c_I}{8\zeta(2)}
\int_0^1 du \left( \ \int_1^2 dw\ F^{(4.3.2.1)}(\xi;u,w) +
\int_2^\infty dw\, F^{(4.3.2.2)}(\xi ;u,w) \right),
\end{equation}
where
\begin{equation*}
\begin{split}
& F^{(4.3.2.1)}(\xi;u,w)=\frac{(w-1)^2}{(w-u) w^2}\cdot
(w-\xi)_+ \wedge u ,\\ & F^{(4.3.2.2)}(\xi;u,w)=
\frac{(1-u)^2}{(w-2u)^2 (w-u)} \cdot (w-\xi)_+ \wedge u.
\end{split}
\end{equation*}

\begin{figure}[ht]
\centering
\unitlength 0.27mm
\begin{picture}(400,200)(-45,-5)
\texture{ccccccc 0000}
\shade\path(0,85)(0,90)(30,90)(30,85)(0,85)
\shade\path(300,80)(300,85)(285,85)(285,80)(300,80)

\put(-7,70){\makebox(0,0){\small $(-1,-\eps)$}}
\put(-7,165){\makebox(0,0){\small $(-1,\eps)$}}

\put(30,22){\makebox(0,0){\small $q$}}
\put(60,-4){\makebox(0,0){\small $2q$}}
\put(50,187){\makebox(0,0){\small $q_k$}}
\put(80,172){\makebox(0,0){\small $q_{k+1}$}}
\put(375,10){\makebox(0,0){\small $2q+4q_{k+1}$}}
\put(300,0){\makebox(0,0){\small $2q+3q_{k+1}$}}
\put(220,-4){\makebox(0,0){\small $2q+2q_{k+1}$}}
\put(140,-4){\makebox(0,0){\small $q_{k+3}=2q+q_{k+1}$}}
\put(110,122){\makebox(0,0){\small $q_{k+2}$}}
\put(160,184){\makebox(0,0){\small $2q_{k+1}$}}
\put(240,189){\makebox(0,0){\small $3q_{k+1}$}}
\put(320,189){\makebox(0,0){\small $4q_{k+1}$}}
\put(190,134){\makebox(0,0){\small $q+2q_{k+1}$}}
\put(270,144){\makebox(0,0){\small $q+3q_{k+1}$}}
\put(350,154){\makebox(0,0){\small $q+4q_{k+1}$}}
\put(285,103){\makebox(0,0){\small $\xi Q$}}

\put(-11,92){\makebox(0,0){\small $\AA_0$}}
\put(-10,118){\makebox(0,0){\small $\CC_k$}}
\put(-10,147){\makebox(0,0){\small $\BB_k$}}

\put(181,72){\makebox(0,0){\small $w_{\AA_0}-w_{\BB_k}$}}
\put(99,62){\makebox(0,0){\small $w_{\AA_0}-w_{\BB_k}$}}
\put(97,97){\makebox(0,0){\small $w_{\BB_k}$}}

\path(80,90)(380,90) \path(30,80)(300,80) \path(300,85)(380,85)

\dottedline{2}(30,105)(30,30) \dottedline{2}(110,40)(110,115)
\dottedline{2}(190,50)(190,125) \dottedline{2}(270,60)(270,135)
\dottedline{2}(350,70)(350,145)

\thicklines  \path(0,80)(30,80) \path(30,105)(80,105)
\path(50,140)(80,140) \path(0,155)(50,155)

\Thicklines \path(380,95)(380,20) \path(300,85)(300,10)
\path(220,75)(220,5) \path(140,65)(140,5) \path(60,55)(60,5)
\path(50,140)(50,180) \path(80,90)(80,165) \path(160,100)(160,175)
\path(240,110)(240,180) \path(320,120)(320,180) \path(0,80)(0,155)

\thinlines \path(133,55)(133,65) \path(131.5,58)(133,55)(134.5,58)
\path(131.5,62)(133,65)(134.5,62) \path(30,30)(60,30)
\path(110,40)(140,40) \path(190,50)(220,50) \path(270,60)(300,60)
\path(350,70)(380,70) \path(110,115)(160,115)
\path(190,125)(240,125) \path(270,135)(320,135)
\path(350,145)(390,145) \path(380,95)(390,95)
\path(160,100)(190,100) \path(240,110)(270,110)
\path(320,120)(350,120) \path(0,90)(80,90) \path(0,105)(30,105)
\path(0,140)(50,140) \path(0,85)(300,85) \path(60,55)(110,55)
\path(110,40)(140,40) \path(140,65)(190,65) \path(213,65)(213,75)
\path(211.5,68)(213,65)(214.5,68)
\path(211.5,72)(213,75)(214.5,72) \path(220,75)(270,75)

\path(285,70)(285,95) \path(-25,80)(-25,90)
\path(-23.5,83)(-25,80)(-26.5,83)
\path(-23.5,87)(-25,90)(-26.5,87)

\path(85,90)(85,105) \path(83.5,93)(85,90)(86.5,93)
\path(83.5,102)(85,105)(86.5,102)

\put(-60,85){\makebox(0,0){\small $w_{\AA_0}-w_{\BB_k}$}}

\end{picture}
\caption{The case $t \in I_{\gamma,k}$, $r=1$, $r_k=-1$,
$r_{k+1}=0$, $\CC_{\leftarrow}$, $w_{\AA_0}>w_{\BB_k}$, $n_0=3$}
\label{Figure17}
\end{figure}

The remaining part $\tilde{\AA}_0$ of $\AA_0$ (of total width
$w_{\AA_0}-w_{\BB_k}$) is locked by the slits $2q+n_0 q_{k+1}$ and
$2q+(n_0+1)q_{k+1}$, with $n_0$ uniquely determined by $2\eps >
2(w_{\BB_k}+w_{\CC_k})-n_0 (w_{\AA_0}-w_{\BB_k}) \geqslant
2w_{\BB_k}+w_{\CC_k}$, or equivalently $n_0=\Big\lfloor
\frac{w_{\CC_k}}{w_{\AA_0}-w_{\BB_k}} \Big\rfloor=\left\lfloor
\frac{a_{k-1}-2\eps-q_{k-1}t}{a_{k+1}-q_{k+1}t} \right\rfloor
\geqslant 0$. The widths of the relevant sub-channels (from bottom
to top) are $2\eps -2(w_{\BB_k}+w_{\CC_k}) +n_0
(w_{\AA_0}-w_{\BB_k}) =(n_0+1) a_{k+1} -a_{k-1} +2\eps -\big(
(n_0+1)q_{k+1} - q_{k-1} \big) t$ and $w_{\CC_k} -n_0
(w_{\AA_0}-w_{\BB_k}) =(n_0 q_{k+1} -q_{k-1}) t -n_0 a_{k+1}
+a_{k-1} -2\eps$. The following two subcases arise:

(I) $\ n_0=0$ $(\Longleftrightarrow t <\frac{a+\eps}{q})$.
From $t_k <\frac{a+\eps}{q}$ we infer $q_{k+2}>2Q$. In this
situation we have
\begin{equation*}
W_{\gamma,k,0}(t) = 2(a+\eps-qt)\cdot (2q-\xi Q)_+ \wedge q +
w_{\CC_k}(t) \cdot (2q+q_{k+1}-\xi Q)_+ \wedge q,
\end{equation*}
with contribution ($q_{k+1} >2Q  \Longleftrightarrow
t_{k-1}<\gamma_{k+1}$)
\begin{equation*}
\G_{I,Q}^{(4.3.3)}(\xi) =\sum_{k=0}^\infty
\sideset{}{^*}\sum_{\substack{q_{k+1}\in \II_{q,k+1} \\ 2Q-q <
q_{k+1} \leqslant 2Q}} \int_{t_k}^{\frac{a+\eps}{q}}
\frac{W_{\gamma,k,0}(t)\, dt}{t^2+t+1} + \sum_{k=1}^\infty
\sideset{}{^*}\sum_{\substack{q_{k+1} \in \II_{q,k+1} \\ q_{k+1}
> 2Q}} \int_{t_k}^{t_{k-1}} \frac{W_{\gamma,k,0}(t)\, dt}{t^2+t+1} .
\end{equation*}
Employing \eqref{A1.14}, \eqref{A1.15}, \eqref{A1.16}, \eqref{5.7},
\eqref{A1.1} we find
\begin{equation}\label{9.12}
\G_{I,Q}^{(4.3.3)}(\xi) \approxeq \frac{c_I}{8\zeta(2)}
\int_0^1 du \left(\ \int_{2-u}^2 dw\ F^{(4.3.3.1)}(\xi;u,w) +
\int_2^\infty dw\ F^{(4.3.3.2)}(\xi;u,w) \right),
\end{equation}
where
\begin{equation*}
\begin{split}
F^{(4.3.3.1)} (\xi;u,w) = & \frac{w+u-2}{2u(w-u)} \left(
\frac{1-u}{w-u}+\frac{2-w}{2u}\right) \cdot  (w+2u-\xi)_+
\wedge u \\ & +\frac{(w+u-2)^2}{2u(w-u)^2}
\cdot (2u-\xi)_+ \wedge u , \\
F^{(4.3.3.2)}(\xi;u,w) = & \frac{(1-u)^2}{(w-2u)(w-u)^2} \cdot (w+2u-\xi)_+ \wedge u \\
& +\frac{1-u}{(w-2u)(w-u)} \left( \frac{w-2}{w-2u} +
\frac{w+u-2}{w-u} \right) \cdot (2u-\xi)_+ \wedge u .
\end{split}
\end{equation*}

(II) $\ n_0 \geqslant 1 \ (\Longleftrightarrow t\geqslant
\frac{a+\eps}{q})$. From $\frac{a+\eps}{q}\leqslant \gamma_{k+1}$
we infer $q_{k+1}\leqslant 2Q$. Taking $\lambda_{k,n}$ as in
\eqref{9.3} we obtain, when $t \in [ \lambda_{k,n_0} ,
\lambda_{k,n_0+1})$,
\begin{equation*}
\begin{split}
W_{\gamma,k,n}(t) = & \big( (n_0+1)a_{k+1}-a_{k-1}+2\eps -\big(
(n_0+1)q_{k+1} -q_{k-1} ) t \big) \cdot (2q+n_0 q_{k+1} -\xi Q
)_+ \wedge q \\ & +\big( (n_0 q_{k+1}-q_{k-1}) t - n_0 a_{k+1} +
a_{k-1} -2\eps\big) \cdot \big( 2q+(n_0+1) q_{k+1}-\xi Q
\big)_+ \wedge q .
\end{split}
\end{equation*}
Ordering $\lambda_{k,n_0}$, $\lambda_{k,n_0+1}$ and $t_k$, the
corresponding contribution takes the form
\begin{equation*}
\G_{I,Q}^{(4.3.4)}(\xi) = \sum_{n=1}^\infty \sum_{k=0}^\infty
\left(\ \sideset{}{^*}\sum_{\substack{q_{k+1}\in \II_{q,k+1}
\\ Q+\frac{Q-q}{n+1} < q_{k+1} \leqslant Q+\frac{Q-q}{n}}}
\hspace{-10pt} \int_{t_k}^{\lambda_{k,n+1}}
\frac{W_{\gamma,k,n}(t)\, dt}{t^2+t+1} +
\sideset{}{^*}\sum_{\substack{q_{k+1}\in \II_{q,k+1} \\
Q+\frac{Q-q}{n} < q_{k+1} \leqslant 2Q}} \hspace{-5pt}
\int_{\lambda_{k,n}}^{\lambda_{k,n+1}} \frac{W_{\gamma,k,n}(t)\,
dt}{t^2+t+1}\right) .
\end{equation*}
Employing \eqref{A4.8}-\eqref{A4.11} we find
\begin{equation}\label{9.13}
\begin{split}
\G_{I,Q}^{(4.3.4)}(\xi) \approxeq & \frac{c_I}{8\zeta(2)} \sum_{n=1}^\infty
\int_0^1 du  \int_{1+\frac{1-u}{n+1}}^{1+\frac{1-u}{n}}  dw\,
F_n^{(4.3.4.1)}(\xi;u,w) \\ & + \frac{c_I}{8\zeta(2)} \sum_{n=1}^\infty \int_0^1 du \int_{1+\frac{1-u}{n}}^2 dw\,
F_n^{(4.3.4.2)}(\xi;u,w)  ,
\end{split}
\end{equation}
where
\begin{equation*}
\begin{split}
& F_n^{(4.3.4.1)}(\xi;u,w) = \frac{\big( (n+1) (w-1) + u - 1
\big)^2}{(w-u)^2 (nw+2u)}  \cdot
(nw+2u-\xi)_+ \wedge u \\
& \quad + \frac{(n+1)(w-1)+u-1}{(w-u) (nw+2u)} \left(
\frac{2-w}{nw+2u} +\frac{1-u-n(w-1)}{w-u} \right) \cdot
\big( (n+1)w+2u-\xi\big)_+ \wedge u ,    \\
& F_n^{(4.3.4.2)}(\xi;u,w) = \frac{(2-w)^2}{(nw+2u)\big(
(n-1)w+2u\big)^2} \cdot (nw+2u-\xi)_+ \wedge u \\ &
\quad + \frac{(2-w)^2}{(nw+2u)^2 \big( (n-1)w+2u\big)}
\cdot \big( (n+1)w+2u-\xi\big)_+ \wedge u .
\end{split}
\end{equation*}

\subsection{The channel $\CC_\leftarrow$ when $r=-1$ and
$r_k=1$}\label{Subsec9.3}

The $(\CC_\leftarrow,r=-1,r_k=1)$ and
$(\CC_\downarrow,r=1,r_k=-1)$ contributions have the same main and
error terms, so we shall consider below the former situation and
double the result. This is analogous to Section \ref{Subsec9.2},
only that this time the slits $q_k+n q_{k+1}$ are removed, while
$2q+n q_{k+1}$ are not. Two cases arise:

\subsubsection{$w_{\BB_k}\leqslant w_{\AA_0}\ (\Longleftrightarrow t \leqslant
\gamma_{k+1})$}\label{Subsub9.3.1}

The slit $q_{k+1}$ locks the channel $\BB_k$ and $W_{\gamma,k} (t) =
w_{\BB_k}(t)\cdot (q_{k+1}-\xi Q)_+ \wedge q_k$. The
contribution
\begin{equation*}
\begin{split}
\G_{I,Q}^{(4.4.1)}(\xi) & = \sum_{k=0}^\infty\
\sideset{}{^*}\sum_{\substack{q_{k+1}\in\II_{q,k+1} \\ q_{k+1}
\leqslant 2Q}} (q_{k+1}-\xi Q)_+ \wedge q_k
\int_{t_k}^{\gamma_{k+1}} \frac{w_{\BB_k}(t)\, dt}{t^2+t+1}
\\ & \quad  + \sum_{k=1}^\infty\ \sideset{}{^*}\sum_{\substack{q_{k+1} \in
\II_{q,k+1} \\ q_{k+1} > 2Q}} (q_{k+1}-\xi Q)_+ \wedge q_k
\int_{t_k}^{t_{k-1}} \frac{w_{\BB_k}(t)\, dt}{t^2+t+1}
\end{split}
\end{equation*}
is estimated upon \eqref{A1.8}, \eqref{5.3}, \eqref{5.6} as
\begin{equation}\label{9.14}
\G_{I,Q}^{(4.4.1)}(\xi) \approxeq \frac{c_I}{8\zeta(2)}
\int_0^1 du \left( \int_1^2 dw\, F^{(4.4.1.1)}(\xi;u,w) +
\int_2^\infty dw\, F^{(4.4.1.2)}(\xi;u,w) \right),
\end{equation}
where
\begin{equation*}
\begin{split}
& F^{(4.4.1.1)}(\xi;u,w)=\frac{(w-1)^2}{(w-u) w^2} \cdot
(w-\xi)_+ \wedge (w-u) , \\ &
F^{(4.4.1.2)}(\xi;u,w)=\frac{(1-u)^2}{(w-2u)^2 (w-u)} \cdot
(w-\xi)_+ \wedge (w-u) .
\end{split}
\end{equation*}

\begin{figure}[ht]
\centering
\unitlength 0.33mm
\begin{picture}(370,205)(-20,0)

\texture{ccccccc 0000}
\shade\path(0,135)(0,145)(70,145)(70,135)(0,135)
\shade\path(240,155)(240,145)(220,145)(220,155)(240,155)

\put(-22,71){\makebox(0,0){\small $(-1,-\eps)$}}
\put(-19,158){\makebox(0,0){\small $(-1,\eps)$}}

\put(30,2){\makebox(0,0){\small $q$}}
\put(70,197){\makebox(0,0){\small $q_k$}}
\put(100,48){\makebox(0,0){\small $q_{k+1}$}}
\put(140,197){\makebox(0,0){\small $2q_{k}$}}
\put(170,187){\makebox(0,0){\small $q_k+q_{k+1}$}}
\put(200,27){\makebox(0,0){\small $2q_{k+1}$}}
\put(240,197){\makebox(0,0){\small $2q_k+q_{k+1}$}}
\put(340,117){\makebox(0,0){\small $2q_k+2q_{k+1}$}}
\put(220,62){\makebox(0,0){\small $\xi Q$}}

\put(-10,82){\makebox(0,0){\small $\AA_0$}}
\put(-10,105){\makebox(0,0){\small $\CC_k$}}
\put(-10,137){\makebox(0,0){\small $\BB_k$}}

\put(274,151){\makebox(0,0){\small $w_{\BB_k}-w_{\AA_0}$}}
\put(138,140){\makebox(0,0){\small $w_{\BB_k}-w_{\AA_0}$}}
\put(122,127){\makebox(0,0){\small $w_{\AA_0}$}}

\path(100,135)(340,135) \path(240,145)(340,145)
\path(70,155)(240,155)

\dottedline{2}(70,120)(70,190) \dottedline{2}(170,100)(170,180)
\dottedline{2}(270,80)(270,160) \dottedline{2}(220,160)(220,70)

\thicklines \path(0,155)(70,155) \path(70,120)(100,120)
\path(100,90)(30,90) \path(0,75)(30,75)

\Thicklines \path(0,75)(0,155) \path(30,10)(30,90)
\path(100,55)(100,135) \path(140,165)(140,190)
\path(200,35)(200,115) \path(240,145)(240,190)
\path(340,125)(340,190)

\thinlines \path(0,120)(100,120) \path(0,135)(100,135)
\path(0,145)(240,145) \path(0,90)(30,90) \path(140,165)(170,165)
\path(270,160)(340,160) \path(340,190)(300,190)(300,200)
\path(170,180)(240,180) \path(170,100)(200,100)
\path(200,115)(270,115)

\path(247,145)(247,165) \path(245,148)(247,145)(249,148)
\path(245,162)(247,165)(249,162)

\path(110,135)(110,155)\path(108,138)(110,135)(112,138)
\path(108,152)(110,155)(112,152)

\path(110,135)(110,120)\path(108.5,132)(110,135)(111.5,132)
\path(108.5,123)(110,120)(111.5,123)
\end{picture}
\caption{The case $t\in I_{\gamma,k}$, $r=-1$, $r_k=1$,
$r_{k+1}=0$, $\CC_{\leftarrow}$, $w_{\AA_0}<w_{\BB_k}$, $n_0=1$}
\label{Figure18}
\end{figure}

\subsubsection{$w_{\AA_0} <w_{\BB_k}\ (\Longleftrightarrow t >
\gamma_{k+1})$}\label{Subsub9.3.2}

Then $k\geqslant 1$ and $q_{k+1} \leqslant 2Q$. Consider first the
sub-channel of $\BB_k$ of width $w_{\AA_0}$ locked by $q_{k+1}$,
with contribution
\begin{equation*}
\G_{I,Q}^{(4.4.2)}(\xi) =\sum_{k=1}^\infty \
\sideset{}{^*}\sum_{\substack{q_{k+1}\in\II_{q,k+1} \\ q_{k+1}
\leqslant 2Q}} (q_{k+1}-\xi Q)_+ \wedge q_k
\int_{\gamma_{k+1}}^{t_{k-1}} \frac{w_{\AA_0}(t)\, dt}{t^2+t+1}
\end{equation*}
estimated upon \eqref{A2.1} as
\begin{equation}\label{9.15}
\approxeq \frac{c_I}{8\zeta(2)}
\int_0^1 du \int_{1+u}^2 dw\, \frac{2-w}{(w-2u)w}  \left(
\frac{w-1}{w}+\frac{w-u-1}{w-2u} \right)\cdot (w-\xi)_+
\wedge (w-u).
\end{equation}

The remaining sub-channel $\tilde{\BB}_k$ of $\BB_k$ (of
width $w_{\BB_k}-w_{\AA_0}$) is locked by the slits $2q_k+n_0
q_{k+1}$ and $2q_k +(n_0+1)q_{k+1}$, with $n_0$ uniquely
determined by $0\leqslant B(2q_k) +n_0(w_{\BB_k} -w_{\AA_0})
<w_{\BB_k}-w_{\AA_0} =T(q_{k+1})$, or equivalently
$n_0:=\Big\lfloor \frac{w_{\CC_k}}{w_{\BB_k}-w_{\AA_0}}\Big\rfloor
= \Big\lfloor \frac{a_{k-1}-2\eps -q_{k-1}t}{q_{k+1}t -a_{k+1}}
\Big\rfloor \geqslant 0$. The situation is described in Figure
\ref{Figure18}. The widths of the relevant two sub-channels of
$\tilde{\BB}_k$ are (from top to bottom) $B(2q_k+n_0q_{k+1})=\big(
(n_0+1)q_{k+1}+q_{k-1}\big)t -(n_0+1) a_{k+1} -a_{k-1} +2\eps$ and
$w_{\BB_k}-w_{\AA_0}-B(2q_k+n_0 q_{k+1})=n_0 a_{k+1}
+a_{k-1} -2\eps -(n_0 q_{k+1}+q_{k-1})t$. Taking this time
$\lambda_{k,n}$ as in \eqref{9.8} we find, for $t\in
(\lambda_{k,n_0+1},\lambda_{k,n_0}]$,
\begin{equation*}
\begin{split}
W_{\gamma,k,n_0}(t) = & \Big(\big( (n_0+1)q_{k+1}+q_{k-1}\big) t
-\big( (n_0+1)a_{k+1} + a_{k-1} - 2\eps\big) \Big) \cdot
(2q_k+n_0q_{k+1}-\xi Q)_+ \wedge q_k \\ & + \Big( n_0
a_{k+1}+a_{k-1}-2\eps -(n_0 q_{k+1}+q_{k-1}) t \Big) \cdot \big(
2q_k +(n_0+1)q_{k+1}-\xi Q\big)_+ \wedge q_k .
\end{split}
\end{equation*}
Since $\lambda_{k,0}=t_{k-1}$ the corresponding contribution is given by
\begin{equation*}
\G_{I,Q}^{(4.4.3)}(\xi) =\sum_{n=0}^\infty \sum_{k=1}^\infty \
\sideset{}{^*}\sum_{\substack{q_{k+1}\in \II_{q,k+1} \\ q_{k+1}
\leqslant 2Q}} \int_{\lambda_{k,n+1}}^{\lambda_{k,n}}
\frac{W_{\gamma,k,n}(t)\, dt}{t^2+t+1} .
\end{equation*}
Employing \eqref{A4.14} and \eqref{A4.15} we find
\begin{equation}\label{9.16}
\G^{(4.4.3)}_{I,Q}(\xi) \approxeq \frac{c_I}{8\zeta(2)}
\int_0^1 du \ \int_{1+u}^2 dw\ \sum_{n=0}^\infty F^{(4.4.3)}_n
(\xi;u,w) ,
\end{equation}
where
\begin{equation*}
\begin{split}
F_n^{(4.4.3)}(\xi;u,w)= & \frac{(2-w)^2}{\big(
(n+1)w-2u\big)^2 \big( (n+2)w-2u \big)} \cdot \big(
(n+2) w-2u-\xi \big)_+ \wedge (w-u) \\
& +\frac{(2-w)^2}{\big( (n+1)w-2u\big) \big( (n+2)w-2u\big)^2}
\cdot \big( (n+3)w-2u-\xi\big)_+ \wedge (w-u) .
\end{split}
\end{equation*}

\begin{figure}[ht]
\centering{\includegraphics*[scale=.7, bb=0 0 240 160]{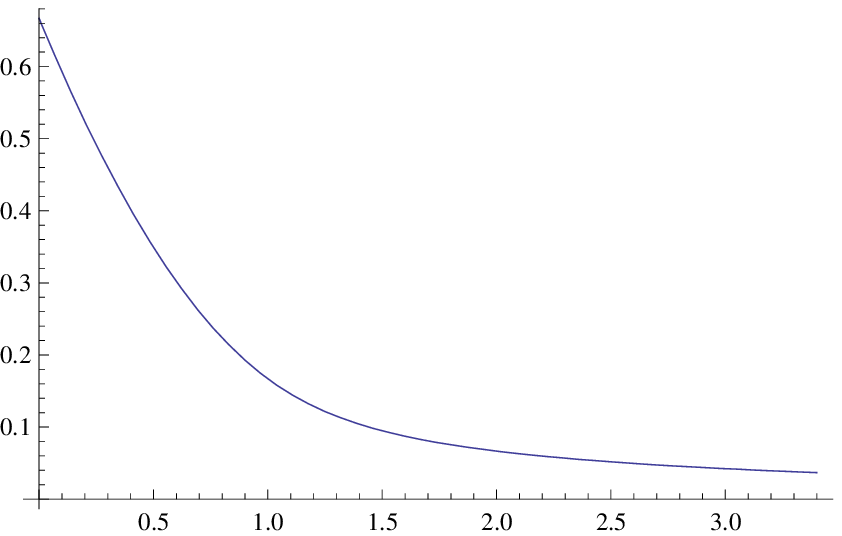}}
\centering{\includegraphics*[scale=.7, bb=0 0 240 160]{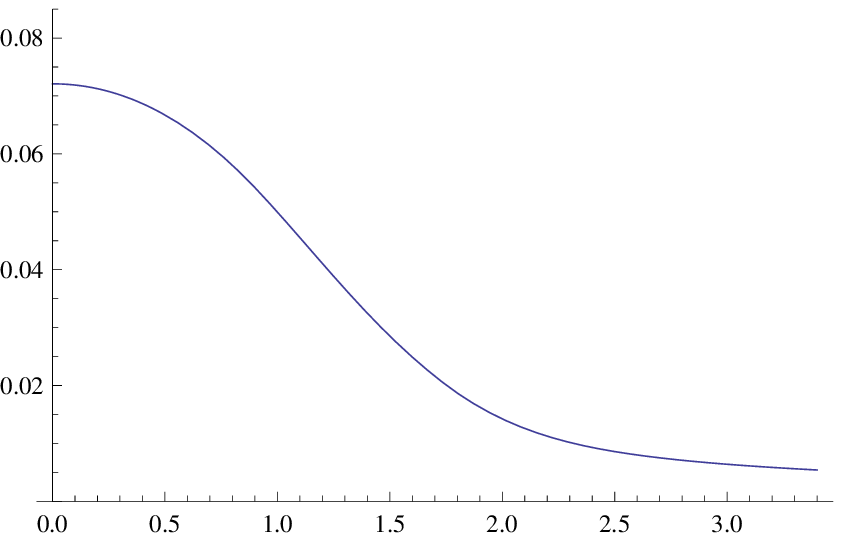}}
\centering{\includegraphics*[scale=.7, bb=0 0 240 160]{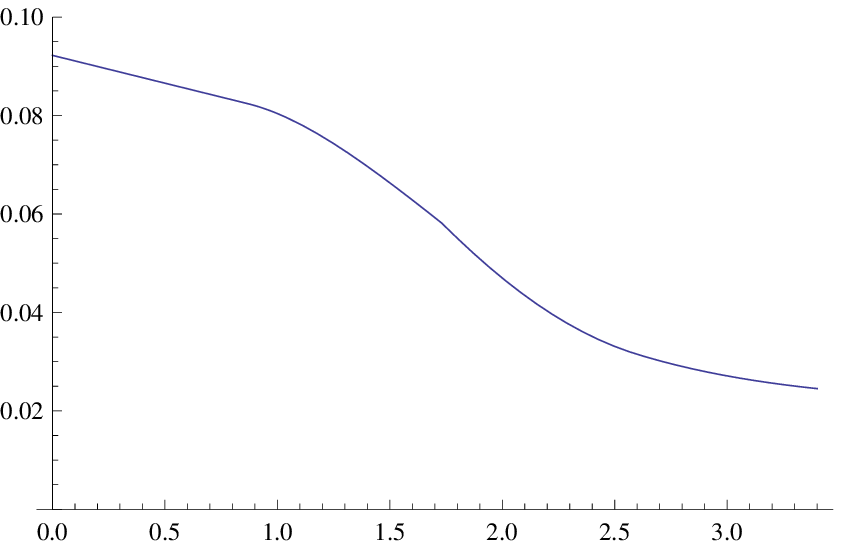}}
\centering{\includegraphics*[scale=.7, bb=0 0 240 160]{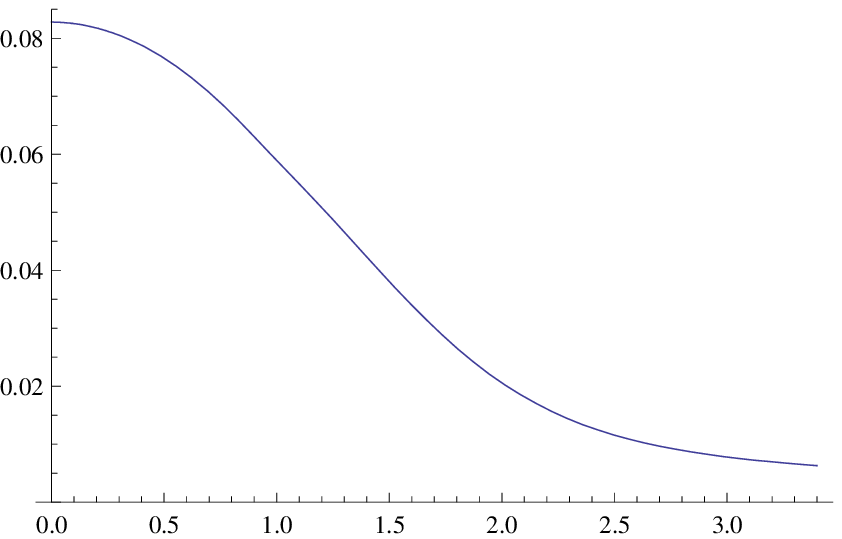}}
\centering{\includegraphics*[scale=.7, bb=0 0 240 160]{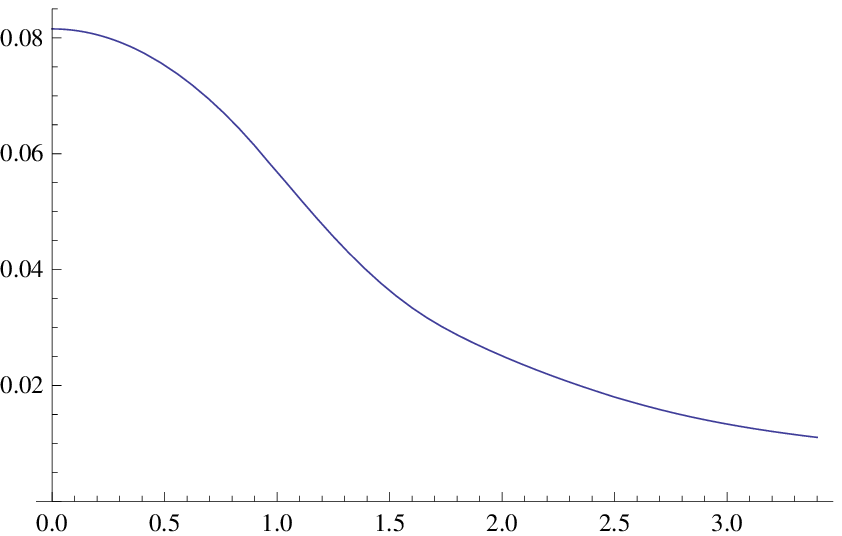}}
\caption{The individual contributions of $\G^{(0)},\ldots,\G^{(4)}$ to $\Phi^\hex$ }
\label{Figure19}
\end{figure}

\newpage

\section{Appendix 1}
\subsection{}\label{SubA1.1}
For $k\geqslant 1$:
\begin{equation}\label{A1.1}
\begin{split}
\int_{t_k}^{t_{k-1}} & \frac{2(a+\eps-qt)\, dt}{t^2+t+1}
=-2\left( \int_\gamma^{t_{k-1}} \frac{qt-a}{t^2+t+1}\ dt
-\int_\gamma^{t_k} \frac{qt-a}{t^2+t+1}\, dt \right) \\ & \hspace{3cm} +
\int_{t_k}^{t_{k-1}} \frac{2\eps}{t^2+t+1}\, dt
\\ & =\frac{Q-q}{2Q^2 q_{k-1} q_k (\gamma^2+\gamma+1)} \left(
\frac{q_{k+1}-2Q}{q_{k-1}} +\frac{q_{k+2}-2Q}{q_k}\right) + O
\left( \frac{1}{Qq^2 q_{k-1}^2} \right) .
\end{split}
\end{equation}

\subsection{}\label{SubA1.2} For $k\geqslant 1$ and $q_{k+1}\leqslant 2Q$:
\begin{equation}\label{A1.2}
\int_{\frac{a_k-\eps}{q_k}}^{t_{k-1}} \frac{w_{\CC_k}(t)\,
dt}{t^2+t+1} = \frac{(2Q-q_{k+1})^2}{8Q^2 q_{k-1}q_k^2
(\gamma^2+\gamma +1)} +O\left( \frac{1}{Qq q_k^3}\right) .
\end{equation}
\begin{equation}\label{A1.3}
\int_{\frac{a_k-\eps}{q_k}}^{t_{k-1}} \frac{2 (q_k t -a_k+\eps)\,
dt}{t^2+t+1} = \frac{(2Q-q_{k+1})^2}{4Q^2 q_{k-1}^2 q_k
(\gamma^2+\gamma+1)} +O\left( \frac{1}{Q^2 q q_k^2}\right).
\end{equation}
\begin{equation}\label{A1.4}
\begin{split}
\int_{\frac{a_k-\eps}{q_k}}^{t_{k-1}} & \frac{w_{\AA_0} (t)\,
dt}{t^2+t+1} = \int_{\frac{a_k-\eps}{q_k}}^{u_0} \frac{a+2\eps
-qt}{t^2+t+1}\, dt -\int_{t_{k-1}}^{u_0}
\frac{a+2\eps -qt}{t^2+t+1}\, dt \\
& =\frac{2Q-q_{k+1}}{4Q^2 q_{k-1}q_k (\gamma^2+\gamma +1)} \left(
\frac{q_k+q_{k+1}-2Q}{2q_k}+\frac{q_k-Q}{q_{k-1}}\right)+O\left(
\frac{1}{q^2 q_{k-1} q_k^2}\right) .
\end{split}
\end{equation}
\begin{equation}\label{A1.5}
\begin{split}
\int_{\gamma_{k+1}}^{\frac{a_k-\eps}{q_k}} & \frac{w_{\AA_0}(t)\,
dt}{t^2+t+1} =\int_{\gamma_{k+1}}^{u_0}
\frac{a+2\eps-qt}{t^2+t+1}\, dt -\int_{\frac{a_k-\eps}{q_k}}^{u_0}
\frac{a+2\eps-qt}{t^2+t+1}\, dt \\ & =\frac{2Q-q_{k+1}}{4Q^2 q_k
q_{k+1} (\gamma^2+\gamma+1)} \left( \frac{q_{k+1}-Q}{q_{k+1}}
+\frac{q_k+q_{k+1}-2Q}{2q_k}\right) +O \left( \frac{1}{q^2 q_k
q_{k+1}^2}\right) .
\end{split}
\end{equation}
\begin{equation}\label{A1.6}
\int_{\gamma_{k+1}}^{\frac{a_k-\eps}{q_k}} \frac{q_{k+1}t
-a_{k+1}}{t^2+t+1}\, dt =\frac{(2Q-q_{k+1})^2}{8Q^2 q_k^2 q_{k+1}
(\gamma^2+\gamma+1)} +O\left( \frac{1}{qq_k^2 q_{k+1}^2}\right).
\end{equation}
\begin{equation}\label{A1.7}
\int_{\gamma_{k+1}}^{\frac{a_k-\eps}{q_k}} \frac{2(a_k -\eps -q_k
t)\, dt}{t^2+t+1} =\frac{(2Q-q_{k+1})^2}{4Q^2 q_k q_{k+1}^2
(\gamma^2+\gamma +1)} +O\left( \frac{1}{qq_k^2 q_{k+1}^2}\right) .
\end{equation}
\begin{equation}\label{A1.8}
\int_{t_k}^{\gamma_{k+1}} \frac{w_{\BB_k}(t)\, dt}{t^2+t+1}  =
\frac{(q_{k+1}-Q)^2}{2Q^2 q_k q_{k+1}^2 (\gamma^2+\gamma +1)}
+O\left(\frac{1}{qq_k^2 q_{k+1}^2}\right) .
\end{equation}
\begin{equation}\label{A1.9}
\begin{split}
\int_{t_k}^{\gamma_{k+1}} \frac{w_{\AA_0} (t)-w_{\BB_k}
(t)}{t^2+t+1} \, dt & =\int_{t_k}^{\gamma_{k+1}}
\frac{a_{k+1}-q_{k+1}t}{t^2+t+1}\ dt \\ & =\frac{(q_{k+1}-Q)^2}{2Q^2
q_k^2 q_{k+1}(\gamma^2+\gamma +1)} +O\left( \frac{1}{qq_k^2
q_{k+1}^2}\right) .
\end{split}
\end{equation}

\subsection{}\label{SubA1.3}
For $k\geqslant 0$ and $q_{k+1}\leqslant 2Q-q$:
\begin{equation}\label{A1.10}
\begin{split}
\int_{t_k}^{\gamma_{k+1}} & \frac{2(qt -a-\eps)}{t^2+t+1}\ dt =
\int_{\frac{a+\eps}{q}}^{\gamma_{k+1}} \frac{2(qt
-a-\eps)}{t^2+t+1} \ dt -\int_{\frac{a+\eps}{q}}^{t_k} \frac{2(qt
-a-\eps)}{t^2+t+1} \ dt
\\ & =\frac{q_{k+1}-Q}{2Q^2 q_k q_{k+1} (\gamma^2+\gamma+1)}
\left( \frac{2Q-q_{k+1}}{q_{k+1}} +\frac{2Q-q_{k+2}}{q_k}\right)
+O\left( \frac{1}{q^3 q_{k+1}^2}\right) .
\end{split}
\end{equation}

\subsection{}\label{SubA1.4}
For  $k\geqslant 0$ and $2Q-q<q_{k+1}\leqslant 2Q$ it follows that
 $0\leqslant q_{k+2}-2Q\leqslant q$, $k=\Big\lfloor
\frac{2Q-q-q^\prime}{q}\Big\rfloor$, and
\begin{equation}\label{A1.11}
\begin{split}
\int_{t_k}^{\frac{a+\eps}{q}} \frac{w_{\AA_0}(t)\, dt}{t^2+t+1} & =
\int_{t_k}^{u_0} \frac{a+2\eps -qt}{t^2+t+1}\, dt -
\int_{\frac{a+\eps}{q}}^{u_0} \frac{a+2\eps -qt}{t^2+t+1}\, dt
\\ & =\frac{(q_{k+2}-2Q)(q_k+2q_{k+1}-2Q)}{8Q^2 qq_k^2
(\gamma^2 +\gamma+1)} +O \left( \frac{1}{q^2 q^\prime q_k^2}
\right) .
\end{split}
\end{equation}
\begin{equation}\label{A1.12}
\begin{split}
\int_{t_k}^{\frac{a+\eps}{q}} \frac{qt -a}{t^2+t+1}\, dt &
=\int_{\gamma}^{\frac{a+\eps}{q}} \frac{qt -a}{t^2+t+1}\, dt -
\int_{\gamma}^{t_k} \frac{qt -a}{t^2+t+1}\, dt \\ &
=\frac{(q_{k+2}-2Q)(q_{k-2}+2Q)}{8Q^2 qq_k^2 (\gamma^2+\gamma+1)}
+O\left( \frac{1}{Q^3 q^2} \right) .
\end{split}
\end{equation}
\begin{equation}\label{A1.13}
\int_{t_k}^{\frac{a+\eps}{q}} \frac{w_{\BB_k}(t)\, dt}{t^2+t+1}
=\frac{(q_{k+2}-2Q)^2}{8Q^2 q^2 q_k (\gamma^2+\gamma+1)}+O\left(
\frac{1}{Q^2 q q_k^2}\right) .
\end{equation}
\begin{equation}\label{A1.14}
\int_{t_k}^{\frac{a+\eps}{q}} \frac{2(a+\eps-qt)\, dt}{t^2+t+1}
=\frac{(q_{k+2}-2Q)^2}{4Q^2 qq_k^2 (\gamma^2+\gamma+1)} +O\left(
\frac{1}{Qqq^\prime q_k^2}\right) .
\end{equation}
\begin{equation}\label{A1.15}
\begin{split}
\int_{t_k}^{\frac{a+\eps}{q}} & \frac{w_{\CC_k}(t)\, dt}{t^2+t+1}
=\int_{t_k}^{t_{k-1}} \frac{a_{k-1}-2\eps-q_{k-1}t}{t^2+t+1}\, dt -
\int_{\frac{a+\eps}{q}}^{t_{k-1}}
\frac{a_{k-1}-2\eps -q_{k-1}t}{t^2+t+1}\, dt  \\
& =\frac{q_{k+2}-2Q}{4Q^2 qq_k (\gamma^2+\gamma +1)} \left(
\frac{Q-q}{q_k} +\frac{2Q-q_{k+1}}{2q}\right) +O\left(
\frac{1}{Qq q_k^3}\right)\quad \mbox{\rm if $k\geqslant 1.$}
\end{split}
\end{equation}
\begin{equation}\label{A1.16}
\begin{split}
\int_{t_0}^{\frac{a+\eps}{q}} \frac{w_{\CC_0}(t)}{t^2+t+1}\, dt & =
\int_{t_0}^{\gamma^\prime} \frac{a^\prime-q^\prime t}{t^2+t+1}\, dt -
\int_{\frac{a+\eps}{q}}^{\gamma^\prime} \frac{a^\prime -q^\prime t}{t^2+t+1}\, dt
\\ & \qquad - \left( \int_{t_0}^{u_0} \frac{a+2\eps-qt}{t^2+t+1}\, dt -
\int_{\frac{a+\eps}{q}}^{u_0} \frac{a+2\eps-qt}{t^2+t+1}\, dt \right) \\
& =\frac{q_2-2Q}{4Q^2 qq^\prime (\gamma^2+\gamma+1)} \left(
\frac{Q-q}{q^\prime}+\frac{2Q-q_1}{2q}\right) +
O\left(  \frac{1}{Q^2 qq^{\prime 2}}\right).
\end{split}
\end{equation}
\begin{equation}\label{A1.17}
\begin{split}
\int_{\frac{a+\eps}{q}}^{\gamma_{k+1}} \frac{w_{\BB_k}(t)\,
dt}{t^2+t+1} & =\int_{t_k}^{\gamma_{k+1}} \frac{q_k t
-a_k+2\eps}{t^2+t+1}\, dt -\int_{t_k}^{
\frac{a+\eps}{q}} \frac{q_k t -a_k+2\eps}{t^2+t+1}\, dt \\
& =\frac{2Q-q_{k+1}}{4Q^2 qq_{k+1}(\gamma^2+\gamma+1)} \left(
\frac{q_{k+1}-Q}{q_{k+1}}+\frac{q_{k+2}-2Q}{2q}\right)+O\left(
\frac{1}{Q^2 q q_k q_{k+1}}\right).
\end{split}
\end{equation}
\begin{equation}\label{A1.18}
\int_{\frac{a+\eps}{q}}^{\gamma_{k+1}}
\frac{a_{k+1}-q_{k+1}t}{t^2+t+1} \, dt =\frac{(2Q-q_{k+1})^2}{8Q^2
q^2 q_{k+1} (\gamma^2+\gamma+1)} +O\left( \frac{1}{q^3
q_{k+1}^2}\right).
\end{equation}
\begin{equation}\label{A1.19}
\int_{\frac{a+\eps}{q}}^{\gamma_{k+1}} \frac{2(qt -a-\eps)\,
dt}{t^2+t+1} =\frac{(2Q-q_{k+1})^2}{4Q^2 qq_{k+1}^2
(\gamma^2+\gamma+1)} +O\left( \frac{1}{Qq^2 q_{k+1}^2}\right).
\end{equation}
\begin{equation}\label{A1.20}
\int_{\frac{a+\eps}{q}}^{\gamma_{k+1}}
\frac{w_{\CC_k}(t)\, dt}{t^2+t+1} =\frac{(2Q-q_{k+1})^2 q_{k+3}}{8Q^2 q^2 q_{k+1}^2
(\gamma^2+\gamma+1)} + \begin{cases} \vspace{.2cm} O\Big( \frac{1}{q^3 q_{k-1}^2}\Big)
& \mbox{\rm if $k\geqslant 1,$} \\ O\Big( \frac{1}{q^3 q^{\prime 2}}\Big)
& \mbox{\rm if $k=0.$}
\end{cases}
\end{equation}

\section{Appendix 2}

\subsection{Estimates for the $\CC_O$ contribution when $(r,r_k)=(0,1)$}
Here $W^{(1)}_{\gamma,k,n}$ and $W^{(2)}_{\gamma,k}$ are as in \eqref{7.3} and \eqref{7.4}.
\subsubsection{}\label{SubSub11.1.1}
For $k\geqslant 1$ and $q_{k+1}\leqslant 2Q$:
\begin{equation}\label{A2.1}
\begin{split}
\int_{\gamma_{k+1}}^{t_{k-1}} & \frac{w_{\AA_0}(t)\, dt}{t^2+t+1}
=\int_{\gamma_{k+1}}^{u_0} \frac{a+2\eps -qt}{t^2+t+1}\, dt -
\int_{t_{k-1}}^{u_0} \frac{a+2\eps -qt}{t^2+t+1}\, dt \\
& =\frac{2Q-q_{k+1}}{2Q^2 q_{k-1}q_{k+1}(\gamma^2+\gamma+1)}
\left( \frac{q_{k+1}-Q}{q_{k+1}} +\frac{q_k -Q}{q_{k-1}} \right)
+O\left( \frac{1}{Q q^2 q_{k+1}^2}\right) .
\end{split}
\end{equation}
Summing as in \eqref{5.2} and employing Lemma \ref{L3.4} we infer
\begin{equation*}
\begin{split}
& \G_{I,Q}^{(2.1)}(\xi) =\sum_{\beta\in\{ -1,1\}}
\sum\limits_{\substack{1\leqslant q\leqslant Q
\\ q\equiv -\beta \hspace{-6pt}\pmod{3}}} \sum_{k=1}^\infty
\sum\limits_{\substack{x=3\tilde{x}\in q(1-I),\, (\tilde{x},q)=1 \\
y=q_k \in\II_{q,k},\, y\leqslant 2Q-q  \\ \tilde{x}y\equiv
\frac{\beta q+1}{3}\hspace{-6pt}\pmod{q} }} (y+q-\xi Q)_+
\wedge q \int_{\gamma_{k+1}}^{t_{k-1}}
\frac{w_{\AA_0}(t)}{t^2+t+1}\, dt
\\ & \approxeq \frac{c_I C(3)}{3} \hspace{-3pt} \sum_{\beta\in \{ -1,1\}}
\hspace{-3pt} \frac{\varphi(q)}{q} \int_Q^{2Q-q} \hspace{-5pt} \frac{2Q-q-y}{2Q^2 (y-q)(y+q)}
\, \left( \frac{y+q-Q}{y+q}+\frac{y-Q}{y-q}\right) \cdot
(y+q-\xi Q)_+ \wedge q \, dy.
\end{split}
\end{equation*}
Estimate \eqref{7.1} follows applying Lemma \ref{L3.2} and making
the change of variable $(q,y)=(Qu,Qw)$.

\subsubsection{}\label{SubSub11.1.2}
For $k\geqslant 1$ and $q_{k+1}>2Q$:
\begin{equation}\label{A2.2}
\begin{split}
\int_{t_k}^{t_{k-1}} & \frac{w_{\AA_0}(t) -w_{\BB_k}(t)}{t^2+t+1}\,
dt = \int_{t_k}^{\gamma_{k+1}} \frac{a_{k+1}-q_{k+1}t}{t^2+t+1}\,
dt - \int_{t_{k-1}}^{\gamma_{k+1}}
\frac{a_{k+1}-q_{k+1}t}{t^2+t+1} \, dt \\
& =\frac{Q-q}{2Q^2 q_{k-1}q_k (\gamma^2+\gamma +1)} \left(
\frac{q_{k+1}-Q}{q_k} +\frac{q_{k+1}-2Q}{q_{k-1}} \right) +O\left(
\frac{1}{Q^2 qq_k q_{k+1}}\right) .
\end{split}
\end{equation}

\subsubsection{}\label{SubSub11.1.3}
When $n\geqslant 1$ and $n(Q-q)\leqslant q_k \leqslant
(n+1)(Q-q)$:
\begin{equation}\label{A2.3}
\begin{split}
\int_{t_k}^{\gamma_{k+n}} \frac{W_{\gamma,k,n}^{(1)}(t)\,
dt}{t^2+t+1} & =(q_{k+n+1}-\xi Q) \int_{t_k}^{\gamma_{k+n}}
\frac{a_{k+n}-q_{k+n}t}{t^2+t+1}\, dt \\ & =\frac{(q_{k+n}-nQ)^2 (q_{k+n+1}-\xi Q)}{2Q^2 q_k^2
q_{k+n}(\gamma^2+\gamma+1)} + O_\xi \left( \frac{1}{qq_k^2
q_{k+n}}\right) .
\end{split}
\end{equation}
Here and below $\ll_\xi$ means uniformly for $\xi$ in compact subsets of $[0,\infty)$.

\subsubsection{}\label{SubSub11.1.4}
When $n\geqslant 1$ and $(n+1)(Q-q)\leqslant q_{k}
\leqslant Q+n(Q-q)$:
\begin{equation}\label{A2.4}
\begin{split}
\int_{\gamma_{k+n+1}}^{\gamma_{k+n}}
\frac{W^{(1)}_{\gamma,k,n}(t)\, dt}{t^2+t+1} & = (q_{k+n+1}-\xi
Q) \int_{\gamma_{k+n+1}}^{\gamma_{k+n}}
\frac{a_{k+n}-q_{k+n}t}{t^2+t+1}\, dt \\ &
=\frac{q_{k+n+1}-\xi Q}{2q_{k+n} q_{k+n+1}^2
(\gamma^2+\gamma+1)} +O_\xi \left( \frac{1}{qq_{k+n}
q_{k+n+1}^2}\right).
\end{split}
\end{equation}

\subsubsection{}\label{SubSub11.1.5}
When $k\geqslant 1$ and $Q+n(Q-q) \leqslant q_k
\leqslant Q+(n+1)(Q-q)$:
{\small \begin{equation}\label{A2.5}
\begin{split}
& \int_{\gamma_{k+n+1}}^{t_{k-1}} \frac{W^{(1)}_{\gamma,k,n}(t)\,
dt}{t^2+t+1} = (q_{k+n+1}-\xi Q) \left(
\int_{\gamma_{k+n+1}}^{\gamma_{k+n}}
\frac{a_{k+n}-q_{k+n}t}{t^2+t+1}\, dt
-\int_{t_{k-1}}^{\gamma_{k+n}} \frac{a_{k+n}-q_{k+n}t}{t^2+t+1}\,
dt \right) \\
& =\frac{\big( (n+2)Q-q_{k+n+1}\big) (q_{k+n+1}-\xi Q)}{2Q^2
q_{k-1} q_{k+n+1}(\gamma^2+\gamma+1)} \left(
\frac{Q}{q_{k+n+1}}+\frac{q_{k+n}-(n+1)Q}{q_{k-1}}\right)
+O_\xi \left( \frac{1}{qq_{k-1} q_{k+n}q_{k+n+1}}\right).
\end{split}
\end{equation}}

\subsubsection{}\label{SubSub11.1.6}
When $n\geqslant 1$ and $(n+1)(Q-q)\leqslant q_{k}
\leqslant Q+(n+1)(Q-q)$:
\begin{equation*} \begin{split}
\int_{t_k}^{\gamma_{k+n+1}} \frac{dt}{t^2+t+1} & = \frac{q_k
-(n+1)(Q-q)}{Qq_k q_{k+n+1}(\gamma^2+\gamma+1)} +O\left(
\frac{1}{Qq q_{k+n+1}^2}\right) ,\\
\int_{t_k}^{\gamma_{n+k+1}} \frac{w_{\BB_k}(t)
+w_{\CC_k}(t)}{t^2+t+1}\, dt & = \int_\gamma^{\gamma_{k+n+1}}
\frac{qt -a}{t^2+t+1}\, dt -
\int_\gamma^{t_k} \frac{qt -a}{t^2+t+1}\, dt \\
& =\frac{q_k -(n+1)(Q-q)}{2Q^2 q_k q_{k+n+1} (\gamma^2+\gamma+1)}
\left( \frac{Q}{q_{n+k+1}} + \frac{Q-q}{q_k}\right)+O\left(
\frac{1}{Q^2 q_{n+k+1}^3}\right),
\end{split}
\end{equation*}
yielding
{\small \begin{equation}\label{A2.6}
\hspace{-5pt} \int_{t_k}^{\gamma_{k+n+1}}
\hspace{-2pt}\frac{W_{\gamma,k}^{(2)}(t)\, dt}{t^2+t+1}
=\frac{q_{k+n+1}-(n+1)Q}{2Qq_k q_{k+n+1} (\gamma^2+\gamma+1)}
\left( \frac{q_{k+n+1}-\xi Q}{q_{k+n+1}}+\frac{q_k -\xi
(Q-q)}{q_k}\right) +O_\xi \left( \frac{1}{Qq q_{k+n+1}^2}\right).
\end{equation}}

\subsubsection{}\label{SubSub11.1.7}
When $q_k \geqslant Q+(n+1)(Q-q)$ formulas \eqref{5.8}
and \eqref{5.9} yield (here $k\geqslant 1$)
\begin{equation}\label{A2.7}
\int_{t_k}^{t_{k-1}} \frac{W_{\gamma,k}^{(2)} (t)\, dt}{t^2+t+1}
=\frac{Q-q}{2Qq_{k-1}q_k(\gamma^2+\gamma +1)} \left(
2-\xi\bigg( \frac{Q-q}{q_{k-1}}+\frac{Q-q}{q_k}\bigg) \right)
+O_\xi \left( \frac{1}{q q_{k-1}^2 q_k}\right).
\end{equation}

\subsection{Estimates for the $\CC_\leftarrow$ contribution when $(r,r_k)=(0,1)$}
Here $\lambda_{k,n}$ is as in \eqref{7.7}.

\subsubsection{}\label{SubSub11.2.1} $(n-1)(Q-q)\leqslant q_{k}\leqslant n(Q-q)$. In this
case $n\geqslant 2$. When $k\geqslant 1$ we have
\begin{equation}\label{A2.8}
\begin{split}
& \int_{t_k}^{\lambda_{k,n}} \frac{w_{\CC_k}(t)\, dt}{t^2+t+1} =
\int_{t_k}^{t_{k-1}} \frac{a_{k-1}-2\eps-q_{k-1}t}{t^2+t+1}\, dt
-\int_{\lambda_{k,n}}^{t_{k-1}} \frac{a_{k-1}-2\eps
- q_{k-1}t}{t^2+t+1}\, dt \\
& =\frac{q_{k}- (n-1)(Q-q)}{2Q^2 q_k (q_{k}+q_{k+n-1})
(\gamma^2+\gamma+1)} \left( \frac{Q-q}{q_k} +
\frac{(n+1)Q-q_{k+n}}{q_{k}+q_{k+n-1}} \right)
+O\left(\frac{1}{qq_{k-1}^2 q_k^2}\right).
\end{split}
\end{equation}
\begin{equation}\label{A2.9}
\begin{split}
\int_{t_k}^{\lambda_{k,n}} \frac{w_{\BB_k} (t)\, dt}{t^2+t+1} & =
\int_{t_k}^{\lambda_{k,n}} \frac{q_k t -a_k+2\eps}{t^2+t+1}\, dt
\\ & =\frac{\big( q_{k}-(n-1)(Q-q)\big)^2}{2Q^2 q_k
(q_{k}+q_{k+n-1})^2 (\gamma^2+\gamma+1)} +O\left( \frac{1}{q q_k^2
q_{k+n}^2}\right).
\end{split}
\end{equation}
\begin{equation}\label{A2.10}
\begin{split}
& \int_{t_k}^{\lambda_{k,n}} \frac{q_{k+n} t -a_{k+n}}{t^2+t+1}\,
dt =\int_{\gamma_{k+n}}^{\lambda_{k,n}} \frac{q_{k+n}t
-a_{k+n}}{t^2+t+1}\, dt -\int_{\gamma_{k+n}}^{t_k}
\frac{q_{k+n}t -a_{k+n}}{t^2+t+1}\, dt \\
&  =\frac{q_{k}-(n-1)(Q-q)}{2Q^2 q_k
(q_{k}+q_{k+n-1})(\gamma^2+\gamma+1)} \left(
\frac{(n+1)Q-q_{k+n}}{q_{k} + q_{k+n-1}} +
\frac{n(Q-q)-q_{k}}{q_k} \right) +O \left( \frac{1}{qq_k^3
q_{k+n}} \right) .
\end{split}
\end{equation}
When $k=0$ formulas \eqref{A2.9} and \eqref{A2.10} still hold. The
main term in \eqref{A2.8} remains the same but we need a more
careful estimate to control the error term, getting
\begin{equation}\label{A2.11}
\begin{split}
\int_{t_0}^{\lambda_{0,n}} & \frac{w_{\CC_0}(t)\, dt}{t^2+t+1} =
\int_{\gamma}^{\lambda_{0,n}} \frac{qt -a}{t^2+t+1}\, dt -
\int_{\gamma}^{t_0} \frac{qt -a}{t^2+t+1}\, dt
-\int_{t_0}^{\lambda_{0,n}} \frac{q^\prime t -a^\prime+2\eps}{t^2+t+1}\, dt \\
& =\frac{q_{n-1}-(n-1)Q}{2Q^2 q^\prime (q^\prime
+q_{n-1})(\gamma^2+\gamma+1)} \left( \frac{(n+1)Q-q_n}{q^\prime
+q_{n-1}} + \frac{Q-q}{q^\prime} \right) +O\left( \frac{1}{Qq^2
q_{n-1}^2} \right).
\end{split}
\end{equation}
Since $n$ only takes one value $n=\left\lfloor
\frac{Q+q_{k-1}}{Q-q}\right\rfloor$ for fixed $k$, the error terms
in \eqref{A2.8}-\eqref{A2.11} do not play a role in the final
asymptotic formula.

\subsubsection{}\label{SubSub11.2.2} When $k\geqslant 1$ and $n(Q-q)\leqslant q_k \leqslant
Q+n(Q-q)$:
\begin{equation}\label{A2.12}
\begin{split}
& \int_{\lambda_{k,n+1}}^{\lambda_{k,n}} \frac{w_{\CC_k} (t)\
dt}{t^2+t+1} = \int_{\lambda_{k,n+1}}^{t_{k-1}}
\frac{a_{k-1}-2\eps-q_{k-1} t}{t^2+t+1}\ dt
-\int_{\lambda_{k,n}}^{t_{k-1}}
\frac{a_{k-1}-2\eps-q_{k-1}t}{t^2+t+1}\ dt  \\
& =\frac{2Q-q}{2Q^2
(q_{k}+q_{k+n-1})(q_k+q_{k+n})(\gamma^2+\gamma +1)} \left(
\frac{(n+2)Q-q_{k+n+1}}{q_k+q_{k+n}} +
\frac{(n+1)Q-q_{k+n}}{q_{k}+q_{k+n-1}} \right) \\ & \qquad
+ O \left( \frac{1}{(n-1)^3 q^2 q_k^3}\right) .
\end{split}
\end{equation}
\begin{equation}\label{A2.13}
\begin{split}
& \int_{\lambda_{k,n+1}}^{\lambda_{k,n}} \frac{w_{\BB_k}(t)\,
dt}{t^2+t+1} = \int_{t_k}^{\lambda_{k,n}} \frac{q_k
t-a_k+2\eps}{t^2+t+1}\, dt - \int_{t_k}^{\lambda_{k,n+1}} \frac{q_k
t - a_k+2\eps}{t^2+t+1} \, dt
\\ & \quad =\frac{2Q-q}{2Q^2 (q_{k}+q_{k+n-1})(q_k+q_{k+n})(\gamma^2+\gamma+1)}
\left( \frac{q_{k+n-1}-(n-1)Q}{q_{k} +q_{k+n-1}} +
\frac{q_{k+n}-nQ}{q_k+q_{k+n}} \right) \\ & \qquad + O \left( \frac{1}{q q_k
q_{k+n}^3}\right) .
\end{split}
\end{equation}
\begin{equation}\label{A2.14}
\int_{\lambda_{k,n+1}}^{\gamma_{k+n}}
\frac{a_{k+n}-q_{k+n}t}{t^2+t+1}\, dt =\frac{(q_{k+n}-nQ)^2}{2Q^2
q_{k+n} (q_k+q_{k+n})^2(\gamma^2+\gamma+1)} + O\left(
\frac{1}{qq_{k+n}^4} \right) .
\end{equation}
\begin{equation}\label{A2.15}
\int_{\gamma_{k+n}}^{\lambda_{k,n}} \frac{q_{k+n}t -
a_{k+n}}{t^2+t+1}\, dt =\frac{\big( (n+1)Q-q_{k+n}\big)^2}{2Q^2
q_{k+n} (q_{k-1}+q_{k+n})^2 (\gamma^2+\gamma+1)} +O \left(
\frac{1}{qq_{k+n}^4} \right) .
\end{equation}

\subsubsection{}\label{SubSub11.2.3} When $k=0$ and $n\geqslant 2$ we have
$t_0<\lambda_{0,n+1}<\gamma_n <\lambda_{0,n}\leqslant t_{k-1}$.
Analog formulas as \eqref{A2.12}-\eqref{A2.15} hold, with same
main terms and
\begin{equation}\label{A2.16}
\begin{split}
& \int_{\lambda_{0,n+1}}^{\lambda_{0,n}} \frac{w_{\CC_0}(t)\,
dt}{t^2+t+1} =\int_{\gamma}^{\lambda_{0,n}} \frac{qt -a}{t^2+t+1}\,
dt - \int_{\gamma}^{\lambda_{0,n+1}} \frac{qt -a}{t^2+t+1}\, dt \\
& \hspace{3cm} -\left( \int_{t_0}^{\lambda_{0,n}} \frac{q^\prime t
-a^\prime+2\eps}{t^2+t+1}\, dt - \int_{t_0}^{\lambda_{0,n+1}}
\frac{q^\prime t -a^\prime +2\eps}{t^2+t+1}\, dt \right) \\
& =\frac{2Q-q}{2Q^2 (q^\prime +q_{n-1}) (q^\prime+q_n) (\gamma^2
+\gamma +1)} \left( \frac{(n+2)Q-q_{n+1}}{q^\prime +q_n} +
\frac{(n+1)Q-q_n}{q^\prime+q_{n-1}} \right)+O \left( \frac{1}{q
q^{\prime } q_n^3}\right) .
\end{split}
\end{equation}

\subsubsection{}\label{SubSub11.2.4} When $k=0$ and $n=1$ we have $t_0 \leqslant
\lambda_{0,2} <\gamma_1 \leqslant \lambda_{0,1}$. We have
\begin{equation}\label{A2.17}
\begin{split}
\int_{\lambda_{0,2}}^{\gamma_1} \frac{w_{\BB_0}(t)\, dt}{t^2+t+1}
 & = \int_{t_0}^{\gamma_1} \frac{q^\prime t -a^\prime
+2\eps}{t^2+t+1}\, dt -\int_{t_0}^{\lambda_{0,2}} \frac{q^\prime
t -a^\prime +2\eps}{t^2+t+1}\, dt \\
& =\frac{(q_1-Q)^2 (q^\prime +2q_1)}{2Q^2 q_1^2 (q^\prime +q_1)^2
(\gamma^2+\gamma+1)} +O \left( \frac{1}{Q^3 qq^\prime}\right) .
\end{split}
\end{equation}
\begin{equation}\label{A2.18}
\int_{\lambda_{0,2}}^{\gamma_1} \frac{a_1-q_1 t}{t^2+t+1}\, dt
=\frac{(q_1-Q)^2}{2Q^2 q_1 (q^\prime +q_1)^2 (\gamma^2+\gamma +1)}
+O\left( \frac{1}{Q^4 q}\right) .
\end{equation}
\begin{equation}\label{A2.19}
\begin{split}
\int_{\lambda_{0,2}}^{\gamma_1} & \frac{w_{\CC_0}(t)\, dt}{t^2+t+1}
= \int_{\lambda_{0,2}}^{\gamma^\prime}
\frac{a^\prime-q^\prime t}{t^2 +t+1}\, dt
-\int_{\gamma_1}^{\gamma^\prime} \frac{a^\prime -q^\prime t}{t^2+t+1}\, dt  -
\int_{\lambda_{0,2}}^{\frac{a+2\eps}{q}}
\frac{a+2\eps-qt}{t^2+t+1}\, dt \\ & \qquad +
\int_{\gamma_1}^{\frac{a+2\eps}{q}} \frac{a+2\eps-qt}{t^2+t+1}\, dt  \\
& =\frac{q_1-Q}{2Q^2 q_1 (q^\prime +q_1)(\gamma^2+\gamma+1)}
\left( \frac{3Q-q_2}{q^\prime+q_1} +\frac{2Q-q_1}{q_1} \right)
+O\left( \frac{1}{Q^3 qq^\prime}\right) .
\end{split}
\end{equation}

\subsubsection{}\label{SubSub11.2.5} When $Q+n(Q-q) \leqslant q_k \leqslant Q+(n+1)(Q-q)$:
\begin{equation}\label{A2.20}
\int_{\lambda_{k,n+1}}^{t_{k-1}} \frac{w_{\CC_k} (t)\, dt}{t^2+t+1}
= \frac{\big( (n+2)Q-q_{k+n+1}\big)^2}{2Q^2 q_{k-1}
(q_k+q_{k+n})^2 (\gamma^2+\gamma+1)} +O\left( \frac{1}{Qqq_{k-1}
q_{k+n}^2}\right) .
\end{equation}
\begin{equation}\label{A2.21}
\begin{split}
\int_{\lambda_{k,n+1}}^{t_{k-1}} & \frac{w_{\BB_k}(t)\,
dt}{t^2+t+1} = \int_{t_k}^{t_{k-1}} \frac{q_k
t-a_k+2\eps}{t^2+t+1}\, dt - \int_{t_k}^{\lambda_{k,n+1}} \frac{q_k
t - a_k +2\eps}{t^2+t+1} \, dt \\
& =\frac{(n+2)Q-q_{k+n+1}}{2Q^2 q_{k-1} (q_k+q_{k+n})
(\gamma^2+\gamma+1)} \left( \frac{Q-q}{q_{k-1}}
+\frac{q_{k+n}-nQ}{q_k+q_{k+n}} \right) + O \left( \frac{1}{Q^2 q
q_k^2}\right) .
\end{split}
\end{equation}
\begin{equation}\label{A2.22}
\begin{split}
& \int_{\lambda_{k,n+1}}^{t_{k-1}}
\frac{a_{k+n}-q_{k+n}t}{t^2+t+1}\, dt =
\int_{\lambda_{k,n+1}}^{\gamma_{k+n}}
\frac{a_{k+n}-q_{k+n}t}{t^2+t+1}\, dt -
\int_{t_{k-1}}^{\gamma_{k+n}} \frac{a_{k+n}-q_{k+n}t}{t^2+t+1}\, dt
\\ & =\frac{(n+2)Q-q_{k+n+1}}{2Q^2 q_{k-1}
(q_k+q_{k+n}) (\gamma^2+\gamma+1)} \left(
\frac{q_{k+n}-nQ}{q_k+q_{k+n}} +\frac{q_{k+n}-(n+1)Q}{q_{k-1}}
\right) + O \left( \frac{1}{Q^2 q q_{k+n}^2} \right).
\end{split}
\end{equation}

\section{Appendix 3}

\subsection{Estimates for the $\CC_O$ contribution when $(r,r_k)=(1,0)$}\label{SubA3.1}
Here $\lambda_{k,n}$ is as in \eqref{8.3} and $W_{\gamma,k,n}^{(1)}$ and $W_{\gamma,k}^{(2)}$
as in \eqref{8.4}.

\subsubsection{}\label{SubSub12.1.1} For $k\geqslant 1$ and $q_{k+1}\leqslant 2Q$:
\begin{equation}\label{A3.1}
\int_{\gamma_{k+1}}^{t_{k-1}} \frac{w_{\CC_k}(t)\, dt}{t^2+t+1} =
\frac{(2Q-q_{k+1})^2}{2Q^ 2 q_{k-1}q_{k+1}^2 (\gamma^2+\gamma+1)}
+O\left( \frac{1}{qq_{k-1}^2 q_k^2}\right) .
\end{equation}
\begin{equation}\label{A3.2}
\begin{split}
\int_{\gamma_{k+1}}^{t_{k-1}} \frac{w_{\BB_k}(t)
-w_{\AA_0}(t)}{t^2+t+1}\, dt & =\int_{\gamma_{k+1}}^{t_{k-1}}
\frac{q_{k+1}t -a_{k+1}}{t^2+t+1}\, dt \\ & =\frac{(2Q-q_{k+1})^2}{2Q^2 q_{k-1}^2
q_{k+1}(\gamma^2+\gamma+1)} +O \left( \frac{1}{qq_{k-1}^3
q_{k+1}}\right).
\end{split}
\end{equation}

\subsubsection{}\label{SubSub12.1.2} For $k\geqslant 1$ and $n(q_k-Q)\leqslant Q-q\leqslant
(N+1)(q_k-Q)$:
\begin{equation}\label{A3.3}
\int_{\lambda_{k,n}}^{t_{k-1}} \frac{(q+nq_k)t
-(a+na_k)}{t^2+t+1}\, dt =\frac{\big( (n+1)Q-(q+nq_k)\big)^2}{2Q^2
q_{k-1}^2 (q+nq_k)(\gamma^2+\gamma+1)} +O\left(
\frac{1}{qq_{k-1}^3 (q+nq_k)}\right) .
\end{equation}

\subsubsection{}\label{SubSub12.1.3} For $k\geqslant 1$ and $(n+1)(q_k-Q)\leqslant Q-q$:
\begin{equation}\label{A3.4}
\int_{\lambda_{k,n}}^{\lambda_{k,n+1}}
\frac{W^{(1)}_{\gamma,k,n}(t)\ dt}{t^2+t+1}
=\frac{q_{k+1}+nq_k-\xi Q}{2(q+nq_k)(q_{k+1}+nq_k)^2 (\gamma^2
+\gamma+1)} +O_\xi \left( \frac{1}{qq_k (q+nq_k) (q_{k+1}+nq_k)}
\right) .
\end{equation}
\begin{equation}\label{A3.5}
\int_{\lambda_{k,n+1}}^{t_{k-1}} \frac{dt}{t^2+t+1} =
\frac{(n+2)Q-q_{k+1}-nq_k}{Qq_{k-1} (q_{k+1}+nq_k) (\gamma^2
+\gamma+1)} +O\left( \frac{1}{qq_{k-1}^2 q_k} \right) .
\end{equation}
\begin{equation}\label{A3.6}
\begin{split}
\int_{\lambda_{k,n+1}}^{t_{k-1}} & \frac{w_{\AA_0}(t)
+w_{\CC_k}(t)}{t^2+t+1}\, dt  = \int_{\lambda_{k,n+1}}^{\gamma_k}
\frac{a_k-q_k t}{t^2+t+1}\,
dt - \int_{t_{k-1}}^{\gamma_k} \frac{a_k-q_k t}{t^2+t+1}\, dt \\
& =\frac{(n+2)Q-q_{k+1}-nq_k}{2Q^2 q_{k-1} (q_{k+1}+nq_k)(\gamma^2
+\gamma +1)} \left( \frac{Q}{q_{k+1}+nq_k}
+\frac{q_k-Q}{q_{k-1}}\right) +O \left( \frac{1}{q_{k-1}^3
q_{k+1}^2} \right) .
\end{split}
\end{equation}
\begin{equation}\label{A3.7}
\begin{split}
\int_{\lambda_{k,n+1}}^{t_{k-1}}
\frac{W^{(2)}_{\gamma,k}(t)}{t^2+t+1}\, dt & =\frac{(n+2)Q-q_{k+1}-nq_k}{2Q
q_{k-1}(q_{k+1}+nq_k)(\gamma^2 +\gamma +1)} \\ & \qquad \cdot \left(
\frac{q_{k+1}+nq_k -\xi Q}{q_{k+1}+nq_k}
+\frac{q_{k-1}-\xi (q_k-Q)}{q_{k-1}} \right) +O_\xi \left(
\frac{1}{qq_{k-1}^3} \right) .
\end{split}
\end{equation}

\subsection{Estimates for the $\CC_\leftarrow$ contribution when $(r,r_k)=(1,0)$}\label{AubA5.2}
Here $\mu_{k,n}$ and $\nu_{k,n}$ are as in \eqref{8.7}.

\subsubsection{}\label{SubSub12.2.1} For $q_2 >2Q$:
\begin{equation}\label{A3.8}
\begin{split}
\int_{\frac{a+\eps}{q}}^{\gamma_1} \frac{w_{\CC_0}(t)\,
dt}{t^2+t+1} &  = - \left( \int_{t_0}^{\gamma_1} \frac{q^\prime t-
a^\prime + 2\eps}{t^2+t+1}\, dt -\int_{t_0}^{\frac{a+\eps}{q}}
\frac{q^\prime t -a^\prime+2\eps}{t^2+t+1}\, dt \right)
\\ & \qquad \qquad +\left( \int_{\gamma}^{\gamma_1} \frac{qt -a}{t^2+t+1}\,
dt -\int_{\gamma}^{\frac{a+\eps}{q}} \frac{qt -a}{t^2+t+1}\, dt
\right) \\ & =\frac{(2Q-q_1)^2 q_3}{8Q^2 q^2 q_1^2 (\gamma^2
+\gamma+1)} +O\left( \frac{1}{Q^2 q^2 q^\prime}\right) .
\end{split}
\end{equation}
\begin{equation}\label{A3.9}
\begin{split}
\int_{\frac{a+\eps}{q}}^{\gamma_1} \frac{w_{\AA_0}(t)\,
dt}{t^2+t+1} & = \int_{\frac{a+\eps}{q}}^{u_0} \frac{w_{\AA_0}(t)\,
dt}{t^2+t+1}\, dt - \int_{\gamma}^{u_0} \frac{w_{\AA_0}(t)\,
dt}{t^2+t+1} \\ & =\frac{(2Q-q_1)(3q_1-2Q)}{8Q^2 qq_1^2 (\gamma^2
+\gamma +1)} +O\left( \frac{1}{Q^3 q^2}\right) .
\end{split}
\end{equation}
\begin{equation}\label{A3.10}
\int_{\frac{a+\eps}{q}}^{\gamma_1} \frac{a_1-q_1 t}{t^2+t+1}\, dt
=\frac{(2Q-q_1)^2}{8Q^2 q^2 q_1 (\gamma^2 +\gamma +1)} +O\left(
\frac{1}{q^3 Q^2}\right) .
\end{equation}

\subsubsection{}\label{SubSub12.2.2} When $n=1$ and $\frac{2(Q-q)}{n+1} \leqslant q_k
\leqslant \frac{2(Q-q)}{n} \wedge \Big( Q+\frac{Q-q}{n+1}\Big)$:
\begin{equation}\label{A3.11}
\begin{split}
\int_{t_k}^{\mu_{k,n}} & \frac{w_{\AA_0} (t)\, dt}{t^2+t+1} =
\int_{t_k}^{u_0} \frac{a+2\eps-qt}{t^2+t+1}\, dt
- \int_{\mu_{k,n}}^{u_0} \frac{a+2\eps -qt}{t^2+t+1}\, dt \\
& =\frac{(n+1)q_k -2(Q-q)}{2Q^2 q_k (2q+nq_k)(\gamma^2 +\gamma
+1)} \left( \frac{q_{k+1}-Q}{q_k} +\frac{q+nq_k -nQ}{2q+nq_k}
\right) + O\left( \frac{1}{Qq^2 q_k^2}\right) .
\end{split}
\end{equation}
\begin{equation}\label{A3.12}
\begin{split}
& \int_{t_k}^{\mu_{k,n}} \frac{w_{\CC_k}(t)\, dt}{t^2+t+1}
=\int_{t_k}^{t_{k-1}} \frac{a_{k-1}-2\eps-q_{k-1}t}{t^2+t+1}\, dt -
\int_{\mu_{k,n}}^{t_{k-1}} \frac{a_{k-1}-2\eps
- q_{k-1}t}{t^2+t+1}\, dt \\
& =\frac{(n+1)q_k -2(Q-q)}{2Q^2 q_k (2q+nq_k)(\gamma^2 +\gamma+1)}
\left( \frac{Q-q}{q_k} + \frac{(n+2)Q-q-(n+1)q_k}{2q+nq_k} \right)
+ O\left( \frac{1}{Qqq_{k-1} q_k^2}\right) .
\end{split}
\end{equation}
\begin{equation}\label{A3.13}
\int_{t_k}^{\nu_{k,n}} \frac{a+na_k -(q+nq_k)t}{t^2+t+1}\, dt
=\frac{(q+nq_k-Q)^2}{2Q^2 q_k^2 (q+nq_k) (\gamma^2 +\gamma +1)}
+O\left( \frac{1}{qq_k^4}\right) .
\end{equation}
\begin{equation}\label{A3.14}
\int_{\nu_{k,n}}^{\mu_{k,n}} \frac{(q+nq_k)t -(a+na_k)}{t^2+t+1}\,
dt =\frac{(q+nq_k -Q)^2}{2Q^2 (q+nq_k) (2q+nq_k)^2 (\gamma^2
+\gamma +1)} + O\left( \frac{1}{qq_k^4} \right) .
\end{equation}

\subsubsection{}\label{SubSub12.2.3} When $\frac{2(Q-q)}{n}\leqslant q_k \leqslant
Q+\frac{Q-q}{n+1}$, $n,k\geqslant 1$:

For $n\geqslant 2$ we have
\begin{equation}\label{A3.15}
\begin{split}
& \int_{\mu_{k,n-1}}^{\mu_{k,n}} \frac{w_{\AA_0}(t)\, dt}{t^2+t+1}
 = \int_{\mu_{k,n-1}}^{u_0} \frac{a+2\eps-qt}{t^2+t+1}\, dt -
\int_{\mu_{k,n}}^{u_0} \frac{a+2\eps -qt}{t^2+t+1}\, dt \\
& =\frac{2Q-q_k}{2Q^2 (2q+nq_k)\big( 2q+(n-1)q_k\big)(\gamma^2
+\gamma +1)} \left( \frac{q+(n-1)(q_k-Q)}{2q+(n-1)q_k}
+\frac{q+n(q_k-Q)}{2q+nq_k} \right)  \\ & \qquad  +O \left(
\frac{1}{(n-1)^3 q^2 q_k^3} \right) .
\end{split}
\end{equation}
For $n=1$ the error can be improved (since $2Q-q_k \leqslant 2q$ and $\mu_{k,1}
-\gamma \leqslant \frac{1}{qq_k}$) as follows:
\begin{equation}\label{A3.16}
\begin{split}
\int_{\mu_{k,0}}^{\mu_{k,1}} & \frac{w_{\AA_0}(t)\, dt}{t^2+t+1} =
- \int_{\mu_{k,0}}^{\mu_{k,1}} \frac{qt -a-\eps}{t^2+t+1}\, dt
+ \int_{\mu_{k,0}}^{\mu_{k,1}} \frac{\eps \, dt}{t^2+t+1} \\
& =\frac{2Q-q_k}{2Q^2 (2q+q_k)(2q)(\gamma^2 +\gamma +1)} \left(
\frac{q}{2q} + \frac{q+q_k-Q}{2q+q_k}\right) +O \left(
\frac{1}{Q^2 q q_k^2}\right) .
\end{split}
\end{equation}
Since $0\leqslant (n+1)Q-q-nq_k \leqslant Q-q <q^\prime$ and
$0 \leqslant nq_k -(n+1)Q \leqslant 2Q$ we find:
\begin{equation}\label{A3.17}
\begin{split}
& \int_{\mu_{k,n-1}}^{\mu_{k,n}} \frac{w_{\CC_k}(t)\ dt}{t^2+t+1}
 = \int_{\mu_{k,n-1}}^{t_{k-1}}
\frac{a_{k-1}-2\eps-q_{k-1}t}{t^2+t+1}\, dt -
\int_{\mu_{k,n}}^{t_{k-1}} \frac{a_{k-1}-2\eps -q_{k-1}t
}{t^2+t+1}\, dt
\\ & \qquad \qquad \qquad \quad =\frac{2Q-q_k}{2Q^2 (2q+nq_k) \big( 2q+(n-1)q_k\big)
(\gamma^2 +\gamma +1)} \\ & \cdot \left( \frac{Q-q-n(q_k-Q)}{2q+(n-1)q_k}
+\frac{Q-q-(n+1)(q_k-Q)}{2q+nq_k}\right) +O\left( \frac{1}{qq_{k-1}^2 \big( q+(n-1)q_k\big)^2} \right) .
\end{split}
\end{equation}
\begin{equation}\label{A3.18}
\begin{split}
\int_{\mu_{k,n-1}}^{\nu_{k,n}} \frac{a+na_k -(q+nq_k)t}{t^2+t+1} \,
dt & = \frac{\big( Q-q-n(q_k-Q)\big)^2}{2Q^2 (q+nq_k) \big(
2q+(n-1)q_k\big)^2 (\gamma^2 +\gamma+1)} \\ & \qquad + O\left(
\frac{1}{n qq_k^2 \big( q+(n-1)q_k\big)^2} \right) .
\end{split}
\end{equation}
\begin{equation}\label{A3.19}
\int_{\nu_{k,n}}^{\mu_{k,n}} \frac{(q+nq_k)t -(a+na_k)}{t^2+t+1}\,
dt = \frac{\big(q+n(q_k -Q)\big)^2}{2Q^2 (q+nq_k) (2q+nq_k)^2
(\gamma^2 +\gamma +1)} +O \left( \frac{1}{n^3 q q_k^4} \right) .
\end{equation}
We check that the contribution of error terms is negligible.
Note that when $n=1$ we must have $2q\geqslant 2Q-q_k \geqslant
2Q-\frac{3Q-q}{2} =\frac{Q+q}{2}$, so $q\geqslant \frac{Q}{3}$.
The errors in \eqref{A3.15}-\eqref{A3.19} add up to
\begin{equation*}
\begin{split}
& \ll \sum_{n=2}^\infty \sum_{k=1}^\infty \sum_{\gamma\in \FF(Q)}
\frac{q}{(n-1)^3 q^2 q_k^3} +\sum_{k=1}^\infty
\sum_{\gamma\in\FF(Q)} \frac{1}{Q^2 q_k^2} + \sum_{n=2}^\infty
\sum_{k=1}^\infty \sum_{\gamma\in\FF(Q)} \frac{q_{k+1}}{qq_{k-1}^2
q_k^2} \\ & + \hspace{-1pt} \sum_{k=1}^\infty \hspace{-1pt} \sum_{\gamma\in\FF(Q)}
\frac{q_{k+1}}{q^3 q_{k-1}^2}
+ \hspace{-1pt} \sum_{n=2}^\infty \hspace{-1pt} \sum_{k=1}^\infty \sum_{\gamma\in\FF(Q)}
\frac{q_{k+1}}{n(n-1)^2 qq_k^4} +\hspace{-1pt} \sum_{k=1}^\infty \hspace{-1pt}
\sum_{\gamma\in\FF(Q)} \frac{q_{k+1}}{q^3 q_k^2}
+ \hspace{-1pt} \sum_{n=1}^\infty \hspace{-1pt} \sum_{k=1}^\infty \frac{1}{n^3 q_k^4} +
\frac{1}{\vert I\vert} \cdot\frac{1}{Q} \\ & \ll
\sum_{\gamma\in\FF(Q)} \frac{Q}{q^3 q^{\prime 2}} +
\sum_{\frac{Q}{3} \leqslant q\leqslant Q} \frac{\varphi(q)}{q^3} +
Q^{c-1} \ll Q^{c-1}.
\end{split}
\end{equation*}

\subsubsection{}\label{SubSub12.2.4} For $k\geqslant 1$ and $q_{k+2}\leqslant 2Q$:
\begin{equation}\label{A3.20}
\int_{\gamma_{k+1}}^{t_{k-1}} \frac{q_{k+1}t -a_{k+1}}{t^2+t+1}\,
dt =\frac{(2Q-q_{k+1})^2}{2Q^2 q_{k-1}^2 q_{k+1}(\gamma^2
+\gamma+1)} + O \left( \frac{1}{qq_{k-1}^3 q_{k+1}} \right) .
\end{equation}

\subsubsection{}\label{SubSub13.2.5} For $n,k\geqslant 1$ and $ \frac{2(Q-q)}{n} \vee \Big(
Q+\frac{Q-q}{n+1}\Big) \leqslant q_k \leqslant Q+\frac{Q-q}{n}$:
\begin{equation}\label{A3.21}
\begin{split}
\int_{\mu_{k,n-1}}^{t_{k-1}} & \frac{w_{\AA_0}(t)\, dt}{t^2+t+1} =
\int_{\mu_{k,n-1}}^{u_0} \frac{a+2\eps-qt}{t^2+t+1}\, dt -
\int_{t_{k-1}}^{u_0} \frac{a+2\eps -qt}{t^2+t+1}\, dt \\
&=\frac{(n+1)Q-q-nq_k}{2Q^2 q_{k-1} \big( 2q+(n-1)q_k\big)
(\gamma^2 +\gamma +1)} \left( \frac{q+(n-1)(q_k -Q)}{2q+(n-1)q_k}
+ \frac{q_k-Q}{q_{k-1}} \right) \\ & \hspace{2cm} + O \Bigg( \frac{1}{q
\big( 2q+(n-1)q_k\big)^2} \bigg( \frac{Q-q}{Q^2
q_{k-1}}+\frac{1}{q\big( 2q+(n-1)q_k\big)} \bigg) \Bigg).
\end{split}
\end{equation}
When $n\geqslant 2$ the error is $\leqslant \frac{2}{q^2 q_{k-1}
q_k^2}$. When $n=1$ we can improve on the error term:
\begin{equation}\label{A3.22}
\begin{split}
\int_{\mu_{k,0}}^{t_{k-1}} & \frac{a+2\eps -qt}{t^2+t+1}\, dt = -
\int_{\frac{a+\eps}{q}}^{t_{k-1}} \frac{a+\eps-qt}{t^2+t+1}\, dt +
\int_{\frac{a+\eps}{q}}^{t_{k-1}} \frac{\eps \, dt}{t^2+t+1} \\
& =\frac{2Q-q_{k+1}}{4Q^2 qq_{k-1} (\gamma^2 +\gamma +1)} \left(
1-\frac{2Q-q_{k+1}}{2q_{k-1}} \right)+ O \left( \frac{1}{Qq^2
q_{k-1}^2} +\frac{1}{q^2 q_{k-1}^3} +\frac{1}{Q^2 q^2 q_{k-1}}
\right) .
\end{split}
\end{equation}
\begin{equation}\label{A3.23}
\int_{\mu_{k,n-1}}^{t_{k-1}} \frac{w_{\CC_k}(t)\, dt}{t^2+t+1} =
\frac{\big( (n+1)Q-q-nq_k\big)^2}{2Q^2 q_{k-1} \big(
2q+(n-1)q_k\big)^2 (\gamma^2 +\gamma +1)} +O \left(
\frac{1}{qq_{k-1}^2 \big( 2q+(n-1)q_k\big)^2} \right) .
\end{equation}
\begin{equation}\label{A3.24}
\int_{\mu_{k,n-1}}^{\nu_{k,n}} \frac{a+na_k -(q+nq_k)t}{t^2+t+1}\,
dt = \frac{\big( (n+1)Q-q-nq_k\big)^2}{2Q^2 (q+nq_k) \big( 2q+
(n-1) q_k\big)^2 (\gamma^2 +\gamma +1)} +O\left( \frac{1}{q^3
q_k^2} \right).
\end{equation}
\begin{equation}\label{A3.25}
\int_{\nu_{k,n}}^{t_{k-1}} \frac{(q+nq_k)t -(a+na_k)}{t^2+t+1}\, dt
=\frac{\big( (n+1)Q-q-nq_k\big)^2}{2Q^2 q_{k-1}^2 ( q+nq_k )
(\gamma^2 +\gamma +1)} + O \left( \frac{1}{Q^2 qq_k^2}\right) .
\end{equation}

\section{Appendix 4}

When $k\geqslant 1$ and $q_{k+1}\leqslant 2Q$:
\begin{equation}\label{A4.1}
\begin{split}
\int_{t_k}^{\gamma_{k+1}} \frac{w_{\CC_k}(t)\, dt}{t^2+t+1} &
=\int_{t_k}^{t_{k-1}} \frac{a_{k-1}-2\eps -q_{k-1}t}{t^2+t+1}\, dt
- \int_{\gamma_{k+1}}^{t_{k-1}} \frac{a_{k-1}-2\eps
- q_{k-1} t}{t^2+t+1}\, dt \\
& =\frac{q_{k+1}-Q}{2Q^2 q_k q_{k+1} (\gamma^2 +\gamma +1)} \left(
\frac{Q-q}{q_k} +\frac{2Q-q_{k+1}}{q_{k+1}} \right) + O \left(
\frac{1}{q q_{k-1} q_k^3} \right) .
\end{split}
\end{equation}
\begin{equation}\label{A4.2}
\begin{split}
\int_{\gamma_{k+1}}^{t_{k-1}} \frac{w_{\AA_0} (t)\, dt}{t^2+t+1} &
= \int_{\gamma_{k+1}}^{u_0} \frac{a+2\eps -qt}{t^2+t+1}\, dt -
\int_{t_{k-1}}^{u_0} \frac{a+2\eps -qt}{t^2+t+1} \, dt \\ &
 =\frac{2Q-q_{k+1}}{2Q^2 q_{k-1} q_{k+1}} \left(
\frac{q_{k+1} -Q}{q_{k+1}} +\frac{q_k -Q}{q_{k-1}} \right)
+O\left( \frac{1}{q^3 q_{k+1}^2} \right) .
\end{split}
\end{equation}

When $q_{k+1} >2Q$:
\begin{equation}\label{A4.3}
\begin{split}
\int_{t_k}^{\gamma_{k+1}} & \frac{2(a+\eps -qt)\, dt}{t^2+t+1}
=-\left( \int_{\gamma}^{\gamma_{k+1}} \frac{qt -a}{t^2+t+1}\, dt
-\int_{\gamma}^{t_k} \frac{qt -a}{t^2+t+1}\ dt\right)  +
\int_{t_k}^{\gamma_{k+1}}\frac{2\eps\, dt}{t^2+t+1} \\
& =\frac{q_{k+1}-Q}{2Q^2 q_k q_{k+1}(\gamma^2 +\gamma +1)} \left(
\frac{q_{k+2}-2Q}{q_k} + \frac{q_{k+1}-2Q}{q_{k+1}}\right) +O
\left( \frac{1}{q^3 q_k^2} \right) .
\end{split}
\end{equation}

\subsection{Estimates for the $\CC_O$ and the $\CC_\leftarrow$ contributions
when $(r,r_k)=(1,-1)$ and $w_{\AA_0} >w_{\BB_k}$}\label{SubA4.1}
Here $\lambda_{k,n}$ is as in \eqref{9.3}.

\subsubsection{}\label{SubSub14.1.1} If $Q+\frac{Q-q}{n+1}\leqslant q_{k+1}\leqslant 2Q$ and
$k\geqslant 1$, then
\begin{equation}\label{A4.4}
\begin{split}
& \int_{\lambda_{k,n+1}}^{\gamma_{k+1}} \frac{w_{\CC_k}(t)\,
dt}{t^2+t+1}  = \int_{\lambda_{k,n+1}}^{t_{k-1}}
\frac{a_{k-1}-2\eps -q_{k-1}t}{t^2+t+1}\, dt
-\int_{\gamma_{k+1}}^{t_{k-1}} \frac{a_{k-1}-2\eps
-q_{k-1}t}{t^2+t+1}\, dt \\ &  = \frac{2Q-q_{k+1}}{2Q^2
q_{k+1}(nq_{k+1}+2q)(\gamma^2 +\gamma +1)} \left(
\frac{(n+1)(2Q-q_{k+1})}{nq_{k+1}+2q} +\frac{2Q-q_{k+1}}{q_{k+1}}
\right) + O\left( \frac{1}{qq_{k-1}^2 q_{k+1}^2} \right) .
\end{split}
\end{equation}
When $k=0$ one can improve on the error as follows:
\begin{equation}\label{A4.5}
\begin{split}
& \int_{\lambda_{0,n+1}}^{\gamma_1} \frac{w_{\CC_0}(t)\,
dt}{t^2+t+1} = -\left( \int_{t_0}^{\gamma_1} \frac{q^\prime t
-a^\prime +2\eps}{t^2+t+1}\, dt -\int_{t_0}^{\lambda_{0,n+1}}
\frac{q^\prime t - a^\prime + 2 \eps}{t^2+t+1}\, dt \right) \\ &
\qquad \qquad \qquad \qquad +\left( \int_{\gamma}^{\gamma_1} \frac{qt -a}{t^2+t+1}\, dt
-\int_{\gamma}^{\lambda_{0,n+1}} \frac{qt -a}{t^2+t+1}\, dt \right)  \\
& =\frac{2Q-q_{1}}{2Q^2 q_{1}(nq_{1}+2q)(\gamma^2 +\gamma
+1)} \left( \frac{(n+1)(2Q-q_{1})}{nq_{1}+2q}
+\frac{2Q-q_{1}}{q_{1}} \right) + O\left( \frac{1}{Q^2 q^2
q^\prime} \right) .
\end{split}
\end{equation}

\subsubsection{}\label{SubSub13.1.2} When $n\geqslant 1$ and $Q+\frac{Q-q}{n+1} \leqslant
q_{k+1} \leqslant 2Q$:
\begin{equation}\label{A4.6}
\int_{\lambda_{k,n+1}}^{\gamma_{k+1}} \frac{w_{\AA_0} (t) -
w_{\BB_k}(t)}{t^2+t+1}\, dt = \frac{(2Q-q_{k+1})^2}{2Q^2
q_{k+1}(nq_{k+1}+2q)^2 (\gamma^2 +\gamma +1)} +O\left(
\frac{1}{qq_{k+1}^4}\right) .
\end{equation}
\begin{equation}\label{A4.7}
\begin{split}
\int_{\lambda_{k,n}}^{\lambda_{k,n+1}} & \frac{
( n q_{k+1}-q_{k-1})t -na_{k+1} +a_{k-1} -2\eps}{t^2+t+1}\, dt
\\ & = \frac{(2Q-q_{k+1})^2}{2Q^2 (nq_{k+1}+2q)^2 \big( (n-1) q_{k+1}
+ 2q \big)(\gamma^2 +\gamma +1)} + O \left( \frac{1}{Qq^2
q_{k+1}^2} \right).
\end{split}
\end{equation}

\subsubsection{}\label{A13.1.3} When $n\geqslant 1$ and $Q+\frac{Q-q}{n+1} \leqslant
q_{k+1} \leqslant Q+\frac{Q-q}{n}$ we have $0\leqslant
(n+1)(q_{k+1}-Q) +q-Q \leqslant \frac{Q-q}{n}$ and
\begin{equation}\label{A4.8}
\begin{split}
\int_{t_k}^{\lambda_{k,n+1}} & \frac{(n+1)a_{k+1}-a_{k-1}+2\eps
-\big( (n+1)q_{k+1}-q_{k-1} \big)t}{t^2+t+1}\, dt \\ & \qquad
\qquad = \frac{\big( (n+1)(q_{k+1}-Q)+q-Q\big)^2}{2Q^2 q_k^2 (
nq_{k+1}+2q) (\gamma^2 +\gamma +1)} +O\left( \frac{1}{n^4 qq_k^2
q_{k+1}^2}\right) .
\end{split}
\end{equation}
\begin{equation}\label{A4.9}
\begin{split}
& \int_{t_k}^{\lambda_{k,n+1}} \frac{(nq_{k+1}-q_{k-1})t -na_{k+1}
+a_{k-1} -2\eps}{t^2+t+1}\, dt =
\int_{\lambda_{k,n}}^{\lambda_{k,n+1}} \cdots
- \int_{\lambda_{k,n}}^{t_k} \cdots \\
&  =\frac{(n+1)(q_{k+1}-Q)+q-Q}{2Q^2 q_k ( nq_{k+1}+2q) (\gamma^2
+\gamma +1)} \left( \frac{2Q-q_{k+1}}{nq_{k+1}+2q}
+\frac{Q-q-n(q_{k+1}-Q)}{q_k}\right)  +O \left( \frac{1}{n^2 Q q
q_k q_{k+1}^2}\right) .
\end{split}
\end{equation}

\subsubsection{}\label{SubSub13.1.4} When $n\geqslant 1$ and $Q+\frac{Q-q}{n} \leqslant
q_{k+1} \leqslant 2Q$:
\begin{equation}\label{A4.10}
\begin{split}
& \int_{\lambda_{k,n}}^{\lambda_{k,n+1}} \frac{(n+1)a_{k+1}
-a_{k-1}+2\eps -\big( (n+1)q_{k+1}-q_{k-1}\big)t}{t^2+t+1}\, dt
\\ &  = \frac{(2Q-q_{k+1})^2}{2Q^2 (nq_{k+1}+2q) \big(
(n-1)q_{k+1}+2q\big)^2 (\gamma^2 +\gamma +1)} +O \left(
\frac{1}{(n^2-n+1)q^3 q_{k+1}^2} \right) .
\end{split}
\end{equation}
\begin{equation}\label{A4.11}
\begin{split}
& \int_{\lambda_{k,n}}^{\lambda_{k,n+1}} \frac{(nq_{k+1}-q_{k-1})t
-na_{k+1} +a_{k-1}-2\eps}{t^2+t+1}\, dt \\
& \qquad \qquad =\frac{(2Q-q_{k+1})^2}{2Q^2 (nq_{k+1}+2q)^2 \big(
(n-1)q_{k+1}+2q\big) (\gamma^2+\gamma +1)} +O \left( \frac{1}{n^2
Q q^2 q_{k+1}^2} \right) .
\end{split}
\end{equation}

\subsection {Estimates for the $\CC_O$ and the $\CC_\downarrow$ contributions when $(r,r_k)=(1,-1)$
and $w_{\AA_0} <w_{\BB_k}$}\label{SubA4.2} Here we take $\lambda_{k,n}$ as in
\eqref{9.8}, $k\geqslant 1$ and $q_{k+1} \leqslant 2Q$.

\begin{equation}\label{A4.12}
\begin{split}
\int_{\gamma_{k+1}}^{\lambda_{k,n+1}} \frac{w_{\CC_k} (t)\,
dt}{t^2+t+1} & = \int_{\gamma_{k+1}}^{t_{k-1}}
\frac{a_{k-1}-2\eps-q_{k-1}t}{t^2+t+1}\, dt -
\int_{\lambda_{k,n+1}}^{t_{k-1}} \frac{a_{k-1}-2\eps-q_{k-1}t}{t^2+t+1} \, dt \\
& = \frac{(2Q-q_{k+1})^2 \big( (2n+2)q_{k+1}+q_{k-1}\big)}{2Q^2
q_{k+1}^2 \big( (n+1)q_{k+1}+q_{k-1}\big)^2
(\gamma^2 +\gamma +1)} +O\left( \frac{1}{qq_{k-1}^2 q_{k+1}^2}
\right).
\end{split}
\end{equation}
\begin{equation}\label{A4.13}
\begin{split}
\int_{\gamma_{k+1}}^{\lambda_{k,n+1}} & \frac{w_{\BB_k} (t) -
w_{\AA_0}(t)}{t^2+t+1}\, dt  =
\int_{\gamma_{k+1}}^{\lambda_{k,n+1}} \frac{q_{k+1}t -a_{k+1}}{t^2+t+1}\, dt \\
& = \frac{(2Q-q_{k+1})^2}{2Q^2 q_{k+1} \big( (n+1)q_{k+1} +
q_{k-1}\big)^2 (\gamma^2 +\gamma +1)} + O \left(
\frac{1}{qq_{k+1}^4} \right) .
\end{split}
\end{equation}
\begin{equation}\label{A4.14}
\begin{split}
& \int_{\lambda_{k,n+1}}^{\lambda_{k,n}}
\frac{na_{k+1}+a_{k-1}-2\eps - (nq_{k+1}+q_{k-1})t}{t^2+t+1} \, dt
\\ & =\frac{(2Q-q_{k+1})^2}{2Q^2 ( nq_{k+1}+q_{k-1})\big(
(n+1)q_{k+1} +q_{k-1}\big)^2 (\gamma^2 +\gamma +1)} + O \left(
\frac{1}{(n^3+1)q q_{k-1}^2 q_{k+1}^2} \right) .
\end{split}
\end{equation}
\begin{equation}\label{A4.15}
\begin{split}
& \int_{\lambda_{k,n+1}}^{\lambda_{k,n}} \frac{\big(
(n+1)q_{k+1}+q_{k-1}\big)t - (n+1) a_{k+1}
-a_{k-1}+2\eps}{t^2+t+1}\, dt \\
& =\frac{(2Q-q_{k+1})^2}{2Q^2 ( nq_{k+1} +q_{k-1})^2 \big(
(n+1)q_{k+1}+q_{k-1} \big)(\gamma^2 +\gamma +1)} +O \left(
\frac{1}{(n^3+1)q q_{k-1}^2 q_{k+1}^2}\right) .
\end{split}
\end{equation}

\end{document}